\documentclass[11pt]{article}
\usepackage[left=1in, right=1in, top=1in]{geometry}
\usepackage{xargs}
\usepackage[numbers]{natbib}
\usepackage{fancyhdr}
\usepackage{setspace}
\usepackage{lastpage}
\usepackage{upgreek}
\usepackage[american]{babel}
\usepackage[utf8]{inputenc}
\usepackage[T1]{fontenc}
\usepackage{amsmath,mathtools,amsthm,amsfonts,amssymb}
\usepackage{dsfont,bbm}
\usepackage{graphicx}
\usepackage{subcaption}
\usepackage{placeins}
\usepackage{booktabs}
\usepackage{nicefrac}
\usepackage{microtype}
\usepackage{xcolor}
\usepackage{longtable}
\usepackage{comment}
\usepackage{enumitem}
\usepackage{multirow}
\usepackage{bm}
\usepackage{mathrsfs}
\usepackage{xurl}

\setcitestyle{number}

\usepackage[colorlinks=true,breaklinks=true,bookmarks=true,urlcolor=blue,
  citecolor=blue,linkcolor=blue,bookmarksopen=false,draft=false]{hyperref}
\usepackage{aliascnt}
\usepackage[nameinlink,capitalize,noabbrev]{cleveref}

\newcommand{\abs}[1]{\left\vert #1 \right\vert}
\newcommandx{\norm}[2][2=]{\Vert#1 \Vert_{{#2}}}
\newcommand{\supnorm}[1]{\norm{ #1 }[\infty]}
\newcommand{\lzeronorm}[1]{\Vert #1 \Vert_{0}}
\newcommand{\lr}[1]{\left( #1 \right)}
\newcommand{\lrcb}[1]{\left\{ #1 \right\}}
\newcommand{\indi}[1]{\mathbbm{1}_{#1}}
\newcommand{\indiacc}[1]{\mathbbm{1}_{\{#1\}}}
\newcommand{\ceil}[1]{\lceil #1 \rceil}
\newcommand{\kbeta}{k_\beta}
\newcommand{\supp}{\text{supp}}
\DeclareMathOperator*{\argmin}{arg\,min}

\def\eqsp{\,}
\def\rmd{\mathrm{d}}
\def\rme{\mathrm{e}}

\newcommand{\E}{\mathbb{E}}
\newcommand{\PP}{\mathbb{P}}

\newcommand{\PVar}{\mathsf{Var}}
\newcommand{\R}{\mathbb{R}}
\newcommand{\X}{\mathcal{X}}
\newcommand{\Y}{\mathcal{Y}}
\def\N{\mathbb{N}}
\def\XC{\mathcal{X}}
\def\Xset{\XC}
\def\HC{\mathcal{H}}
\def\FC{\mathcal{F}}
\def\SC{\mathcal{S}}
\def\WC{\mathcal{W}}
\newcommandx{\DC}[1][1=]{P_{#1}}
\newcommandx{\QC}[1][1=]{Q_{#1}}
\newcommandx{\Lone}[2][2=P_X]{\|#1\|_{L^1(#2)}}
\newcommandx{\Ltwo}[2][2=P_X]{\|#1\|_{L^2(#2)}}
\newcommandx{\Lp}[2][1=\Lambda]{\|#2\|_{L^p(#1)}}

\usepackage{aliascnt}
\usepackage{cleveref}
\makeatletter
\@ifundefined{theorem}{%
  \newtheorem{theorem}{Theorem}
  \newaliascnt{lemma}{theorem}
  \newtheorem{lemma}[lemma]{Lemma}
  \aliascntresetthe{lemma}
  \newaliascnt{proposition}{theorem}
  \newtheorem{proposition}[proposition]{Proposition}
  \aliascntresetthe{proposition}
  \newaliascnt{corollary}{theorem}
  \newtheorem{corollary}[corollary]{Corollary}
  \aliascntresetthe{corollary}
  \newaliascnt{definition}{theorem}
  
  \aliascntresetthe{definition}
  \newaliascnt{remark}{theorem}
  \newtheorem{remark}[remark]{Remark}
  \aliascntresetthe{remark}
}{}
\makeatother
\crefname{theorem}{Theorem}{Theorems}
\crefname{lemma}{Lemma}{Lemmas}
\crefname{proposition}{Proposition}{Propositions}
\crefname{corollary}{Corollary}{Corollaries}
\crefname{definition}{Definition}{Definitions}
\crefname{remark}{Remark}{Remarks}

\newtheorem{assum}{\textbf{A}\hspace{-2pt}}
\crefname{assum}{\textbf{A}\hspace{-2pt}}{\textbf{A}\hspace{-2pt}}
\Crefname{assum}{\textbf{A}\hspace{-2pt}}{\textbf{A}\hspace{-2pt}}
\newtheorem{assumB}{\textbf{B}\hspace{-2pt}}
\crefname{assumB}{\textbf{B}\hspace{-2pt}}{\textbf{B}\hspace{-2pt}}
\Crefname{assumB}{\textbf{B}\hspace{-2pt}}{\textbf{B}\hspace{-2pt}}
\newtheorem{assumL}{\textbf{L}\hspace{-2pt}}
\crefname{assumL}{\textbf{L}}{\textbf{L}}
\Crefname{assumL}{\textbf{L}\hspace{-2pt}}{\textbf{L}\hspace{-2pt}}

\def\low{\mu_{\operatorname{low}}}
\def\up{\mu_{\operatorname{up}}}
\newcommand{\ahi}{1-\alpha/2}
\newcommand{\alo}{\alpha/2}
\newcommand{\quant}[2]{\operatorname{Q}(#1;#2)}
\newcommand{\TM}{T_M}
\newcommandx{\fhat}[2][1=n]{\hat{f}_{#1,#2}}
\newcommand{\fstar}[1]{f^\star_{#1}}
\newcommand{\Fhat}{\widehat F}
\newcommand{\Sstar}{S^\star}
\newcommand{\Shat}{\widehat S}

\newcommandx{\CCstar}[1][1=\alpha]{\mathcal{C}_{#1}^{\star}}
\newcommandx{\CChat}[1][1=\alpha]{\hat{\mathcal{C}}_{#1}}
\newcommand{\Qhat}[1]{\widehat{Q}_{{#1}}}
\newcommand{\betam}{\beta_m}
\newcommandx{\Dtrain}[1][1=n]{\mathcal{D}_{#1}^{\operatorname{tr}}}
\newcommandx{\Dcal}[1][1=m]{\mathcal{D}_{#1}^{\operatorname{cal}}}
\newcommand{\Eunif}{\mathcal{E}_{\mathrm{unif}}}

\newcommand{\pinball}{\rho_\tau}
\newcommandx{\Risk}[2][1=\tau]{R_{#1}(#2)}
\newcommandx{\EmpRisk}[2][1=\tau]{R_{n,#1}(#2)}
\newcommandx{\approxerror}[1][1=\tau]{\mathcal{A}_{#1}}
\def\param{\theta}
\def\Param{\Theta}
\newcommandx{\hparam}[2][1=n,2=\tau]{{\widehat{\param}_{#1,#2}}}
\newcommand{\func}[1]{f^{{#1}}}
\newcommandx{\bestfinclass}[1][1=\tau]{\tilde{f}_{#1}}
\newcommandx{\bestparam}[1][1=\tau]{\bar{\param}_{#1}}
\newcommand{\relu}{\sigma}
\newcommand{\bgamma}{{\boldsymbol{\gamma}}}
\def\bp{\mathbf{p}}
\def\lipconst{L}
\newcommand{\NNclass}{\mathcal{F}_{L,\WC,\SC,M,1}}
\newcommand{\NNclassSH}{\FC(L,\bp,s,M)}
\newcommand{\NNclassSHn}[3]{\FC(#1, #2, #3, M)}
\newcommand{\metricinfty}{\Vert\cdot\Vert_{\infty}}
\newcommand{\logcover}[2]{\log \mathcal{N}\!\left(#1, \; #2, \; \metricinfty \right)}
\newcommand{\cover}[2]{\mathcal{N}\!\left(#1, \; #2, \; \metricinfty \right)}
\newcommand{\Capp}{C_{\mathrm{app}}}
\newcommand{\fone}{f^{(1)}}
\newcommand{\ftwo}{f^{(2)}}
\newcommand{\Wone}{W^{(1)}}
\newcommand{\Wtwo}{W^{(2)}}
\newcommand{\vone}{v^{(1)}}
\newcommand{\vtwo}{v^{(2)}}
\def\eps{\varepsilon}

\newcommand{\wsh}{w}
\newcommand{\wmax}{w_{\max}}
\newcommand{\csdiv}{\bigl(1+\chi^2(\QC[X]\Vert\DC[X])\bigr)}
\newcommand{\Fhatw}{\widehat{F}^{\,w}_{m}}
\newcommand{\Qhatw}{\widehat{Q}^{\,w}_{m}}
\newcommandx{\CChatw}[1][1=\alpha]{\widehat{\mathcal{C}}^{\,w}_{#1}}
\newcommand{\etaw}[1]{\eta^{w}_{m}(#1)}
\newcommand{\FQS}{F^{Q}_{\Shat}}
\newcommand{\FQSstar}{F^{Q}_{\Sstar}}

\makeatletter
\def\rep@title{Restatement}
\newtheorem*{rep@theorem}{\rep@title}
\newcommand{\newreptheorem}[2]{%
  \newenvironment{rep#1}[2][]{%
    \def\rep@title{\texorpdfstring{#2~\ref{##2}%
      \if\relax\detokenize{##1}\relax\else\space(##1)\fi}{#2}}%
    \begin{rep@theorem}}
  {\end{rep@theorem}}}
\makeatother

\newreptheorem{theorem}{Theorem}
\newreptheorem{proposition}{Proposition}

\newcommand{\appendixcontextname}{appendix}

\hypersetup{
  pdftitle={Conformalized Quantile Regression and Minimax Limits of Fixed-Score Calibration under Known Covariate Shift},
  pdfauthor={Rustam Isaev, Anton Conrad, Denis Belomestny, Eric Moulines, Sergey Samsonov},
}

\title{Conformalized Quantile Regression and Minimax Limits of\\
Fixed-Score Calibration under Known Covariate Shift}

\author{%
  Rustam Isaev\textsuperscript{1,2}\thanks{Corresponding author: \texttt{risaev@hse.ru}}
  \and Anton Conrad\textsuperscript{3}
  \and Denis Belomestny\textsuperscript{1,4}
  \and Eric Moulines\textsuperscript{3,5}
  \and Sergey Samsonov\textsuperscript{1}}
\date{}

\begin{document}

\emergencystretch=3em

\maketitle

\begingroup
\footnotesize\noindent
\textsuperscript{1}Faculty of Computer Science, HSE University,
Moscow, Russia\\
\textsuperscript{2}Faculty of Computational Mathematics and Cybernetics,
Lomonosov Moscow State University, Moscow, Russia\\
\textsuperscript{3}Laboratoire de Recherche d'EPITA,
Le Kremlin-Bic\^etre, France\\
\textsuperscript{4}Faculty of Mathematics,
University of Duisburg-Essen, Essen, Germany\\
\textsuperscript{5}Computing and Mathematical Sciences Division,
Mohamed bin Zayed University of Artificial Intelligence (MBZUAI),
Abu Dhabi, United Arab Emirates
\endgroup

\begin{abstract}
In this paper, we study nonasymptotic $L^p$ error bounds for interval length and conditional coverage in split conformalized quantile regression (CQR).
Our bounds rely on local regularity conditions and accuracy guarantees for the estimated quantiles.
We further instantiate our bounds for quantile regression with sparse ReLU neural networks.
We also consider covariate shift, where the calibration and test covariates have different distributions, and derive nonasymptotic bounds for this setting.
We obtain matching minimax upper and lower bounds in expectation for two constructed fixed-score calibration benchmarks under known covariate shift.
The bounds match for every $p\in[1,\infty]$ in the scalar problem and for finite $p$ in the $K$-threshold problem; for the latter, a high-probability minimax lower bound holds for every $p\in[1,\infty]$.

\end{abstract}

\medskip
\noindent\textbf{Keywords:} conformal prediction; quantile regression;
covariate shift; finite-sample guarantees; conditional coverage.

\smallskip
\noindent\textbf{MSC2020 subject classifications:} 62G08, 62G15, 62C20.

\section{Introduction}
\label{sec:intro}

Conformal prediction converts a fitted score into a prediction set with
finite-sample marginal coverage under exchangeability
\cite{vovk2005algorithmic,shafer2008tutorial,lei2018distributionfree}.
Unless stated otherwise, training, calibration, and test pairs follow
$\DC[XY]\coloneqq\DC[X]\otimes\DC[Y\mid X]$.
For a target miscoverage level $\alpha\in(0,1)$ and an independent test
observation $(X,Y)\sim\DC[XY]$, a conformal rule $\CChat$ constructed from
training and calibration samples satisfies
\[
 \PP\{Y\in\CChat(X)\}\ge 1-\alpha.
\]
This guarantee averages over the test covariate and over the data used to construct the set. Exact distribution-free conditional validity, that is,
\[
\PP\{Y\in\CChat(x)\mid X=x\} \geq 1-\alpha
\]
for almost every $x$, is unavailable for nontrivial rules without further assumptions, see \cite{vovk2012conditional,barber2021limits}. We therefore ask how, after the training and calibration samples have been fixed, the resulting random conditional coverage profile and interval length compare with an oracle benchmark.

We answer this question for split conformalized quantile regression (CQR)
\cite{papadopoulos2002inductive,romano2019conformalized}.
Split CQR fits the two conditional quantile endpoints on the training fold and
selects a common scalar score threshold on the independent calibration fold.
For $\tau\in(0,1)$, let $\fstar{\tau}(x)$ denote the conditional $\tau$-quantile
of $Y$ given $X=x$. The equal-tailed oracle interval is
\[
 \CCstar(x)=\bigl[\fstar{\alo}(x),\fstar{\ahi}(x)\bigr],
 \qquad
 \PP\{Y\in\CCstar(x)\mid X=x\}=1-\alpha,
\]
under the local mass conditions imposed below.
For the realized split CQR set $\CChat$ constructed from both folds, we control,
for every $p\in[1,\infty]$,
\[
 \Lp[{\DC[X]}]{|\CChat|-|\CCstar|},
 \qquad
 \Lp[{\DC[X]}]{\operatorname{Cov}_{\CChat}-(1-\alpha)},
 \qquad
 \operatorname{Cov}_{\CChat}(x)=\PP\{Y\in\CChat(x)\mid X=x\}.
\]
We prove finite sample bounds for both quantities that hold with high
probability jointly over the training and calibration folds and separate
endpoint estimation, score calibration, and rank correction. Their only
learner-specific input is the $L^p(\DC[X])$ error of the two fitted quantile
endpoints.

Covariate shift changes only the covariate law: training and calibration pairs
follow $\DC[X]\otimes\DC[Y\mid X]$, whereas the test pair follows
$\QC[X]\otimes\DC[Y\mid X]$. When
$\wsh=\rmd\QC[X]/\rmd\DC[X]$ is known, weighted split conformal prediction
retains finite sample target marginal coverage
\cite{tibshirani2019conformal}. Under the same local mass and endpoint
conditions and $\wsh\le\wmax$, our bounds additionally control the realized
target $L^p(\QC[X])$ conditional coverage profile and the length deviation
from $\CCstar$.

Conditional on the training fold, the fitted CQR score is fixed. We therefore
isolate calibration in a separate minimax problem: the source and target
marginals and a reference score are fixed, the conditional response kernel
varies, and only the threshold rule uses the $m$ source calibration
observations. The reference score is the oracle CQR score under one baseline
kernel and remains fixed across the candidate kernels.

On an explicit two-carrier construction with point-mass target
$\QC[X]=\delta_{x_0}$, only the threshold value at $x_0$ affects the target
risk. Hence every covariate-dependent threshold rule is equivalent, for this
experiment, to its scalar value at $x_0$. Let $\mathcal R_{m,p}$ denote the
infimum over these measurable scalar outputs of the worst case, over
admissible conditional response kernels, expected $L^p(\QC[X])$ coverage
profile error. For every $p\in[1,\infty]$ and $m\ge m_\star$,
\[
 c_\alpha\sqrt{\frac{\csdiv}{m}}
 \le \mathcal R_{m,p}
 \le \frac32\sqrt{\frac{\csdiv}{m}},
\]
so the exact weighted rule is minimax-rate optimal on this construction. A
$K$-atomic Fano construction with $K\ge23$ and $m>m_\star^{(K)}$ gives a
lower bound of order $\sqrt{\csdiv K/m}$ with probability at least
$1-2e^{-K/32}$; atomwise split calibration gives a matching moment upper
bound for finite $p$. The
single-atom construction has effective calibration size $m/\csdiv$, while
each target atom in the $K$-atomic construction receives on average
$m/(\csdiv K)$ calibration observations. The single-atom rate reproduces the
$\sqrt{\csdiv/m}$ calibration scaling of the CQR upper bounds. The $K$-atomic
rate instead quantifies the additional cost of estimating $K$ atom-specific
threshold values within the separate fixed-score benchmark.

Our contributions are:
\begin{itemize}
\item finite-sample high-probability $L^p(\DC[X])$ bounds for equal-tailed
oracle-length deviation and the realized conditional-coverage profile of CQR,
with an explicit sparse-ReLU instantiation of the endpoint condition
(\Cref{prop:target,th:global_cov_bound,theo:requ_rates,cor:relu_rates});
\item corresponding $L^p(\QC[X])$ guarantees under known bounded covariate
shift, with calibration rate $\sqrt{\csdiv/m}$
(\Cref{prop:cs_length,th:cs_cond_cov,cor:cs_relu});
\item under the stated sample-size conditions, matching expected bounds for
two constructed fixed-score calibration benchmarks: the single-carrier
experiment (\Cref{prop:cs_lecam_lower,prop:cs_carrier_upper}) and, for $K\ge23$
and every finite $p$, the $K$-atomic experiment, with rate
$\sqrt{\csdiv K/m}$ and a same-order lower bound holding with probability at
least $1-2e^{-K/32}$ for every $p\in[1,\infty]$
(\Cref{thm:cs_fano_lower,prop:cs_cellwise_upper}).
\end{itemize}

The numerical study evaluates the fixed-score carrier rules and illustrates
learned weighted CQR under a continuous shift; detailed carrier calculations
and figures are reported in \Cref{sec:numerical_carrier_details}.

\section{Related Work}
\label{sec:related_work}

\paragraph{CQR efficiency and conditional profiles}
Split conformal prediction gives finite-sample marginal coverage
\cite{papadopoulos2002inductive,vovk2005algorithmic,lei2018distributionfree},
whereas exact distribution-free conditional validity is generally impossible
\cite{vovk2012conditional,barber2021limits}. Romano et
al.~\cite{romano2019conformalized} introduced CQR, and Sesia and
Cand\`es~\cite{sesia2020comparison} established asymptotic efficiency under
consistent quantile estimation. Kivaranovic et
al.~\cite{kivaranovic2020adaptive} adapt the interval while retaining marginal
validity. Rossellini et al.~\cite{rossellini2024uncertainty} adapt the
conformal correction to estimated endpoint uncertainty; their finite-sample
result is marginal, and their conditional comparisons are empirical. Yao et
al.~\cite{yao2025non} bound expected CQR length error for a correctly specified
linear quantile model trained by SGD. Other efficiency results compare with a
shortest symmetric-residual oracle \cite{lebars2025volume} or estimate more of
the conditional law to target shortest or highest-density sets
\cite{chernozhukov2021distributional,izbicki2022cdsplit}.

Our bounds use the equal-tailed CQR oracle and control, with high probability
after both data folds are fixed, its length deviation together with the full
$L^p$ conditional-coverage profile. This conditioning convention is
important: the nonasymptotic decomposition of Min et
al.~\cite{min2026unified} averages conditional coverage over the procedure's
randomness. Localized calibration and score transformations instead pursue
pointwise or neighborhood guarantees
\cite{plassier2025rectifying,conrad2026localized}; their target and the
bias--variance tradeoff created by localization differ from the global
additive CQR correction studied here.

\paragraph{Covariate shift}
Tibshirani et al.~\cite{tibshirani2019conformal} proved finite-sample target
marginal validity for weighted conformal prediction with a known likelihood
ratio; Lei and Cand\`es~\cite{lei2021conformal} applied the construction to
counterfactual CQR. Pournaderi and
Xiang~\cite{pournaderi2026trainingconditional} control target marginal
miscoverage after the source sample is fixed, whereas we control the realized
target conditional-coverage profile and oracle length. In the unshifted
setting, Bian and Barber~\cite{bian2023trainingconditional} distinguish
training-conditional guarantees from ordinary marginal validity. PAC
prediction sets under shift can also be obtained by rejection sampling with
known or interval-estimated weights \cite{park2022pac}.

When the likelihood ratio is unknown, asymptotic target validity can be
obtained through nuisance estimation or doubly robust calibration
\cite{qiu2023prediction,yang2024doubly}. Direct covariate-dependent threshold
learning and weight clipping provide further alternatives
\cite{joshi2025lrqr,wang2026weightclipping}. Our finite-sample analysis assumes
a known uniformly bounded likelihood ratio.

\paragraph{Fixed-score calibration limits}
Fixed-score calibration is studied for covariate-dependent thresholds
\cite{areces2024two,duchi2025sampleconditional}, prescribed finite-dimensional
shift classes \cite{gibbs2025conditional}, and overlapping or fractional groups
\cite{kandinsky2025}. Related results give conditional-calibration oracle
inequalities for set-valued maps \cite{bao2026shapeadaptive} and characterize
continuous split calibration through transported beta laws
\cite{ramos2026transportedbeta}. \Cref{sec:calibration_lower_bounds} compares
our benchmarks with the threshold calibration results. Analogous
$\chi^2$-controlled rates for mean estimation from biased sources appear in
\cite{harding2026biased}.

\paragraph{Quantile endpoint estimation}
Sparse-ReLU quantile rates are developed by Madrid Padilla et
al.~\cite{madridPadilla2022quantile}, building on the approximation
architecture of Schmidt-Hieber~\cite{schmidthieber_2020} and its later
regression correction \cite{schmidthieber_vu_2024_correction}. In the
isotropic H\"older setting, inserting the resulting $L^2(\DC[X])$
endpoint-error bound into our general CQR result yields the $(\log n)^{3/2}$
factor in the final rate. Feng et
al.~\cite{feng2024deep} analyze expected target $L^2$ error for neural quantile
regression under known or estimated covariate shift; our ReLU result instead
supplies a high-probability source endpoint condition and then propagates it
through conformal calibration.

\section{Split CQR}
\label{sec:non-asymptotic-CQR}

We follow the split-CQR construction of Romano et al.~\cite{romano2019conformalized}.
Let $\Xset$ be an arbitrary measurable space, let $Y$ be real-valued, and let $x\mapsto\DC[Y\mid X=x]$ be a probability kernel.
Set
\[
\DC[XY]=\DC[X]\otimes\DC[Y\mid X].
\]
For any nondecreasing, right-continuous function $H\colon\R\to[0,\infty)$ with $\lim_{t\to-\infty}H(t)=0$, and any $\tau>0$, define its generalized inverse by
\[
\quant{\tau}{H}\coloneqq\inf\{t\in\R:H(t)\ge\tau\},
\qquad \inf\varnothing\coloneqq+\infty.
\]
Write $\overline\R=\R\cup\{-\infty,+\infty\}$ with its order-Borel
$\sigma$-field, and extend every probability CDF $F$ by $F(-\infty)=0$ and
$F(+\infty)=1$.
Weighted empirical cumulative functions retain their realized total mass; an unattained level has generalized inverse $+\infty$.
For $\tau\in(0,1)$, the conditional $\tau$-quantile is
\begin{equation}
\label{eq:quantile_regr}
\fstar{\tau}(x) = \inf\{t\in\R\colon \PP(Y\le t\mid X=x)\ge \tau\}\eqsp.
\end{equation}
Fix $\alpha\in(0,1)$. The benchmark for interval length is the equal-tailed
conditional-quantile interval
\begin{equation}
\label{eq:oracle_interval}
\CCstar(x)\coloneqq
\bigl[\fstar{\alo}(x),\fstar{\ahi}(x)\bigr].
\end{equation}

\begin{assum}
\label{assum:mass_regular}
There are constants $0<\low\le\up<\infty$ and a local mass radius $r_0>0$, and one measurable set
$\mathcal X_0$ with $\DC[X](\mathcal X_0)=1$ such that, for every
$x\in\mathcal X_0$,
\begin{equation}
\label{eq:upper_mass_bound}
\DC[Y\mid X=x]([u,v])\le\up(v-u)
\quad\text{for all finite }u\le v,
\end{equation}
and, for $\tau\in\{\alo,\ahi\}$ and $0\le h\le r_0$,
\[
\DC[Y\mid X=x]([\fstar{\tau}(x)-h,\fstar{\tau}(x)])\ge\low h,
\qquad
\DC[Y\mid X=x]([\fstar{\tau}(x),\fstar{\tau}(x)+h])\ge\low h.
\]
\end{assum}
The upper bound in \Cref{assum:mass_regular} makes the conditional CDFs
$\up$-Lipschitz and atomless; hence, for every $x\in\mathcal X_0$,
$\DC[Y\mid X=x](({-\infty},\fstar{\alo}(x)])=\alo$ and
$\DC[Y\mid X=x](({-\infty},\fstar{\ahi}(x)])=\ahi$.
Thus endpoint error controls score-CDF and conditional-coverage error. The
local lower bounds provide two-sided linear
growth at the oracle quantiles, which is used to convert score-CDF error into
threshold error. Conditions of this form appear in
\cite[Definition~2.1]{steinwart2011pinball} and
\cite[Assumption~2]{madridPadilla2022quantile}. Our condition is weaker than the
continuous two-sided density bounds of \cite[Assumption~3.3]{yao2025non}: it
requires lower growth only near the two oracle quantiles and allows unbounded
response support and discontinuous conditional densities.

Let $\Dtrain\sim\DC[XY]^{\otimes n}$ and
$\Dcal\sim\DC[XY]^{\otimes m}$ be independent.
The training sample produces finite raw maps $(\Dtrain,x)\mapsto\widetilde f_{n,\tau}(x)$, jointly measurable for $\tau\in\{\alo,\ahi\}$.

\begingroup
\renewcommand{\theassum}{\arabic{assum}($p$)}
\begin{assum}
\label{assum:HPD-excess-risk}
Fix $p\in[1,\infty]$. There is
$\epsilon_p\colon\N\times(0,1)\to\R_+$ such that, for every $n\in\N$,
$\delta\in(0,1)$ and $\tau\in\{\alo,\ahi\}$,
\[
\PP_{\Dtrain}\!\left(
\Lp[{\DC[X]}]{\widetilde f_{n,\tau}-\fstar{\tau}}
\le\epsilon_p(n,\delta)
\right)\ge1-\delta.
\]
\end{assum}
\endgroup
\Cref{assum:HPD-excess-risk} is the only condition imposed on the endpoint
learner. Its $L^p(\DC[X])$ guarantee controls the spatial error terms and, since
$p\geq 1$, yields the $L^1(\DC[X])$ bound needed to compare the estimated and
oracle score distributions. Its high-probability form allows endpoint
estimation and calibration errors to be combined in a single guarantee.
Accordingly, the results below apply to any learner satisfying this integrated
error bound.

Define the fitted endpoints by pointwise sorting,
\begin{equation}
\label{eq:raw_endpoint_sorting}
\fhat{\alo}=\widetilde f_{n,\alo}\wedge\widetilde f_{n,\ahi},
\qquad
\fhat{\ahi}=\widetilde f_{n,\alo}\vee\widetilde f_{n,\ahi}.
\end{equation}
Since the oracle endpoints are ordered, \Cref{lem:sorting_contraction_mass}
gives, pointwise,
\[
|\fhat{\alo}-\fstar{\alo}|+|\fhat{\ahi}-\fstar{\ahi}|
\le
|\widetilde f_{n,\alo}-\fstar{\alo}|
+|\widetilde f_{n,\ahi}-\fstar{\ahi}|.
\]
Thus \Cref{assum:HPD-excess-risk} controls the combined endpoint error after
sorting; all scores and prediction intervals below use $\fhat{\alo}$ and
$\fhat{\ahi}$.

\paragraph{Calibration rule}
Following Romano et al.~\cite{romano2019conformalized}, we calibrate the fitted CQR score by a common scalar empirical quantile and use the resulting score sublevel set as the prediction interval.
By \Cref{assum:mass_regular}, the oracle interval
$\CCstar(x)$ has conditional coverage
$1-\alpha$ for $\DC[X]$-almost every $x$. Define the oracle and fitted scores
\[
\Sstar(x,y) \coloneqq \max\{\fstar{\alo}(x)-y,\, y-\fstar{\ahi}(x)\},\qquad
\Shat(x,y) \coloneqq \max\{\fhat{\alo}(x)-y,\, y-\fhat{\ahi}(x)\}.
\]
For fixed $x$, $\Shat(x,y)$ is the smallest scalar $t$ such that
$y\in[\fhat{\alo}(x)-t,\fhat{\ahi}(x)+t]$; calibration therefore selects one
common expansion or contraction across covariates.
Write $\Dcal=\{(X_i,Y_i)\}_{i=1}^m$ and $\Shat_i=\Shat(X_i,Y_i)$. Define the
empirical score CDF by
\[
\Fhat^{(\Shat)}_m(t)
\coloneqq \frac{1}{m}\sum_{i=1}^m\indiacc{\Shat_i\le t}.
\]
Define the split-conformal level and calibrated threshold by
\begin{equation}
\label{eq:beta_m_def}
\betam \coloneqq \frac{\lceil (m+1)(1-\alpha)\rceil}{m},
\qquad
\Qhat{\betam} \coloneqq \quant{\betam}{\Fhat^{(\Shat)}_m}.
\end{equation}
The index $\lceil(m+1)(1-\alpha)\rceil$ is the standard finite sample split conformal rank correction.
The split CQR interval is
\begin{equation}
\label{eq:split-conformal-eq-main}
\CChat(x)\coloneqq\{y\in\R:\Shat(x,y)\le\Qhat{\betam}\}.
\end{equation}
If $\Qhat{\betam}\in\R$, this set equals
$[\fhat{\alo}(x)-\Qhat{\betam},\fhat{\ahi}(x)+\Qhat{\betam}]$, with
$[a,b]=\varnothing$ when $a>b$.  If $\Qhat{\betam}=+\infty$, it equals
$\R$.  Thus $|\CChat(x)|$ always denotes the Lebesgue length, namely
the positive part of the endpoint difference.  Conditional on $\Dtrain$, the
calibration scores and an independent test score are exchangeable.
Consequently, for $\PP_{\Dtrain}$-almost every training sample,
\[
\PP(Y\in\CChat(X)\mid\Dtrain)\ge1-\alpha,
\]
where the probability is taken over the random calibration sample $\Dcal$
and the independent test point $(X,Y)$ \cite[Theorem~1]{romano2019conformalized}.

For $\delta\in(0,1)$, set
\begin{equation}
\label{eq:no_shift_scales}
\begin{aligned}
s_m&\coloneqq\betam-(1-\alpha),
&\eta_m(\delta)&\coloneqq\sqrt{\frac{\log(4/\delta)}{2m}},\\
r_-&\coloneqq r_0\wedge\frac{1-\alpha}{2\up},&r_+&\coloneqq r_0.
\end{aligned}
\end{equation}
The exact rank slack satisfies
\begin{equation}
\label{eq:rank_slack_exact}
\frac{1-\alpha}{m}\le s_m<\frac{2-\alpha}{m}.
\end{equation}
\paragraph{Finite-sample guarantees}
\begin{proposition}
\label{prop:target}
Let $\alpha\in(0,1)$ and $p\in[1,\infty]$, and assume
\Cref{assum:mass_regular} and \mbox{\Cref{assum:HPD-excess-risk}}.
Fix
$\delta\in(0,1)$ and $n,m$ such that
\begin{equation}
\label{eq:no_shift_localization}
2\up\epsilon_p(n,\delta/4)+\eta_m(\delta)\le2\low r_-,
\qquad
2\up\epsilon_p(n,\delta/4)+\eta_m(\delta)+s_m\le2\low r_+.
\end{equation}
Then there is an event $\mathcal A_{n,m}(\delta)$, measurable with respect to
$(\Dtrain,\Dcal)$, with probability at least $1-\delta$.  On this event,
$\Qhat{\betam}$ is finite and
\begin{equation}
\label{eq:length_mismatch_bound}
\Lp[{\DC[X]}]{|\CChat(\cdot)|-|\CCstar(\cdot)|}
\le2\left(1+\frac{\up}{\low}\right)\epsilon_p(n,\delta/4)
+\frac{\eta_m(\delta)+s_m}{\low}.
\end{equation}
\end{proposition}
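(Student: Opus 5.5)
The plan is to reduce the length mismatch to a pointwise bound involving the endpoint errors and the calibrated threshold, and then to localize $\Qhat{\betam}$ near $0$. The key fact is that $0$ is the $(1-\alpha)$-quantile of the oracle score. Write $e_\tau\coloneqq|\fhat{\tau}-\fstar{\tau}|$. If $\Qhat{\betam}\in\R$, then $|\CChat(x)|=(\fhat{\ahi}(x)-\fhat{\alo}(x)+2\Qhat{\betam})_+$ and $|\CCstar(x)|=\fstar{\ahi}(x)-\fstar{\alo}(x)\ge0$. Since $u\mapsto u_+$ is $1$-Lipschitz, we get pointwise
\[
\bigl||\CChat(x)|-|\CCstar(x)|\bigr|\le e_{\alo}(x)+e_{\ahi}(x)+2|\Qhat{\betam}|.
\]
By Minkowski and the sorting contraction (\Cref{lem:sorting_contraction_mass}), the $L^p(\DC[X])$ norm is then at most $2\epsilon_p(n,\delta/4)+2|\Qhat{\betam}|$ on the training event. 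It therefore suffices to show $|\Qhat{\betam}|\le(2\up\epsilon_p(n,\delta/4)+\eta_m(\delta)+s_m)/(2\low)$. Multiplying this by $2$ produces exactly the remaining terms $2\up\epsilon_p/\low+(\eta_m+s_m)/\low$.

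The event $\mathcal A_{n,m}(\delta)$ is the intersection of three events. The first two are the training events of \Cref{assum:HPD-excess-risk} for $\tau=\alo$ and $\tau=\ahi$ at level $\delta/4$ each. The third is the DKW event $\sup_t|\Fhat^{(\Shat)}_m(t)-F_{\Shat}(t)|\le\eta_m(\delta)$, where $F_{\Shat}$ is the conditional score CDF given $\Dtrain$. Conditionally on $\Dtrain$ this has probability at least $1-2e^{-2m\eta_m^2}=1-\delta/2$. A union bound gives total probability at least $1-\delta$.

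Next, compare $F_{\Shat}$ with the oracle score CDF $F^\star(t)=\PP(\Sstar\le t)$. For each $x$, $\{\Shat\le t\}$ and $\{\Sstar\le t\}$ are intervals whose endpoints differ by $e_{\alo}(x)$ and $e_{\ahi}(x)$. The upper mass bound \eqref{eq:upper_mass_bound} then gives $|F_{\Shat}(t)-F^\star(t)|\le\up\,\E[e_{\alo}(X)+e_{\ahi}(X)]$. Since $\|\cdot\|_{L^1}\le\|\cdot\|_{L^p}$, this is at most $2\up\epsilon_p(n,\delta/4)$. The case of empty intervals needs a small check, but the symmetric difference is still contained in two strips of those widths.

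By \Cref{assum:mass_regular}, $F^\star(0)=1-\alpha$. The local lower mass bounds at both oracle quantiles give $F^\star(h)-F^\star(0)\ge2\low h$ for $0\le h\le r_+$ and $F^\star(0)-F^\star(-h)\ge2\low h$ for $0\le h\le r_-$. The restriction $h\le(1-\alpha)/(2\up)$ is needed in the second case. The upper bound forces $\fstar{\ahi}-\fstar{\alo}\ge(1-\alpha)/\up\ge 2h$, so the shrunken interval $[\fstar{\alo}+h,\fstar{\ahi}-h]$ is nonempty and the two removed strips are disjoint.

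Finally, invert the CDFs. Set $\Delta\coloneqq2\up\epsilon_p+\eta_m$. At $t=h_+\coloneqq(\Delta+s_m)/(2\low)$, which is at most $r_+$ by \eqref{eq:no_shift_localization}, we have $\Fhat^{(\Shat)}_m(h_+)\ge F^\star(h_+)-\Delta\ge1-\alpha+s_m=\betam$. Hence $\Qhat{\betam}\le h_+$, and in particular it is finite. For any $t<-h_-$ with $h_-\coloneqq\Delta/(2\low)\le r_-$, we have $\Fhat^{(\Shat)}_m(t)\le F^\star(t)+\Delta<1-\alpha+s_m=\betam$. The strict inequality comes from monotonicity together with $s_m>0$, so $\Qhat{\betam}\ge-h_-$. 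Thus $|\Qhat{\betam}|\le h_+$, which completes the argument.

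I expect the main obstacle to be the bookkeeping on the lower side. Strictness in the generalized inverse and the requirement $h_-\le r_-$ must both hold. Here we use only $\Delta\le2\low r_-$, which is the first condition in \eqref{eq:no_shift_localization}. We also need to handle $t$ strictly below $-h_-$, which I would do by applying the growth bound at $-h_-$ and monotonicity.
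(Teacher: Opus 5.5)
Your proposal is correct and follows the paper's proof essentially step for step. The matching steps are the same event $\mathcal E_{\rm tr}\cap\mathcal E_{\rm DKW}$ with the $\delta/4,\delta/4,\delta/2$ allocation, the endpoint-to-score-CDF perturbation via the upper mass bound, two-sided oracle growth at rate $2\low$, inversion to localize $\Qhat{\betam}$, and the $1$-Lipschitz positive-part bound $D_n+2|\Qhat{\betam}|$. The only difference is cosmetic: you carry out the lower-side inversion inline, getting strictness from $s_m>0$, instead of invoking the general asymmetric inversion lemma with its strict left window. That lemma's extra generality (signed $s$) is only needed later for the covariate-shift threshold.
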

The construction of $\mathcal A_{n,m}(\delta)$ and the complete proof are
given in \Cref{app:efficiency_proof}.

Once the data are fixed, a measurable prediction rule $C$ has conditional coverage profile
\[
\operatorname{Cov}_{C}(x)
\coloneqq \PP\{Y\in C(x)\mid X=x\}
=\DC[Y\mid X=x]\bigl(C(x)\bigr).
\]
The event in \Cref{prop:target} also controls this profile in $L^p(\DC[X])$.

\begin{theorem}
\label{th:global_cov_bound}
Let $\alpha\in(0,1)$ and $p\in[1,\infty]$, and assume
\Cref{assum:mass_regular} and \mbox{\Cref{assum:HPD-excess-risk}}.
Fix
$\delta\in(0,1)$ and $n,m$ satisfying \eqref{eq:no_shift_localization}. On the
event $\mathcal A_{n,m}(\delta)$ defined in \Cref{prop:target},
\begin{equation}
\label{eq:cond_cov}
\Lp[{\DC[X]}]{\operatorname{Cov}_{\CChat}(\cdot)-(1-\alpha)}
\le2\up\left(1+\frac{\up}{\low}\right)\epsilon_p(n,\delta/4)
+\frac{\up}{\low}\bigl(\eta_m(\delta)+s_m\bigr).
\end{equation}
\end{theorem}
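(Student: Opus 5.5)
The plan is to work on the event $\mathcal A_{n,m}(\delta)$ from \Cref{prop:target} and reuse its structure. On that event $\Qhat{\betam}$ is finite and, for $x\in\mathcal X_0$, the realized interval is $\CChat(x)=[\fhat{\alo}(x)-\Qhat{\betam},\fhat{\ahi}(x)+\Qhat{\betam}]$. The key structural fact I expect the proof of \Cref{prop:target} to supply is a bound on the threshold,
\[
|\Qhat{\betam}|\le \frac{2\up\,\epsilon_p(n,\delta/4)+\eta_m(\delta)+s_m}{2\low},
\]
and a bound $\Lp[{\DC[X]}]{|\fhat{\alo}-\fstar{\alo}|+|\fhat{\ahi}-\fstar{\ahi}|}\le 2\epsilon_p(n,\delta/4)$. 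These come from the high-probability endpoint event, the sorting contraction \Cref{lem:sorting_contraction_mass}, and a DKW step at level $\eta_m(\delta)$. The constants in \eqref{eq:cond_cov} are exactly $\up$ times those in \eqref{eq:length_mismatch_bound}, which suggests this decomposition. I would therefore restate these two ingredients as the output of \Cref{prop:target}'s event.

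The main step is a pointwise Lipschitz comparison. For $x\in\mathcal X_0$, write $a(x)=\fhat{\alo}(x)-\Qhat{\betam}$ and $b(x)=\fhat{\ahi}(x)+\Qhat{\betam}$. The oracle coverage is $F_x(\fstar{\ahi}(x))-F_x(\fstar{\alo}(x))=1-\alpha$, which holds because the conditional CDF $F_x$ is atomless under \Cref{assum:mass_regular}.

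If $a\le b$, then $\operatorname{Cov}_{\CChat}(x)=F_x(b)-F_x(a)$. Since $F_x$ is $\up$-Lipschitz,
\[
|\operatorname{Cov}_{\CChat}(x)-(1-\alpha)|\le \up\bigl(|\fhat{\alo}(x)-\fstar{\alo}(x)|+|\fhat{\ahi}(x)-\fstar{\ahi}(x)|+2|\Qhat{\betam}|\bigr).
\]

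If $a>b$, the set is empty and the error is $1-\alpha$. This case is still controlled by the same display. Empty implies $\fhat{\alo}-\fhat{\ahi}>2\Qhat{\betam}$, and $\fhat{\alo}\le\fhat{\ahi}$ by sorting, so $\Qhat{\betam}<0$. Then
\[
1-\alpha=F_x(\fstar{\ahi})-F_x(\fstar{\alo})\le F_x(b)-F_x(a)+\up(|a-\fstar{\alo}|+|b-\fstar{\ahi}|).
\]
Here $F_x(b)-F_x(a)\le0$, so the same bound follows by monotonicity.

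Next I take the $L^p(\DC[X])$ norm and apply Minkowski. This gives
\[
\up\cdot 2\epsilon_p+\up\cdot\frac{2\up\epsilon_p+\eta_m+s_m}{\low}=2\up\Bigl(1+\frac{\up}{\low}\Bigr)\epsilon_p+\frac{\up}{\low}(\eta_m+s_m),
\]
which is \eqref{eq:cond_cov}. The constant term $2|\Qhat{\betam}|$ has $L^p$ norm equal to itself because $\DC[X]$ is a probability measure, and $p=\infty$ is handled identically.

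The main obstacle is making sure the threshold bound really is available on $\mathcal A_{n,m}(\delta)$ in this two-sided form. If \Cref{prop:target}'s proof only gives the length bound, I would re-derive the threshold bound. On the event, the fitted score CDF $\FQS$ is within $\up\cdot2\epsilon_p$ in sup norm of the oracle score CDF, using $|\Shat-\Sstar|\le|\fhat{\alo}-\fstar{\alo}|\vee|\fhat{\ahi}-\fstar{\ahi}|$ and the $L^1$ error. The oracle score CDF has value $1-\alpha$ at $0$ and slope at least $2\low$ on $[-r_-,r_+]$, by the local mass lower bounds on both endpoints. The localization condition \eqref{eq:no_shift_localization} guarantees that the generalized inverse at level $\betam$, perturbed by $\eta_m+s_m+2\up\epsilon_p$, stays inside this window. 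Inverting the linear growth then yields the threshold bound.
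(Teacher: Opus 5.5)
Your argument is essentially the paper's proof. On $\mathcal A_{n,m}(\delta)$ the paper applies \Cref{lem:coverage_decomposition_mass}, which is exactly your pointwise comparison $|\operatorname{Cov}_{\CChat}(x)-(1-\alpha)|\le\up D_n(x)+2\up|\Qhat{\betam}|$ (empty intervals included); it then substitutes $\Lp[{\DC[X]}]{D_n}\le2\epsilon_p(n,\delta/4)$ and the two-sided threshold bound \eqref{eq:E3}, which the restated \Cref{prop:target} does establish on that event, so your fallback is not needed. Had you used the fallback, note that comparing score CDFs through $|\Shat-\Sstar|\le|\fhat{\alo}-\fstar{\alo}|\vee|\fhat{\ahi}-\fstar{\ahi}|$ and the $2\up$-Lipschitz oracle score CDF only bounds the gap by $2\up\int(|\fhat{\alo}-\fstar{\alo}|\vee|\fhat{\ahi}-\fstar{\ahi}|)\,\rmd\DC[X]$, which can be as large as $4\up\epsilon_p$; the stated constants need the sharper symmetric-difference bound $\up\Lone{D_n}[{\DC[X]}]\le2\up\epsilon_p$ of \Cref{lem:score_cdf_perturbation_mass}.
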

The proof is given in \Cref{sec:global_cov_bound_proof}.

For $p=1$, nesting of scalar score sublevel sets gives a direct bound that
does not require localization of the calibrated threshold.

\begin{proposition}
\label{prop:global_cov_l1_direct}
Let $\alpha\in(0,1)$ and assume \Cref{assum:mass_regular} and
\hyperref[assum:HPD-excess-risk]{\textbf{A2(1)}}. For every
$\delta\in(0,1)$ and
$n,m\ge1$, there is an event of probability at least $1-\delta$ on which
\begin{equation}
\label{eq:cond_cov_l1_direct}
\Lone{\operatorname{Cov}_{\CChat}-(1-\alpha)}[{\DC[X]}]
\le
4\up\epsilon_1(n,\delta/4)+\eta_m(\delta)+s_m.
\end{equation}
\end{proposition}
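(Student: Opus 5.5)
The plan is to compare everything through the conditional score CDFs and use the fact that, for a scalar score, the sublevel sets $\{y:\Shat(x,y)\le t\}$ are nested in $t$ simultaneously for all $x$; this nesting is what removes the need to localize $\Qhat{\betam}$. Write $e(x)\coloneqq|\fhat{\alo}(x)-\fstar{\alo}(x)|+|\fhat{\ahi}(x)-\fstar{\ahi}(x)|$. Introduce the conditional CDFs $H_x(t)\coloneqq\DC[Y\mid X=x](\{y:\Shat(x,y)\le t\})$ and the population fitted score CDF $F(t)\coloneqq\int H_x(t)\,\DC[X](\rmd x)$, with the convention $H_x(+\infty)=F(+\infty)=1$. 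By definition $\operatorname{Cov}_{\CChat}(x)=H_x(\Qhat{\betam})$, and by \Cref{assum:mass_regular} the oracle set $\{S^\star(x,\cdot)\le0\}=\CCstar(x)$ has mass exactly $1-\alpha$.

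For each $x$ I split
\[
|H_x(\Qhat{\betam})-(1-\alpha)|\le|H_x(\Qhat{\betam})-H_x(0)|+|H_x(0)-(1-\alpha)|.
\]
The second term is at most the $\DC[Y\mid X=x]$-mass of the symmetric difference of $[\fhat{\alo}(x),\fhat{\ahi}(x)]$ and $\CCstar(x)$. By \eqref{eq:upper_mass_bound} this is at most $\up e(x)$. Integrating, and using \Cref{lem:sorting_contraction_mass} together with \textbf{A2(1)} at level $\delta/4$ for each of the two endpoints, gives a contribution of at most $2\up\epsilon_1(n,\delta/4)$ on the training event, which has probability at least $1-\delta/2$.

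For the first term I use nesting. The sign of $H_x(\Qhat{\betam})-H_x(0)$ is the same for every $x$, since it is determined by whether $\Qhat{\betam}\ge0$. Hence
\[
\int|H_x(\Qhat{\betam})-H_x(0)|\,\DC[X](\rmd x)=|F(\Qhat{\betam})-F(0)|
\le|F(\Qhat{\betam})-\betam|+s_m+|(1-\alpha)-F(0)|.
\]
The last term equals $|\int(H_x(0)-(1-\alpha))\,\DC[X](\rmd x)|$, which is again bounded by $2\up\epsilon_1(n,\delta/4)$. Together these account for the $4\up\epsilon_1$ and $s_m$ in \eqref{eq:cond_cov_l1_direct}.

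The remaining step, which I expect to need the most care, is to show $|F(\Qhat{\betam})-\betam|\le\eta_m(\delta)$. Conditional on $\Dtrain$, the calibration scores are i.i.d.\ with CDF $F$. The DKW inequality with constant $2$ gives $\sup_t|\Fhat^{(\Shat)}_m(t)-F(t)|\le\eta_m(\delta)$ with probability at least $1-2e^{-2m\eta_m^2}=1-\delta/2$. Since each $\DC[Y\mid X=x]$ is atomless, $y\mapsto\Shat(x,y)$ is piecewise affine with slopes $\pm1$, so each $H_x$ and hence $F$ is continuous.

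If $\betam\le1$, then $\Qhat{\betam}$ is a finite order statistic and $\Fhat^{(\Shat)}_m(\Qhat{\betam}-)<\betam\le\Fhat^{(\Shat)}_m(\Qhat{\betam})$. By continuity of $F$ this yields $\betam-\eta_m\le F(\Qhat{\betam})\le\betam+\eta_m$. If instead $\betam>1$, then $\Qhat{\betam}=+\infty$ and the coverage is identically $1$. In that case the loss is $\alpha<\betam-(1-\alpha)=s_m$, so the bound holds trivially.

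A union bound over the two training events (each of probability at least $1-\delta/4$) and the DKW event gives total probability at least $1-\delta$. This holds for every $n,m\ge1$, since no localization condition was used.
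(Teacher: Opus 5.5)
Your proposal is correct and follows essentially the same route as the paper's proof. Nesting of the score sublevel sets turns the $L^1(\DC[X])$ distance between $\operatorname{Cov}_{\CChat}$ and the threshold-zero profile into $|F(\Qhat{\betam})-F(0)|$; DKW together with continuity of the fitted score CDF controls $F(\Qhat{\betam})$ around $\betam$, with the case $\betam>1$ handled separately; and the upper mass bound converts the endpoint error into the two $2\up\epsilon_1(n,\delta/4)$ contributions. The only cosmetic difference is that you bound $|F(\Qhat{\betam})-\betam|$ and $s_m$ separately, whereas the paper bounds $|F(\Qhat{\betam})-(1-\alpha)|\le\eta_m(\delta)+s_m$ in a single step.
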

A complete proof is given in \Cref{sec:global_cov_bound_proof}.

The three bounds above answer slightly different questions.  The marginal
split-conformal guarantee is an average over a fresh calibration sample and a
fresh test pair, conditional only on the training fold.  By contrast,
\Cref{th:global_cov_bound,prop:global_cov_l1_direct} fix the realized training
and calibration data and measure how far the resulting conditional-coverage
function is from the constant $1-\alpha$.  This distinction matters because
positive and negative conditional errors may cancel in the marginal average.
The $L^p$ norm retains their spatial magnitude, increasingly emphasizing
regions of poor conditional coverage as $p$ grows; the case $p=\infty$
controls the essential worst case.  Moreover, the same event
$\mathcal A_{n,m}(\delta)$ controls both length and coverage, so the two
criteria do not require separate favorable calibration realizations.

Conditions \eqref{eq:no_shift_localization} keep the calibrated threshold
within the region where \Cref{assum:mass_regular} provides the required lower
CDF growth; \Cref{prop:global_cov_l1_direct} avoids this localization for the
direct $L^1$ coverage bound.

\paragraph{Discussion}
For correctly specified linear CQR fitted by SGD, Yao et al.~\cite[Theorem~3.2]{yao2025non} obtain an expected absolute length deviation of order $n^{-1/2}+(\alpha^2n)^{-1}+m^{-1/2}+\exp(-\alpha^2m)$.
They assume continuous conditional densities bounded above and below on a common bounded support; \Cref{assum:mass_regular} uses a global upper mass bound and local lower mass bounds near the two oracle quantiles.
Our length bound holds with probability at least $1-\delta$ for every
$p\in[1,\infty]$ and accepts any endpoint learner satisfying
\Cref{assum:HPD-excess-risk}; \Cref{theo:requ_rates} verifies
\hyperref[assum:HPD-excess-risk]{\textbf{A2(2)}} for sparse-ReLU pinball ERM.
\Cref{th:global_cov_bound} also controls the realized conditional coverage profile.
Le Bars and Humbert~\cite{lebars2025volume} bound excess volume relative to an oracle for symmetric intervals with a common radius; our length loss measures deviations in both directions from the equal-tailed CQR oracle.
Min et al.~\cite{min2026unified} average conditional coverage over the procedure's randomness, while our profile is evaluated after the training and calibration samples are fixed.

\section{Known Covariate Shift}
\label{sec:cs_main}

In \Cref{sec:non-asymptotic-CQR}, training, calibration, and test pairs share the law $\DC[X]\otimes\DC[Y\mid X]$.
We now study how the preceding guarantees change under covariate shift, where training and calibration retain this source law while the test pair follows $\QC[X]\otimes\DC[Y\mid X]$.
The conditional response law, and hence the conditional quantiles and the oracle interval $\CCstar$, remain unchanged.
We analyze the weighted conformal CQR rule under this shift.
For this rule, we retain finite-sample target marginal validity and derive high-probability bounds, evaluated after the training and calibration samples have been fixed, for
\begin{equation}
\label{eq:cs_target_main}
\Lp[{\QC[X]}]{|\CChatw(\cdot)|-|\CCstar(\cdot)|},
\qquad
\Lp[{\QC[X]}]{\operatorname{Cov}_{\CChatw}(\cdot)-(1-\alpha)}.
\end{equation}

\begin{assumL}
\label{assum:cov_shift}
$\QC[X]\ll\DC[X]$, and the likelihood ratio
$\wsh=\rmd\QC[X]/\rmd\DC[X]$ and a deterministic envelope
$\wmax<\infty$ satisfying $0\le\wsh\le\wmax$ are known.
\end{assumL}

The absolute continuity condition and knowledge of $\wsh$ are inherited from the weighted conformal construction above.

Since $\E_{\DC[X]}[\wsh]=1$, the chi-square divergence satisfies
\[
\chi^2(\QC[X]\Vert\DC[X])
\coloneqq \int(\wsh-1)^2\,\rmd\DC[X]
=\E_{\DC[X]}[\wsh^2]-1.
\]
Together with $\wsh^2\le\wmax\wsh$, this identity gives
\begin{equation}
\label{eq:cs_weight_second_moment_main}
\csdiv
=\E_{\DC[X]}[\wsh^2]
\le\wmax.
\end{equation}
For every measurable $g$ and $p\in[1,\infty]$,
\begin{equation}
\label{eq:cs_change_measure}
\Lp[{\QC[X]}]{g}\le\wmax^{1/p}\Lp[{\DC[X]}]{g},
\qquad \wmax^{1/\infty}\coloneqq1.
\end{equation}

\paragraph{Weighted calibration}
Write $\Shat_j=\Shat(X_j,Y_j)$ for the fitted calibration scores from
\Cref{sec:non-asymptotic-CQR}, and define the raw weighted empirical function
\begin{equation}
\label{eq:cs_raw_main}
\Fhatw(t)
\coloneqq\frac1m\sum_{j=1}^m
\wsh(X_j)\indiacc{\Shat_j\le t},
\qquad
\Fhatw(+\infty)=\frac1m\sum_{j=1}^m\wsh(X_j).
\end{equation}
Conditional on $\Dtrain$, the fitted score is fixed and the calibration pairs
follow the source law. The Radon--Nikodym identity therefore changes the
covariate marginal from $\DC[X]$ to $\QC[X]$ and gives, for every $t\in\R$,
\[
\E[\Fhatw(t)\mid\Dtrain]
=\E_{\DC[XY]}\!\left[
\wsh(X)\indi{\{\Shat(X,Y)\le t\}}\middle|\Dtrain
\right]
=\FQS(t)
\coloneqq(\QC[X]\otimes\DC[Y\mid X])\{\Shat\le t\}.
\]
Its total mass $\Fhatw(+\infty)$ is random. Since
\[
\QC[X]\{\wsh=0\}
=\int_{\{\wsh=0\}}\wsh\,\rmd\DC[X]
=0,
\]
$\{\wsh>0\}$ is $\QC[X]$-full. Following the weighted split conformal construction of
Tibshirani et al.~\cite{tibshirani2019conformal}, for
$x\in\{\wsh>0\}$ define the probability measure
\begin{equation}
\label{eq:cs_exact_weighted_measure_main}
\sum_{j=1}^m
\frac{\wsh(X_j)}{m\Fhatw(+\infty)+\wsh(x)}\,
\delta_{\Shat_j}
+
\frac{\wsh(x)}{m\Fhatw(+\infty)+\wsh(x)}\,
\delta_{+\infty}.
\end{equation}
Let $\Qhatw(x)$ be its $(1-\alpha)$-quantile.
The corresponding weighted CQR interval is
\begin{equation}
\label{eq:cs_interval_main}
\CChatw(x)\coloneqq
\{y\in\R:\Shat(x,y)\le\Qhatw(x)\}.
\end{equation}
Equivalently, for $x\in\{\wsh>0\}$,
\Cref{lem:cs_exact_threshold_measurable} gives
\begin{equation}
\label{eq:cs_exact_raw_level_main}
\Qhatw(x)
=
\quant{(1-\alpha)\bigl(\Fhatw(+\infty)+\wsh(x)/m\bigr)}{\Fhatw}.
\end{equation}
By the weighted split conformal validity result of Tibshirani et al.~\cite[Corollary~1]{tibshirani2019conformal}, for almost every training realization,
\begin{equation}
\label{eq:cs_exact_marginal_main}
\PP\!\left(
Y_\star\in\CChatw(X_\star)\,\middle|\,\Dtrain
\right)\ge1-\alpha,
\end{equation}
where the probability averages over the source calibration sample and the
independent pair
$(X_\star,Y_\star)\sim\QC[X]\otimes\DC[Y\mid X]$; only the training fold is
conditioned upon. The results below fix both folds and control the realized
$L^p(\QC[X])$ profile with probability at least $1-\delta$.

\paragraph{Target guarantees}
For $\delta\in(0,1)$, set
\begin{equation}
\label{eq:cs_weighted_radius_main}
\etaw{\delta}
\coloneqq
9\sqrt{\frac{\csdiv\log(4/\delta)}{m}}
+\frac{4\wmax\log(4/\delta)}{3m}.
\end{equation}

\begin{proposition}
\label{prop:cs_length}
Let $\alpha\in(0,1)$, $p\in[1,\infty]$, and assume
\Cref{assum:mass_regular,assum:cov_shift} and
\Cref{assum:HPD-excess-risk}.
Fix $\delta\in(0,1)$ and $n,m\ge1$ such that
\begin{equation}
\label{eq:cs_localization_main}
\begin{aligned}
2\up\wmax^{1/p}\epsilon_p(n,\delta/4)
+(2-\alpha)\etaw{\delta}
&<2\low r_-,\\
2\up\wmax^{1/p}\epsilon_p(n,\delta/4)
+(2-\alpha)\etaw{\delta}
+\frac{(1-\alpha)\wmax}{m}
&\le2\low r_+.
\end{aligned}
\end{equation}
Then there is an event $\mathcal A^w_{n,m}(\delta)$, measurable with respect
to $(\Dtrain,\Dcal)$, with probability at least $1-\delta$.
On this event,
$\Qhatw(x)$ is finite for $\QC[X]$-almost every $x$ and
\begin{equation}
\label{eq:cs_length_main}
\begin{aligned}
\Lp[{\QC[X]}]{|\CChatw(\cdot)|-|\CCstar(\cdot)|}
&\le2\left(1+\frac{\up}{\low}\right)
\wmax^{1/p}\epsilon_p(n,\delta/4)
\\
&\quad+\frac{
(2-\alpha)\etaw{\delta}
+(1-\alpha)\Lp[{\QC[X]}]{\wsh}/m
}{\low}.
\end{aligned}
\end{equation}
\end{proposition}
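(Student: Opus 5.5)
The plan is to reduce the length comparison to a pointwise bound on the calibrated threshold $\Qhatw(x)$. The oracle threshold for $\Sstar$ under the target law is $0$. Whenever $\Qhatw(x)\in\R$ and the interval is nonempty,
\[
|\CChatw(x)|-|\CCstar(x)|=\bigl(\fhat{\ahi}(x)-\fstar{\ahi}(x)\bigr)-\bigl(\fhat{\alo}(x)-\fstar{\alo}(x)\bigr)+2\Qhatw(x).
\]
If the interval is empty, the positive part only decreases the deviation when $\Qhatw(x)$ is small. Hence, pointwise,
\[
\bigl||\CChatw(x)|-|\CCstar(x)|\bigr|\le e(x)+2|\Qhatw(x)|,
\qquad
e\coloneqq|\fhat{\alo}-\fstar{\alo}|+|\fhat{\ahi}-\fstar{\ahi}|.
\]
By \Cref{lem:sorting_contraction_mass}, \eqref{eq:cs_change_measure} and \Cref{assum:HPD-excess-risk} applied at level $\delta/4$ for each $\tau$, we have $\Lp[{\QC[X]}]{e}\le 2\wmax^{1/p}\epsilon_p(n,\delta/4)$ on a training event of probability at least $1-\delta/2$. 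This accounts for the coefficient $2$ in the first term of \eqref{eq:cs_length_main}. It then remains to show
\[
|\Qhatw(x)|\le\frac{2\up\wmax^{1/p}\epsilon_p(n,\delta/4)+(2-\alpha)\etaw{\delta}+(1-\alpha)\wsh(x)/m}{2\low},
\]
which after taking $L^p(\QC[X])$ norms gives the remaining terms.

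\textbf{Step 1 (score-CDF comparison).} Write $\FQSstar$ for the target CDF of $\Sstar$. For $t\in\R$ and $x\in\mathcal X_0$, the conditional probability $\DC[Y\mid X=x]\{\Shat\le t\}$ is the mass of $[\fhat{\alo}(x)-t,\fhat{\ahi}(x)+t]$. The mass of $[\fstar{\alo}(x)-t,\fstar{\ahi}(x)+t]$ is the corresponding probability for $\Sstar$. By the upper mass bound in \Cref{assum:mass_regular}, these two masses differ by at most $\up e(x)$. Integrating against $\QC[X]$ gives
\[
\sup_t|\FQS-\FQSstar|\le\up\Lone{e}[{\QC[X]}]\le2\up\wmax^{1/p}\epsilon_p(n,\delta/4),
\]
using $L^1\le L^p$ on a probability space. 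By the two-sided local lower bounds at both oracle quantiles,
\[
\FQSstar(t)\ge1-\alpha+2\low t\quad\text{for }0\le t\le r_+,
\qquad
\FQSstar(-t)\le1-\alpha-2\low t\quad\text{for }0\le t\le r_-.
\]
The cap $r_-\le(1-\alpha)/(2\up)$ is there to keep the interval $[\fstar{\alo}+t,\fstar{\ahi}-t]$ well defined.

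\textbf{Step 2 (weighted concentration).} Conditionally on $\Dtrain$, the summands $\wsh(X_j)\indiacc{\Shat_j\le t}$ are i.i.d., bounded by $\wmax$, with second moment at most $\E_{\DC[X]}[\wsh^2]=\csdiv$. Bernstein's inequality bounds the total mass as $|\Fhatw(+\infty)-1|\le\etaw{\delta}$. I would also establish a one-sided uniform bound $\sup_t|\Fhatw(t)-\FQS(t)|\le\etaw{\delta}$, each holding with probability at least $1-\delta/4$. A union bound with the two training events then defines $\mathcal A^w_{n,m}(\delta)$.

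\textbf{Step 3 (threshold localization).} Use the representation \eqref{eq:cs_exact_raw_level_main}. On the event, the target level is at most
\[
(1-\alpha)\bigl(1+\etaw{\delta}+\wsh(x)/m\bigr).
\]
At $t=t_+(x)$, equal to the right side of the display above, we have $\Fhatw(t)\ge1-\alpha+2\low t-2\up\wmax^{1/p}\epsilon_p-\etaw{\delta}$, which exceeds that level. Hence $\Qhatw(x)\le t_+(x)$, which also shows finiteness. The second condition in \eqref{eq:cs_localization_main}, combined with $\wsh\le\wmax$, ensures $t_+(x)\le r_+$. Symmetrically, the level is at least $(1-\alpha)(1-\etaw{\delta})$. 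At $-t_-$ with $t_-<r_-$ (by the strict first condition), $\Fhatw$ falls strictly below this level, so $\Qhatw(x)\ge-t_-$. The term $\etaw{\delta}$ enters once from $\Fhatw$ and $(1-\alpha)$ times from the total mass, which produces the factor $2-\alpha$.

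\textbf{Main obstacle.} The obstacle is Step 2: the evaluation points $t_\pm(x)$ depend on $x$, so Bernstein's inequality at fixed points is not enough. I would first try a uniform Bernstein bound over the monotone class of half-lines, via a peeling and bracketing argument on the quantile grid of $\FQS$. The constant $9$ in $\etaw{\delta}$ suggests that such a bound absorbs the union over brackets without a logarithmic loss. If that fails, I would instead evaluate only at the two extreme points $t_\pm$ corresponding to $\wsh(x)=\wmax$ and at the $\wsh$-free point. I would then use monotonicity of $\Fhatw$ to sandwich $\Qhatw(x)$. The price is that the $\wsh(x)$ dependence would have to be recovered through linearity of the level in $\wsh(x)$.
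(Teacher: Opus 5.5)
Your deterministic skeleton is the paper's own. The pointwise bound $e(x)+2|\Qhatw(x)|$, the transfer $\Lp[{\QC[X]}]{e}\le 2\wmax^{1/p}\epsilon_p(n,\delta/4)$, the score-CDF comparison under $\QC[X]$, the two-sided growth of $\FQSstar$, and the inversion at level $(1-\alpha)\bigl(\Fhatw(+\infty)+\wsh(x)/m\bigr)$ producing $(2-\alpha)\etaw{\delta}$ and $(1-\alpha)\wsh(x)/m$ are exactly \Cref{lem:cs_calibration_bound} followed by the length step.

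The gap is Step~2, which you leave open although the statement depends on it. You need an event of conditional probability at least $1-\delta/2$ on which $\sup_{t\in\R}|\Fhatw(t)-\FQS(t)|\le\etaw{\delta}$, with the explicit constants $9$ and $4/3$. Neither of your routes gives this. A union bound over a grid of $\FQS$-quantiles fine enough to make the bracketing error of order $\sqrt{\csdiv/m}$ costs an extra $\log(m/\csdiv)$ next to $\log(4/\delta)$. Peeling does not remove the cost of a single-scale union bound. The $9$ is $8/\sqrt{\log 4}+\sqrt2$ rounded up, not a bracketing count. Your fallback gives only $\Qhatw(x)\le t_{\max}$ for all $x$, where $t_{\max}$ is $t_+(x)$ at $\wsh(x)=\wmax$, so $\Lp[{\QC[X]}]{\wsh}$ becomes $\wmax$ in the final bound. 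Linearity of the level in $\wsh(x)$ cannot repair this. $\Fhatw$ is a step function, and you need $\Fhatw(t)\ge\FQS(t)-\etaw{\delta}$ simultaneously for every $t$ in $\{t_+(x)\}$. That set can fill an interval of length $(1-\alpha)\wmax/(2\low m)$, so this is again a uniform statement. Only this side needs uniformity. The lower threshold bound follows from fixed-point bounds at $+\infty$ and at the deterministic point $-t_-$ (as a left limit), using monotonicity of $\Fhatw$ and $\wsh(x)>0$ for strictness.

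The missing idea is in \Cref{lem:cs_weighted_expectation,lem:cs_weighted_ep}:
\begin{enumerate}
\item After symmetrization, for a fixed sample every index set $\{i:\Shat_i\le t\}$ is a prefix of the increasing score order $\pi$. The symmetrized supremum is therefore at most $\max_{0\le k\le m}\bigl|\sum_{j\le k}\varepsilon_{\pi(j)}\wsh(X_{\pi(j)})\bigr|$, the running maximum of a Rademacher martingale.
\item Doob's $L^2$ maximal inequality and Jensen give $\E[Z_m^{\pm}\mid\Dtrain]\le 4\sqrt{\csdiv/m}$, with no logarithm.
\item Bousquet's inequality, applied to the two one-sided suprema with variance bound $\csdiv$, envelope $\wmax$, and $u=\log(4/\delta)$, gives $Z_m^{\pm}\le\etaw{\delta}$, each with failure probability $\delta/4$.
\end{enumerate}

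On the same event, letting $t\to+\infty$ gives $|\Fhatw(+\infty)-1|\le\etaw{\delta}$. So the total mass needs no separate Bernstein budget. Your split ($\delta/4$ for the mass, $\delta/4$ for the uniform bound) would put $\log(8/\delta)$ into the $\wmax$ term, which $\etaw{\delta}$ does not absorb in general. With this event in place of your Step~2, the rest of your argument goes through as written.
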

The event and signed threshold bounds are established in
\Cref{lem:cs_calibration_bound}; the proof is given in
\Cref{subsec:cs_length}.

\begin{theorem}
\label{th:cs_cond_cov}
Let $\alpha\in(0,1)$ and $p\in[1,\infty]$, and assume
\Cref{assum:mass_regular,assum:cov_shift} and
\Cref{assum:HPD-excess-risk}. Fix $\delta\in(0,1)$ and $n,m$
satisfying
\eqref{eq:cs_localization_main}. On the event
$\mathcal A^w_{n,m}(\delta)$ of \Cref{prop:cs_length},
\begin{equation}
\label{eq:cs_cov_main}
\begin{aligned}
\Lp[{\QC[X]}]{\operatorname{Cov}_{\CChatw}(\cdot)-(1-\alpha)}
&\le2\up\left(1+\frac{\up}{\low}\right)
\wmax^{1/p}\epsilon_p(n,\delta/4)
\\
&\quad+\frac{\up}{\low}
\left(
(2-\alpha)\etaw{\delta}
+\frac{1-\alpha}{m}\Lp[{\QC[X]}]{\wsh}
\right).
\end{aligned}
\end{equation}
\end{theorem}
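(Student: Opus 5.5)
We reduce \Cref{th:cs_cond_cov} to a pointwise comparison between the conditional coverage of $\CChatw(x)$ and the oracle value $1-\alpha$, and then take $L^p(\QC[X])$ norms. This is the same route that \Cref{th:global_cov_bound} follows from \Cref{prop:target}, with the source calibration event replaced by the weighted event $\mathcal A^w_{n,m}(\delta)$ of \Cref{prop:cs_length}. On this event $\Qhatw(x)$ is finite for $\QC[X]$-almost every $x$. For such $x\in\mathcal X_0$, the interval is $\CChatw(x)=[\fhat{\alo}(x)-\Qhatw(x),\fhat{\ahi}(x)+\Qhatw(x)]$, possibly empty.

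Fix such an $x$ and write $a=\fhat{\alo}(x)-\Qhatw(x)$, $b=\fhat{\ahi}(x)+\Qhatw(x)$. By \Cref{assum:mass_regular} the conditional CDF $F_x$ is $\up$-Lipschitz and atomless, and $F_x(\fstar{\alo}(x))=\alo$, $F_x(\fstar{\ahi}(x))=\ahi$. In the nonempty case,
\[
\operatorname{Cov}_{\CChatw}(x)-(1-\alpha)=\bigl(F_x(b)-F_x(\fstar{\ahi}(x))\bigr)-\bigl(F_x(a)-F_x(\fstar{\alo}(x))\bigr),
\]
so that
\[
|\operatorname{Cov}_{\CChatw}(x)-(1-\alpha)|\le\up\bigl(|a-\fstar{\alo}(x)|+|b-\fstar{\ahi}(x)|\bigr).
\]
In the empty case ($a>b$) the coverage is $0$. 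There $1-\alpha=F_x(\fstar{\ahi})-F_x(\fstar{\alo})\le\up(\fstar{\ahi}-\fstar{\alo})$, and since $b<a$ we have $\fstar{\ahi}-\fstar{\alo}\le|a-\fstar{\alo}|+|b-\fstar{\ahi}|$, so the same bound holds. The triangle inequality then gives the pointwise bound
\[
|\operatorname{Cov}_{\CChatw}(x)-(1-\alpha)|\le\up\bigl(|\fhat{\alo}-\fstar{\alo}|+|\fhat{\ahi}-\fstar{\ahi}|\bigr)(x)+2\up|\Qhatw(x)|.
\]

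Next we take $L^p(\QC[X])$ norms. For the endpoint terms we use the sorting contraction (\Cref{lem:sorting_contraction_mass}) and the change of measure \eqref{eq:cs_change_measure}. On the training part of the event, each raw endpoint error is at most $\epsilon_p(n,\delta/4)$ in $L^p(\DC[X])$, which yields $2\up\wmax^{1/p}\epsilon_p(n,\delta/4)$. For the threshold we invoke the signed bounds of \Cref{lem:cs_calibration_bound}, which are established on $\mathcal A^w_{n,m}(\delta)$. These should give, for $\QC[X]$-a.e. $x$,
\[
|\Qhatw(x)|\le\frac{1}{2\low}\Bigl(2\up\wmax^{1/p}\epsilon_p(n,\delta/4)+(2-\alpha)\etaw{\delta}+(1-\alpha)\wsh(x)/m\Bigr).
\]
Here the $\wsh(x)/m$ term comes from the level shift in \eqref{eq:cs_exact_raw_level_main}. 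The bound is obtained by inverting the weighted score CDF $\FQS$ using the lower growth near zero, which is valid inside the localization radii guaranteed by \eqref{eq:cs_localization_main}. Taking $L^p(\QC[X])$ norms, only the last term varies with $x$, and it contributes $(1-\alpha)\Lp[{\QC[X]}]{\wsh}/m$. Multiplying by $2\up$ and adding the endpoint contribution gives exactly \eqref{eq:cs_cov_main}: $2\up\wmax^{1/p}\epsilon_p+\frac{\up}{\low}(2\up\wmax^{1/p}\epsilon_p+\dots)$.

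The main obstacle is making sure that the threshold bound from \Cref{lem:cs_calibration_bound} really has the form above, with constant $1/(2\low)$ and the growth-converted endpoint term $\up\wmax^{1/p}\epsilon_p/\low$. This form is also what the length bound \eqref{eq:cs_length_main} requires, since that bound equals $2\epsilon$-terms plus $2|\Qhatw|$ in norm. If the lemma is instead stated only through the length, the plan is to reuse the length argument: the same pointwise quantity $\sum|\text{endpoint errors}|+2|\Qhatw|$ bounds both length deviation and, after multiplication by $\up$, coverage deviation. This gives \eqref{eq:cs_cov_main} as $\up$ times the right side of \eqref{eq:cs_length_main}, which matches the stated constants.
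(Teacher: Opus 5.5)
Your proposal is correct and is essentially the paper's proof. You use the pointwise bound $|\operatorname{Cov}_{\CChatw}(x)-(1-\alpha)|\le\up D_n(x)+2\up|\Qhatw(x)|$ on the $\QC[X]$-full set $\mathcal X_0$ (your explicit empty-interval check plays the role of the paper's symmetric-difference covering), then the change-of-measure endpoint transfer and the threshold bound \eqref{eq:cs_threshold_lp} of \Cref{lem:cs_calibration_bound}, which has exactly the form you anticipated. One descriptive inaccuracy, harmless since you only invoke the lemma: it inverts $\Fhatw$ using the growth of the oracle target CDF $\FQSstar$ (not of $\FQS$), together with $\sup_t|\Fhatw(t)-\FQSstar(t)|\le 2\up\wmax^{1/p}\epsilon_p(n,\delta/4)+\etaw{\delta}$.
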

A complete proof is given in \Cref{subsec:cs_theorem}.

\paragraph{Calibration deviation}
By \Cref{lem:cs_weighted_expectation,lem:cs_weighted_ep}, conditionally on
$\Dtrain$,
\begin{equation}
\label{eq:cs_weighted_radius_purpose}
\PP\!\left(
\sup_{t\in\R}|\Fhatw(t)-\FQS(t)|\le\etaw{\delta}
\,\middle|\,\Dtrain
\right)
\ge1-\frac{\delta}{2}.
\end{equation}
The same event gives $|\Fhatw(+\infty)-1|\le\etaw{\delta}$.
The deviation bound applies to the unbiased raw process $\Fhatw$; the
conformal measure in \eqref{eq:cs_exact_weighted_measure_main} is normalized.

The $\csdiv$ term follows from the $L^2$ maximal bound for weighted score
prefixes; the $\wmax$ term comes from the envelope in Bousquet's inequality.
The leading constant $9$ combines symmetrization, the martingale maximal
inequality, and Bousquet's fluctuation bound.
Bounding the score-CDF and total-mass fluctuations separately gives
$(2-\alpha)$, although the same supremum controls both.
These constants are not optimized. For $\wsh\equiv1$, this weighted bound is
weaker than the unshifted bound based on $\eta_m(\delta)$ in
\eqref{eq:no_shift_scales}.

\paragraph{Discussion}
For fixed $\delta$, the endpoint, calibration, and test-atom terms in \eqref{eq:cs_length_main} and \eqref{eq:cs_cov_main} have orders $\wmax^{1/p}\epsilon_p(n,\delta/4)$, $\sqrt{\csdiv/m}+\wmax/m$, and $\|\wsh\|_{L^p(\QC[X])}/m$, respectively.
Pournaderi and Xiang~\cite[Theorem~1 and Corollary~1]{pournaderi2026trainingconditional} control the one-sided target marginal miscoverage of the same rule conditional on the complete source sample, whereas our bounds control the realized $L^p(\QC[X])$ profile and equal-tailed oracle length on an event measurable with respect to both source folds.
Under $\wsh\le\wmax$, their bounded-ratio excess has order $\sqrt{\wmax/m}+\wmax\sqrt{\log(4/\delta)/m}$ with exponential confidence, while their second-moment alternative has polynomial dependence on $1/\delta$.
Their score learner is unrestricted; our profile and length bounds additionally require the endpoint event and mass regularity.
Joshi et al.~\cite{joshi2025lrqr} learn a covariate-dependent threshold from
labeled source observations and unlabeled source and target covariates without
directly estimating $\wsh$. Their result gives target marginal coverage up to
estimation and likelihood-ratio projection errors, whereas our known-ratio
bounds control the realized $L^p(\QC[X])$ profile and equal-tailed oracle
length.

The three shift quantities in the bounds have distinct roles.
The factor $\wmax^{1/p}$ transfers an integrated endpoint error from the
source covariate law to the target law and disappears when $p=\infty$.
The second moment $\csdiv=\E_{\DC[X]}[\wsh^2]$ determines the leading
stochastic calibration fluctuation, just as a variance determines the scale
of an importance-weighted average. Finally,
$\|\wsh\|_{L^p(\QC[X])}/m$ comes from the additional mass assigned to
$+\infty$ for the test point in \eqref{eq:cs_exact_weighted_measure_main}.
The leading calibration term is not determined by $\wmax$ alone.
When $\wsh\equiv1$, $\csdiv=1$ and the effective calibration size is $m$.
The fixed score benchmarks in \Cref{sec:calibration_lower_bounds} show that
the dependence $\sqrt{\csdiv/m}$ can arise from calibration alone.

\section{Minimax Limits for Fixed-Score Calibration under Covariate Shift}
\label{sec:calibration_lower_bounds}

Split CQR uses independent samples for score construction and calibration.
Conditional on the training sample, the fitted score $\Shat$ is fixed, while
the calibration sample determines a threshold function through
\[
\CChatw(x)=\{y\in\R:\Shat(x,y)\leq\Qhatw(x)\}.
\]
This conditional representation motivates studying calibration with a fixed
score. For fixed failure probability, the leading calibration term in
\Cref{th:cs_cond_cov} has order $\sqrt{\csdiv/m}$; the benchmarks below ask
whether the same dependence can arise from calibration alone.

Each construction fixes source and target covariate marginals, a baseline
conditional kernel $P^0_{Y\mid X}$, and its equal-tailed oracle CQR score
$\Sstar$. The supremum varies $P_{Y\mid X}$ over all conditional kernels
satisfying \Cref{assum:mass_regular} on a measurable set of full
$\DC[X]$-measure, while the marginals and $\Sstar$ remain fixed. Each infimum
ranges over calibration rules that are measurable functions of the $m$ source
observations and take values in $\overline{\R}$ for scalar thresholds or in
$\overline{\R}^{K}$ for threshold vectors; the conformal value $+\infty$ is
admissible. Thus only the calibration rule depends on the source observations.
Both constructions use finite measurable subsets of $\Xset$ as carriers.

Without covariate shift, Areces et al.~\cite{areces2024two} obtain an
expected minimax lower bound of order $\sqrt{d/m}$ for covariate-dependent
fixed-score thresholds. Their loss is the supremum of a normalized weighted
coverage discrepancy over a class of $\{-1,1\}$-valued functions of the
covariates with VC dimension $d$.
Their upper theorem uses a finite-dimensional linear witness space with a
uniformly bounded orthonormal basis, so it is not a matching upper bound for
every binary VC class.
Duchi~\cite{duchi2025sampleconditional} derives high-probability
sample-conditional guarantees for threshold functions estimated by quantile
regression.
We study two calibration problems with a fixed score. The first has one target
atom, so only one threshold value affects the target risk. The second has $K$
target atoms and uses a separate threshold at each atom. In our construction,
each target atom receives $m/(\csdiv K)$ source calibration observations on
average, which yields the rate $\sqrt{\csdiv K/m}$.
For $p=1$ and $K=d$, its dimension dependence agrees with the atomic
lower-bound geometry of Areces et al.

Formally, a calibration rule maps each calibration sample $\Dcal$ to a
measurable threshold function
$\widehat q_{\Dcal}\colon\Xset\to\overline{\R}$, with
$(\Dcal,x)\mapsto\widehat q_{\Dcal}(x)$ jointly measurable. For a realized
calibration sample, define
\[
C_{\widehat q_{\Dcal}}^{\star}(x)
\coloneqq
\bigl\{y\in\R:\Sstar(x,y)\leq\widehat q_{\Dcal}(x)\bigr\}.
\]
For a candidate conditional kernel $P_{Y\mid X}$, its realized
conditional coverage profile is
\begin{equation}
\label{eq:fixed_score_coverage_profile}
\operatorname{Cov}^{P}_{\{\Sstar\leq\widehat q_{\Dcal}\}}(x)
\coloneqq
P_{Y\mid X=x}\!\left(C_{\widehat q_{\Dcal}}^{\star}(x)\right)
=
P_{Y\mid X=x}\!\left(
\Sstar(x,Y)\leq\widehat q_{\Dcal}(x)
\right).
\end{equation}
The calibration sample is held fixed in this conditional probability. The
profile remains random through $\widehat q_{\Dcal}$. Henceforth, we suppress
the subscript $\Dcal$ and write $\widehat q$, $C_{\widehat q}^{\star}$, and
$\operatorname{Cov}^{P}_{\{\Sstar\leq\widehat q\}}$.

The exact weighted CQR threshold $\Qhatw(x)$ is covariate dependent through
the test-point weight $\wsh(x)$ in
\eqref{eq:cs_exact_weighted_measure_main}--\eqref{eq:cs_exact_raw_level_main}.
The calibration rules considered here may likewise depend on $x$. In the
single-carrier experiment,
$\QC[X]=\delta_{x_0}$, so an arbitrary threshold function enters the target
risk only through the scalar $\widehat q_{\Dcal}(x_0)$; conversely, any scalar
rule admits a measurable constant extension. In the $K$-atomic experiment,
$\QC[X]$ is uniform on $x_1,\ldots,x_K$, so the rule is identified with the
vector of its values at these atoms. Values away from the target support do
not enter the $L^p(\QC[X])$ loss.

\begin{theorem}
\label{prop:cs_lecam_lower}
Fix $\alpha\in(0,1)$ and $\kappa>0$, and assume that $\Xset$ contains two
distinct points whose singleton sets are measurable. Then there exist source
and target marginals $\DC[X],\QC[X]$, a reference conditional kernel
$P^0_{Y\mid X}$, and a reference score $\Sstar$, all independent of $m$, with
the following properties. The marginals satisfy \Cref{assum:cov_shift} and
$\chi^2(\QC[X]\Vert\DC[X])=\kappa$.
The kernel $P^0_{Y\mid X}$ satisfies \Cref{assum:mass_regular} on a set of
full $\DC[X]$-measure. The score $\Sstar$ is the equal-tailed oracle CQR score
under $P^0_{Y\mid X}$ and is held fixed throughout the minimax problem below.
For $m\ge1$ and $p\in[1,\infty]$, define
\begin{equation}
\label{eq:cs_scalar_risk}
\mathcal R_{m,p}\coloneqq
\inf_{\widehat q}
\sup_{P_{Y\mid X}}
\E_{(\DC[X]\otimes P_{Y\mid X})^{\otimes m}}
\!\left[
\Lp[{\QC[X]}]{
\operatorname{Cov}^{P}_{\{\Sstar\le\widehat q\}}(\cdot)-(1-\alpha)}
\right].
\end{equation}
Set
\begin{equation}
\label{eq:cs_lecam_threshold}
m_\star\coloneqq
\left\lceil
\frac{4(\log 2)(1+\kappa)\,
\alpha(1-\alpha)}{\min\{\alpha,1-\alpha\}^2}
\right\rceil.
\end{equation}
Then for every $m\ge m_\star$ and $p\in[1,\infty]$,
\[
\mathcal R_{m,p}
\ge c_\alpha\sqrt{\frac{1+\kappa}{m}},
\qquad
c_\alpha\coloneqq\frac18\sqrt{(\log 2)\alpha(1-\alpha)}.
\]
\end{theorem}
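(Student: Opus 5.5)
We build a two-carrier construction and reduce the problem to a two-point Le Cam test on a single Bernoulli-type parameter.

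\textbf{Construction.} Take two distinct points $x_0,x_1$ with measurable singletons. Put
\[
\DC[X]=\pi\delta_{x_0}+(1-\pi)\delta_{x_1},\qquad \QC[X]=\delta_{x_0},\qquad \pi=\frac{1}{1+\kappa}.
\]
Then $\wsh(x_0)=1/\pi$ and $\wsh(x_1)=0$, so $\wmax=1+\kappa$ and $\chi^2(\QC[X]\Vert\DC[X])=1/\pi-1=\kappa$.

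For the baseline kernel, let $P^0_{Y\mid X=x}$ be uniform on $[0,1]$ at both carriers. Then $\fstar{\alo}=\alo$, $\fstar{\ahi}=\ahi$, and $\Sstar(x,y)=\max\{\alo-y,\,y-\ahi\}$. \Cref{assum:mass_regular} holds with $\up=\low=1$ and any $r_0\le\alo$.

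The alternatives $P^{\pm}_{Y\mid X}$ agree with $P^0$ at $x_1$. At $x_0$ they are piecewise-uniform densities on $[0,1]$ that move mass $\pm\gamma$ between the central region $\{\Sstar\le0\}=[\alo,\ahi]$ and its complement. Concretely, the density is $(1\mp\gamma/(1-\alpha))$ on the center and $(1\pm\gamma/\alpha)$ outside. Under $P^{\pm}$ the score CDF at $x_0$ satisfies
\[
F^{\pm}(t)=F^{0}(t)\pm\gamma\,g(t),
\]
with $|g|\le1$ and $g(0)=-1$ up to sign. For $\gamma\le\tfrac12\min\{\alpha,1-\alpha\}$ the densities stay in $[\tfrac12,\tfrac32]$, so \Cref{assum:mass_regular} still holds. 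The reference score $\Sstar$ is kept fixed.

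\textbf{Reduction to a test.} Since $\QC[X]=\delta_{x_0}$, the $L^p(\QC[X])$ loss equals $|F^{P}(\widehat q)-(1-\alpha)|$ for every $p$, where $\widehat q=\widehat q(x_0)$. This is why $p$ plays no role.

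Under $P^{+}$ the unique $(1-\alpha)$-score quantile at $x_0$ is $t_+>0$; under $P^-$ it is $t_-<0$. Because $F^\pm$ are strictly increasing with slopes in $[\tfrac12,\tfrac32]$ near $0$, we get $|t_\pm|\gtrsim\gamma$. Hence for any $\widehat q\in\overline\R$,
\[
|F^+(\widehat q)-(1-\alpha)|+|F^-(\widehat q)-(1-\alpha)|\ge c\gamma.
\]
To check this, split according to whether $\widehat q\le0$ or $\widehat q>0$. If $\widehat q\le 0$, then $F^+(\widehat q)\le F^+(0)=1-\alpha-\gamma$. If $\widehat q>0$, then $F^-(\widehat q)\ge F^-(0)=1-\alpha+\gamma$. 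So the loss sum is at least $\gamma$, and we may take $c=1$. The case $\widehat q=\pm\infty$ gives loss $\alpha$ or $1-\alpha$, which is at least $\gamma$.

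Le Cam's two-point method then gives
\[
\mathcal R_{m,p}\ge\frac{\gamma}{2}\bigl(1-\mathrm{TV}(P_+^{\otimes m},P_-^{\otimes m})\bigr).
\]
It is slightly cleaner to use $\frac{\gamma}{4}\exp(-\mathrm{KL})$, or Bretagnolle–Huber, which is the likely source of the $\log2$ in $c_\alpha$ and $m_\star$.

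\textbf{Divergence bound.} The two joint laws differ only when $X=x_0$, which has probability $\pi$. Given $X=x_0$, the sufficient information is the Bernoulli indicator $\{\Sstar\le0\}$, with parameter $1-\alpha\mp\gamma$. Therefore
\[
\mathrm{KL}\le m\pi\,\mathrm{KL}\bigl(\mathrm{Ber}(1-\alpha-\gamma)\,\Vert\,\mathrm{Ber}(1-\alpha+\gamma)\bigr)\le \frac{C m\gamma^2}{(1+\kappa)\alpha(1-\alpha)}
\]
for $\gamma\le\tfrac12\min\{\alpha,1-\alpha\}$, via the $\chi^2$ bound on KL.

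Choose
\[
\gamma=\sqrt{\frac{(\log2)(1+\kappa)\alpha(1-\alpha)}{4m}}
\]
(up to the constant fixed by the chosen KL bound), so that $\mathrm{KL}\le\log 2$ and $e^{-\mathrm{KL}}\ge\tfrac12$. Then $\mathcal R_{m,p}\ge\gamma/8$, which gives exactly $c_\alpha\sqrt{(1+\kappa)/m}$. The condition $m\ge m_\star$ is exactly what ensures $\gamma\le\tfrac12\min\{\alpha,1-\alpha\}$, which keeps the alternatives admissible and the KL bound valid.

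The main obstacle is the bookkeeping of constants: matching $c_\alpha=\tfrac18\sqrt{(\log2)\alpha(1-\alpha)}$ and $m_\star$ requires choosing the perturbation size and the divergence inequality consistently. A secondary obstacle is verifying that the perturbed kernels satisfy \Cref{assum:mass_regular} at their own oracle quantiles, with uniform $\low,\up,r_0$, while $\Sstar$ remains the baseline score.
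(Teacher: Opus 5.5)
Your construction and reduction use the same mechanism as the paper. Two carriers with $\QC[X]=\delta_{x_0}$ collapse every $L^p$ loss to one scalar. Piecewise-uniform alternatives move mass between $\{\Sstar\le0\}$ and its complement, information arrives only through $X=x_0$ with probability $1/(1+\kappa)$, and Le Cam finishes. This gives the rate $\sqrt{(1+\kappa)/m}$, and your separation argument is correct.

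The gap is the constant $c_\alpha$, which the statement fixes and your bookkeeping does not deliver. With your choice $\gamma^2=(\log 2)(1+\kappa)\alpha(1-\alpha)/(4m)$, one gets $\gamma/8=\tfrac12 c_\alpha\sqrt{(1+\kappa)/m}$, not $c_\alpha\sqrt{(1+\kappa)/m}$. Your divergence step does not even license that choice, because the Bernoulli parameters $1-\alpha\mp\gamma$ are $2\gamma$ apart:
\[
\mathrm{KL}\le\frac{m}{1+\kappa}\cdot\frac{4\gamma^2}{(1-\alpha+\gamma)(\alpha-\gamma)}\le\frac{8m\gamma^2}{(1+\kappa)\alpha(1-\alpha)}.
\]
So $\mathrm{KL}\le\log 2$ requires $\gamma^2\le(\log 2)(1+\kappa)\alpha(1-\alpha)/(8m)$. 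Bretagnolle--Huber then yields only $c_\alpha\sqrt{(1+\kappa)/m}/(2\sqrt2)$. Even the asymptotically sharp $\mathrm{KL}\approx\chi^2/2$ still leaves a factor $1/\sqrt2$.

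The symmetric pair $P^\pm$ is what costs you. The hypotheses are $2\gamma$ apart, but the loss separation is only $\gamma$. The divergence denominators also pick up $\pm\gamma$ corrections. As a result, even the $\chi^2$-tensorized total-variation route misses $c_\alpha$ at $\alpha=1/2$, where $(\tfrac12+\gamma)(\tfrac12-\gamma)<\tfrac14$.

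The repair is to drop $P^-$ and test the reference $P^0$ against your $P^+$ with mass shift $\xi$, as the paper does.

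\begin{itemize}
\item The per-draw divergence is then exact: $\chi^2(P_X\otimes P^{+}_{Y\mid X}\Vert P_X\otimes P^{0}_{Y\mid X})=\xi^2/((1+\kappa)\alpha(1-\alpha))$, because the reference masses sit in the denominator.
\item Tensorization and $(1+\chi^2)^m\le e^{m\chi^2}\le 2$ give $\mathrm{TV}\le\tfrac12$. This is where $\log 2$ enters, not Bretagnolle--Huber.
\item Let $a\in[0,1]$ be the fraction of the outer band admitted by the clipped threshold. The two coverage errors are $D_0(a)=\alpha a$ and $D_1(a)=(\alpha+\xi)a-\xi$. Their absolute values sum to at least $\alpha\xi/(\alpha+\xi)\ge\xi/2$ whenever $\xi\le\alpha$.
\item Le Cam gives $\max_j\E_j|D_j|\ge\tfrac12\cdot\tfrac{\xi}{2}\cdot\tfrac12=\xi/8$.
\end{itemize}

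Taking $\xi=\sqrt{(\log 2)(1+\kappa)\alpha(1-\alpha)/m}$ gives exactly $c_\alpha\sqrt{(1+\kappa)/m}$. The condition $m\ge m_\star$ is then precisely $\xi\le\tfrac12\min\{\alpha,1-\alpha\}$, which keeps $P^+$ admissible. With your $\gamma$, admissibility would only have needed roughly $m\ge m_\star/4$, another sign the constants were not matched.
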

A complete proof is given in \Cref{subsec:cs_lecam}.

\paragraph{Lower-bound construction}
Choose distinct $x_0,x_D\in\Xset$ and set
\[
\QC[X]=\delta_{x_0},
\qquad
\DC[X]=\frac1{1+\kappa}\delta_{x_0}
+\frac{\kappa}{1+\kappa}\delta_{x_D}.
\]
Then $\csdiv=1+\kappa$, the likelihood ratio satisfies
$\wsh(x_0)=\csdiv$ and $\wsh(x_D)=0$, and
\[
\lVert\wsh\rVert_{L^\infty(\DC[X])}
=\int_{\Xset}\wsh(x)^2\,\rmd\DC[X](x)
=\csdiv.
\]
Thus a source calibration draw has $X=x_0$ with probability $1/\csdiv$.
Since $\QC[X]=\delta_{x_0}$, for every candidate
kernel $P_{Y\mid X}$, every realized scalar threshold $\widehat q$, and every
$p\in[1,\infty]$,
\[
\Lp[{\QC[X]}]{
\operatorname{Cov}^{P}_{\{\Sstar\leq\widehat q\}}(\cdot)-(1-\alpha)}
=\bigl|
\operatorname{Cov}^{P}_{\{\Sstar\leq\widehat q\}}(x_0)-(1-\alpha)
\bigr|.
\]
These identities belong to the construction; \Cref{assum:cov_shift} itself
only gives $\csdiv\le\wmax$.

Let $P^0_{Y\mid X}$ be uniform on $[-1,1]$ at every covariate value, and fix
the score $\Sstar(x,y)=|y|-(1-\alpha)$. This is the equal-tailed oracle CQR
score for $P^0_{Y\mid X}$; the same score is retained under the alternative
$P^1_{Y\mid X}$. Define the two score bands
\[
B_0=[-(1-\alpha),0],
\qquad
B_1=(0,\alpha],
\]
which have respective masses $1-\alpha$ and $\alpha$ under
$P^0_{Y\mid X}$. Let $P^1_{Y\mid X}$ agree with $P^0_{Y\mid X}$ away from
$x_0$ and, at $x_0$, transfer conditional mass $\xi$ from $B_0$ to $B_1$.

Clipping a realized scalar threshold $\widehat q$ to $[0,\alpha]$ cannot
increase its absolute coverage error under either hypothesis. Write the
clipped threshold as $\widetilde q=\alpha a$, where $a\in[0,1]$ is the
fraction of $B_1$ admitted by the threshold ray. For $j\in\{0,1\}$, define
\[
D_j(a)
\coloneqq
\operatorname{Cov}^{P^j}_{\{\Sstar\leq\alpha a\}}(x_0)-(1-\alpha).
\]
Then
\[
D_0(a)=\alpha a,
\qquad
D_1(a)=(\alpha+\xi)a-\xi,
\]
whose respective zeros are $0$ and $\xi/(\alpha+\xi)$. Thus no scalar
threshold attains exact target coverage under both hypotheses. Since the two
source experiments differ only when $X=x_0$, their per-observation divergence
is
\[
\chi^2\!\left(
\DC[X]\otimes P^1_{Y\mid X}
\,\middle\Vert\,
\DC[X]\otimes P^0_{Y\mid X}
\right)
=\frac{\xi^2}{\csdiv\alpha(1-\alpha)}.
\]
For
$\xi^2=(\log 2)\csdiv\alpha(1-\alpha)/m$, the total variation distance
between the two $m$-sample laws is at most $1/2$, and Le Cam's lemma gives
the lower bound $\xi/8$ in \Cref{prop:cs_lecam_lower}. The density construction
and Le Cam reduction are given in \Cref{subsec:cs_lecam}.

For an upper bound on the same construction, apply the weighted conformal
construction \eqref{eq:cs_exact_weighted_measure_main} to the calibration
scores $\Sstar(X_i,Y_i)$ under the same source and target marginals and fixed
score, and let $\Qhatw(x)$ denote its $(1-\alpha)$ quantile. Since
$\QC[X]=\delta_{x_0}$, only $\Qhatw(x_0)$ affects the $L^p(\QC[X])$ profile
loss. \Cref{prop:cs_carrier_upper} bounds this loss uniformly over the
candidate conditional kernels, with
$\Dcal\sim(\DC[X]\otimes P_{Y\mid X})^{\otimes m}$.

\begin{proposition}
\label{prop:cs_carrier_upper}
Under the assumptions of \Cref{prop:cs_lecam_lower}, consider the source and
target marginals and fixed reference score used in its two-point construction,
and let $x_0$ denote the unique target atom. Thus
$\QC[X]=\delta_{x_0}$ and $\DC[X](\{x_0\})=\csdiv^{-1}$. Then, for every
$m\ge1$ and $p\in[1,\infty]$,
\begin{equation}
\label{eq:cs_carrier_finite_upper}
\begin{aligned}
&\sup_{P_{Y\mid X}}
\E_{\Dcal}\!\left[
\Lp[{\QC[X]}]{
\operatorname{Cov}^{P}_{\{\Sstar\le\Qhatw(x_0)\}}(\cdot)-(1-\alpha)}
\right]\\
&\qquad\le
\frac32
\sqrt{
\frac{\csdiv}{m+1}
\left[1-\left(1-\csdiv^{-1}\right)^{m+1}\right]}\\
&\qquad\le \frac32\sqrt{\frac{\csdiv}{m+1}}.
\end{aligned}
\end{equation}
\end{proposition}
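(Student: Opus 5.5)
Observe that in the two-point construction the weighted conformal threshold at $x_0$ is just ordinary split conformal calibration run on the calibration points that land at $x_0$. The plan is to prove a conditional mean-square bound given their count, and then average over the binomial count with Jensen's inequality.

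\emph{Step 1 (reduction to unweighted split conformal).} Write $c=\csdiv$. Since $\wsh(x_D)=0$ and $\wsh(x_0)=c$, every calibration point at $x_D$ receives weight zero in \eqref{eq:cs_exact_weighted_measure_main}. The test point $x_0$ also receives weight $c$. Let $N=\#\{i:X_i=x_0\}$, so that $N\sim\mathrm{Bin}(m,1/c)$. The measure in \eqref{eq:cs_exact_weighted_measure_main} at $x=x_0$ is then uniform over the $N$ scores $\Sstar(x_0,Y_i)$ with $X_i=x_0$, together with one atom at $+\infty$. Hence, with $k=\lceil (N+1)(1-\alpha)\rceil$, the threshold $\Qhatw(x_0)$ is the $k$-th order statistic of these $N$ scores when $k\le N$, and $+\infty$ otherwise. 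Because $\QC[X]=\delta_{x_0}$, the $L^p(\QC[X])$ loss equals $|F(\Qhatw(x_0))-(1-\alpha)|$ for every $p$. Here $F(t)=P_{Y\mid X=x_0}(\Sstar(x_0,Y)\le t)$ and $F(+\infty)=1$.

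\emph{Step 2 (conditional law given $N$).} For any admissible kernel, \Cref{assum:mass_regular} at $x_0$ makes $P_{Y\mid X=x_0}$ atomless. The score $y\mapsto |y|-(1-\alpha)$ then has a continuous CDF $F$, because each level set $\{|y|=t+1-\alpha\}$ has at most two points. Conditionally on $N$, the relevant scores are i.i.d.\ with CDF $F$, so by the probability integral transform $F(\Qhatw(x_0))\overset{d}{=}U_{(k)}$, the $k$-th order statistic of $N$ i.i.d.\ uniforms. Consequently $F(\Qhatw(x_0))\sim\mathrm{Beta}(k,N+1-k)$ when $k\le N$.

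In that case the bias is $b=k/(N+1)-(1-\alpha)\in[0,1/(N+1))$. The variance is $k(N+1-k)/((N+1)^2(N+2))\le 1/(4(N+1))$. Hence
\[
\E\bigl[(F(\Qhatw(x_0))-(1-\alpha))^2\mid N\bigr]\le \frac{1}{(N+1)^2}+\frac{1}{4(N+1)}\le\frac{9}{4(N+1)}.
\]
If instead $k>N$, the loss is $\alpha$. In that case $(N+1)(1-\alpha)>N$, so $\alpha<1/(N+1)$, and the squared loss is again at most $1/(N+1)^2\le 9/(4(N+1))$. This also covers $N=0$.

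\emph{Step 3 (averaging over $N$).} By Cauchy--Schwarz conditionally on $N$, and then Jensen's inequality for the concave square root,
\[
\E[\text{loss}]\le \frac32\,\E\bigl[(N+1)^{-1/2}\bigr]\le\frac32\sqrt{\E[(N+1)^{-1}]}.
\]
The standard binomial identity with $\pi=1/c$ gives
\[
\E[(N+1)^{-1}]=\frac{1-(1-\pi)^{m+1}}{(m+1)\pi}=\frac{c}{m+1}\bigl[1-(1-c^{-1})^{m+1}\bigr].
\]
This yields the first inequality in \eqref{eq:cs_carrier_finite_upper}, and the second follows trivially. All bounds are uniform in the admissible kernel, so the supremum over $P_{Y\mid X}$ is handled automatically.

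The step needing the most care is Step 2. It must correctly identify the rank $k$ including the $+\infty$ atom and the case $k>N$, and it must check that continuity of $F$ holds for every kernel in the supremum, which is what makes the conditional law exactly Beta. The remaining work is the routine variance and bias arithmetic.
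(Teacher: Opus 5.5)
Your proposal is correct and follows the paper's proof essentially step for step. The shared steps are: the reduction of the weighted measure at $x_0$ to mass $1/(N+1)$ on each carrier score plus $+\infty$; the conditional $\operatorname{Beta}(k_N,N+1-k_N)$ law from continuity of $F_P$, with the $k_N=N+1$ case handled via $\alpha<1/(N+1)$; and then Jensen's inequality together with the binomial reciprocal identity. The only cosmetic difference is that you bound the conditional second moment (variance plus squared bias, which even allows the constant $\sqrt5/2$) and apply Cauchy--Schwarz, whereas the paper bounds $\mathbb{E}|B_{n,k_n}-\tau|$ directly by standard deviation plus bias.
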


The scalar rule $\widehat q=\Qhatw(x_0)$ is admissible in the definition of $\mathcal R_{m,p}$.
Combining \Cref{prop:cs_lecam_lower,prop:cs_carrier_upper} therefore gives, for every $m\ge m_\star$ and $p\in[1,\infty]$,
\begin{equation}
\label{eq:cs_carrier_rank_upper}
c_\alpha\sqrt{\frac{\csdiv}{m}}
\le \mathcal R_{m,p}
\le \frac32\sqrt{\frac{\csdiv}{m+1}}
\le \frac32\sqrt{\frac{\csdiv}{m}}.
\end{equation}
Thus, on this fixed-score construction, the weighted conformal rule of
Tibshirani et al.~\cite{tibshirani2019conformal} attains the expected minimax
rate $\sqrt{\csdiv/m}$ up to $\alpha$-dependent constants. The effective
calibration size is $m/\csdiv$; the same shift factor appears in the leading
calibration term of \Cref{th:cs_cond_cov}.

For a target uniform on $K$ atoms, the calibration rule must estimate $K$
threshold values from the same source sample. The theorem below gives a
minimax lower bound whose probability tends to one exponentially fast in
$K$. For finite $p$,
\Cref{prop:cs_cellwise_upper} gives the corresponding moment upper bound.

\begin{theorem}
\label{thm:cs_fano_lower}
Fix $\alpha\in(0,1)$, an integer $K\ge23$, and $\kappa>0$, and assume that
$\Xset$ contains at least $2K$ distinct points whose singleton sets are
measurable. Then there exist source and target marginals $\DC[X],\QC[X]$,
distinct points $x_1,\ldots,x_K$, a reference conditional kernel
$P^0_{Y\mid X}$, and a reference score $\Sstar$, all independent of $m$,
with the following properties. The marginals satisfy
\Cref{assum:cov_shift},
\[
\chi^2(\QC[X]\Vert\DC[X])=\kappa,
\qquad
\QC[X]=\frac1K\sum_{k=1}^K\delta_{x_k}.
\]
$P^0_{Y\mid X}$ satisfies \Cref{assum:mass_regular} on a set of full
$\DC[X]$-measure. The score $\Sstar$ is the equal-tailed oracle CQR score
under $P^0_{Y\mid X}$ and is held fixed throughout the minimax problem below.
For a threshold vector $\widehat q_{1:K}$, define its canonical measurable
extension by
\[
\widehat q(x)
=
\begin{cases}
\widehat q_k,&x=x_k\text{ for some }k\in\{1,\ldots,K\},\\
0,&x\notin\{x_1,\ldots,x_K\}.
\end{cases}
\]
The value chosen off the target support is immaterial to the loss below.
Set
\begin{equation}
\label{eq:cs_fano_threshold}
m_\star^{(K)}\coloneqq
\frac{(1+\kappa)\,\alpha(1-\alpha)K}
{4\min\{\alpha,1-\alpha\}^2}.
\end{equation}
Then for every integer $m>m_\star^{(K)}$ and every $p\in[1,\infty]$,
\[
\begin{aligned}
&\inf_{\widehat q_{1:K}}
\sup_{P_{Y\mid X}}
\PP_{(\DC[X]\otimes P_{Y\mid X})^{\otimes m}}\!\Biggl(
\Lp[{\QC[X]}]{\operatorname{Cov}^{P}_{\{\Sstar\le\widehat q\}}(\cdot)-(1-\alpha)}\\
&\hspace{11em}\ge\frac1{64}
\sqrt{\frac{(1+\kappa)\,
\alpha(1-\alpha)K}{m}}
\Biggr)
\ge1-2e^{-K/32}.
\end{aligned}
\]
\end{theorem}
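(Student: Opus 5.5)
We construct a $K$-atomic analogue of the two-carrier experiment and use a Varshamov--Gilbert hypercube with a Fano-type (or Assouad-type, with high probability) reduction. Take $2K$ distinct points $x_1,\ldots,x_K,x_{D,1},\ldots,x_{D,K}$ and set
\[
\QC[X]=\frac1K\sum_{k=1}^K\delta_{x_k},
\qquad
\DC[X]=\frac1K\sum_{k=1}^K\Bigl(\frac{1}{1+\kappa}\delta_{x_k}+\frac{\kappa}{1+\kappa}\delta_{x_{D,k}}\Bigr).
\]
Then $\wsh(x_k)=1+\kappa$ and $\wsh(x_{D,k})=0$, so $\chi^2(\QC[X]\Vert\DC[X])=\kappa$ and \Cref{assum:cov_shift} holds with $\wmax=1+\kappa$. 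As in \Cref{prop:cs_lecam_lower}, take $P^0_{Y\mid X}$ uniform on $[-1,1]$, the score $\Sstar(x,y)=|y|-(1-\alpha)$, and the bands $B_0$ and $B_1$. For $\omega\in\{0,1\}^K$, let $P^\omega_{Y\mid X}$ move mass $\xi$ from $B_0$ to $B_1$ at $x_k$ whenever $\omega_k=1$, with piecewise-constant densities. We would check \Cref{assum:mass_regular}; this needs $\xi\le c\min\{\alpha,1-\alpha\}$, which is exactly where the condition $m>m_\star^{(K)}$ enters once $\xi$ is fixed.

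\emph{Step 1: loss separation.} After clipping each $\widehat q_k$ to $[0,\alpha]$, the atomwise errors are $D_0(a_k)=\alpha a_k$ and $D_1(a_k)=(\alpha+\xi)a_k-\xi$. For every $a\in[0,1]$,
\[
|D_0(a)|+|D_1(a)|\ge c\,\xi,
\]
with $c\ge\alpha/(\alpha+\xi)\ge1/2$. Define the estimated sign $\hat\omega_k=\indiacc{a_k>a^\dagger}$, where $a^\dagger$ is the crossing point $\xi/(2\alpha+\xi)$. Under $\omega$, every atom with $\hat\omega_k\ne\omega_k$ contributes an error of at least $\xi/4$. Since $\QC[X]$ is uniform, the $L^p(\QC[X])$ loss dominates the $L^1(\QC[X])$ loss, so
\[
\text{loss}\ge\frac{\xi}{4}\cdot\frac{d_H(\hat\omega,\omega)}{K}.
\]
It therefore suffices to show that $d_H(\hat\omega,\omega)\ge K/8$ with the stated probability. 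Choosing
\[
\xi\asymp\sqrt{\frac{(1+\kappa)\alpha(1-\alpha)K}{m}}
\]
then gives the constant $1/64$.

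\emph{Step 2: KL budget.} A source draw lands on $x_k$ with probability $1/(K(1+\kappa))$. Flipping one coordinate changes the per-sample divergence by at most $\xi^2/(K(1+\kappa)\alpha(1-\alpha))$, up to a constant, using $\mathrm{KL}\le\chi^2$. Take a Varshamov--Gilbert packing $\Omega\subset\{0,1\}^K$ with $|\Omega|\ge e^{K/8}$ and pairwise Hamming distance at least $K/8$. The $m$-sample KL divergence between any two members of $\Omega$ is then at most $m\xi^2/((1+\kappa)\alpha(1-\alpha))$. With the constant in $\xi$ chosen small, this is at most $K/64$, i.e.\ a small fraction of $\log|\Omega|$.

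\emph{Step 3: high-probability Fano.} This is the main obstacle. Standard Fano gives only a constant error probability, but we need the sup over $P$ of the probability that the loss is small to be at most $2e^{-K/32}$. We would first try a direct per-coordinate argument. For a uniformly random $\omega$ over the full cube, the coordinates are independent a priori. However, the posterior distributions of the $\omega_k$ given the sample are not independent of each other, because the atom counts are multinomial.

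To handle this, we would condition on the atom counts $N_k$. Given the counts, the $\omega_k$ are a posteriori independent, and the Bayes error at coordinate $k$ is bounded below by a constant whenever $N_k\xi^2/(\alpha(1-\alpha))\lesssim1$. A Chernoff bound shows that at least half of the atoms have $N_k\le 2m/(K(1+\kappa))$ outside an event of probability $e^{-K/32}$; this is where $K\ge23$ is used to make the numerical constants fit. Hoeffding over the independent coordinate errors then gives, outside a second event of probability $e^{-K/32}$, that $d_H(\hat\omega,\omega)\ge K/8$ for any estimator. Averaging over $\omega$ bounds the Bayes probability, and the supremum over $\omega$ dominates it. Combining the two exceptional events yields the factor $2e^{-K/32}$.
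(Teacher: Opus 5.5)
\paragraph{Gap in Step 3.} Your construction, the clipping reduction, and the per-atom separation in Step~1 are sound. In fact each misclassified atom costs at least $\alpha\xi/(2\alpha+\xi)\ge\xi/3$ once $\xi\le\alpha$. The gap is Step~3, which carries the entire high-probability claim and is only asserted.

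\emph{Independence.} For an arbitrary rule, $\hat\omega_k$ depends on the data at all atoms. So the indicators $\indiacc{\hat\omega_k\ne\omega_k}$ are not independent given the counts. They are independent only conditionally on the full sample $Z$. There, contrary to your remark, the posterior of $\omega$ is already a product, because the count law does not depend on $\omega$. But their means are then the random posterior minima $\rho_k=\min\{\PP(\omega_k=0\mid Z),\PP(\omega_k=1\mid Z)\}$, not constants. A lower bound on the Bayes risk $\E\rho_k=\frac12(1-\mathrm{TV})$ must therefore be upgraded to an exponential-probability lower bound on $\sum_k\rho_k$ under multinomially dependent counts. That is a further exceptional event, which your budget of two events of size $e^{-K/32}$ does not include.

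\emph{The count step fails.} At $\alpha=1/2$, $m>m_\star^{(K)}$ only gives $\lambda=m/(\csdiv K)>1/4$. For $\lambda$ just below $1/2$, $\{N_k\le2\lambda\}=\{N_k=0\}$, whose probability is at most $e^{-\lambda}\approx0.61$. For large $\kappa$ the atom counts are nearly independent Poisson. Hence the event that fewer than $K/2$ atoms qualify decays only at rate about $\mathrm{kl}(\tfrac12\Vert e^{-1/2})\approx0.023<1/32$.

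\emph{The constants do not close.} Per-error loss $\xi/4$ with $d_{\mathrm H}\ge K/8$ gives $\xi/32$. The factor $1/64$ then needs $\xi\ge\frac12\sqrt{\csdiv\alpha(1-\alpha)K/m}$, while your Step~2 budget takes $\xi\le\frac18\sqrt{\csdiv\alpha(1-\alpha)K/m}$. Also, $K\ge23$ is needed only so that $2e^{-K/32}<1$.

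\paragraph{Missing idea.} The packing you set up in Step~2 already suffices once you replace the KL form of Fano by its $\chi^2$ form against a common reference. For $P_1,\ldots,P_N\ll Q$ and any test $\psi$,
\[
\frac1N\sum_jP_j(\psi=j)\le\frac1N+\sqrt{N^{-2}\sum_j\chi^2(P_j\Vert Q)}.
\]
This follows from $P_j(E_j)-Q(E_j)\le\sqrt{Q(E_j)\,\chi^2(P_j\Vert Q)}$ and Cauchy--Schwarz over the disjoint fibers $E_j=\{\psi=j\}$.

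The paper takes $Q$ to be the all-reference $m$-sample law. The per-draw divergence of vertex $\tau$ is then $\frac{h(\tau)}{K}\frac{\xi^2}{\csdiv\alpha(1-\alpha)}$, where $h(\tau)$ is the Hamming weight. Tensorization with $\xi=\frac14\sqrt{\csdiv\alpha(1-\alpha)K/m}$ gives $\chi^2\le e^{K/16}-1$. A Varshamov--Gilbert set with $|A|\ge e^{K/8}$ and separation $K/4$ then makes the average success probability at most $e^{-K/8}+e^{-K/32}\le2e^{-K/32}$, with no conditioning on counts.

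A nearest-codeword decoder in fraction space shows that a normalized $\ell^1$ coverage loss below $\xi/16$ would identify $\tau$. So the loss is at least $\xi/16=\frac1{64}\sqrt{\csdiv\alpha(1-\alpha)K/m}$ with the required probability. The condition $m>m_\star^{(K)}$ is exactly $\xi<\frac12\min\{\alpha,1-\alpha\}$, which keeps all vertex densities in $[1/4,3/4]$.

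Your product-prior route might be repairable with a multi-level concentration argument and retuned constants, but as written it does not prove the statement.
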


A complete proof is given in \Cref{subsec:cs_lev}.

\paragraph{Geometry of the hypercube}
The construction pairs each target atom $x_k$ with an auxiliary atom
$x_{D,k}$ carrying source mass but no target mass. Select distinct points
$x_1,\ldots,x_K$ and $x_{D,1},\ldots,x_{D,K}$, and support both marginals on
these $2K$ points. For $k=1,\ldots,K$, set
\[
\begin{gathered}
\QC[X](\{x_k\})=\frac1K,
\qquad
\QC[X](\{x_{D,k}\})=0,\\
\DC[X](\{x_k\})=\frac1{(1+\kappa)K},
\qquad
\DC[X](\{x_{D,k}\})=\frac{\kappa}{(1+\kappa)K}.
\end{gathered}
\]
Then $\csdiv=1+\kappa$, $\wsh(x_k)=\csdiv$ and
$\wsh(x_{D,k})=0$, and
\[
\lVert\wsh\rVert_{L^\infty(\DC[X])}
=\int_{\Xset}\wsh(x)^2\,\rmd\DC[X](x)
=\sum_{k=1}^K\frac{\csdiv^2}{\csdiv K}
=\csdiv,
\]
and a source draw reaches a given target atom with probability
$1/(\csdiv K)$. The $x_{D,k}$ absorb the remaining source mass without
carrying information about the alternative.

Unlike the scalar construction, the target mass is now spread over $K$
atoms. For a threshold vector $\widehat q_{1:K}$, write
\[
e_k(\widehat q_k)
\coloneqq\operatorname{Cov}^{P}_{\{\Sstar\le\widehat q\}}(x_k)-(1-\alpha).
\]
Then the target marginal error and the realized profile loss are, respectively,
\[
\begin{aligned}
\left|\E_{\QC[X]}[\operatorname{Cov}^{P}_{\{\Sstar\le\widehat q\}}(X)]-(1-\alpha)\right|
&=\left|\frac1K\sum_{k=1}^K e_k(\widehat q_k)\right|,\\
\Lp[{\QC[X]}]{\operatorname{Cov}^{P}_{\{\Sstar\le\widehat q\}}-(1-\alpha)}
&=\begin{cases}
\displaystyle
\left(\frac1K\sum_{k=1}^K|e_k(\widehat q_k)|^p\right)^{1/p},
&p<\infty,\\[1ex]
\displaystyle\max_{1\le k\le K}|e_k(\widehat q_k)|,
&p=\infty.
\end{cases}
\end{aligned}
\]
The marginal error may vanish by cancellation, whereas the profile loss
retains the deviations at individual atoms. The Fano argument therefore
encodes the alternatives by $K$-dimensional binary vectors and compares the
corresponding threshold vectors coordinate by coordinate.

Use the same uniform reference conditional at every carrier point, reuse the
score bands $B_0,B_1$ from the scalar construction, and write $D$ for the
coarsened label of the corresponding dump point $x_{D,k}$. A vertex
$\tau\in\{0,1\}^K$ transfers mass $\xi$ from $B_0$ to $B_1$ at target atom
$x_k$ when $\tau_k=1$. After a source observation is coarsened to a label in
$\{1,\ldots,K\}\times\{B_0,B_1,D\}$, let the resulting source law be
$\overline{\DC}^{\,\tau}$. Its masses are
\[
\begin{gathered}
\overline{\DC}^{\,\tau}(k,B_0)
=\frac{1-\alpha-\xi\tau_k}{\csdiv K},
\qquad
\overline{\DC}^{\,\tau}(k,B_1)
=\frac{\alpha+\xi\tau_k}{\csdiv K},
\\
\overline{\DC}^{\,\tau}(k,D)=\frac{\csdiv-1}{\csdiv K}.
\end{gathered}
\]
The threshold fraction yielding conditional coverage $1-\alpha$ at $x_k$ is
\[
a_k^\star(\tau)=\frac{\xi}{\alpha+\xi}\tau_k.
\]
For $\tau_k=0$ and $\tau_k=1$, this fraction equals $0$ and
$\xi/(\alpha+\xi)$, respectively. The proof selects exponentially many binary
vectors such that every pair differs in at least $K/4$ coordinates, and sets
\[
\xi=\frac14
\sqrt{\frac{\csdiv\alpha(1-\alpha)K}{m}}.
\]
For this choice of $\xi$, the $\chi^2$ divergence between each $m$-sample
alternative and the $m$-sample reference law is at most $e^{K/16}-1$.
Target $L^1$ coverage loss below $\xi/16$ identifies the corresponding binary
vector. Fano's inequality bounds the average probability of correct
identification over the selected vectors by $2e^{-K/32}$; hence the worst-case
probability of loss at least $\xi/16$ is at least $1-2e^{-K/32}$. The density
and $\chi^2$ calculations and the identification argument are given in
\Cref{subsec:cs_lev}.

\paragraph{Split conformal calibration at each target atom}
For the upper bound, apply the standard finite-sample split conformal rank
correction separately to the calibration observations at each target atom
$x_k$.
For $k=1,\ldots,K$, let
$N_k=\sum_{i=1}^m\indiacc{X_i=x_k}$ be the number of source calibration
observations at $x_k$, and set
$j_k=\lceil(N_k+1)(1-\alpha)\rceil$.
Define
\begin{equation}
\label{eq:cs_cellwise_rule}
\widehat q_k
\coloneqq
\inf\left\{
t\in\R:
\sum_{i=1}^m
\indiacc{X_i=x_k}\indiacc{\Sstar(X_i,Y_i)\le t}
\ge j_k
\right\},
\qquad k=1,\ldots,K.
\end{equation}
Under covariate shift, the source and target laws share
$P_{Y\mid X=x_k}$, so $\Sstar(x_k,Y)$ has the same conditional distribution
under both laws. If $j_k=N_k+1$, then $\widehat q_k=+\infty$ by the generalized
inverse convention.
On this construction, $N_k\sim\operatorname{Bin}(m,1/(\csdiv K))$,
so the effective calibration size at each target atom is $m/(\csdiv K)$.

\begin{proposition}
\label{prop:cs_cellwise_upper}
Under the assumptions of \Cref{thm:cs_fano_lower}, consider the source and
target marginals, target atoms $x_1,\ldots,x_K$, and fixed reference score
used in its $K$-atomic construction. In particular,
\[
\QC[X]=\frac1K\sum_{k=1}^K\delta_{x_k},
\qquad
\DC[X](\{x_k\})=\frac1{\csdiv K},
\quad k=1,\ldots,K.
\]
For every $p\in[1,\infty)$, there is a
constant $C_p<\infty$, depending only on $p$, such that the rule
\eqref{eq:cs_cellwise_rule}, with $\widehat q$ defined by the canonical
extension in \Cref{thm:cs_fano_lower}, satisfies, for every $m\ge1$,
\begin{equation}
\label{eq:cs_cellwise_moment_upper}
\sup_{P_{Y\mid X}}
\left\{
\E_{\Dcal}\!\left[
\left(
\Lp[{\QC[X]}]{
\operatorname{Cov}^{P}_{\{\Sstar\le\widehat q\}}(\cdot)-(1-\alpha)}
\right)^p
\right]
\right\}^{1/p}
\le C_p\sqrt{\frac{\csdiv K}{m}}.
\end{equation}
\end{proposition}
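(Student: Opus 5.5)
The plan is to reduce the $K$-atomic problem to $K$ independent-in-law, unshifted split-conformal problems, each with a random sample size $N_k$. For each one we bound the moments of the realized coverage error by an order-statistic argument and then average over the binomial law of $N_k$. Since $\QC[X]$ is uniform on $x_1,\ldots,x_K$, for $p<\infty$
\[
\E_{\Dcal}\!\left[\Bigl(\Lp[{\QC[X]}]{\operatorname{Cov}^{P}_{\{\Sstar\le\widehat q\}}-(1-\alpha)}\Bigr)^p\right]
=\frac1K\sum_{k=1}^K\E_{\Dcal}\bigl[|e_k(\widehat q_k)|^p\bigr].
\]
It therefore suffices to show $\E|e_k(\widehat q_k)|^p\le C_p^p(\csdiv K/m)^{p/2}$ uniformly in $k$ and in the admissible kernel $P_{Y\mid X}$.

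\emph{Step 1 (conditioning and probability integral transform).} Fix $k$ and an admissible $P_{Y\mid X}$. Let $F_k$ be the CDF of $\Sstar(x_k,Y)$ under $P_{Y\mid X=x_k}$. Since $\Sstar(x_k,y)=|y|-(1-\alpha)$ and the upper mass bound in \Cref{assum:mass_regular} makes $P_{Y\mid X=x_k}$ atomless, $F_k$ is continuous. Hence $\operatorname{Cov}^P(x_k)=F_k(\widehat q_k)$, with the convention $F_k(+\infty)=1$. Conditional on $(X_1,\ldots,X_m)$, the scores at $x_k$ are $N_k$ i.i.d.\ draws from $F_k$. So $F_k(\widehat q_k)$ has the law of the $j_k$-th order statistic $U_{(j_k)}$ of $N_k$ i.i.d.\ uniforms, namely $\mathrm{Beta}(j_k,N_k+1-j_k)$, whenever $j_k\le N_k$. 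It equals $1$ when $j_k=N_k+1$. In particular this law does not depend on $P_{Y\mid X}$, which makes the supremum over kernels trivial.

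\emph{Step 2 (conditional moment bound).} Write $N=N_k$ and $j=j_k$.
\begin{itemize}
\item If $j=N+1$, then $(N+1)\alpha<1$, so $|e_k|=\alpha<1/(N+1)\le(N+1)^{-1/2}$.
\item If $j\le N$, the mean $j/(N+1)$ lies in $[1-\alpha,\,1-\alpha+1/(N+1))$. A Beta variable with parameters summing to $N+1$ is sub-Gaussian with variance proxy $1/(4(N+2))$ (Marchal--Arbel). Alternatively, one can bound the central moments directly through the Dirichlet/order-statistic representation.
\end{itemize}
Either route gives $\E[|U_{(j)}-(1-\alpha)|^p\mid N]\le C_p'(N+1)^{-p/2}$, using $1/(N+1)\le(N+1)^{-1/2}$ for the bias.

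\emph{Step 3 (binomial inverse moments).} Here $N_k\sim\operatorname{Bin}(m,\pi)$ with $\pi=1/(\csdiv K)$. We need $\E[(N+1)^{-r}]\le C_r((m+1)\pi)^{-r}$ for $r=p/2$. For $r\le1$, this follows from Jensen together with the exact identity $\E[1/(N+1)]=\bigl(1-(1-\pi)^{m+1}\bigr)/((m+1)\pi)$. For $r>1$, split on the event $\{N\ge m\pi/2\}$, and use the Chernoff bound $\PP(N<m\pi/2)\le e^{-m\pi/8}\le C_r(m\pi)^{-r}$. A cleaner alternative is $\E\bigl[\prod_{i=1}^{\lceil r\rceil}(N+i)^{-1}\bigr]\le((m+1)\pi)^{-\lceil r\rceil}\prod_i i$, obtained by shifting the binomial index.

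When $m\pi<1$, the trivial bound $|e_k|\le1\le\sqrt{\csdiv K/m}$ suffices. Combining the three steps, averaging over $k$, and taking $p$-th roots yields \eqref{eq:cs_cellwise_moment_upper}.

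The main obstacle is Step 3 for large $p$, where the inverse moments must be controlled with constants depending only on $p$; the Chernoff split is what I would try first. A secondary point needing care in Step 2 is that the Beta concentration constant must be uniform in $j/(N+1)$, including $j$ near $N$.
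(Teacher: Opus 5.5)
Your proposal is correct and follows essentially the same route as the paper: averaging over the target atoms, the probability integral transform to the $\operatorname{Beta}(j_k,N_k+1-j_k)$ order statistic (with the value $1$ when $j_k=N_k+1$), a conditional moment bound $C_p^p(N_k+1)^{-p/2}$ with the rank bias $<1/(N_k+1)$ absorbed, and the split $\PP(N_k\le\lambda/2)\le e^{-\lambda/8}$ (trivial case $\lambda\le1$) for the binomial inverse moment. The only difference is how the Beta central moments are bounded: you cite Marchal--Arbel sub-Gaussianity, whereas the paper applies Hoeffding's inequality to the binomial representation of uniform order statistics, which keeps the argument self-contained. Both bounds are uniform in $j/(N+1)$, as you required.
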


A complete proof is given in \Cref{subsec:cs_cellwise_upper}.

By Markov's inequality, for every $\delta\in(0,1)$,
\begin{equation}
\label{eq:cs_cellwise_probability_upper}
\sup_{P_{Y\mid X}}
\PP_{\Dcal}\!\left(
\Lp[{\QC[X]}]{
\operatorname{Cov}^{P}_{\{\Sstar\le\widehat q\}}(\cdot)-(1-\alpha)}
\ge C_p\delta^{-1/p}\sqrt{\frac{\csdiv K}{m}}
\right)
\le\delta.
\end{equation}
Integrating the lower tail bound in \Cref{thm:cs_fano_lower} and applying
Jensen's inequality to \Cref{prop:cs_cellwise_upper} gives, for every
$K\ge23$, $p\in[1,\infty)$, and $m$ satisfying the condition of
\Cref{thm:cs_fano_lower},
\begin{equation}
\label{eq:cs_cellwise_expected_sandwich}
\begin{aligned}
&\left(1-2e^{-K/32}\right)
\frac{\sqrt{\alpha(1-\alpha)}}{64}
\sqrt{\frac{\csdiv K}{m}}\\
&\qquad\le
\inf_{\widehat q_{1:K}}
\sup_{P_{Y\mid X}}
\E_{\Dcal}\!\left[
\Lp[{\QC[X]}]{
\operatorname{Cov}^{P}_{\{\Sstar\le\widehat q\}}(\cdot)-(1-\alpha)}
\right]
\le C_p\sqrt{\frac{\csdiv K}{m}}.
\end{aligned}
\end{equation}
Thus the expected minimax rate on this construction is
$\sqrt{\csdiv K/m}$ for every fixed $p<\infty$. For every fixed
$\delta<1-2e^{-K/32}$, \eqref{eq:cs_cellwise_probability_upper} shows that,
uniformly over the candidate kernels, the atomwise rule has loss at most
\[
C_p\delta^{-1/p}\sqrt{\frac{\csdiv K}{m}}
\]
with probability at least $1-\delta$. Conversely, for every calibration rule,
\Cref{thm:cs_fano_lower} gives the worst-case lower bound
\[
\frac{\sqrt{\alpha(1-\alpha)}}{64}
\sqrt{\frac{\csdiv K}{m}}
\]
with probability at least $1-2e^{-K/32}>\delta$. The moment bound does not
cover $p=\infty$. Its high probability constant grows as $\delta^{-1/p}$ and
is therefore not independent of $K$ when $\delta$ decreases exponentially in
$K$.

For expected loss, the rates $\sqrt{\csdiv/m}$ and
$\sqrt{\csdiv K/m}$ are the inverse square roots of the mean source counts
$m/\csdiv$ and $m/(\csdiv K)$ at the target atoms. In both lower bounds,
$\Sstar$ is fixed and the infimum ranges over every measurable scalar or
vector threshold rule. The weighted conformal rule attains the one-atom rate
for $p\in[1,\infty]$, while the atomwise split conformal rule attains the
$K$-atom rate for every fixed $p<\infty$.

\section{Numerical Experiments}
\label{sec:numerical_experiments}

Experiments E1--E2 evaluate the fixed-score carrier rules from
\Cref{sec:calibration_lower_bounds}; Experiment E3 illustrates the learned CQR
bounds of \Cref{sec:non-asymptotic-CQR,sec:cs_main} under a continuous shift.
The complete carrier identities, parameters, and plots are given in
\Cref{sec:numerical_carrier_details}.

\subsection{Fixed-Score Carrier Experiments}

E1 uses the two-point construction of \Cref{prop:cs_lecam_lower} at
$\alpha=0.1$ and $\kappa\in\{1,3,9,27\}$. The exact weighted-rule risk is a
Binomial--Beta mixture, so no response simulation is needed. Against the
effective size $m/\csdiv$, the four curves nearly collapse. At small effective
sizes, the conformal rank can exceed the number of carrier observations; the
threshold is then $+\infty$ and the absolute coverage error is $\alpha$,
producing the visible plateau. At large effective sizes, the scaled risk
converges to the Gaussian constant
$\sqrt{2\alpha(1-\alpha)/\pi}$. The comparison with
\Cref{prop:cs_lecam_lower,prop:cs_carrier_upper} also shows the gap created by
unoptimized nonasymptotic constants; see \Cref{fig:e1_scalar}.

E2 applies split calibration separately at
$K\in\{23,64,256\}$ target atoms for $\kappa\in\{1,3,9\}$. Exact beta moments
give $\{\E L_p^p\}^{1/p}$, while Monte Carlo estimates from $10^4$
replications give $\E L_p$. The curves illustrate the scaling
$\sqrt{\csdiv K/m}$ for finite $p$. Their separation for $p>1$ is the
finite-$K$ Jensen gap, rather than a discrepancy between theory and
simulation. The $p=\infty$ panel illustrates the carrier rule's
$\sqrt{\log K}$ behavior after the effective-size limit; see
\Cref{fig:e2_cellwise}.

\subsection{Learned CQR under Covariate Shift}
\label{subsec:numerical_cqr}

At $\alpha=0.1$, E3 takes $\DC[X]=\operatorname{Unif}[0,1]$ and
\[
 \wsh(x)=\frac{a_\kappa e^{a_\kappa x}}{e^{a_\kappa}-1},
 \qquad
 \frac{a_\kappa}{2}\coth\!\left(\frac{a_\kappa}{2}\right)=1+\kappa,
 \qquad \kappa\in\{1,3\},
\]
so $\csdiv=1+\kappa$. The response model is
\[
 Y\mid X=x\sim\mathcal N\!\left(\sin(2\pi x),
 \left\{\tfrac12+\tfrac14\cos(2\pi x)\right\}^2\right).
\]
It satisfies \Cref{assum:mass_regular}. Raw endpoints are fitted either by a
one-hidden-layer tanh network of width $20$ or by misspecified affine quantile
regression, and then sorted as in \eqref{eq:raw_endpoint_sorting}. Thus E3
illustrates the generic learned-score results.

We compare the exact weighted rule $\CChatw$, unweighted split CQR, and the
population-normalized diagnostic
\[
 \{y:\Shat(x,y)\le \quant{\betam}{\Fhatw}\}.
\]
The last rule replaces the random normalization and test-point correction in
\eqref{eq:cs_exact_raw_level_main} by $\betam$ and therefore has no exact
target validity guarantee. Because the conditional law is Gaussian, all
coverage profiles and target integrals are evaluated deterministically; only
the source training and calibration folds are simulated.

\begin{figure}[!htbp]
  \centering
  \includegraphics[width=\textwidth]{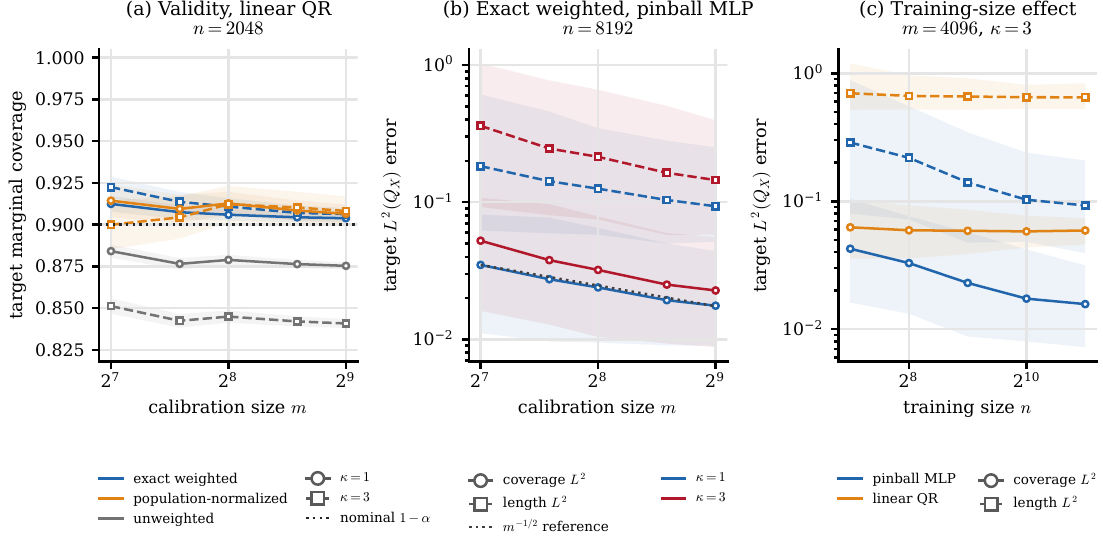}
  \caption{Learned CQR experiment (E3) under exponential covariate shift at
  $\alpha=0.1$, from $R=200$ replications. Panel~(a) reports means with
  pointwise $95\%$ intervals; panels~(b)--(c) report medians with pointwise
  empirical $[0.025,0.975]$-quantile bands.}
  \label{fig:e3_cqr}
\end{figure}

Panel~(a) uses affine endpoints with $n=2048$. The exact weighted rule is
consistent with target marginal validity, while the unweighted rule
undercovers under the positive tilt. Panel~(b), using the network at
$n=8192$, shows length and coverage-profile errors decaying at the
calibration scale $m^{-1/2}$, with a larger prefactor for $\kappa=3$.
Panel~(c) fixes $(m,\kappa)=(4096,3)$: network errors decrease with the
training size, whereas the affine fit reaches its misspecification floor,
illustrating the endpoint term in
\Cref{prop:cs_length,th:cs_cond_cov}.

\FloatBarrier

\section{A Sparse-ReLU Instantiation of the Endpoint Condition}
\label{sec:relu_specialization}

Neural quantile regression rates are established by Madrid Padilla et
al.~\cite{madridPadilla2022quantile} and Shen et al.~\cite{shen2021deep}.
Here we verify the $p=2$ instance of
\Cref{assum:HPD-excess-risk} for one sparse-ReLU
pinball estimator and substitute the resulting endpoint radius into the
generic CQR bounds. The verification uses the bounded support and two-sided
conditional-density bounds required by the sparse-ReLU approximation and the
pinball-risk argument.

\begin{assum}
\label{assum:boundedness}
$\Xset=[0,1]^d$ and $\Y=[-M,M]$ for some $M>0$.
\end{assum}
\begin{assum}
\label{assum:density}
For every $x\in\Xset$, the conditional law $\DC[Y\mid X=x]$ has support
$\Y$ and a Lebesgue density satisfying
$0<\low\le p_{Y\mid X}(y\mid x)\le\up<\infty$ for every $y\in\Y$.
\end{assum}
For every $\alpha\in(0,1)$, \Cref{assum:boundedness,assum:density} imply
\Cref{assum:mass_regular} with $r_0=\alpha/(2\up)$; see
\Cref{subsec:relu_endpoint_condition}.

\begin{assumB}
\label{assum:smoothness}
For each $\tau\in\{\alo,\ahi\}$, the conditional quantile function
$\fstar{\tau}$ belongs to the isotropic H\"older ball
$\HC^\beta(\Xset,\lipconst_\beta)$ for some $\beta>0$.
\end{assumB}

\paragraph{Estimator and endpoint rate}
Let $\mathcal F_n^{\mathrm{SH}}$ denote the sparse ReLU class in
\eqref{eq:NNSHn}, with outputs clipped to $[-M,M]$.
The pinball loss, H\"older-ball convention, architecture, and training threshold
$n_{\mathrm{SH}}$ are specified in \Cref{sec:nn-classes}.

\begin{theorem}
\label{theo:requ_rates}
Fix $\alpha\in(0,1)$ and $\tau\in\{\alo,\ahi\}$, and assume
\Cref{assum:boundedness,assum:density,assum:smoothness}.
Let $\widetilde f_{n,\tau}$ be a measurable empirical
pinball risk minimizer over $\mathcal F_n^{\mathrm{SH}}$.
There are constants
$C_{\mathrm{rate}},C_{\mathrm{conf}}>0$, depending only on
$(\alpha,\low,\up,\beta,d,M,\lipconst_\beta)$, such that, for every
$\delta\in(0,1)$ and integer $n\ge n_{\mathrm{SH}}$, with probability at
least $1-\delta$ over $\Dtrain$,
\[
\Ltwo{\widetilde f_{n,\tau}-\fstar{\tau}}
\le C_{\mathrm{rate}}n^{-\beta/(2\beta+d)}(\log n)^{3/2}
+C_{\mathrm{conf}}\sqrt{\frac{\log(1/\delta)}{n}}.
\]
\end{theorem}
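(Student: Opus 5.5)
The plan is the standard three-step route for sparse-ReLU nonparametric regression, adapted to the pinball loss. The three steps are: a local quadratic comparison between excess pinball risk and $L^2$ error, an approximation bound for the class $\mathcal F_n^{\mathrm{SH}}$, and a localized concentration bound for the empirical risk minimizer.

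\emph{Step 1 (curvature).} Write $\Risk{f}=\E[\pinball(Y-f(X))]$. By Knight's identity,
\[
\Risk{f}-\Risk{\fstar{\tau}}=\E\int_0^{f(X)-\fstar{\tau}(X)}\bigl(F_{Y\mid X}(\fstar{\tau}(X)+s)-\tau\bigr)\,\rmd s .
\]
Under \Cref{assum:density}, both $f$ and $\fstar{\tau}$ take values in $[-M,M]$, and the conditional density lies in $[\low,\up]$ on $\Y$. The integrand is therefore sandwiched between $\low|s|$ and $\up|s|$, which gives
\[
\tfrac{\low}{2}\Ltwo{f-\fstar{\tau}}^2\le \Risk{f}-\Risk{\fstar{\tau}}\le\tfrac{\up}{2}\Ltwo{f-\fstar{\tau}}^2 .
\]
The lower bound converts excess risk into $L^2$ error. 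The upper bound converts the approximation error into an excess-risk term.

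\emph{Step 2 (approximation and entropy).} From the Schmidt-Hieber approximation theorem for isotropic H\"older functions, with the architecture in \eqref{eq:NNSHn}, we obtain $\bestfinclass\in\mathcal F_n^{\mathrm{SH}}$ with $\supnorm{\bestfinclass-\fstar{\tau}}\lesssim n^{-\beta/(2\beta+d)}$; clipping to $[-M,M]$ cannot increase this error. By Step 1, the approximation term in excess risk is then $\lesssim n^{-2\beta/(2\beta+d)}$. The sparse-network covering bound gives
\[
\log\mathcal N(\eps,\mathcal F_n^{\mathrm{SH}},\metricinfty)\lesssim s\,L\log(n/\eps),
\]
with $s\asymp n^{d/(2\beta+d)}\log n$ and $L\asymp\log n$. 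The entropy is thus of order $n^{d/(2\beta+d)}(\log n)^2\log(1/\eps)$.

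\emph{Step 3 (localized concentration).} The pinball loss is $\max(\tau,1-\tau)$-Lipschitz. The excess-loss functions $g_f=\pinball(Y-f(X))-\pinball(Y-\bestfinclass(X))$ are therefore bounded by $4M$ and satisfy
\[
\PVar(g_f)\le \Ltwo{f-\bestfinclass}^2\lesssim \Risk{f}-\Risk{\fstar{\tau}}+\text{approx}.
\]
This is a Bernstein condition with exponent one. We apply Bousquet's inequality, or Talagrand's inequality with a peeling argument, over a finite $\eps$-net with $\eps=1/n$, combined with the basic inequality $\EmpRisk{\widetilde f_{n,\tau}}\le\EmpRisk{\bestfinclass}$. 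With probability at least $1-\delta$ this yields
\[
\Risk{\widetilde f_{n,\tau}}-\Risk{\fstar{\tau}}\lesssim \text{approx}+\frac{\log\mathcal N+\log(1/\delta)}{n}.
\]
The entropy term equals $n^{-2\beta/(2\beta+d)}(\log n)^3$. Taking square roots after Step 1 gives the rate $n^{-\beta/(2\beta+d)}(\log n)^{3/2}$ plus the term $\sqrt{\log(1/\delta)/n}$, which is the claimed bound. The threshold $n\ge n_{\mathrm{SH}}$ is used so that the approximation theorem applies and the net resolution is absorbed.

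I expect the main obstacle to be Step 3. The difficulty is obtaining the fast rate $\log\mathcal N/n$ rather than $\sqrt{\log\mathcal N/n}$, which requires the variance--excess-risk link uniformly over the class. One must also handle that $\widetilde f_{n,\tau}$ is only an exact empirical minimizer over a class not containing $\fstar{\tau}$. I would center at $\bestfinclass$ and use the ratio-type (weighted) empirical process bound over the $1/n$-net. The discretization error $O(1/n)$ is controlled by the Lipschitz property of the loss.
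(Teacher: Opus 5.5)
Your proposal is correct and follows essentially the same route as the paper: the Knight's-identity quadratic sandwich, Schmidt-Hieber approximation with output clipping, sup-norm entropy of order $n^{d/(2\beta+d)}(\log n)^3$ at resolution $1/n$, and a fixed-function Bernstein bound whose variance is controlled by the excess risk, union-bounded over a $1/n$-net and transferred to the minimizer through the Lipschitz property of the pinball loss, with the resulting quadratic inequality solved and then converted to $L^2$ error. The differences are minor. Because the net is finite, Bernstein plus a union bound already gives the ratio-type bound, so Bousquet/Talagrand and peeling are unnecessary; and the paper centers at the in-class population risk minimizer, whose existence it proves by compactness, whereas your centering at the approximant sidesteps that step.
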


The proof combines the sparse ReLU approximation of
Schmidt-Hieber~\cite[Theorem~5]{schmidthieber_2020}, metric entropy, a
Bernstein oracle inequality, and the quadratic risk comparison.
A complete proof is given in \Cref{sec:theo_requ_rates}.

Together with the deterministic $2M$ bound, \Cref{theo:requ_rates} verifies
\Cref{assum:HPD-excess-risk} at $p=2$; the radius $\epsilon_2$ is given in
\eqref{eq:relu_epsilon_main}.

\begin{corollary}
\label{cor:relu_rates}
Fix $\alpha\in(0,1)$. Under
\Cref{assum:boundedness,assum:density,assum:smoothness}, construct the endpoints
by~\eqref{eq:raw_endpoint_sorting}. For every $\delta\in(0,1)$, fix
$n\ge n_{\mathrm{SH}}$ and $m\ge1$ satisfying
\eqref{eq:no_shift_localization} with $p=2$ and $\epsilon_2$ from
\eqref{eq:relu_epsilon_main}. Then, with probability at least $1-\delta$,
\begin{equation}
\label{eq:relu_rate}
\max\!\left\{
\Ltwo{|\CChat(\cdot)|-|\CCstar(\cdot)|},
\Ltwo{\operatorname{Cov}_{\CChat}(\cdot)-(1-\alpha)}
\right\}
\ \lesssim\
\begin{aligned}
&n^{-\frac{\beta}{2\beta+d}}(\log n)^{3/2}
+\sqrt{\tfrac{\log(4/\delta)}{n}}\\
&\qquad+\frac1m+\sqrt{\tfrac{\log(4/\delta)}{m}},
\end{aligned}
\end{equation}
where $\lesssim$ hides only
$(\alpha,\low,\up,\beta,d,M,\lipconst_\beta)$-dependent constants.
\end{corollary}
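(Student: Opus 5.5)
The corollary follows by plugging the sparse-ReLU endpoint radius into \Cref{prop:target,th:global_cov_bound} at $p=2$. The plan has three steps.

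First, I verify the hypotheses. By the remark after \Cref{assum:density}, \Cref{assum:boundedness,assum:density} imply \Cref{assum:mass_regular} with $r_0=\alpha/(2\up)$ and the same $\low,\up$. \Cref{theo:requ_rates}, applied to each $\tau\in\{\alo,\ahi\}$, gives \hyperref[assum:HPD-excess-risk]{\textbf{A2(2)}} with
\[
\epsilon_2(n,\delta)=C_{\mathrm{rate}}n^{-\beta/(2\beta+d)}(\log n)^{3/2}+C_{\mathrm{conf}}\sqrt{\log(1/\delta)/n}
\]
for $n\ge n_{\mathrm{SH}}$. For $n<n_{\mathrm{SH}}$ the deterministic bound $\Ltwo{\widetilde f_{n,\tau}-\fstar{\tau}}\le 2M$ holds, since both functions take values in $[-M,M]$. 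So \eqref{eq:relu_epsilon_main} is a valid radius for all $n$, as \Cref{assum:HPD-excess-risk} requires. One point to check: \Cref{assum:HPD-excess-risk} is stated per $\tau$ with confidence $1-\delta$. \Cref{prop:target} is called with $\delta/4$ and internally unions over both endpoints, so nothing extra is needed.

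Second, I invoke the two results. Since $n,m$ satisfy \eqref{eq:no_shift_localization} with $p=2$, \Cref{prop:target} and \Cref{th:global_cov_bound} hold on the common event $\mathcal A_{n,m}(\delta)$, which has probability at least $1-\delta$. On this event both the length and coverage $L^2(\DC[X])$ errors are bounded by
\[
C\bigl(\epsilon_2(n,\delta/4)+\eta_m(\delta)+s_m\bigr),
\qquad C=\max\{2(1+\up/\low),\,1/\low,\,2\up(1+\up/\low),\,\up/\low\}.
\]
This constant depends only on $(\low,\up)$.

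Third, I substitute the rates. By \eqref{eq:rank_slack_exact}, $s_m<2/m$. By definition, $\eta_m(\delta)=\sqrt{\log(4/\delta)/(2m)}$. Finally,
\[
\epsilon_2(n,\delta/4)=C_{\mathrm{rate}}n^{-\beta/(2\beta+d)}(\log n)^{3/2}+C_{\mathrm{conf}}\sqrt{\log(4/\delta)/n}.
\]
Collecting terms gives \eqref{eq:relu_rate}, with constants depending only on $(\alpha,\low,\up,\beta,d,M,\lipconst_\beta)$.

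The argument is bookkeeping, and there is no substantive obstacle. The step needing the most care is matching the confidence levels: \eqref{eq:relu_epsilon_main} must be evaluated at $\delta/4$, so that $\log(1/(\delta/4))=\log(4/\delta)$ gives exactly the displayed $\sqrt{\log(4/\delta)/n}$ term.
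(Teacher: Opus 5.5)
Your proposal is correct and is essentially the paper's proof. You verify \Cref{assum:mass_regular} and \hyperref[assum:HPD-excess-risk]{\textbf{A2(2)}} as in \Cref{subsec:relu_endpoint_condition}, then apply \Cref{prop:target,th:global_cov_bound} at $p=2$ on the common event $\mathcal A_{n,m}(\delta)$. Finally you substitute $\epsilon_2(n,\delta/4)$, $\eta_m(\delta)$ and $s_m<2/m$. Your explicit constant $C$ spells out bookkeeping that the paper leaves implicit. One small correction: since \eqref{eq:relu_epsilon_main} carries a $\min\{2M,\cdot\}$, your displayed equality for $\epsilon_2(n,\delta/4)$ should be an inequality, which is all the argument needs.
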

The proof combines \Cref{prop:target,th:global_cov_bound} with
\Cref{theo:requ_rates}; see \Cref{app:length_mismatch_l2}.

\begin{corollary}
\label{cor:cs_relu}
Fix $\alpha\in(0,1)$ and assume
\Cref{assum:boundedness,assum:density,assum:cov_shift,assum:smoothness},
and use the rearranged sparse-ReLU endpoints
of~\eqref{eq:raw_endpoint_sorting}. For every $\delta\in(0,1)$, fix
$n\ge n_{\mathrm{SH}}$ and $m\ge1$ satisfying
\eqref{eq:cs_localization_main} with $p=2$ and $\epsilon_2$ from
\eqref{eq:relu_epsilon_main}. Then, with probability at least $1-\delta$,
\begin{equation}
\label{eq:cs_relu_rate}
\begin{aligned}
\max\!\left\{
\begin{gathered}
\Ltwo{|\CChatw(\cdot)|-|\CCstar(\cdot)|}[{\QC[X]}],\\
\Ltwo{\operatorname{Cov}_{\CChatw}(\cdot)-(1-\alpha)}[{\QC[X]}]
\end{gathered}
\right\}
&\lesssim
\wmax^{1/2}\left\{
n^{-\beta/(2\beta+d)}(\log n)^{3/2}
+\sqrt{\frac{\log(4/\delta)}n}\right\}\\
&\quad +(2-\alpha)\etaw{\delta}
+\frac{1-\alpha}{m}\sqrt{\wmax\,\csdiv}.
\end{aligned}
\end{equation}
The multiplicative constant depends only on
$(\alpha,\low,\up,\beta,d,M,\lipconst_\beta)$, not on
$(n,m,\delta,\wsh,\wmax)$.
The last term follows from
$\Ltwo{\wsh}[{\QC[X]}]\le\sqrt{\wmax\,\csdiv}$ and is the correction due to
the test atom at $+\infty$.
\end{corollary}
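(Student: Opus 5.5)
The plan is to combine \Cref{prop:cs_length,th:cs_cond_cov} at $p=2$ with the sparse-ReLU endpoint radius from \Cref{theo:requ_rates}, then simplify the resulting right-hand sides.

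\emph{Step 1: endpoint condition.} First verify \Cref{assum:mass_regular} from \Cref{assum:boundedness,assum:density}, with $r_0=\alpha/(2\up)$, as noted after \Cref{assum:density}. Next verify \hyperref[assum:HPD-excess-risk]{\textbf{A2(2)}}. Apply \Cref{theo:requ_rates} to each $\tau\in\{\alo,\ahi\}$ with confidence $\delta/4$. The radius $\epsilon_2(n,\delta/4)$ is the one in \eqref{eq:relu_epsilon_main}; it is at most $C_{\mathrm{rate}}n^{-\beta/(2\beta+d)}(\log n)^{3/2}+C_{\mathrm{conf}}\sqrt{\log(4/\delta)/n}$ for $n\ge n_{\mathrm{SH}}$, truncated by the deterministic bound $2M$ so that it is defined for all $n$. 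Since the radius is evaluated at $\delta/4$ and the proposition also uses $\epsilon_p(n,\delta/4)$, the constants stay $\delta$-free once $\log(16/\delta)\le 2\log(4/\delta)$ is absorbed. The endpoint sorting \eqref{eq:raw_endpoint_sorting} is already part of the setting of \Cref{sec:cs_main}.

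\emph{Step 2: invoke the shifted bounds.} \Cref{assum:cov_shift} holds by hypothesis, and the localization condition \eqref{eq:cs_localization_main} with $p=2$ is assumed. So \Cref{prop:cs_length,th:cs_cond_cov} apply on the single event $\mathcal A^w_{n,m}(\delta)$, which has probability at least $1-\delta$. They give bounds of the form
\[
c_1\wmax^{1/2}\epsilon_2(n,\delta/4)+c_2\Bigl((2-\alpha)\etaw{\delta}+\tfrac{1-\alpha}{m}\Ltwo{\wsh}[{\QC[X]}]\Bigr),
\]
with $c_1=2\max\{1,\up\}(1+\up/\low)$ and $c_2=\max\{1,\up\}/\low$. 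These constants depend only on $(\low,\up)$, and in particular not on $(n,m,\delta,\wsh,\wmax)$.

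\emph{Step 3: test-atom term.} Compute $\Ltwo{\wsh}[{\QC[X]}]^2=\int\wsh^2\,\rmd\QC[X]=\int\wsh^3\,\rmd\DC[X]$. Using $\wsh\le\wmax$, this is at most $\wmax\int\wsh^2\,\rmd\DC[X]=\wmax\csdiv$ by \eqref{eq:cs_weight_second_moment_main}. Substituting this and the radius from Step 1 gives \eqref{eq:cs_relu_rate}.

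There is no real obstacle; the only care needed is bookkeeping. One must check that the hidden constant is uniform in $\wsh$ and $\wmax$: this holds because all shift dependence is displayed explicitly and $c_1,c_2$ involve only $\low,\up$. One must also check that the confidence level $\delta/4$ inside $\epsilon_2$ only inflates the $\log$ term by a constant factor.
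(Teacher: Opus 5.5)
Your proposal is correct and follows the paper's proof. You verify the $p=2$ instance of \Cref{assum:HPD-excess-risk} from \Cref{theo:requ_rates} together with the deterministic $2M$ cap, apply \Cref{prop:cs_length,th:cs_cond_cov} at $p=2$ on the common event $\mathcal A^w_{n,m}(\delta)$, and bound $\Ltwo{\wsh}[{\QC[X]}]^2=\E_{\DC[X]}[\wsh^3]\le\wmax\csdiv$.

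The only superfluous step is the $\log(16/\delta)$ absorption. The assumption is verified for the whole radius function $u\mapsto\epsilon_2(n,u)$, and the shifted bounds evaluate it at $u=\delta/4$, which already gives $\log(4/\delta)$ exactly.
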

The proof is given in \Cref{subsec:cs_corollary}.

\section{Conclusion}
\label{sec:conclusion}
We obtained finite-sample high-probability CQR bounds that separate quantile
endpoint estimation from score calibration in oracle-length and realized
conditional-coverage-profile error. Under known bounded covariate shift,
endpoint transfer contributes $\wmax^{1/p}$, while the leading calibration
term is governed by $\csdiv=\E_{\DC[X]}[\wsh^2]$.

We complemented these guarantees with fixed-score calibration benchmarks on
constructed source--target marginals. The single-carrier benchmark has
expected minimax rate $\sqrt{\csdiv/m}$ for $m\ge m_\star$. For the constructed
$K$-atomic benchmark with $K\ge23$ and $m>m_\star^{(K)}$, expected upper and
lower bounds match at rate $\sqrt{\csdiv K/m}$ for every finite $p$. For every
fixed finite $p$ and $\delta<1-2e^{-K/32}$, the atomwise rule has loss at most
$C_p\delta^{-1/p}\sqrt{\csdiv K/m}$ with probability at least $1-\delta$,
whereas, for every calibration rule, the worst-case probability of loss at
least $\sqrt{\alpha(1-\alpha)\csdiv K/m}/64$ is at least $1-2e^{-K/32}$.

The numerical experiments illustrate both effective-calibration-size scalings
and the transition from calibration error to endpoint error for a
learned CQR score. All shifted results assume a known likelihood ratio. The
fixed-score benchmarks quantify calibration on the constructed problems,
while the CQR bounds propagate learner-specific endpoint error. We leave a
joint minimax analysis of score learning and calibration open.

\section*{Acknowledgments}
The work of Eric Moulines and Anton Conrad was supported by the European
Union (ERC-2022-SYGOCEAN-101071601). Views and opinions expressed are however
those of the author(s) only and do not necessarily reflect those of the
European Union or the European Research Council Executive Agency. Neither
the European Union nor the granting authority can be held responsible for them.

The work of Rustam Isaev, Sergey Samsonov, and Denis Belomestny was supported
by research project HSE-BR-2025-019 implemented as part of the Basic
Research Program at HSE University. This research was supported in part
through computational resources of HPC facilities at HSE
University~\cite{kostenetskiy2021hpc}.

The authors used OpenAI Codex to assist with language editing, consistency
checks, literature searches, \LaTeX{} formatting, and debugging and editing
the code for the numerical experiments. All AI-assisted suggestions were
reviewed and verified by the authors. The authors assume responsibility for
all content.

\clearpage
\appendix
\crefalias{section}{appendix}
\crefalias{subsection}{subappendix}
\crefalias{subsubsection}{subsubappendix}

\section{Preliminaries on Quantiles and Score Distributions}
\subsection{Notation and conventions}
\label{subsec:notation_conventions}

The \appendixcontextname\ uses the notation of
\Cref{sec:non-asymptotic-CQR,sec:cs_main}. The following decorations
distinguish conditional, marginal, empirical, source, and target score CDFs.

\paragraph{Distribution functions on the score line}
We write $F$ for an exact population CDF and $\Fhat$ for a random empirical
CDF based on a finite sample. A superscript $(\,\cdot\mid X)$ denotes a
conditional law given $X$; superscripts $\DC$, $\QC$, and $\nu$ denote source,
target, and generic marginalization over $X$, while $\wsh$ marks
likelihood-ratio weighting.
Subscripts $\Sstar$ and $\Shat$ identify the oracle and fitted scores, while
the subscript $m$ in $\Fhat^{(\Shat)}_{m}$ and $\Fhatw$ is the calibration
sample size. Thus, $F(t)$ is a marginal CDF and $F(t\mid x)$ is a conditional
CDF evaluated at $X=x$.
We write $\overline\R=\R\cup\{-\infty,+\infty\}$ with its order-Borel
$\sigma$-field. Whenever a probability CDF is evaluated at an extended-real
quantile, it is extended by $F(-\infty)=0$ and $F(+\infty)=1$. Weighted
empirical cumulative functions retain
their realized total mass; if the requested level is unattained, their
generalized inverse equals $+\infty$.

For a fixed training realization, the conditional fitted-score CDF is
\[
F^{(\Shat\mid X)}(t\mid x)
\coloneqq\PP\bigl(\Shat(X,Y)\le t\mid X=x\bigr),
\]
whereas the marginal CDF of a score $S$ under $\nu$ is
\[
F^{\nu}_{S}(t)=\int_{\Xset}F^{(S\mid X)}(t\mid x)\,\rmd\nu(x).
\]

\paragraph{Calibration radii}
The no-shift proof uses $\eta_m(\delta)$ from
\eqref{eq:no_shift_scales}; the known-shift proof uses $\etaw{\delta}$ from
\eqref{eq:cs_weighted_radius_main}.

\subsection{Primitive-mass calibration lemmas}
\label{subsec:primitive_mass_lemmas}

The generic no-shift and known-shift proofs use the next five lemmas; they
require neither bounded support nor bounded fitted endpoints.

\begin{lemma}
\label{lem:sorting_contraction_mass}
Fix $\alpha\in(0,1)$.
Let $g_{\alo},g_{\ahi},u,v\colon\Xset\to\R$ be finite measurable functions with
$u\le v$, and put
\[
a=g_{\alo}\wedge g_{\ahi},
\qquad b=g_{\alo}\vee g_{\ahi},
\qquad D=|a-u|+|b-v|.
\]
Then $a,b$ are finite measurable, $a\le b$, and
\begin{equation}
\label{eq:sort_contraction_mass}
D\le |g_{\alo}-u|+|g_{\ahi}-v|.
\end{equation}
Consequently, for every probability measure $\nu$, $p\in[1,\infty]$ and
$\varepsilon\ge0$, the bounds
$\|g_{\alo}-u\|_{L^p(\nu)}\le\varepsilon$ and
$\|g_{\ahi}-v\|_{L^p(\nu)}\le\varepsilon$ imply
\[
\|D\|_{L^p(\nu)}\le2\varepsilon,
\qquad
\|D\|_{L^1(\nu)}\le2\varepsilon.
\]
\end{lemma}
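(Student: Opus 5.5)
The argument is pointwise, followed by integration. Measurability and finiteness of $a,b$ are immediate, since minima and maxima of finite measurable functions are finite and measurable; $a\le b$ holds by construction. The core is the deterministic inequality: for reals $g_1,g_2$ and $u\le v$, with $a=g_1\wedge g_2$ and $b=g_1\vee g_2$,
\[
|a-u|+|b-v|\le|g_1-u|+|g_2-v|.
\]
This is the classical rearrangement fact that sorting does not increase an $\ell^1$ matching cost against a sorted target.

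The pointwise inequality splits into two cases. If $g_1\le g_2$, then $(a,b)=(g_1,g_2)$ and equality holds. If $g_1>g_2$, then $(a,b)=(g_2,g_1)$, and we must show
\[
|g_2-u|+|g_1-v|\le|g_1-u|+|g_2-v|
\qquad\text{whenever } g_2<g_1 \text{ and } u\le v.
\]
I would use the function $\phi(t)=|t-u|-|t-v|$. Because $u\le v$, $\phi$ is nondecreasing in $t$: it equals $u-v$ for $t\le u$, increases linearly on $[u,v]$, and equals $v-u$ for $t\ge v$. The target inequality rearranges to $\phi(g_2)\le\phi(g_1)$, which follows from monotonicity since $g_2<g_1$. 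This monotonicity check is the only step needing care, and it is elementary.

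For the integrated consequence, Minkowski's inequality in $L^p(\nu)$ applied to the pointwise bound gives
\[
\|D\|_{L^p(\nu)}\le\|g_{\alo}-u\|_{L^p(\nu)}+\|g_{\ahi}-v\|_{L^p(\nu)}\le2\varepsilon;
\]
the case $p=\infty$ is handled the same way with essential suprema. Since $\nu$ is a probability measure, Hölder's (or Jensen's) inequality gives $\|D\|_{L^1(\nu)}\le\|D\|_{L^p(\nu)}$, which yields the second bound.
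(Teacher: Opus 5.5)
Your proof is correct and follows the same route as the paper: a case split on the order of $g_{\alo}$ and $g_{\ahi}$, with equality in the sorted case, then Minkowski's inequality (including $p=\infty$) and monotonicity of $L^p$ norms on a probability space. Your nondecreasing function $\phi(t)=|t-u|-|t-v|$ is a clean way to carry out the ordered-matching inequality, which the paper only asserts follows by checking the positions of $g_{\alo},g_{\ahi}$ relative to $[u,v]$.
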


\begin{proof}
Measurability follows because minimum and maximum are continuous maps on
$\R^2$.  If $g_{\alo}\le g_{\ahi}$, \eqref{eq:sort_contraction_mass} is an
equality.  If $g_{\ahi}<g_{\alo}$, the ordered matching inequality
\[
|g_{\ahi}-u|+|g_{\alo}-v|
\le |g_{\alo}-u|+|g_{\ahi}-v|,
\qquad u\le v,
\]
follows by checking the positions of $g_{\alo},g_{\ahi}$ relative to $[u,v]$.
Minkowski's inequality gives the $L^p$ bound, also for $p=\infty$; on a
probability space $\|D\|_{L^1}\le\|D\|_{L^p}$.
\end{proof}

\begin{lemma}
\label{lem:score_cdf_perturbation_mass}
Fix $\alpha\in(0,1)$ and assume \Cref{assum:mass_regular}.
Let $\nu$ be a probability measure on $\Xset$ such that
$\nu\ll\DC[X]$.
Let $a\le b$ and $a'\le b'$ be finite measurable functions, and define
\[
F_{a,b}^{\nu}(t)=\int \DC[Y\mid X=x]([a(x)-t,b(x)+t])\,\rmd\nu(x),
\]
with $[l,r]=\varnothing$ for $l>r$, and analogously $F_{a',b'}^{\nu}$.
Then
\begin{equation}
\label{eq:score_mass_upper}
\sup_{t\in\R}|F_{a,b}^{\nu}(t)-F_{a',b'}^{\nu}(t)|
\le\up\int\bigl(|a-a'|+|b-b'|\bigr)\,\rmd\nu.
\end{equation}
Moreover, both score CDFs are $2\up$-Lipschitz: for $s<t$,
\begin{equation}
\label{eq:score_mass_lipschitz}
0\le F_{a,b}^{\nu}(t)-F_{a,b}^{\nu}(s)\le2\up(t-s),
\end{equation}
and likewise for $F_{a',b'}^{\nu}$; in particular, both are continuous.
\end{lemma}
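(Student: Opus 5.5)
The plan is to write both score CDFs as integrals over $\nu$ of conditional interval masses and compare them pointwise in $x$, before integrating. Fix $t\in\R$ and $x\in\mathcal X_0$; this set is $\nu$-full because $\nu\ll\DC[X]$ and $\DC[X](\mathcal X_0)=1$. Put $I=[a(x)-t,b(x)+t]$ and $I'=[a'(x)-t,b'(x)+t]$, with the empty-interval convention. The key pointwise claim is
\[
\bigl|\DC[Y\mid X=x](I)-\DC[Y\mid X=x](I')\bigr|\le\up\bigl(|a(x)-a'(x)|+|b(x)-b'(x)|\bigr).
\]
To prove it, write $G(l,r)=\DC[Y\mid X=x]([l,r])$ for $l\le r$ and $G(l,r)=0$ otherwise. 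Then show that $G$ is $\up$-Lipschitz in each endpoint separately, using the upper mass bound \eqref{eq:upper_mass_bound}.

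For the right endpoint, take $r<r'$. If $l\le r$, then $G(l,r')-G(l,r)=\DC[Y\mid X=x]((r,r'])\le\up(r'-r)$. If $r<l\le r'$, then $G(l,r')\le\up(r'-l)\le\up(r'-r)$ and $G(l,r)=0$. If $r'<l$, both values vanish. The left endpoint is handled symmetrically. Changing one endpoint at a time through the intermediate interval $[a'(x)-t,b(x)+t]$ then gives the pointwise claim by the triangle inequality. Integrating against $\nu$ and taking the supremum over $t$ yields \eqref{eq:score_mass_upper}. Measurability of $x\mapsto\DC[Y\mid X=x](I)$ follows from the kernel property composed with the measurable endpoints, so the integrals are well defined.

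For \eqref{eq:score_mass_lipschitz}, fix $s<t$. The interval $[a(x)-t,b(x)+t]$ contains $[a(x)-s,b(x)+s]$, since $a\le b$ and the lower endpoint moves down while the upper endpoint moves up. Monotonicity of measure gives the lower bound $0$ after integration. For the upper bound, apply the pointwise endpoint-Lipschitz estimate twice, once moving the left endpoint by $t-s$ and once the right endpoint by $t-s$. This gives a pointwise increment of at most $2\up(t-s)$, and integrating over $\nu$ preserves it. Continuity follows immediately, and the same argument applies to $F^{\nu}_{a',b'}$.

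The only delicate step is the case analysis when one of the intervals is empty or transitions between empty and nonempty; the three-case check above handles it. Atomlessness, which follows from the upper mass bound, ensures that open versus closed endpoints do not matter, for example when writing $(r,r']$. No lower mass bound is needed.
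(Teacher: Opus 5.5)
Your proposal is correct and follows the paper's argument: compare the two conditional interval masses pointwise in $x$ on the $\nu$-full set $\mathcal X_0$ via the upper mass bound \eqref{eq:upper_mass_bound}, then integrate against $\nu$. The difference is only bookkeeping. The paper covers $[l,r]\mathbin\triangle[l',r']$ by $[l\wedge l',l\vee l']\cup[r\wedge r',r\vee r']$, and the increment set by $[a(x)-t,a(x)-s]\cup[b(x)+s,b(x)+t]$, in one step each. You instead chain one-endpoint Lipschitz estimates through an intermediate interval, with your three-case check handling empty intervals; the two routes are equivalent.
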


\begin{proof}
For every finite $a\le b$ and $t\in\R$,
\begin{equation}
\label{eq:score_sublevel_interval}
\{y:\max(a-y,y-b)\le t\}=[a-t,b+t],
\end{equation}
where the right side is empty when $a-t>b+t$.  For possibly empty intervals,
\[
[l,r]\mathbin\triangle[l',r']
\subseteq[l\wedge l',l\vee l']\cup[r\wedge r',r\vee r'].
\]
The two covering intervals have lengths $|l-l'|$ and $|r-r'|$.
Applying the mass upper bound with
$l=a(x)-t$, $r=b(x)+t$, $l'=a'(x)-t$, $r'=b'(x)+t$ and integrating over
$x$ proves \eqref{eq:score_mass_upper}.

For fixed $x$ and $s<t$,
\[
\{y:s<\max(a(x)-y,y-b(x))\le t\}
\subseteq[a(x)-t,a(x)-s]\cup[b(x)+s,b(x)+t].
\]
Each interval on the right has length $t-s$. The mass upper bound and
integration over $x$ prove \eqref{eq:score_mass_lipschitz}; the argument for
$a',b'$ is identical.
\end{proof}

\begin{lemma}
\label{lem:oracle_score_growth}
Fix $\alpha\in(0,1)$ and assume \Cref{assum:mass_regular}.
Put
$a^\star=\fstar{\alo}$, $b^\star=\fstar{\ahi}$ and
\[
F^{(\Sstar\mid X)}(t\mid x)
=\DC[Y\mid X=x]([a^\star(x)-t,b^\star(x)+t]),
\qquad
F^{(\Sstar)}(t)=\int F^{(\Sstar\mid X)}(t\mid x)\,\rmd\DC[X](x).
\]
Then, for every $x\in\mathcal X_0$,
\[
F^{(\Sstar\mid X)}(0\mid x)=1-\alpha,
\qquad
b^\star(x)-a^\star(x)\ge\frac{1-\alpha}{\up}.
\]
Moreover, with $r_-=r_0\wedge(1-\alpha)/(2\up)$ and $r_+=r_0$,
\begin{align}
F^{(\Sstar)}(0)-F^{(\Sstar)}(-u)&\ge2\low u &&(0\le u\le r_-),
\label{eq:oracle_left_mass_growth}\\
F^{(\Sstar)}(v)-F^{(\Sstar)}(0)&\ge2\low v &&(0\le v\le r_+).
\label{eq:oracle_right_mass_growth}
\end{align}
\end{lemma}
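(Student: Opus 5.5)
The plan is to work pointwise for $x\in\mathcal X_0$ and then integrate against $\DC[X]$. The function $F^{(\Sstar\mid X)}(t\mid x)$ is the mass of $[a^\star(x)-t,b^\star(x)+t]$. By the atomless consequence of \Cref{assum:mass_regular} noted after its statement, $\DC[Y\mid X=x]((-\infty,a^\star(x)])=\alo$ and $\DC[Y\mid X=x]((-\infty,b^\star(x)])=\ahi$. Since there are no atoms, the mass of $[a^\star,b^\star]$ equals $(\ahi)-\alo=1-\alpha$, which gives $F^{(\Sstar\mid X)}(0\mid x)=1-\alpha$. The upper mass bound \eqref{eq:upper_mass_bound} applied to $[a^\star,b^\star]$ then gives $1-\alpha\le\up(b^\star-a^\star)$, which is the length bound.

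For the right growth \eqref{eq:oracle_right_mass_growth}, fix $0\le v\le r_+=r_0$. I will write the increment as the mass of $[a^\star-v,a^\star)\cup(b^\star,b^\star+v]$; these are disjoint from $[a^\star,b^\star]$ and from each other. Since there are no atoms, their masses equal those of the closed intervals $[a^\star-v,a^\star]$ and $[b^\star,b^\star+v]$. By the local lower bounds at $\tau=\alo$ (left side) and $\tau=\ahi$ (right side), each mass is at least $\low v$, so the increment is at least $2\low v$ pointwise. Integrating over $x$ with $\DC[X](\mathcal X_0)=1$ gives the claim.

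For the left growth \eqref{eq:oracle_left_mass_growth}, fix $0\le u\le r_-$. Because $u\le(1-\alpha)/(2\up)$ and $b^\star-a^\star\ge(1-\alpha)/\up$, we have $a^\star+u\le b^\star-u$. So the shrunken interval $[a^\star+u,b^\star-u]$ is nonempty, and the decrement is the mass of $[a^\star,a^\star+u)\cup(b^\star-u,b^\star]$, a disjoint union. Using the local lower bounds on the sides $[\fstar{\alo},\fstar{\alo}+h]$ and $[\fstar{\ahi}-h,\fstar{\ahi}]$ with $h=u\le r_0$, together with atomlessness to pass between open and closed endpoints, gives at least $2\low u$ pointwise; integrating finishes the proof.

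The main obstacle is the left case. There one must check that the two removed pieces do not overlap, which is exactly why $r_-$ includes the cap $(1-\alpha)/(2\up)$ and why the length bound is proved first. Measurability of $x\mapsto F^{(\Sstar\mid X)}(t\mid x)$ follows from the kernel property and the measurability of $a^\star,b^\star$, so the integrals are well defined.
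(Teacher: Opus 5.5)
Your proposal is correct and follows the paper's proof essentially step for step. Atomlessness plus the equal-tail identities give the coverage at zero and the width bound. The two outer boundary intervals give the right growth. For the left growth, the width bound together with the cap $(1-\alpha)/(2\up)$ in $r_-$ keeps the two removed inner intervals disjoint, and integration over the full set $\mathcal X_0$ then finishes. Your explicit half-open decompositions simply spell out the paper's remark that atomlessness removes all endpoint overlaps.
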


\begin{proof}
The upper mass inequality at $l=r$ gives $\DC[Y\mid X=x](\{l\})=0$.  Hence the two
equal-tail identities yield
$\DC[Y\mid X=x]([a^\star(x),b^\star(x)])=\ahi-\alo=1-\alpha$, and the upper mass bound
gives the width inequality.  If $u\le r_-$, then
$a^\star(x)+u\le b^\star(x)-u$; removing the two inner boundary intervals
from $[a^\star(x),b^\star(x)]$ loses at least $2\low u$.  If $v\le r_+$,
adding the two outer boundary intervals gains at least $2\low v$.
Atomlessness removes all endpoint overlaps.  Integration over the common
full set $\mathcal X_0$ proves the marginal assertions.
\end{proof}

\begin{lemma}
\label{lem:sharp_asymmetric_inversion_mass}
Let $\alpha\in(0,1)$, $d\ge0$, $\rho,r_-,r_+>0$, and let $F$ be a CDF with
$F(0)=1-\alpha$ such that
\[
F(0)-F(-u)\ge\rho u\quad(0\le u\le r_-),
\qquad
F(v)-F(0)\ge\rho v\quad(0\le v\le r_+).
\]
Let $\hat H\colon\R\to[0,\infty)$ be nondecreasing and right-continuous,
$\lim_{t\to-\infty}\hat H(t)=0$, and
$\sup_t|\hat H(t)-F(t)|\le d$. For $s\in\R$, suppose
$\beta=1-\alpha+s>0$. If
\[
(d-s)_+<\rho r_-,
\qquad (d+s)_+\le\rho r_+,
\]
then $\{t:\hat H(t)\ge\beta\}$ is nonempty and bounded below.  Thus its
ordinary infimum is finite and
\begin{equation}
\label{eq:sharp_mass_inversion}
-\frac{(d-s)_+}{\rho}\le\quant{\beta}{\hat H}
\le\frac{(d+s)_+}{\rho}.
\end{equation}
\end{lemma}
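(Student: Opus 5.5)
The plan is to argue directly from the definition of the generalized inverse $\quant{\beta}{\hat H}=\inf\{t:\hat H(t)\ge\beta\}$, bracketing this set between two explicit points. Write $t_+\coloneqq(d+s)_+/\rho$ and $t_-\coloneqq-(d-s)_+/\rho$. The upper bound follows once $\hat H(t_+)\ge\beta$; this also shows the set is nonempty. The lower bound follows once $\hat H(t)<\beta$ for every $t<t_-$; this also shows the set is bounded below. Since $\hat H$ is nondecreasing, $\{t:\hat H(t)\ge\beta\}$ is then an up-ray contained in $[t_-,\infty)$ and containing $t_+$. Its infimum is therefore finite and lies in $[t_-,t_+]$, which is \eqref{eq:sharp_mass_inversion}.

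\emph{Upper bound.} Assume first $d+s>0$, so $t_+=(d+s)/\rho\in(0,r_+]$ by the hypothesis $(d+s)_+\le\rho r_+$. The right growth condition gives $F(t_+)\ge F(0)+\rho t_+=1-\alpha+d+s$. Uniform closeness then gives $\hat H(t_+)\ge F(t_+)-d\ge1-\alpha+s=\beta$. If instead $d+s\le0$, then $t_+=0$ and $\hat H(0)\ge F(0)-d=1-\alpha-d\ge1-\alpha+s=\beta$, since $d\le -s$. In both cases $t_+$ lies in the set.

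\emph{Lower bound.} This is the step needing care, because the left growth condition only holds on $[0,r_-]$. The hypothesis $(d-s)_+<\rho r_-$ is strict precisely so that we can go slightly beyond $t_-$. If $d-s\le0$, then $t_-=0$. For $t<0$ we have $F(t)\le F(0)$, and strictly so: for $0<u\le r_-$, $F(-u)\le1-\alpha-\rho u$, and for $u>r_-$ monotonicity gives $F(-u)\le F(-r_-)\le 1-\alpha-\rho r_-$. Hence $\hat H(t)\le F(t)+d<1-\alpha+d\le\beta$.

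If $d-s>0$, take $t<t_-$ and set $u=-t>(d-s)/\rho$. When $u\le r_-$, left growth gives $F(t)\le1-\alpha-\rho u<1-\alpha-(d-s)$. When $u>r_-$, monotonicity gives $F(t)\le F(-r_-)\le1-\alpha-\rho r_-<1-\alpha-(d-s)$, using the strict hypothesis. In either case $\hat H(t)\le F(t)+d<1-\alpha+s=\beta$. So no $t<t_-$ lies in the set, and the set is bounded below by $t_-$.

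Right-continuity of $\hat H$ is not needed for the bounds, only monotonicity; $\beta>0$ together with $\hat H(t)\to0$ as $t\to-\infty$ is consistent with bounded-belowness but is superseded by the explicit argument above. The only step requiring attention is the lower-bound case $u>r_-$. There the strict inequality $(d-s)_+<\rho r_-$ is exactly what closes the argument via monotonicity of $F$.
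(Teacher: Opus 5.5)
Your proof is correct and follows essentially the same route as the paper's: you show $\hat H(t_+)\ge\beta$ at $t_+=(d+s)_+/\rho$ via right growth (or directly at $0$ when $d+s\le 0$), and you rule out every $t<-(d-s)_+/\rho$ using left growth on $[-r_-,0]$, together with monotonicity and the strict window $(d-s)_+<\rho r_-$ beyond $-r_-$. The only differences are cosmetic: you treat the case $d\le s$ separately, and in the far-left region you invoke monotonicity of $F$ rather than of $\hat H$.
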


\begin{proof}
Put $t_+=(d+s)_+/\rho$. If $d+s\ge0$, right growth and uniform
approximation give
\[
\hat H(t_+)\ge F(t_+)-d
\ge1-\alpha+\rho t_+-d=\beta,
\]
whereas $d+s<0$ gives $t_+=0$ and
$\hat H(0)\ge1-\alpha-d>\beta$. Thus the superlevel set is nonempty.

Let $a=(d-s)_+/\rho$. If $-r_-\le t<-a$, left growth gives
$\hat H(t)\le1-\alpha-\rho|t|+d<\beta$. If $t<-r_-$, monotonicity and the
strict left-window condition give
\[
\hat H(t)\le\hat H(-r_-)
\le1-\alpha-\rho r_-+d<\beta.
\]
Hence every superlevel point is at least $-a$, proving finiteness and
\eqref{eq:sharp_mass_inversion}.
\end{proof}

The strict left condition is necessary for discontinuous empirical
cumulative functions. Take $1-\alpha=1/2$, $\rho=1/2$, $r_-=r_+=1$,
$d=1/2$, $s=0$, let $F$ be the uniform CDF on $[-1,1]$, and set
\[
\hat H(t)=
\begin{cases}
0,&t<-2,\\
1/2,&-2\le t<1,\\
1,&t\ge1.
\end{cases}
\]
Then $\sup_t|\hat H(t)-F(t)|=1/2$ and the left condition holds with
equality, but $\quant{1/2}{\hat H}=-2<-1$.

\begin{lemma}
\label{lem:coverage_decomposition_mass}
Fix $\alpha\in(0,1)$ and assume \Cref{assum:mass_regular}.
Let $\nu$ be a probability measure on $\Xset$ such that
$\nu\ll\DC[X]$.
Let $a\le b$ be finite measurable functions, $q\in\R$, and
$\widehat C(x)=[a(x)-q,b(x)+q]$, with the empty-interval convention.  Put
$D(x)=|a(x)-\fstar{\alo}(x)|+|b(x)-\fstar{\ahi}(x)|$.  Then, for every
$p\in[1,\infty]$,
\begin{equation}
\label{eq:mass_coverage_decomposition}
\left\|\operatorname{Cov}_{\widehat C}-(1-\alpha)\right\|_{L^p(\nu)}
\le2\up|q|+\up\|D\|_{L^p(\nu)}.
\end{equation}
\end{lemma}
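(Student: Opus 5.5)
We argue pointwise in $x$ and then integrate with Minkowski's inequality. Fix $x\in\mathcal X_0$, where \Cref{assum:mass_regular} holds. Write $\widehat C(x)=[a(x)-q,b(x)+q]$ and $C^\star(x)=[\fstar{\alo}(x),\fstar{\ahi}(x)]$. By \Cref{lem:oracle_score_growth}, $\DC[Y\mid X=x](C^\star(x))=1-\alpha$, so
\[
\operatorname{Cov}_{\widehat C}(x)-(1-\alpha)
=\DC[Y\mid X=x]\bigl(\widehat C(x)\bigr)-\DC[Y\mid X=x]\bigl(C^\star(x)\bigr).
\]
We insert the intermediate interval $C_q^\star(x)=[\fstar{\alo}(x)-q,\fstar{\ahi}(x)+q]$, with the empty-interval convention, and split the difference into two terms: one comparing $\widehat C(x)$ with $C_q^\star(x)$ (endpoint error), and one comparing $C_q^\star(x)$ with $C^\star(x)$ (threshold error).

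For the endpoint term we reuse the symmetric-difference inclusion from the proof of \Cref{lem:score_cdf_perturbation_mass}, with $l=a(x)-q$, $r=b(x)+q$, $l'=\fstar{\alo}(x)-q$, $r'=\fstar{\ahi}(x)+q$. The symmetric difference of two possibly empty intervals lies in two intervals of lengths $|a(x)-\fstar{\alo}(x)|$ and $|b(x)-\fstar{\ahi}(x)|$. The upper mass bound \eqref{eq:upper_mass_bound} then gives
\[
\bigl|\DC[Y\mid X=x](\widehat C(x))-\DC[Y\mid X=x](C_q^\star(x))\bigr|\le\up D(x).
\]
For the threshold term, the two intervals are nested. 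If $q\ge0$, the difference is contained in $[\fstar{\alo}-q,\fstar{\alo}]\cup[\fstar{\ahi},\fstar{\ahi}+q]$, which has mass at most $2\up q$. If $q<0$, the difference is contained in $[\fstar{\alo},\fstar{\alo}+|q|]\cup[\fstar{\ahi}-|q|,\fstar{\ahi}]$, which has mass at most $2\up|q|$. This remains true when $C_q^\star(x)$ is empty, since then the difference is all of $C^\star(x)$, which is covered by those two boundary strips.

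Combining the two terms gives the pointwise bound $|\operatorname{Cov}_{\widehat C}(x)-(1-\alpha)|\le2\up|q|+\up D(x)$ for all $x\in\mathcal X_0$. Because $\nu\ll\DC[X]$ and $\DC[X](\mathcal X_0)=1$, the set $\mathcal X_0$ is $\nu$-full, so the bound holds $\nu$-almost everywhere. Taking the $L^p(\nu)$ norm and applying Minkowski's inequality gives \eqref{eq:mass_coverage_decomposition}, since the constant $2\up|q|$ has $L^p$ norm at most $2\up|q|$ under a probability measure; this also covers $p=\infty$.

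The only delicate step is the case where $q<0$ is so large that the interval becomes empty. The covering argument above handles it directly, and the mass bound needs no lower-growth condition there. Measurability of $\operatorname{Cov}_{\widehat C}$ follows from the kernel property applied to intervals with measurable endpoints.
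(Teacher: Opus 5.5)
Your proof is correct and follows the same route as the paper. You cover the symmetric difference pointwise on $\mathcal X_0$ by boundary intervals, apply the upper mass bound \eqref{eq:upper_mass_bound}, and finish with Minkowski's inequality in $L^p(\nu)$. The only cosmetic difference is that you pass through the intermediate interval $[\fstar{\alo}(x)-q,\fstar{\ahi}(x)+q]$ and use the triangle inequality, whereas the paper covers $\widehat C(x)\triangle[\fstar{\alo}(x),\fstar{\ahi}(x)]$ in one step by the two endpoint-movement intervals, whose total length is at most $D(x)+2|q|$.
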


\begin{proof}
For $x\in\mathcal X_0$, compare
\[
[a(x)-q,b(x)+q]
\quad\text{and}\quad
[\fstar{\alo}(x),\fstar{\ahi}(x)].
\]
Their symmetric difference is covered by the two endpoint-movement
intervals, whose total length is at most $D(x)+2|q|$, also when the first
interval is empty.  Therefore
\[
|\operatorname{Cov}_{\widehat C}(x)-(1-\alpha)|
\le\up D(x)+2\up|q|.
\]
Taking the $L^p(\nu)$ norm proves \eqref{eq:mass_coverage_decomposition}.
\end{proof}

\section{Proofs for Global CQR}
\subsection{Proof of~Proposition~\ref{prop:target}}
\label{app:efficiency_proof}
This section gives the complete training and calibration events, DKW
allocation, signed-threshold localization, and length calculation.
We restate \Cref{prop:target} in complete form to display the two
localization inequalities used in the inversion step.
\begin{repproposition}{prop:target}
Let $\alpha\in(0,1)$, $p\in[1,\infty]$, and assume
\Cref{assum:mass_regular} and \mbox{\Cref{assum:HPD-excess-risk}}.
Fix
$\delta\in(0,1)$ and $n,m$ satisfying
\[
2\up\epsilon_p(n,\delta/4)+\eta_m(\delta)\le2\low r_-,
\qquad
2\up\epsilon_p(n,\delta/4)+\eta_m(\delta)+s_m\le2\low r_+.
\]
With probability at least $1-\delta$ over $(\Dtrain,\Dcal)$,
$\Qhat{\betam}$ is finite,
\[
-\frac{2\up\epsilon_p(n,\delta/4)+\eta_m(\delta)}{2\low}
\le\Qhat{\betam}\le
\frac{2\up\epsilon_p(n,\delta/4)+\eta_m(\delta)+s_m}{2\low},
\]
and
\[
\Lp[{\DC[X]}]{|\CChat(\cdot)|-|\CCstar(\cdot)|}
\le2\left(1+\frac{\up}{\low}\right)\epsilon_p(n,\delta/4)
+\frac{\eta_m(\delta)+s_m}{\low}.
\]
\end{repproposition}

\begin{proof}
\begin{equation}
\label{eq:primitive_Dn}
D_n(x)=|\fhat{\alo}(x)-\fstar{\alo}(x)|
+|\fhat{\ahi}(x)-\fstar{\ahi}(x)|.
\end{equation}
For $\varepsilon=\epsilon_p(n,\delta/4)$, let
\begin{equation}
\label{eq:definition-EC-global}
\mathcal E_{\rm tr}
=\bigcap_{\tau\in\{\alo,\ahi\}}
\left\{\Lp[{\DC[X]}]{\widetilde f_{n,\tau}-\fstar{\tau}}
\le\varepsilon\right\}.
\end{equation}
By \Cref{assum:HPD-excess-risk} and a union bound,
$\PP(\mathcal E_{\rm tr})\ge1-\delta/2$.  On this event,
\Cref{lem:sorting_contraction_mass} gives, also for $p=\infty$,
\begin{equation}
\label{eq:Delta_L2_bound}
\Lp[{\DC[X]}]{D_n}\le2\varepsilon,
\qquad
\Lone{D_n}[{\DC[X]}]\le2\varepsilon.
\end{equation}

Conditional on $\Dtrain$, the calibration scores are i.i.d.\ real random
variables with CDF
\[
F^{(\Shat)}(t)=\PP(\Shat(X,Y)\le t\mid\Dtrain).
\]
For the two right-continuous CDFs in the following supremum, the supremum
over $\R$ equals that over $\mathbb Q$ and is therefore measurable.
The two-sided Dvoretzky--Kiefer--Wolfowitz inequality with Massart's sharp
constant~\cite{dvoretzky1956asymptotic,massart1990tight}, applied conditionally
on $\Dtrain$, requires no continuity of this CDF and gives
\begin{equation}
\label{eq:dkw_checked}
\PP\!\left(
\left.\sup_{t\in\R}|\Fhat_m^{(\Shat)}(t)-F^{(\Shat)}(t)|
>\eta_m(\delta)\,\right|\Dtrain
\right)
\le2e^{-2m\eta_m(\delta)^2}=\frac\delta2.
\end{equation}
Define
\[
\mathcal E_{\rm DKW}
=\left\{\sup_{t\in\R}|\Fhat_m^{(\Shat)}(t)-F^{(\Shat)}(t)|
\le\eta_m(\delta)\right\}.
\]
The tower property and another union bound give
\begin{equation}
\label{eq:primitive_common_event}
\mathcal A_{n,m}(\delta)
=\mathcal E_{\rm tr}\cap\mathcal E_{\rm DKW},
\qquad
\PP(\mathcal A_{n,m}(\delta))\ge1-\delta.
\end{equation}

Let $F^{(\Sstar)}(t)=\PP(\Sstar(X,Y)\le t)$.  On
$\mathcal E_{\rm tr}$, \Cref{lem:score_cdf_perturbation_mass} with
$\nu=\DC[X]$ gives
\[
\sup_t|F^{(\Shat)}(t)-F^{(\Sstar)}(t)|
\le\up\Lone{D_n}[{\DC[X]}]
\le2\up\varepsilon.
\]
Consequently, on $\mathcal A_{n,m}(\delta)$,
\begin{equation}
\label{eq:primitive_uniform_error}
\sup_t|\Fhat_m^{(\Shat)}(t)-F^{(\Sstar)}(t)|
\le2\up\varepsilon+\eta_m(\delta).
\end{equation}
By \Cref{lem:oracle_score_growth}, $F^{(\Sstar)}(0)=1-\alpha$ and its left and right
growth constants are $2\low$ on $r_-$ and $r_+$, respectively.  Moreover,
\eqref{eq:rank_slack_exact} gives $s_m>0$ and
$\betam=1-\alpha+s_m$.  Since $F^{(\Sstar)}(r_+)\le1$, right growth at $r_+$ gives
$2\low r_+\le\alpha$; hence the right localization condition and
$2\up\varepsilon+\eta_m(\delta)\ge0$ imply
$s_m\le\alpha$ and $\betam\le1$.

Apply \Cref{lem:sharp_asymmetric_inversion_mass} to
\eqref{eq:primitive_uniform_error} with $\rho=2\low$,
$d=2\up\varepsilon+\eta_m(\delta)$, $s=s_m$, and level $\betam>0$. Since
$s_m>0$, the first localization inequality implies the strict left-window
condition $(d-s_m)_+<2\low r_-$: either $d\le s_m$ and $(d-s_m)_+=0$, or
$(d-s_m)_+=d-s_m<d\le2\low r_-$. The second localization inequality is the
right-window condition $(d+s_m)_+\le2\low r_+$. The lemma gives a nonempty,
bounded-below empirical superlevel set, hence a finite ordinary infimum,
and, since $(d-s_m)_+\le d$,
\begin{equation}
\label{eq:E3}
-\frac{2\up\varepsilon+\eta_m(\delta)}{2\low}
\le\Qhat{\betam}\le
\frac{2\up\varepsilon+\eta_m(\delta)+s_m}{2\low}.
\end{equation}

It remains to control the genuine length.  Since
$\fhat{\alo}\le\fhat{\ahi}$,
\[
|\CChat(x)|
=\bigl(\fhat{\ahi}(x)-\fhat{\alo}(x)+2\Qhat{\betam}\bigr)_+,
\qquad
|\CCstar(x)|=\fstar{\ahi}(x)-\fstar{\alo}(x).
\]
The positive-part map is $1$-Lipschitz, so, for every $x$,
\[
\bigl||\CChat(x)|-|\CCstar(x)|\bigr|
\le D_n(x)+2|\Qhat{\betam}|.
\]
This includes negative thresholds and empty learned intervals.  Therefore,
on $\mathcal A_{n,m}(\delta)$, taking the $\Lp[{\DC[X]}]{\cdot}$ norm and using
\eqref{eq:Delta_L2_bound} together with
$|\Qhat{\betam}|\le
(2\up\varepsilon+\eta_m(\delta)+s_m)/(2\low)$ from \eqref{eq:E3} gives
\[
\Lp[{\DC[X]}]{|\CChat(\cdot)|-|\CCstar(\cdot)|}
\le2\left(1+\frac{\up}{\low}\right)\varepsilon
+\frac{\eta_m(\delta)+s_m}{\low},
\]
which proves \Cref{prop:target}.
\end{proof}

\subsection{Proofs of~Theorem~\ref{th:global_cov_bound} and
Proposition~\ref{prop:global_cov_l1_direct}}
\label{sec:global_cov_bound_proof}
Section~3 of the article gives the coverage mechanism. Here we retain the
complete common-event bookkeeping, endpoint/threshold calculation, and direct
$L^1$ argument.

\begin{reptheorem}{th:global_cov_bound}
Let $\alpha\in(0,1)$, $p\in[1,\infty]$, and assume
\Cref{assum:mass_regular} and \mbox{\Cref{assum:HPD-excess-risk}}.
Fix
$\delta\in(0,1)$ and $n,m$ satisfying
\[
2\up\epsilon_p(n,\delta/4)+\eta_m(\delta)\le2\low r_-,
\qquad
2\up\epsilon_p(n,\delta/4)+\eta_m(\delta)+s_m\le2\low r_+.
\]
On the event $\mathcal A_{n,m}(\delta)$ of
\eqref{eq:primitive_common_event}, whose probability is at least
$1-\delta$,
\begin{equation}
\label{eq:cond_cov_complete}
\Lp[{\DC[X]}]{\operatorname{Cov}_{\CChat}(\cdot)-(1-\alpha)}
\le2\up\left(1+\frac{\up}{\low}\right)\epsilon_p(n,\delta/4)
+\frac{\up}{\low}\bigl(\eta_m(\delta)+s_m\bigr).
\end{equation}
\end{reptheorem}

\begin{proof}
Work on $\mathcal A_{n,m}(\delta)$, where \eqref{eq:E3} ensures that
$\Qhat{\betam}\in\R$.
On $\mathcal X_0$, \Cref{lem:oracle_score_growth} gives
$\DC[Y\mid X=x]([\fstar{\alo}(x),\fstar{\ahi}(x)])=1-\alpha$.
Apply \Cref{lem:coverage_decomposition_mass} with $\nu=\DC[X]$,
$a=\fhat{\alo}$, $b=\fhat{\ahi}$, $q=\Qhat{\betam}$ and $D=D_n$:
\begin{equation}
\label{eq:B2_decomp}
\Lp[{\DC[X]}]{\operatorname{Cov}_{\CChat}(\cdot)-(1-\alpha)}
\le2\up|\Qhat{\betam}|+\up\Lp[{\DC[X]}]{D_n}.
\end{equation}
Equations \eqref{eq:Delta_L2_bound} and \eqref{eq:E3} bound, respectively,
the endpoint and threshold terms in \eqref{eq:B2_decomp}:
\[
\up\Lp[{\DC[X]}]{D_n}\le2\up\epsilon_p(n,\delta/4),
\qquad
2\up|\Qhat{\betam}|
\le\frac{\up}{\low}\bigl(
2\up\epsilon_p(n,\delta/4)+\eta_m(\delta)+s_m\bigr).
\]
Substitution in \eqref{eq:B2_decomp} gives
\[
\Lp[{\DC[X]}]{\operatorname{Cov}_{\CChat}(\cdot)-(1-\alpha)}
\le2\up\left(1+\frac{\up}{\low}\right)\epsilon_p(n,\delta/4)
+\frac{\up}{\low}\bigl(\eta_m(\delta)+s_m\bigr),
\]
which is~\eqref{eq:cond_cov_complete}.
\end{proof}

\begin{repproposition}{prop:global_cov_l1_direct}
Let $\alpha\in(0,1)$ and assume \Cref{assum:mass_regular} and the $p=1$ instance of
\Cref{assum:HPD-excess-risk}. For every $\delta\in(0,1)$ and $n,m\ge1$, let
$\mathcal A_{n,m}(\delta)$ be the event in
\eqref{eq:primitive_common_event}, with $\mathcal E_{\rm tr}$ defined by
\eqref{eq:definition-EC-global} using $\epsilon_1(n,\delta/4)$. Then
$\PP(\mathcal A_{n,m}(\delta))\ge1-\delta$, and, on this event,
\begin{equation}
\label{eq:cond_cov_l1_direct_complete}
\Lone{\operatorname{Cov}_{\CChat}-(1-\alpha)}[{\DC[X]}]
\le4\up\epsilon_1(n,\delta/4)+\eta_m(\delta)+s_m.
\end{equation}
\end{repproposition}

\begin{proof}
Work on $\mathcal A_{n,m}(\delta)$.
Conditional on $\Dtrain$, let
\[
F^{(\Shat)}(t)=\PP\bigl(\Shat(X,Y)\le t\mid\Dtrain\bigr).
\]
The CDF $F^{(\Shat)}$ is continuous by
\Cref{lem:score_cdf_perturbation_mass}.

If $\betam\le1$, then $\Qhat{\betam}\in\R$, and the definition of the
generalized inverse together with the DKW bound gives
\[
\betam-\eta_m(\delta)
\le F^{(\Shat)}(\Qhat{\betam})
\le\betam+\eta_m(\delta).
\]
If $\betam>1$, then $\Qhat{\betam}=+\infty$ and
$|F^{(\Shat)}(\Qhat{\betam})-(1-\alpha)|=\alpha<s_m$.
Since $\betam=1-\alpha+s_m$, both cases give
\begin{equation}
\label{eq:direct_l1_calibration}
|F^{(\Shat)}(\Qhat{\betam})-(1-\alpha)|
\le\eta_m(\delta)+s_m.
\end{equation}

Set
$\widehat C_0(x)=\{y:\Shat(x,y)\le0\}
=[\fhat{\alo}(x),\fhat{\ahi}(x)]$.
For every $x$, the sets $\CChat(x)$ and $\widehat C_0(x)$ are nested; hence
\[
\Lone{\operatorname{Cov}_{\CChat}
-\operatorname{Cov}_{\widehat C_0}}[{\DC[X]}]
=\bigl|F^{(\Shat)}(\Qhat{\betam})-F^{(\Shat)}(0)\bigr|.
\]
The upper interval-mass condition makes the conditional laws atomless, so the
equal-tailed oracle has coverage $1-\alpha$. Applying
\Cref{lem:coverage_decomposition_mass} at threshold zero gives
\[
\Lone{\operatorname{Cov}_{\widehat C_0}-(1-\alpha)}[{\DC[X]}]
\le\up\Lone{D_n}[{\DC[X]}].
\]
Moreover, \eqref{eq:Delta_L2_bound} gives
$\Lone{D_n}[{\DC[X]}]\le2\epsilon_1(n,\delta/4)$. Therefore,
\begin{align*}
\Lone{\operatorname{Cov}_{\CChat}-(1-\alpha)}[{\DC[X]}]
&\le \bigl|F^{(\Shat)}(\Qhat{\betam})-(1-\alpha)\bigr|
 +\bigl|F^{(\Shat)}(0)-(1-\alpha)\bigr|\\
&\qquad
 +\Lone{\operatorname{Cov}_{\widehat C_0}-(1-\alpha)}[{\DC[X]}]\\
&\le\eta_m(\delta)+s_m+2\up\Lone{D_n}[{\DC[X]}]\\
&\le\eta_m(\delta)+s_m+4\up\epsilon_1(n,\delta/4).
\end{align*}
This is \eqref{eq:cond_cov_l1_direct_complete}.
\end{proof}

\section{Proofs for Known Covariate Shift}
\label{sec:covariate_shift}

Section~4 of the article gives the known-shift mechanism and decisive
displays. Here we verify the complete event and constant calculations for the
weighted empirical process, target length, and target profile bounds. The
separate fixed-score calibration benchmarks are proved in the next section.

\subsection{Setup and procedure}
\label{subsec:cs_setup}

The generic shifted proof assumes
\Cref{assum:mass_regular,assum:cov_shift} and
\Cref{assum:HPD-excess-risk}, and uses
the sorted endpoints in~\eqref{eq:raw_endpoint_sorting}. The target
functionals are those in~\eqref{eq:cs_target_main}; the realized profile is
evaluated under the target marginal.

\subsection{Known-Shift Calibration Lemmas}

Section~4 of the article records the unbiasedness, ordered-prefix mechanism,
and final radius; this subsection verifies the exact constants.

\paragraph{Weighted calibration deviation}
\label{subsec:cs_weighted_unif}

Fix a training realization.  Joint measurability and finiteness of the sorted
endpoints make
$(x,y)\mapsto(\wsh(x),\Shat(x,y))$ a measurable map into $\R^2$.
The calibration images are therefore i.i.d., and
\[
\FQS(t)\coloneqq(\QC[X]\otimes\DC[Y\mid X])(\Shat\le t)
=\E_{\DC[XY]}\!\bigl[\wsh\,\indi{(-\infty,t]}(\Shat)\bigm|\Dtrain\bigr].
\]
The equality follows by first integrating the indicator against
$\DC[Y\mid X=x]$ and then using
$\rmd\QC[X]=\wsh\,\rmd\DC[X]$; the normalization is deterministic and no
random denominator appears.

\begin{lemma}
\label{lem:cs_weighted_expectation}
Fix $m\ge1$ and assume \Cref{assum:cov_shift}.
Conditionally on $\Dtrain$, set
\[
Z_{m}^+\coloneqq\sup_{t\in\R}\{\Fhatw(t)-\FQS(t)\},
\qquad
Z_{m}^-\coloneqq\sup_{t\in\R}\{\FQS(t)-\Fhatw(t)\}.
\]
Then, with the universal constant $C_{\rm HL}=4$,
\begin{equation}
\label{eq:cs_expectation}
\E[Z_{m}^+\mid\Dtrain]\vee\E[Z_{m}^-\mid\Dtrain]
\le C_{\rm HL}\sqrt{\frac{\csdiv}{m}}.
\end{equation}
\end{lemma}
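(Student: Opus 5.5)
Fix a training realization and treat the calibration pairs as i.i.d.\ with images $(W_j,S_j)=(\wsh(X_j),\Shat(X_j,Y_j))$. The process
\[
\Fhatw(t)-\FQS(t)=\frac1m\sum_{j=1}^m\bigl(W_j\indiacc{S_j\le t}-\E[W\indiacc{S\le t}]\bigr)
\]
is an importance-weighted empirical CDF deviation. The plan is to bound its one-sided suprema by an $L^2$ maximal inequality over the ordered prefixes of the scores, rather than by a VC argument with the envelope $\wmax$. That choice is what should produce $\E_{\DC[X]}[\wsh^2]=\csdiv$ in place of $\wmax$, and it is consistent with the paper's remark that the $\csdiv$ term ``follows from the $L^2$ maximal bound for weighted score prefixes'' with symmetrization and a martingale maximal inequality. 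The argument for $Z_m^-$ is identical to the one for $Z_m^+$ after a sign change, so I treat only $Z_m^+$.

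First, reduce the supremum over $t\in\R$ to a countable one. Both $\Fhatw$ and $\FQS$ are right-continuous, so the supremum equals the supremum over $\mathbb Q$ and is measurable. Then symmetrize. With an independent ghost sample and Rademacher signs $\varepsilon_j$,
\[
\E Z_m^+\le \frac2m\,\E\sup_{t}\Bigl|\sum_j \varepsilon_j W_j\indiacc{S_j\le t}\Bigr|.
\]
Next, condition on $(W_j,S_j)_j$ and sort the sample by score, $S_{(1)}\le\dots\le S_{(m)}$, breaking ties in a fixed way. As $t$ ranges over $\R$, the sets $\{j:S_j\le t\}$ are exactly prefixes of this ordering, possibly skipping within tie blocks. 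Hence the supremum is at most $\max_{k\le m}|M_k|$, where $M_k=\sum_{i\le k}\varepsilon_{(i)}W_{(i)}$. Conditionally on the data, $(M_k)$ is a martingale in $k$, since the signs are independent of the ordering.

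Doob's $L^2$ maximal inequality then gives
\[
\E\bigl[\max_k|M_k|\bigm| (W,S)\bigr]\le \bigl(\E[\max_k M_k^2\mid (W,S)]\bigr)^{1/2}\le 2\Bigl(\sum_j W_j^2\Bigr)^{1/2}.
\]
Taking expectations and applying Jensen,
\[
\E\Bigl(\sum_j W_j^2\Bigr)^{1/2}\le\sqrt{m\,\E_{\DC[X]}[\wsh^2]}=\sqrt{m\csdiv},
\]
by \eqref{eq:cs_weight_second_moment_main}. Combining these gives $\E Z_m^+\le (2/m)\cdot 2\sqrt{m\csdiv}=4\sqrt{\csdiv/m}$, which is $C_{\rm HL}=4$.

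The main obstacle is making the prefix reduction rigorous when scores have ties, which can occur because $\Shat$ need not have an atomless law after conditioning. Tied scores must be added simultaneously, so the attainable partial sums form a subsequence of $(M_k)$ under a fixed tie-breaking order. The maximum over that subsequence is still dominated by $\max_k|M_k|$, so the bound is unaffected. One also needs the tie-breaking permutation to be a measurable function of $(W,S)$ that does not depend on $\varepsilon$, which sorting by score and then by index ensures.
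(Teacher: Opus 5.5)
Your proposal is correct and follows the paper's proof essentially step for step. Both arguments use the countable reduction by right-continuity, symmetrization, reduction to ordered score prefixes (ties only remove prefix endpoints), Doob's $L^2$ maximal inequality for the Rademacher partial-sum martingale, and Jensen, giving $\frac{2}{m}\cdot 2\sqrt{m\csdiv}=4\sqrt{\csdiv/m}$. The only cosmetic difference is that you symmetrize the two-sided supremum with an absolute value where the paper uses the one-sided bound; this does not change the constant $C_{\rm HL}=4$.
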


\begin{proof}
Write $g_t(x,y)=\wsh(x)\indi{\Shat(x,y)\le t}$.  Because the empirical and
population weighted CDFs are right-continuous, each one-sided supremum over
$\R$ equals the corresponding supremum over $\mathbb Q$. Hence the indexing
class may be taken countable and the suprema are measurable. The one-sided
consequence of the symmetrization lemma
\cite[Lemma~2.3.1, p.~108]{vandervaartwellner1996} gives, for either sign,
\[
\E[Z_{m}^\pm\mid\Dtrain]
\le2\E_{Z,\varepsilon}\sup_{t\in\R}
\frac1{m}\sum_{i=1}^{m}\varepsilon_i g_t(Z_i),
\]
where the $\varepsilon_i$ are independent Rademacher signs.  The same
symmetrization bound applies to
$Z_{m}^-$ after replacing every Rademacher sign by its negative; the joint
sign law is unchanged.  Conditional on the sample, order the finite scores
$\Shat_i$ increasingly, with any fixed ordering inside ties.  Every set
$\{i:\Shat_i\le t\}$ ends at a tie-block boundary and is therefore one of
these ordered prefixes.  Hence
\[
\sup_t\sum_i\varepsilon_i\wsh(X_i)\indi{\Shat_i\le t}
\le
\max_{0\le k\le m}
\left|\sum_{j\le k}\varepsilon_{\pi(j)}\wsh(X_{\pi(j)})\right|.
\]
The partial sums form a martingale in the Rademacher signs.  Doob's $L^2$
maximal inequality
\cite[Chapter~VII, Theorem~3.4, p.~317]{doob1953stochastic} and
Cauchy--Schwarz give conditional expectation at most
$2(\sum_i\wsh(X_i)^2)^{1/2}$.  Thus Jensen's inequality yields
\[
\E[Z_{m}^\pm\mid\Dtrain]
\le\frac4{m}\E\sqrt{\sum_{i=1}^{m}\wsh(X_i)^2}
\le4\sqrt{\frac{\csdiv}{m}}.
\]
Ties only remove possible prefix endpoints and cannot increase the maximum.
\end{proof}

\paragraph{Empirical-process refinement of the weighted deviation}
\label{subsec:cs_weighted_ep}

For $g_t=\wsh\indi{\Shat\le t}$,
\begin{equation}
\label{eq:cs_class_variance}
\sup_{t\in\R}\PVar_{\DC[XY]}(g_t)
\le\sup_t\E_{\DC[XY]}[g_t^2]
\le \csdiv\le\wmax.
\end{equation}
This variance upper bound, together with envelope $\wmax$, is the input to the
concentration inequality below.

\begin{lemma}
\label{lem:cs_weighted_ep}
Fix $m\ge1$ and $\delta\in(0,1)$, and assume \Cref{assum:cov_shift}.
Conditionally on $\Dtrain$, define
\begin{equation}
\label{eq:cs_unif_checked}
\Eunif^w\coloneqq
\left\{\sup_{t\in\R}\bigl|\Fhatw(t)-\FQS(t)\bigr|
\le\etaw{\delta}\right\}.
\end{equation}
Then
\[
\PP(\Eunif^w\mid\Dtrain)\ge1-\delta/2.
\]
\end{lemma}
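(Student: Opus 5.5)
We prove the two one-sided bounds separately and then take a union bound. The plan is to apply Bousquet's version of Talagrand's concentration inequality to each one-sided supremum $Z_m^+$ and $Z_m^-$ from \Cref{lem:cs_weighted_expectation}, each at confidence level $\delta/4$.

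Conditionally on $\Dtrain$, the class $\{g_t=\wsh\indi{\Shat\le t}:t\in\mathbb Q\}$ is countable, by right-continuity as in \Cref{lem:cs_weighted_expectation}. The centered functions $g_t-\E g_t$ satisfy $|g_t-\E g_t|\le\wmax$, since $0\le g_t\le\wmax$ and $0\le\E g_t\le1\le\wmax$. By \eqref{eq:cs_class_variance}, the weak variance satisfies $\sigma^2\le\csdiv$.

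Bousquet's inequality, applied to $mZ_m^+$ (a supremum of sums of centered terms bounded by $b=\wmax$), gives with probability at least $1-\delta/4$
\[
Z_m^+\le \E Z_m^+ +\sqrt{\frac{2\log(4/\delta)\,(\csdiv+2\wmax\E Z_m^+)}{m}}+\frac{\wmax\log(4/\delta)}{3m}.
\]
Applying the same inequality to the class $\{-g_t\}$ gives the corresponding bound for $Z_m^-$. A union bound then yields probability at least $1-\delta/2$ for the two-sided event.

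Next we absorb the cross term. By $\sqrt{a+b}\le\sqrt a+\sqrt b$ and $2\sqrt{uv}\le u+v$,
\[
\sqrt{\frac{4\log(4/\delta)\wmax\E Z_m^\pm}{m}}\le \E Z_m^\pm+\frac{\wmax\log(4/\delta)}{m}.
\]
Combined with the bound $\E Z_m^\pm\le4\sqrt{\csdiv/m}$ from \Cref{lem:cs_weighted_expectation}, this gives
\[
Z_m^\pm\le 8\sqrt{\frac{\csdiv}{m}}+\sqrt{\frac{2\csdiv\log(4/\delta)}{m}}+\frac{4\wmax\log(4/\delta)}{3m}.
\]
Since $\delta<1$, we have $\log(4/\delta)>\log 4>1$. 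Hence $8\sqrt{\csdiv/m}\le 8\sqrt{\csdiv\log(4/\delta)/m}$. Also $\sqrt2<1$ fails, but $8+\sqrt2\le 9.5$, so the constant must be checked carefully.

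For the constant $9$ to come out, the $\E Z$ part of the cross term should be absorbed with a smaller weight. Use $2\sqrt{uv}\le \epsilon u+v/\epsilon$ with a suitable $\epsilon$, or keep $\E Z_m^\pm$ with coefficient $1+\epsilon$. The aim is a leading coefficient $4(1+\epsilon)+\sqrt2\cdot(\text{factor})\le9$ while keeping the $\wmax$ coefficient at most $4/3$.

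For example, take the Bousquet form with the $\wmax/(3m)$ term, and absorb via $\sqrt{4L\wmax\E Z/m}\le \tfrac12\E Z+2\wmax L/m$. The resulting $\wmax$ coefficient would be too large, so the weights have to be balanced instead. If balancing does not reach $4/3$, one can exploit $\log(4/\delta)\ge\log4\approx1.386$ to shift part of the $\wmax L/m$ term. The fallback is the inequality $\wmax L/m\le\sqrt{\wmax\csdiv}\cdots$, which is not generally valid, so I would try the balancing first.

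The main obstacle is precisely this constant bookkeeping to land on $9$ and $4/3$. The structural steps are direct: measurability via countable index, envelope and variance bounds, Bousquet's inequality, and the expectation bound from \Cref{lem:cs_weighted_expectation}.

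Finally, the stated $\Eunif^w$ is the intersection of the two one-sided events. On it, $\sup_t|\Fhatw-\FQS|\le\etaw{\delta}$, and letting $t\to\infty$ gives the total-mass bound $|\Fhatw(+\infty)-1|\le\etaw{\delta}$.
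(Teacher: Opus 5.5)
Your argument matches the paper's through the intermediate bound
\[
Z_m^\pm\le 8\sqrt{\frac{\csdiv}{m}}+\sqrt{\frac{2\csdiv\log(4/\delta)}{m}}+\frac{4\wmax\log(4/\delta)}{3m}.
\]
The shared steps are: Bousquet's inequality for each one-sided supremum with envelope $\wmax$ and variance proxy $\csdiv$; absorbing the cross term by $2\sqrt{uv}\le u+v$ with unit weights; the expectation bound of \Cref{lem:cs_weighted_expectation}; and a union bound over the two signs at level $\delta/4$ each. The gap is the last step. You use only $\log(4/\delta)\ge1$, conclude that the constant $9$ does not follow, and leave the proof unfinished.

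The remedies you suggest cannot work. Suppose the cross term is split as $\epsilon\,\E Z_m^\pm+\wmax\log(4/\delta)/(\epsilon m)$. Then the $\wmax$ coefficient becomes $1/3+1/\epsilon$, which is at most $4/3$ only when $\epsilon\ge1$. In that case the coefficient of $\E Z_m^\pm$ is at least $2$. So, using only $\log(4/\delta)\ge1$, no choice of weights brings the leading constant below $8+\sqrt2>9$. Your ``fallback'' is invalid, as you yourself note.

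The missing observation is that you already wrote down the needed slack, $\log(4/\delta)\ge\log 4$, but did not use it. Since $\log(4/\delta)\ge\log4$,
\[
8\sqrt{\frac{\csdiv}{m}}
=\frac{8}{\sqrt{\log(4/\delta)}}\sqrt{\frac{\csdiv\log(4/\delta)}{m}}
\le\frac{8}{\sqrt{\log 4}}\sqrt{\frac{\csdiv\log(4/\delta)}{m}}.
\]
Here $8/\sqrt{\log4}\approx6.79$, so $8/\sqrt{\log4}+\sqrt2\approx8.21<9$. The $\wmax$ term already equals $4\wmax\log(4/\delta)/(3m)$. Hence, for each sign, $Z_m^\pm\le\etaw{\delta}$ outside an event of conditional probability at most $\delta/4$, and the union bound finishes. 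This is exactly how the paper closes the argument; no reweighting is needed.
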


\begin{proof}
The radius $\etaw{\delta}$ is defined in
\eqref{eq:cs_weighted_radius_main}.
The empirical and population weighted CDFs are right-continuous, so their
difference has the same supremum over $\mathbb Q$ as over $\R$.  This gives the
countable class required by \cite[Theorem~2.3]{bousquet2002bennett} and makes
the supremum measurable.

For $\mu_t=\E g_t$, apply that theorem separately to the centered classes
$g_t-\mu_t$ and $\mu_t-g_t$, $t\in\mathbb Q$.  Both have centered envelope
$\wmax$: indeed $g_t-\mu_t\le\wmax$, while
$\mu_t-g_t\le\mu_t\le\E\wsh=1\le\wmax$; in fact
$\abs{g_t-\mu_t}\le\wmax$.  Their variances are bounded above by
$\csdiv$ by~\eqref{eq:cs_class_variance}. In the notation of
\cite[Theorem~2.3]{bousquet2002bennett}, the two applications have
$Z=(m/\wmax)Z_{m}^{\pm}$,
$\sigma^2\le \csdiv/\wmax^2$, and
$v=m\sigma^2+2\E Z$.  Rescaling its unnormalized unit-envelope bound
gives, for either sign and every $u>0$,
\[
Z_{m}^\pm\le\E Z_{m}^\pm
+\sqrt{\frac{2u}{m}\bigl(\csdiv+2\wmax\E Z_{m}^\pm\bigr)}
+\frac{\wmax u}{3m}
\]
with failure probability at most $\rme^{-u}$.  Since
\[
\sqrt{\frac{4u\wmax\E Z_{m}^\pm}{m}}
\le\E Z_{m}^\pm+\frac{u\wmax}{m},
\]
\Cref{lem:cs_weighted_expectation} yields
\[
Z_{m}^\pm\le
2C_{\rm HL}\sqrt{\frac{\csdiv}{m}}
+\sqrt{\frac{2u\csdiv}{m}}+\frac{4u\wmax}{3m}.
\]
Take $u=u_\delta=\log(4/\delta)$. Since $u_\delta\ge\log 4$ and
$8/\sqrt{\log 4}+\sqrt{2}<9$, the right-hand side above is at most
$\etaw{\delta}$. Each sign has failure probability $\delta/4$, so a union
bound gives
$\PP(\Eunif^w\mid\Dtrain)\ge1-\delta/2$.
\end{proof}

\paragraph{Calibration threshold bound}
\label{subsec:cs_calibration}

Let
\[
\FQSstar(t)\coloneqq
\int_{\Xset}\DC[Y\mid X=x]([\fstar{\alo}(x)-t,\fstar{\ahi}(x)+t])\,
\rmd\QC[X](x).
\]
Since $\QC[X]\ll\DC[X]$, the common set $\mathcal X_0$ of
\Cref{assum:mass_regular} is also $\QC[X]$-full.  The pointwise argument of
\Cref{lem:oracle_score_growth}, now integrated under $\QC[X]$, gives
\begin{equation}
\label{eq:cs_oracle_mass_growth}
\begin{aligned}
\FQSstar(0)&=1-\alpha,\\
\FQSstar(0)-\FQSstar(-u)&\ge2\low u
&& (0\le u\le r_-),\\
\FQSstar(v)-\FQSstar(0)&\ge2\low v
&& (0\le v\le r_+).
\end{aligned}
\end{equation}
Thus the same asymmetric inversion lemma applies under the target marginal; no
density or bounded-support specialization is used.

\begin{lemma}
\label{lem:cs_exact_threshold_measurable}
Fix $\alpha\in(0,1)$ and $m\ge1$, and assume
\Cref{assum:cov_shift}.
The set $\{\wsh>0\}$ is $\QC[X]$-full.
For every fixed calibration realization and every $x$ in this set,
$m\Fhatw(+\infty)+\wsh(x)>0$, the measure in
\eqref{eq:cs_exact_weighted_measure_main} is a probability measure, its
$(1-\alpha)$-quantile satisfies the raw-level representation
\eqref{eq:cs_exact_raw_level_main}, and
$x\mapsto\Qhatw(x)$ is measurable as an $\overline\R$-valued map. In
particular, if $X_\star\sim\QC[X]$, then
\[
\wsh(X_\star)>0,
\qquad
m\Fhatw(+\infty)+\wsh(X_\star)>0
\quad\text{almost surely}.
\]
For every $a\in\R$,
\begin{equation}
\label{eq:cs_exact_threshold_sublevel}
\{x:\wsh(x)>0,\ \Qhatw(x)\le a\}
=\Bigl\{x:\wsh(x)>0,\;
(1-\alpha)\bigl(\Fhatw(+\infty)+\wsh(x)/m\bigr)
\le\Fhatw(a)\Bigr\}.
\end{equation}
\end{lemma}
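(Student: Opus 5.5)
The plan is to verify the claims of \Cref{lem:cs_exact_threshold_measurable} in order: fullness, positivity, the raw-level representation, the sublevel identity, and finally measurability, which follows from the sublevel identity.

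\emph{Fullness and positivity.} Fullness of $\{\wsh>0\}$ is the computation already displayed in \Cref{sec:cs_main}: $\QC[X]\{\wsh=0\}=\int_{\{\wsh=0\}}\wsh\,\rmd\DC[X]=0$. For $x$ with $\wsh(x)>0$, the total mass satisfies $m\Fhatw(+\infty)+\wsh(x)\ge\wsh(x)>0$, since every $\wsh(X_j)\ge0$. Hence the weights in \eqref{eq:cs_exact_weighted_measure_main} are nonnegative and sum to one, so the measure is a probability measure on $\overline\R$. The almost-sure statements for $X_\star\sim\QC[X]$ then follow directly from fullness.

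\emph{Raw-level representation.} Let $G_x$ be the CDF of the measure in \eqref{eq:cs_exact_weighted_measure_main}, extended to $\overline\R$. For finite $t$ the atom at $+\infty$ contributes nothing, so
\[
G_x(t)=\frac{m\Fhatw(t)}{m\Fhatw(+\infty)+\wsh(x)},
\]
while $G_x(+\infty)=1$. Multiplying by the positive denominator gives, for every $t\in\R$,
\[
G_x(t)\ge1-\alpha
\iff
\Fhatw(t)\ge(1-\alpha)\bigl(\Fhatw(+\infty)+\wsh(x)/m\bigr).
\]
The two superlevel sets in $\R$ therefore coincide, and so do their infima. If the set is empty, both quantiles equal $+\infty$: for $G_x$ the level is attained only at $+\infty$, and for $\Fhatw$ we use the convention $\inf\varnothing=+\infty$ for unattained levels. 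This proves \eqref{eq:cs_exact_raw_level_main}.

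\emph{Sublevel identity.} For the identity \eqref{eq:cs_exact_threshold_sublevel}, use that $\Fhatw$ is nondecreasing and right-continuous, being a finite sum of weighted indicators of rays $[\Shat_j,\infty)$. For such $H$ and finite $a$, the standard generalized-inverse equivalence $\quant{\beta}{H}\le a\iff H(a)\ge\beta$ holds for every level $\beta$. In the forward direction, right-continuity lets us pass from superlevel points above $\quant{\beta}{H}$ down to the infimum, and monotonicity then gives $H(a)\ge\beta$. In the reverse direction, the definition of the infimum gives $\quant{\beta}{H}\le a$. The case $\quant{\beta}{H}=+\infty$ is consistent, since then no finite $t$, including $a$, satisfies $H(t)\ge\beta$. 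Applying this with $\beta=(1-\alpha)(\Fhatw(+\infty)+\wsh(x)/m)$ yields \eqref{eq:cs_exact_threshold_sublevel}.

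\emph{Measurability.} With the calibration sample fixed, $\Fhatw(a)$ and $\Fhatw(+\infty)$ are constants and $\wsh$ is measurable. Hence the right side of \eqref{eq:cs_exact_threshold_sublevel} is a measurable set for each $a$. Sets of the form $\{\Qhatw\le a\}$ with $a\in\R$, together with $\{\Qhatw=+\infty\}$ obtained as the complement of their countable union over $a\in\mathbb Q$, generate the order-Borel $\sigma$-field of $\overline\R$. So $x\mapsto\Qhatw(x)$ is measurable on $\{\wsh>0\}$, and on the complement, a $\QC[X]$-null measurable set, any measurable convention may be used. The value $-\infty$ never arises, since the superlevel set is bounded below.

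The only real care needed, and the step I expect to be the main obstacle, is the generalized-inverse bookkeeping in the raw-level representation. There one must handle the atom at $+\infty$, the unattained level, and the fact that $\Fhatw$ has random total mass that is not necessarily at least the requested level. I would keep the two quantiles as infima over $\R$ and prove that their superlevel sets are equal, rather than manipulating the normalized measure directly.
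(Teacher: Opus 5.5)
Your proposal is correct and follows the paper's proof essentially step for step. It uses the Radon--Nikodym computation for fullness, the bound $m\Fhatw(+\infty)+\wsh(x)\ge\wsh(x)>0$, equality of the two superlevel sets in $\R$ with the $+\infty$ convention for an unattained level, right-continuity of the weighted step function for the sublevel identity, and the fact that finite sublevels generate the order-Borel $\sigma$-field. Your added details on the generalized-inverse equivalence and on why $-\infty$ never occurs only make explicit what the paper leaves implicit.
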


\begin{proof}
The Radon--Nikodym identity gives
\[
\QC[X]\{\wsh=0\}
=\int_{\{\wsh=0\}}\wsh(x)\,\rmd\DC[X](x)
=0.
\]
Hence $\{\wsh>0\}$ is $\QC[X]$-full and $\wsh(X_\star)>0$ almost surely.
Moreover, $\wsh(X_j)\ge0$ for every $j$, so
$\Fhatw(+\infty)\ge0$. Thus, for $x\in\{\wsh>0\}$,
\[
m\Fhatw(+\infty)+\wsh(x)\ge\wsh(x)>0,
\]
and the coefficients in \eqref{eq:cs_exact_weighted_measure_main} are
nonnegative and sum to
\[
\frac{\sum_{j=1}^m\wsh(X_j)+\wsh(x)}
{m\Fhatw(+\infty)+\wsh(x)}=1.
\]
This also proves the asserted positivity at $X_\star$; no positivity of the
individual calibration weights is required.

For $x\in\{\wsh>0\}$ and $t\in\R$, the measure in
\eqref{eq:cs_exact_weighted_measure_main} assigns to $(-\infty,t]$ the mass
\[
\frac{m\Fhatw(t)}{m\Fhatw(+\infty)+\wsh(x)},
\]
which is at least $1-\alpha$ exactly when
\[
\Fhatw(t)\ge(1-\alpha)\bigl(\Fhatw(+\infty)+\wsh(x)/m\bigr).
\]
The $(1-\alpha)$-quantile of this measure is the infimum of such $t$ when
one exists; otherwise the requested level is carried by the atom at
$+\infty$ and the quantile equals $+\infty$, which matches the convention
that an unattained level has generalized inverse $+\infty$. This proves
\eqref{eq:cs_exact_raw_level_main}.

On $\{\wsh>0\}$, \eqref{eq:cs_exact_raw_level_main} and right-continuity of
the finite weighted step function give $\Qhatw(x)\le a$ exactly when its
requested raw level does not exceed $\Fhatw(a)$. The right-hand side of
\eqref{eq:cs_exact_threshold_sublevel} is measurable in the trace
$\sigma$-field on $\{\wsh>0\}$; finite sublevels generate the order-Borel
$\sigma$-field of $\overline\R$.
\end{proof}

\begin{lemma}
\label{lem:cs_calibration_bound}
Let $\alpha\in(0,1)$, $p\in[1,\infty]$, and assume
\Cref{assum:mass_regular,assum:cov_shift} and
\Cref{assum:HPD-excess-risk}. Fix $\delta\in(0,1)$ and $n,m\ge1$ satisfying
\eqref{eq:cs_localization_main}. There is an event
$\mathcal A^w_{n,m}(\delta)$, measurable with respect to
$(\Dtrain,\Dcal)$ and of probability at least $1-\delta$, on which
$\Fhatw(+\infty)>0$ and, for $\QC[X]$-almost every $x$,
\begin{equation}
\label{eq:cs_E3}
\begin{split}
\Qhatw(x)
&\ge
-\frac{
2\up\wmax^{1/p}\epsilon_p(n,\delta/4)
+(2-\alpha)\etaw{\delta}
}{2\low},\\
\Qhatw(x)
&\le
\frac{
2\up\wmax^{1/p}\epsilon_p(n,\delta/4)
+(2-\alpha)\etaw{\delta}
+(1-\alpha)\wsh(x)/m
}{2\low}.
\end{split}
\end{equation}
In particular, $\Qhatw(x)$ is finite on a target-full set and
\begin{equation}
\label{eq:cs_threshold_lp}
\Lp[{\QC[X]}]{\Qhatw}
\le
\frac{
2\up\wmax^{1/p}\epsilon_p(n,\delta/4)
+(2-\alpha)\etaw{\delta}
+(1-\alpha)\Lp[{\QC[X]}]{\wsh}/m
}{2\low}.
\end{equation}
\end{lemma}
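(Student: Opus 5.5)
The argument follows the no-shift proof of \Cref{prop:target}, with the DKW event replaced by the weighted event of \Cref{lem:cs_weighted_ep}. Set $\varepsilon=\epsilon_p(n,\delta/4)$ and define $\mathcal E_{\rm tr}$ exactly as in \eqref{eq:definition-EC-global}; it has probability at least $1-\delta/2$. Put
\[
\mathcal A^w_{n,m}(\delta)=\mathcal E_{\rm tr}\cap\Eunif^w .
\]
Since $\PP(\Eunif^w\mid\Dtrain)\ge1-\delta/2$, the tower property and a union bound give probability at least $1-\delta$. On $\mathcal E_{\rm tr}$, \Cref{lem:sorting_contraction_mass} gives $\Lp[{\DC[X]}]{D_n}\le2\varepsilon$. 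The change of measure \eqref{eq:cs_change_measure}, together with $\|\cdot\|_{L^1(\QC[X])}\le\|\cdot\|_{L^p(\QC[X])}$, then yields $\|D_n\|_{L^1(\QC[X])}\le2\wmax^{1/p}\varepsilon$. \Cref{lem:score_cdf_perturbation_mass} with $\nu=\QC[X]$ (admissible by \Cref{assum:cov_shift}) gives
\[
\sup_t|\FQS-\FQSstar|\le2\up\wmax^{1/p}\varepsilon .
\]
Combining this with $\Eunif^w$ produces
\[
\sup_t|\Fhatw-\FQSstar|\le d_0\coloneqq2\up\wmax^{1/p}\varepsilon+\etaw{\delta}.
\]
The same event also gives $|\Fhatw(+\infty)-1|\le\etaw{\delta}$. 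Positivity of $\Fhatw(+\infty)$ follows because the first localization inequality forces $\etaw{\delta}<2\low r_-\le1-\alpha<1$; here $2\low r_-\le 1-\alpha$ follows from left growth, since $\FQSstar(-r_-)\ge0$.

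The main obstacle is that the weighted inverse in \eqref{eq:cs_exact_raw_level_main} is taken at a random level of an unnormalized function. I would handle this by rescaling. Fix $x$ with $\wsh(x)>0$ and define $\hat H=\Fhatw$, which is nondecreasing, right-continuous and vanishes at $-\infty$. Write the requested level as
\[
\beta(x)=(1-\alpha)\bigl(\Fhatw(+\infty)+\wsh(x)/m\bigr)=1-\alpha+s(x),
\qquad
s(x)=(1-\alpha)\bigl(\Fhatw(+\infty)-1\bigr)+(1-\alpha)\wsh(x)/m .
\]
Then
\[
-(1-\alpha)\etaw{\delta}\le s(x)\le(1-\alpha)\etaw{\delta}+(1-\alpha)\wsh(x)/m,
\]
and $\beta(x)>0$. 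Applying \Cref{lem:sharp_asymmetric_inversion_mass} with $F=\FQSstar$, $\rho=2\low$ and $d=d_0$ (its hypotheses hold by \eqref{eq:cs_oracle_mass_growth}) requires two window checks. For the left window, $(d_0-s)_+\le d_0+(1-\alpha)\etaw{\delta}$, which equals $2\up\wmax^{1/p}\varepsilon+(2-\alpha)\etaw{\delta}<2\low r_-$ by the strict first inequality of \eqref{eq:cs_localization_main}. For the right window, $(d_0+s)_+\le2\up\wmax^{1/p}\varepsilon+(2-\alpha)\etaw{\delta}+(1-\alpha)\wsh(x)/m$. This is bounded by $2\low r_+$ via the second inequality of \eqref{eq:cs_localization_main}, using $\wsh(x)\le\wmax$. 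The lemma then yields finiteness of $\Qhatw(x)$ and exactly the two-sided bounds \eqref{eq:cs_E3}; these hold for every $x\in\{\wsh>0\}$, which is a $\QC[X]$-full set by \Cref{lem:cs_exact_threshold_measurable}.

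Finally, \eqref{eq:cs_E3} gives the pointwise bound
\[
|\Qhatw(x)|\le\frac{A+(1-\alpha)\wsh(x)/m}{2\low},
\qquad A=2\up\wmax^{1/p}\varepsilon+(2-\alpha)\etaw{\delta},
\]
because the lower bound in \eqref{eq:cs_E3} has magnitude at most the right-hand side. Measurability of $\Qhatw$ comes from \Cref{lem:cs_exact_threshold_measurable}. Taking the $L^p(\QC[X])$ norm and applying Minkowski's inequality to the constant $A$ and the function $\wsh/m$ proves \eqref{eq:cs_threshold_lp}, and this holds for $p=\infty$ as well.
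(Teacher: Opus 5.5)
Your proposal is correct and follows the paper's proof essentially step for step. It uses the same event $\mathcal A^w_{n,m}(\delta)=\mathcal E_{\rm tr}\cap\Eunif^w$, the same transfer of the endpoint error to $L^1(\QC[X])$ and comparison of $\FQS$ with $\FQSstar$, and the same application of \Cref{lem:sharp_asymmetric_inversion_mass} to the unnormalized $\Fhatw$ at the shifted level $1-\alpha+s(x)$ with $s(x)=(1-\alpha)\bigl(\Fhatw(+\infty)-1+\wsh(x)/m\bigr)$, followed by Minkowski's inequality. What you call rescaling is exactly this absorption of the random normalization and the test-point weight into the level shift, which is also how the paper handles it.
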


\begin{proof}
Let $D_n$ and $\mathcal E_{\rm tr}$ be as in
\eqref{eq:primitive_Dn}--\eqref{eq:definition-EC-global}, and put
$\varepsilon=\epsilon_p(n,\delta/4)$. The endpoint assumption and sorting
contraction give $\PP(\mathcal E_{\rm tr})\ge1-\delta/2$ and, on this event,
\[
\Lp[{\DC[X]}]{D_n}\le2\varepsilon,
\qquad
\Lone{D_n}[{\DC[X]}]\le2\varepsilon.
\]
The change of measure and monotonicity of probability-space norms yield
\begin{equation}
\label{eq:cs_target_endpoint_transfer}
\Lp[{\QC[X]}]{D_n}
\le2\wmax^{1/p}\varepsilon,
\qquad
\Lone{D_n}[{\QC[X]}]
\le2\wmax^{1/p}\varepsilon.
\end{equation}
Consequently,
\begin{equation}
\label{eq:cs_cdf_control}
\sup_{t\in\R}|\FQS(t)-\FQSstar(t)|
\le2\up\wmax^{1/p}\varepsilon,
\end{equation}
and, on $\Eunif^w$,
\begin{equation}
\label{eq:cs_primitive_uniform_error}
\sup_{t\in\R}|\Fhatw(t)-\FQSstar(t)|
\le2\up\wmax^{1/p}\varepsilon+\etaw{\delta}.
\end{equation}
Set
\[
\mathcal A^w_{n,m}(\delta)
\coloneqq\mathcal E_{\rm tr}\cap\Eunif^w.
\]
The conditional calibration bound, the tower property, and a union bound give
$\PP(\mathcal A^w_{n,m}(\delta))\ge1-\delta$.

On this event,
$|\Fhatw(+\infty)-1|\le\etaw{\delta}$. The first condition in
\eqref{eq:cs_localization_main}, together with
$2\low r_-\le1-\alpha$, implies $\etaw{\delta}<1$; hence
$\Fhatw(+\infty)>0$. For fixed $x$, write
\[
\begin{aligned}
d&=2\up\wmax^{1/p}\varepsilon+\etaw{\delta},\\
s(x)&=(1-\alpha)
\bigl(\Fhatw(+\infty)-1+\wsh(x)/m\bigr).
\end{aligned}
\]
Then $1-\alpha+s(x)=(1-\alpha)
(\Fhatw(+\infty)+\wsh(x)/m)>0$, and
\begin{align*}
(d-s(x))_+
&\le2\up\wmax^{1/p}\varepsilon+(2-\alpha)\etaw{\delta},\\
(d+s(x))_+
&\le2\up\wmax^{1/p}\varepsilon+(2-\alpha)\etaw{\delta}
+(1-\alpha)\wsh(x)/m.
\end{align*}
The bound $\wsh(x)\le\wmax$ and
\eqref{eq:cs_localization_main} verify the strict left and weak right windows.
Apply \Cref{lem:sharp_asymmetric_inversion_mass} to
\eqref{eq:cs_oracle_mass_growth} and
\eqref{eq:cs_primitive_uniform_error} with $\rho=2\low$, then use
\eqref{eq:cs_exact_raw_level_main}. This proves \eqref{eq:cs_E3} and
finiteness. Since the right numerator dominates the absolute value of both
bounds, Minkowski's inequality gives \eqref{eq:cs_threshold_lp}.
\end{proof}

For finite $p$,
\begin{equation}
\label{eq:cs_weight_moment}
\Lp[{\QC[X]}]{\wsh}^{p}
=\E_{\DC[X]}[\wsh^{p+1}]
\le\wmax^{p-1}\E_{\DC[X]}[\wsh^2]
=\wmax^{p-1}\csdiv.
\end{equation}
For $p=\infty$, $\|\wsh\|_{L^\infty(\QC[X])}\le\wmax$.

\subsection{Proof of~Proposition~\ref{prop:cs_length}}
\label{subsec:cs_length}

\begin{repproposition}{prop:cs_length}
Let $\alpha\in(0,1)$ and $p\in[1,\infty]$, and assume
\Cref{assum:mass_regular,assum:cov_shift} and
\Cref{assum:HPD-excess-risk}.
Fix $\delta\in(0,1)$ and $n,m\ge1$ satisfying
\eqref{eq:cs_localization_main}.
On the event $\mathcal A^w_{n,m}(\delta)$ of
\Cref{lem:cs_calibration_bound}, the pointwise threshold bound
\eqref{eq:cs_E3} holds and
\begin{equation}
\label{eq:cs_length_complete}
\begin{aligned}
\Lp[{\QC[X]}]{\,|\CChatw(\cdot)|-|\CCstar(\cdot)|\,}
&\le2\left(1+\frac{\up}{\low}\right)
\wmax^{1/p}\epsilon_p(n,\delta/4)\\
&\quad+
\frac{
(2-\alpha)\etaw{\delta}
+(1-\alpha)\Lp[{\QC[X]}]{\wsh}/m
}{\low}.
\end{aligned}
\end{equation}
\end{repproposition}

\begin{proof}
For every $x$ at which $\Qhatw(x)$ is finite,
\[
|\CChatw(x)|
=\bigl(\fhat{\ahi}(x)-\fhat{\alo}(x)+2\Qhatw(x)\bigr)_+.
\]
The positive-part map is $1$-Lipschitz, including negative thresholds and
empty fitted intervals, so
\begin{equation}
\label{eq:cs_len_triangle}
\Lp[{\QC[X]}]{\,|\CChatw(\cdot)|-|\CCstar(\cdot)|\,}
\le\Lp[{\QC[X]}]{D_n}+2\Lp[{\QC[X]}]{\Qhatw}.
\end{equation}
Substitute \eqref{eq:cs_target_endpoint_transfer} and
\eqref{eq:cs_threshold_lp} into \eqref{eq:cs_len_triangle} and collect terms.
\end{proof}

\subsection{Proof of~Theorem~\ref{th:cs_cond_cov}}
\label{subsec:cs_theorem}

\begin{reptheorem}{th:cs_cond_cov}
Let $\alpha\in(0,1)$ and $p\in[1,\infty]$, and assume
\Cref{assum:mass_regular,assum:cov_shift} and
\Cref{assum:HPD-excess-risk}.
Fix $\delta\in(0,1)$ and $n,m\ge1$ satisfying
\eqref{eq:cs_localization_main}.
On the event $\mathcal A^w_{n,m}(\delta)$ of
\Cref{prop:cs_length},
\begin{equation}
\label{eq:cs_cond_cov_complete}
\begin{aligned}
\Lp[{\QC[X]}]{\operatorname{Cov}_{\CChatw}(\cdot)-(1-\alpha)}
&\le2\up\left(1+\frac{\up}{\low}\right)
\wmax^{1/p}\epsilon_p(n,\delta/4)\\
&\quad+
\frac{\up}{\low}
\left(
(2-\alpha)\etaw{\delta}
+\frac{1-\alpha}{m}\Lp[{\QC[X]}]{\wsh}
\right).
\end{aligned}
\end{equation}
\end{reptheorem}

\begin{proof}
On the target-full set $\mathcal X_0$, the symmetric difference between
$\CChatw(x)$ and $\CCstar(x)$ is covered by endpoint-movement intervals of
total length at most $D_n(x)+2|\Qhatw(x)|$. The upper conditional interval-mass
bound therefore gives
\begin{equation}
\label{eq:cs_B2_decomp}
\Lp[{\QC[X]}]{\operatorname{Cov}_{\CChatw}(\cdot)-(1-\alpha)}
\le\up\Lp[{\QC[X]}]{D_n}
+2\up\Lp[{\QC[X]}]{\Qhatw}.
\end{equation}
Substitute \eqref{eq:cs_target_endpoint_transfer} and
\eqref{eq:cs_threshold_lp}.
\end{proof}

\begin{remark}
\label{rem:cs_target_l1_refinement}
Under the $p=2$ endpoint assumption, Cauchy--Schwarz gives
\[
\Lone{g}[{\QC[X]}]
\le \sqrt{\csdiv}\,\Ltwo{g}[{\DC[X]}].
\]
Thus the $p=1$ results remain valid after replacing every endpoint factor
$\wmax\epsilon_1(n,\delta/4)$ by
$\sqrt{\csdiv}\,\epsilon_2(n,\delta/4)$, including in the localization
conditions. Since $\Lone{\wsh}[{\QC[X]}]=\csdiv$, the resulting length and
profile bounds contain, respectively,
\[
2\left(1+\frac{\up}{\low}\right)
\sqrt{\csdiv}\,\epsilon_2(n,\delta/4)
+\frac{(2-\alpha)\etaw{\delta}+(1-\alpha)\csdiv/m}{\low}
\]
and
\[
\begin{aligned}
2\up\left(1+\frac{\up}{\low}\right)
\sqrt{\csdiv}\,\epsilon_2(n,\delta/4)
&+\frac{\up}{\low}(2-\alpha)\etaw{\delta}\\
&+\frac{\up(1-\alpha)\csdiv}{\low m}.
\end{aligned}
\]
This refinement leaves the weighted calibration radius and its envelope
assumption unchanged.
\end{remark}

\paragraph{Exact target marginal validity}
Conditional on $\Dtrain$, the fitted score is fixed and measurable.
Corollary~1 of Tibshirani et al.~\cite{tibshirani2019conformal}, together
with the split-conformal extension described in their Section~2.2 and
supplementary material, applied to the source calibration sample and an
independent target test point, gives
\begin{equation}
\label{eq:cs_exact_marginal_validity}
\PP\!\left(
Y_\star\in\CChatw(X_\star)\,\middle|\,\Dtrain
\right)\ge1-\alpha.
\end{equation}
Equation \eqref{eq:cs_exact_marginal_validity} averages over both calibration and test
data; it does not condition on the realized calibration sample. Pournaderi
and Xiang~\cite[Theorem~1 and Corollary~1]{pournaderi2026trainingconditional}
control the upper tail of the corresponding target marginal miscoverage after
the calibration sample is also fixed. By contrast,
\Cref{th:cs_cond_cov} controls the two-sided spatial
$L^p(\QC[X])$ deviation of the complete realized profile. Its $p=1$ instance
also bounds the absolute realized marginal deviation through
\[
\left|
\int_{\Xset}\operatorname{Cov}_{\CChatw}(x)\,\rmd\QC[X](x)
-(1-\alpha)
\right|
\le
\Lone{\operatorname{Cov}_{\CChatw}(\cdot)-(1-\alpha)}[{\QC[X]}].
\]
\section{Proofs for Fixed-Score Calibration Limits}
\label{sec:fixed_score_limits_proofs}

\subsection{Scalar fixed-score calibration: Le Cam lower bound and achievability}
\label{subsec:cs_lecam}

Every supremum over $P_{Y\mid X}$ ranges over
conditional kernels for which \Cref{assum:mass_regular} holds on a measurable
set of full $\DC[X]$-measure.
All infima below range over scalar or vector thresholds measurable in the $m$
source calibration observations. We use the symbols $c_\alpha$, $m_\star$,
and $\mathcal R_{m,p}$ from \Cref{sec:calibration_lower_bounds}.
For a realized threshold function $\widehat q$, the notation
$\operatorname{Cov}^{P}_{\{\Sstar\leq\widehat q\}}$ always refers to the
conditional-coverage profile defined in
\eqref{eq:fixed_score_coverage_profile}.

\begin{reptheorem}{prop:cs_lecam_lower}
Fix $\alpha\in(0,1)$ and $\kappa>0$, and assume that $\Xset$ contains two
distinct points whose singleton sets are measurable. Then there exist source
and target marginals $\DC[X],\QC[X]$, a reference conditional kernel
$P^0_{Y\mid X}$, and a reference score $\Sstar$, all independent of $m$, with
the following properties. The marginals satisfy \Cref{assum:cov_shift} and
\[
\chi^2(\QC[X]\Vert\DC[X])=\kappa.
\]
The kernel $P^0_{Y\mid X}$ satisfies \Cref{assum:mass_regular} on a set of
full $\DC[X]$-measure. The score $\Sstar$ is the equal-tailed oracle CQR score
under $P^0_{Y\mid X}$ and is held fixed throughout the minimax problem below.
For $m\ge1$ and $p\in[1,\infty]$, define
\[
\mathcal R_{m,p}\coloneqq
\inf_{\widehat q}
\sup_{P_{Y\mid X}}
\E_{(\DC[X]\otimes P_{Y\mid X})^{\otimes m}}
\!\left[
\Lp[{\QC[X]}]{
\operatorname{Cov}^{P}_{\{\Sstar\le\widehat q\}}(\cdot)-(1-\alpha)}
\right].
\]
Set
\[
m_\star\coloneqq
\left\lceil
\frac{4(\log 2)(1+\kappa)\,
\alpha(1-\alpha)}{\min\{\alpha,1-\alpha\}^2}
\right\rceil.
\]
Then for every $m\ge m_\star$ and $p\in[1,\infty]$,
\[
\mathcal R_{m,p}
\ge c_\alpha\sqrt{\frac{1+\kappa}{m}},
\qquad
c_\alpha\coloneqq\frac18\sqrt{(\log 2)\alpha(1-\alpha)}.
\]
\end{reptheorem}

\begin{proof}
\emph{Fixed experiment and density realization.}
We construct two kernels $P^0_{Y\mid X},P^1_{Y\mid X}$ satisfying
\Cref{assum:mass_regular} with the common marginal $\DC[X]$, and write
$\DC[XY]^j=\DC[X]\otimes P^j_{Y\mid X}$. At threshold zero, their target
coverages will be $1-\alpha$ and $1-\alpha-\xi$, respectively. The total
variation distance between their $m$-fold source laws will be at most $1/2$.
Write $\E_j$ for expectation under $\DC[XY]^{j\,\otimes m}$.

Fix an arbitrary scalar threshold rule $\widehat q$.
Choose distinct points $x_0,x_D\in\Xset$ and set
\[
\QC[X]=\delta_{x_0},
\qquad
\DC[X]=\frac1{1+\kappa}\delta_{x_0}
+\frac{\kappa}{1+\kappa}\delta_{x_D}.
\]
Then $\csdiv=1+\kappa$ and
\[
\begin{aligned}
\QC[X](\{x_0\})=1,
&\qquad \QC[X](\{x_D\})=0,\\
\DC[X](\{x_0\})=\frac1{\csdiv},
&\qquad \DC[X](\{x_D\})=1-\frac1{\csdiv}.
\end{aligned}
\]
Then $\wsh=\rmd\QC[X]/\rmd\DC[X]$ equals $\csdiv$ at $x_0$ and zero at
$x_D$, and
\[
\begin{aligned}
\E_{\DC[X]}[\wsh]&=1,
&
\E_{\DC[X]}[\wsh^2]&=\csdiv,\\
\chi^2(\QC[X]\Vert\DC[X])&=\csdiv-1.
\end{aligned}
\]
A source draw reaches $x_0$ with probability $1/\csdiv$. Since
$\QC[X]=\delta_{x_0}$, for every candidate kernel $P_{Y\mid X}$,
every realized threshold $\widehat q$, and every $p\in[1,\infty]$,
\[
\Lp[{\QC[X]}]{
\operatorname{Cov}^{P}_{\{\Sstar\leq\widehat q\}}(\cdot)-(1-\alpha)}
=\bigl|
\operatorname{Cov}^{P}_{\{\Sstar\leq\widehat q\}}(x_0)-(1-\alpha)
\bigr|\eqsp.
\]

Take $P^0_{Y\mid X}$ uniform on $[-1,1]$ for every $x$, and fix
\[
\Sstar(x,y)=|y|-(1-\alpha).
\]
This is the equal-tailed oracle CQR score for $P^0_{Y\mid X}$, whose oracle
endpoints are $-1+\alpha$ and $1-\alpha$; the same score is retained under
$P^1_{Y\mid X}$. Define
\[
B_0=[-(1-\alpha),0],
\qquad
B_1=(0,\alpha].
\]
Thus the score-band lengths and reference masses are $|B_0|=1-\alpha$ and
$|B_1|=\alpha$. At $x_0$ define
\begin{equation}
\label{eq:cs_lecam_alt_density}
p^1_{Y\mid X}(y\mid x_0)
=\frac{1-\alpha-\xi}{2(1-\alpha)}\indiacc{|y|\le1-\alpha}
+\frac{\alpha+\xi}{2\alpha}\indiacc{1-\alpha<|y|\le1},
\end{equation}
and set $P^1_{Y\mid X}=P^0_{Y\mid X}$ off $x_0$. The fixed score then has
masses $(1-\alpha-\xi,\alpha+\xi)$ on $(B_0,B_1)$ under the alternative.

If $0<\xi\le\frac12\min\{\alpha,1-\alpha\}$, both densities lie between
$1/4$ and $3/4$ on $[-1,1]$. Each side of either equal-tail quantile has mass
at least $\alpha/2$, so the upper density bound places both quantiles at
distance at least $2\alpha/3$ from both support boundaries. Hence all four
one-sided neighborhoods of radius
$r_0=\alpha/2$ lie in $[-1,1]$. The density bounds give the upper
interval-mass inequality with $\up=3/4$ and the four lower inequalities with
$\low=1/4$. Thus both kernels satisfy \Cref{assum:mass_regular}.

\emph{Threshold and fraction-loss geometry.}
The action class consists only of rays $\{\Sstar\le q\}$. Clipping $q$ to
$[0,\alpha]$ cannot increase either absolute coverage error, so it is enough to
write
\[
\widetilde q\coloneqq\min\{\alpha,\max\{0,\widehat q\}\},
\qquad
a\coloneqq\frac{\widetilde q}{\alpha}\in[0,1]
\]
for the fraction of $B_1$ admitted by the ray. Under $\DC[XY]^{0}$ the split is
$(1-\alpha,\alpha)$, whereas under $\DC[XY]^1$ it is
$(1-\alpha-\xi,\alpha+\xi)$. Hence, with
$a^{\star}\coloneqq\xi/(\alpha+\xi)$,
\[
\begin{aligned}
D_0(a)\coloneqq
\operatorname{Cov}^{P^0}_{\{\Sstar\leq\alpha a\}}(x_0)-(1-\alpha)
&=\alpha a,\\
D_1(a)\coloneqq
\operatorname{Cov}^{P^1}_{\{\Sstar\leq\alpha a\}}(x_0)-(1-\alpha)
&=(\alpha+\xi)(a-a^{\star}).
\end{aligned}
\]
The zeros are $a_0=0$ and $a_1=a^{\star}$. By the point-mass target collapse,
the theorem's loss under hypothesis $j$ equals $\E_j|D_j(a)|$ for every
$p\in[1,\infty]$.

\emph{Two-point information bound.}
Write
$\chi^{2}_{\mathrm{obs}}\coloneqq
\chi^{2}(\DC[XY]^{1}\Vert\DC[XY]^{0})$.
The hypotheses differ only in the $(B_0,B_1)$ split at $x_0$.
The reference joint masses of these cells are
$\DC[X](\{x_0\})(1-\alpha)$ and $\DC[X](\{x_0\})\alpha$, and their
changes are $-\DC[X](\{x_0\})\xi$ and
$\DC[X](\{x_0\})\xi$. Since the densities are constant within each cell,
the widths cancel in the $\chi^2$ integral. Thus
\[
\chi^{2}_{\mathrm{obs}}
=\DC[X](\{x_0\})\,\xi^{2}\Bigl[\frac{1}{1-\alpha}+\frac{1}{\alpha}\Bigr]
=\frac{\xi^{2}}{\csdiv\,\alpha(1-\alpha)}\eqsp,
\]
where $1/\csdiv$ is the mass of the informative atom.

To control product total variation, use the tensorizing $\chi^2$ divergence.
For $P\ll Q$, Cauchy--Schwarz gives
\[
\mathrm{TV}(P,Q)=\frac12\int|p-q|
=\frac12\int\frac{|p-q|}{\sqrt q}\,\sqrt q
\le\frac12\sqrt{\int\frac{(p-q)^{2}}{q}}\,\sqrt{\int q}
=\frac12\sqrt{\chi^{2}(P\Vert Q)}\eqsp.
\]
Expanding the square gives
\[
\int\frac{(p-q)^{2}}{q}=\int\frac{p^{2}}{q}-2\int p+\int q\eqsp,
\qquad\text{i.e.}\qquad
1+\chi^{2}(P\Vert Q)=\int\frac{p^{2}}{q}\eqsp,
\]
and this integral factorizes on products
\cite[Section~2.4]{tsybakov2009introduction}:
\begin{equation}
\label{eq:cs_chi2_tensorization}
1+\chi^{2}\bigl(P^{\otimes m}\Vert Q^{\otimes m}\bigr)
=\int\prod_{s=1}^{m}\frac{p(z_{s})^{2}}{q(z_{s})}\,\prod_{s=1}^{m}\rmd z_{s}
=\prod_{s=1}^{m}\int\frac{p^{2}}{q}
=\bigl(1+\chi^{2}(P\Vert Q)\bigr)^{m}\eqsp.
\end{equation}
The two displays chain to
\[
\mathrm{TV}\bigl(\DC[XY]^{1\,\otimes m},\DC[XY]^{0\,\otimes m}\bigr)
\le\tfrac12\sqrt{\chi^{2}\bigl(\DC[XY]^{1\,\otimes m}\Vert\DC[XY]^{0\,\otimes m}\bigr)}
=\tfrac12\sqrt{(1+\chi^{2}_{\mathrm{obs}})^{m}-1}\eqsp,
\]
so $\mathrm{TV}\le\tfrac12$ holds once $(1+\chi^{2}_{\mathrm{obs}})^{m}\le2$. By $1+x\le e^{x}$, the budget
\begin{equation}\label{eq:cs_budget}
\chi^{2}_{\mathrm{obs}}\le\frac{\log 2}{m}
\end{equation}
suffices:
\[
(1+\chi^{2}_{\mathrm{obs}})^{m}
\le e^{m\,\chi^{2}_{\mathrm{obs}}}\le e^{\log 2}=2\eqsp,
\qquad\text{whence}\qquad
\mathrm{TV}\le\tfrac12\sqrt{2-1}=\tfrac12\eqsp.
\]

Because the per-draw divergence increases with $\xi$, the largest gap allowed
by~\eqref{eq:cs_budget} saturates it:
\[
\frac{\xi^{2}}{\csdiv\,\alpha(1-\alpha)}=\frac{\log 2}{m}
\quad\Longrightarrow\quad
\xi=\xi^{\star}\coloneqq
\sqrt{\frac{(\log 2)\csdiv\,\alpha(1-\alpha)}{m}}\eqsp,
\]
whence
$\mathrm{TV}(\DC[XY]^{0\,\otimes m},\DC[XY]^{1\,\otimes m})
\le\tfrac12$. The
theorem assumes $m\ge m_\star$, which gives
$\xi^\star\le\frac12\min\{\alpha,1-\alpha\}$; hence the density check
following \eqref{eq:cs_lecam_alt_density} applies.

\emph{Le Cam assembly.}
Write $p_0,p_1$ for the densities of
$\DC[XY]^{0\,\otimes m},\DC[XY]^{1\,\otimes m}$ against a common dominating
measure. For any statistic $a$ of the sample, the triangle inequality
$|a-a_0|+|a-a_1|\ge|a_1-a_0|=a^{\star}$ against the minimum density gives
\[
\begin{aligned}
\E_0|a-a_0|+\E_1|a-a_1|
&\ \ge\ \int\min(p_0,p_1)\,\bigl(|a-a_0|+|a-a_1|\bigr)\\
&\ \ge\ a^{\star}\int\min(p_0,p_1)
=\ a^{\star}\,(1-\mathrm{TV})\eqsp.
\end{aligned}
\]
Here the last equality is the affinity identity
$\int\min(p_0,p_1)=1-\mathrm{TV}$
\cite[Theorem~2.2]{tsybakov2009introduction}. With
$\max_j\ge\tfrac12\sum_j$ and $\mathrm{TV}\le\tfrac12$,
\[
\max_{j}\E_j|a-a_j|
\ \ge\ \frac{a^{\star}}{2}\,(1-\mathrm{TV})
\ \ge\ \frac{a^{\star}}{4}\eqsp.
\]

Both slopes are at least $\alpha$: $|D_0(a)|=\alpha\,|a-a_0|$ and
$|D_1(a)|=(\alpha+\xi)\,|a-a_1|\ge\alpha\,|a-a_1|$. Thus the two-point
inequality transfers to the deviations at the price of the smaller slope:
\[
\max_{j}\E_j\bigl|D_j(a)\bigr|
\ \ge\ \alpha\,\max_{j}\E_j|a-a_j|
\ \ge\ \frac{\alpha\,a^{\star}}{4}
\ =\ \frac{\alpha\,\xi}{4(\alpha+\xi)}
\ \ge\ \frac{\alpha\,\xi}{4\cdot2\alpha}
\ =\ \frac{\xi}{8}\eqsp,
\]
the last inequality by $\alpha+\xi\le2\alpha$, i.e.\ $\xi\le\alpha$. At the saturated $\xi=\xi^{\star}$,
\[
\frac{\xi^{\star}}{8}
=\underbrace{\tfrac1{8}\sqrt{(\log 2)\alpha(1-\alpha)}}_{=\,c_{\alpha}}
\sqrt{\frac{\csdiv}{m}}\eqsp.
\]
Both hypotheses lie in the supremum class. By the point-mass target collapse, for
every $p\in[1,\infty]$,
\[
\begin{aligned}
\sup_{P_{Y\mid X}}
&\E_{(\DC[X]\otimes P_{Y\mid X})^{\otimes m}}
\!\left[
\Lp[{\QC[X]}]{
\operatorname{Cov}^{P}_{\{\Sstar\leq\widehat q\}}(\cdot)-(1-\alpha)}
\right]\\
&\ge\max_{j\in\{0,1\}}\E_j|D_j(a)|
\ge\frac{\xi^\star}{8}\\
&=c_\alpha\sqrt{\frac{\csdiv}{m}}\eqsp.
\end{aligned}
\]
Taking the infimum over $\widehat q$ yields
\[
\mathcal R_{m,p}
\ge c_\alpha\sqrt{
\frac{\csdiv}{m}},
\qquad
c_\alpha=\frac18\sqrt{(\log 2)\alpha(1-\alpha)}.
\]
Every displayed $L^p(\QC[X])$ norm equals
$|\E_{\QC[X]}[
\operatorname{Cov}^{P}_{\{\Sstar\leq\widehat q\}}(X)]-(1-\alpha)|$, so the
same bound holds for the absolute target marginal-coverage deviation. This
proves the theorem.
\end{proof}

\begin{repproposition}{prop:cs_carrier_upper}
Under the assumptions of \Cref{prop:cs_lecam_lower}, use the source and target
marginals and fixed reference score used in its two-point construction, and let
$x_0$ denote the unique target atom. Thus $\QC[X]=\delta_{x_0}$ and
$\DC[X](\{x_0\})=\csdiv^{-1}$.
For $m\ge1$, let $N=\sum_{i=1}^m\indiacc{X_i=x_0}$, order the corresponding scores as $T_{(1)}\le\cdots\le T_{(N)}$, set $T_{(N+1)}=+\infty$, and define
\[
\widehat q^{\mathrm{car}}\coloneqq T_{(k_N)},
\qquad
k_n=\left\lceil(n+1)(1-\alpha)\right\rceil.
\]
For each admissible kernel, let $F_P(t)=P_{Y\mid X=x_0}\{\Sstar(x_0,Y)\le t\}$, with expectation taken over $\Dcal\sim(\DC[X]\otimes P_{Y\mid X})^{\otimes m}$.
\[
\begin{aligned}
&
\sup_{P_{Y\mid X}}
\E_{\Dcal}\left|F_P(\widehat q^{\mathrm{car}})-(1-\alpha)\right|\\
&\qquad\le
\frac32
\sqrt{
\frac{\csdiv}{m+1}
\left[1-\left(1-\csdiv^{-1}\right)^{m+1}\right]}\\
&\qquad\le \frac32\sqrt{\frac{\csdiv}{m+1}}.
\end{aligned}
\]
The supremum risk on the left is asymptotic to
$\sqrt{2\alpha(1-\alpha)\csdiv/(\pi m)}$ as $m/\csdiv\to\infty$.
\end{repproposition}

\begin{proof}[Proof of Proposition~\ref{prop:cs_carrier_upper}]
Put $\tau=1-\alpha$ and
\[
N=\sum_{i=1}^m\indiacc{X_i=x_0}
\sim\operatorname{Bin}(m,\csdiv^{-1}).
\]
On this construction, every calibration observation at $x_0$ and the test atom at $x_0$ have weight $\csdiv$, while every observation at $x_D$ has weight zero.
After cancelling the common positive factor, the exact weighted conformal
measure \eqref{eq:cs_exact_weighted_measure_main} becomes
\[
\frac1{N+1}\sum_{j=1}^{N}\delta_{T_{(j)}}
+\frac1{N+1}\delta_{+\infty}.
\]
Its $(1-\alpha)$-quantile is therefore the carrier rule:
\[
\Qhatw(x_0)=\widehat q^{\mathrm{car}}=T_{(k_N)},
\qquad
k_n=\lceil(n+1)\tau\rceil,
\qquad
T_{(n+1)}=+\infty.
\]
For $N=0$, the measure is $\delta_{+\infty}$; if $k_N=N+1$, its quantile is
again $+\infty$. Thus the identity covers every calibration realization.

The all-interval upper mass bound in \Cref{assum:mass_regular} makes the
conditional law atomless, while $\Sstar(x_0,\cdot)$ has level sets of
cardinality at most two; hence $F_P$ is continuous. Conditional on $N=n$, the
probability integral transform makes the transformed carrier scores
i.i.d.\ uniform. Consequently,
\[
F_P(T_{(k)})\sim\operatorname{Beta}(k,n+1-k),
\qquad 1\le k\le n;
\]
see also \cite[Proposition~2]{ramos2026transportedbeta}. Let
$B_{n,k}\sim\operatorname{Beta}(k,n+1-k)$ for $k\le n$, and set
$B_{n,n+1}=1$. Since $\QC[X](\{x_0\})=1$, the $L^p(\QC[X])$ profile loss is
$|F_P(\widehat q^{\mathrm{car}})-\tau|$ for every $p\in[1,\infty]$. Averaging over $N$ gives
\begin{equation}
\label{eq:cs_carrier_rank_exact}
\begin{aligned}
&\sup_{P_{Y\mid X}}
\E_{\Dcal}
\!\left[\left|F_P(\widehat q^{\mathrm{car}})-\tau\right|\right]\\
&\qquad=\sum_{n=0}^m \binom{m}{n}\csdiv^{-n}
\left(1-\csdiv^{-1}\right)^{m-n}
\E|B_{n,k_n}-\tau|.
\end{aligned}
\end{equation}
The right side does not depend on the kernel and equals the supremum risk of
the carrier rule.

Write $r_n=\E|B_{n,k_n}-\tau|$. If $k_n\le n$, the beta mean
$\mu_n=k_n/(n+1)$ and variance satisfy
\[
0\le\mu_n-\tau<\frac1{n+1},
\qquad
\operatorname{Var}(B_{n,k_n})
=\frac{k_n(n+1-k_n)}{(n+1)^2(n+2)}
\le\frac1{4(n+2)}.
\]
Therefore
\[
r_n
\le\sqrt{\operatorname{Var}(B_{n,k_n})}+|\mu_n-\tau|
\le\frac1{2\sqrt{n+2}}+\frac1{n+1}.
\]
If $k_n=n+1$, the ceiling identity implies
$1-\tau<1/(n+1)$, and the same bound holds because $B_{n,n+1}=1$. Hence,
for every $n\ge0$,
\begin{equation}
\label{eq:cs_conditional_carrier_finite_bound}
r_n
\le\frac1{2\sqrt{n+2}}+\frac1{n+1}
\le\frac3{2\sqrt{n+1}}.
\end{equation}
By Cauchy--Schwarz, the left side of
\eqref{eq:cs_carrier_rank_exact} is at most
\[
\frac32\E\!\left[\frac1{\sqrt{N+1}}\right]
\le\frac32\sqrt{\E\!\left[\frac1{N+1}\right]}.
\]
The binomial identity
\begin{equation}
\label{eq:cs_binomial_reciprocal_exact}
\E\!\left[\frac1{N+1}\right]
=\frac{\csdiv}{m+1}
\left[1-\left(1-\csdiv^{-1}\right)^{m+1}\right]
\end{equation}
follows from
$\binom mn/(n+1)=\binom{m+1}{n+1}/(m+1)$. This proves both finite upper
bounds.

It remains to identify the leading constant. Since
$k_n/n=\tau+O(n^{-1})$, Corollary~21.5 and Lemma~21.7 of van der Vaart
\cite{vandervaart1998} give
\[
\sqrt n\,(B_{n,k_n}-\tau)
\ \rightsquigarrow\ \mathcal N\bigl(0,\tau(1-\tau)\bigr).
\]
The moment bound above gives uniform integrability; the convention
$B_{n,n+1}=1$ applies only to finitely many $n$. Thus
\[
\sqrt n\,r_n\longrightarrow
c_\tau\coloneqq\sqrt{\frac{2\tau(1-\tau)}{\pi}}.
\]
Set $\lambda_m=m/\csdiv$. If $\lambda_m\to\infty$, then
$N/\lambda_m\to1$ in probability and
$\sqrt{\lambda_m}\,r_N\to c_\tau$ in probability. Moreover,
\eqref{eq:cs_conditional_carrier_finite_bound} and
\eqref{eq:cs_binomial_reciprocal_exact} give
\[
\E\!\left[\bigl(\sqrt{\lambda_m}\,r_N\bigr)^2\right]
\le\frac94\,\frac{m}{m+1}
\left[1-\left(1-\csdiv^{-1}\right)^{m+1}\right]
\le\frac94.
\]
Hence $\{\sqrt{\lambda_m}\,r_N\}$ is uniformly integrable.
Therefore, the left-hand side of \eqref{eq:cs_carrier_rank_exact} is asymptotic to $\sqrt{2\alpha(1-\alpha)\csdiv/(\pi m)}$ as $m/\csdiv\to\infty$.
At $\alpha=0.1$, the leading constant is $0.2394$.
\end{proof}

\subsection{A high-probability fixed-score lower bound for an atomic target}
\label{subsec:cs_lev}

Spread the perturbation over $K$ target atoms and assign one threshold value
to each atom. A Varshamov--Gilbert packing
\cite{gilbert1952comparison,varshamov1957estimate}, followed by the
$\chi^2$ form of Fano's inequality, raises the failure probability from the
two-point constant to $1-2e^{-K/32}$ while preserving the realized-coverage
loss and the factor $\csdiv$.

\begin{lemma}
\label{lem:cs_vg}
For every integer $K\ge0$ there exists $A\subseteq\{0,1\}^{K}$ with
\[
|A|\ \ge\ e^{K/8}
\qquad\text{and}\qquad
d_{\mathrm{H}}(v,v')\ \ge\ \frac K4\quad\text{for all distinct }v,v'\in A.
\]
\end{lemma}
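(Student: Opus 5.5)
I will use the probabilistic method: draw independent uniform random vectors from the hypercube, show that pairs at small Hamming distance are rare, and keep a large good subset. For small $K$ I will argue by hand. When $K\le 4$, any single vector gives $|A|=1\ge e^{K/8}$, since $e^{4/8}<2$ already allows one point; in fact for $K\le 5$ we have $e^{K/8}\le e^{5/8}<2$, so any single vector works and the distance condition is vacuous. So I assume $K$ is large enough that $e^{K/8}\ge2$.

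The main concentration input is Hoeffding's inequality. Let $v,v'$ be independent uniform on $\{0,1\}^K$. Then $d_{\mathrm H}(v,v')\sim\operatorname{Bin}(K,1/2)$, and Hoeffding gives
\[
\PP\bigl(d_{\mathrm H}(v,v')<K/4\bigr)\le \exp\bigl(-2K(1/4)^2\bigr)=e^{-K/8}.
\]

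I will then do deletion rather than a plain union bound. Draw $N$ independent uniform vectors $v^{(1)},\ldots,v^{(N)}$. The expected number of unordered bad pairs, meaning pairs at distance below $K/4$, is at most $\binom N2e^{-K/8}\le N^2e^{-K/8}/2$. Removing one vector from each bad pair leaves a set in which all distinct members are at distance at least $K/4$, and in which no two members coincide. Its expected size is at least $N-N^2e^{-K/8}/2$.

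Taking $N=\lfloor e^{K/8}\rfloor$ gives expected size at least $N/2$. That is only about $e^{K/8}/2$, so I lose a factor of $2$. This loss is the main obstacle.

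To close the factor, I will use the Gilbert--Varshamov greedy bound with a sharper tail. A maximal $K/4$-separated set satisfies
\[
|A|\ge \frac{2^K}{\sum_{j<K/4}\binom Kj}.
\]
The standard entropy bound gives $\sum_{j\le K/4}\binom Kj\le 2^{K h(1/4)}$, where $h$ is the binary entropy in bits. Hence
\[
|A|\ge 2^{K(1-h(1/4))}.
\]
Now $1-h(1/4)\approx0.1887$ bits, which is about $0.1308$ nats and exceeds $1/8$. Therefore $|A|\ge e^{0.1308K}\ge e^{K/8}$ for every $K\ge0$, with no small-$K$ exception.

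So the plan is as follows. First, construct the greedy maximal packing. Second, use the ball-covering argument: the balls of radius strictly less than $K/4$ around the points of a maximal set cover the cube. Third, bound the ball volume by the entropy (Chernoff) bound $\sum_{j\le\lambda K}\binom Kj\le e^{K\,H(\lambda)}$ for $\lambda\le1/2$, with $H$ in nats, and check numerically that $\log2-H(1/4)>1/8$.

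The only delicate step is this numerical check together with verifying the entropy bound's hypothesis $\lambda=1/4\le1/2$. The check itself is $\log 2-H(1/4)=0.6931-0.5623=0.1308>0.125$.
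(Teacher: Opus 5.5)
Your final argument is correct and is essentially the paper's: the paper cites Massart's Varshamov--Gilbert lemma with $\beta=1/2$, whose proof is exactly your maximal-packing/ball-covering argument combined with a binomial (entropy) tail bound. Its exponent $\rho_{1/2}/2=\tfrac34\log 3-\log 2\approx0.1308$ is the same as your $\log 2-H(1/4)$.

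One slip to discard: your opening small-$K$ remark is false, since $|A|=1<e^{K/8}$ for every $K\ge1$. You would need two points there, such as $\mathbf 0$ and $\mathbf 1$. This is moot, because your covering bound already handles every $K\ge1$, and $K=0$ is trivial, where the covering sum $\sum_{j<0}\binom{0}{j}$ is empty and should not be used.
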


\begin{proof}
For $K=0$ take the single point $A=\{0,1\}^{0}$. For $K\ge1$,
\cite[Lemma~4.7]{massart2007concentration} states that, for every
$\beta\in(0,1)$, there exists $A\subseteq\{0,1\}^{K}$ with
\[
d_{\mathrm{H}}(v,v')\ >\ (1-\beta)\,\frac K2\quad\text{for all distinct }v,v'\in A,
\qquad\text{and}\qquad
\ln|A|\ \ge\ \frac{\rho_{\beta}\,K}{2},
\]
where $\rho_{\beta}=(1+\beta)\ln(1+\beta)+(1-\beta)\ln(1-\beta)$; we use
$\beta$ for the parameter there called $\alpha$. Take $\beta=\tfrac12$. Then
$d_{\mathrm{H}}(v,v')>K/4$, hence $d_{\mathrm{H}}(v,v')\ge K/4$. Moreover,
$\rho_{1/2}=\tfrac32\ln\tfrac32+\tfrac12\ln\tfrac12>\tfrac14$, so
$\ln|A|\ge\rho_{1/2}K/2>K/8$ and $|A|\ge e^{K/8}$.
\end{proof}

\begin{reptheorem}{thm:cs_fano_lower}
Fix $\alpha\in(0,1)$, an integer $K\ge23$, and $\kappa>0$, and assume that
$\Xset$ contains at least $2K$ distinct points whose singleton sets are
measurable. Then there exist source and target marginals $\DC[X],\QC[X]$,
distinct points $x_1,\ldots,x_K$, a reference conditional kernel
$P^0_{Y\mid X}$, and a reference score $\Sstar$, all independent of $m$,
with the following properties. The marginals satisfy
\Cref{assum:cov_shift},
\[
\chi^2(\QC[X]\Vert\DC[X])=\kappa,
\qquad
\QC[X]=\frac1K\sum_{k=1}^K\delta_{x_k}.
\]
$P^0_{Y\mid X}$ satisfies \Cref{assum:mass_regular} on a set of full
$\DC[X]$-measure. The score $\Sstar$ is the equal-tailed oracle CQR score
under $P^0_{Y\mid X}$ and is held fixed throughout the minimax problem below.
For a threshold vector $\widehat q_{1:K}$, define its canonical measurable
extension by
\[
\widehat q(x)
=
\begin{cases}
\widehat q_k,&x=x_k\text{ for some }k\in\{1,\ldots,K\},\\
0,&x\notin\{x_1,\ldots,x_K\}.
\end{cases}
\]
The value chosen off the target support is immaterial to the loss below.
Set
\begin{equation}
\label{eq:cs_lev_admiss}
m_\star^{(K)}\coloneqq
\frac{(1+\kappa)\,\alpha(1-\alpha)K}
{4\min\{\alpha,1-\alpha\}^2}.
\end{equation}
Then for every integer $m>m_\star^{(K)}$ and every $p\in[1,\infty]$,
\[
\begin{aligned}
&\inf_{\widehat q_{1:K}}
\sup_{P_{Y\mid X}}
\PP_{(\DC[X]\otimes P_{Y\mid X})^{\otimes m}}\!\Biggl(
\Lp[{\QC[X]}]{\operatorname{Cov}^{P}_{\{\Sstar\le\widehat q\}}(\cdot)-(1-\alpha)}\\
&\hspace{11em}\ge\frac1{64}
\sqrt{\frac{(1+\kappa)\,
\alpha(1-\alpha)K}{m}}
\Biggr)
\ge1-2e^{-K/32}.
\end{aligned}
\]
\end{reptheorem}

\begin{proof}
Set
\begin{equation}
\label{eq:cs_lev_xi}
\xi\coloneqq\frac14
\sqrt{\frac{(1+\kappa)\,\alpha(1-\alpha)K}{m}}.
\end{equation}
Since $m>m_\star^{(K)}$, \eqref{eq:cs_lev_admiss} gives
$\xi<\frac12\min\{\alpha,1-\alpha\}$.

\emph{Step 1: geometry and density admissibility.}
The $2^K$ hypotheses assign one reference or alternative bit to each target
atom. The extra $K$ points required by the theorem serve only as source-only
dump atoms.
Fix distinct points $x_1,\ldots,x_K,x_{D,1},\ldots,x_{D,K}$, support both
marginals on these $2K$ points, and define
\[
\begin{aligned}
\QC[X](\{x_k\})&=\frac1K,
&\QC[X](\{x_{D,k}\})&=0,\\
\DC[X](\{x_k\})&=\frac1{(1+\kappa)K},
&\DC[X](\{x_{D,k}\})&=\frac{\kappa}{(1+\kappa)K},
\qquad k=1,\ldots,K.
\end{aligned}
\]
Thus $\csdiv=1+\kappa$, the target law is uniform on the atoms $x_k$, each
$x_{D,k}$ is source-only, and
\[
\begin{aligned}
\wsh(x_k)&=\csdiv,
&\qquad
\wsh(x_{D,k})&=0,
\\
\E_{\DC[X]}[\wsh]&=1,
&\qquad
\E_{\DC[X]}[\wsh^2]&=\csdiv.
\end{aligned}
\]
Consequently, $\chi^2(\QC[X]\Vert\DC[X])=\csdiv-1$.

Let $P^0_{Y\mid X=x}$ be uniform on $[-1,1]$ for every $x\in\Xset$, and set
$\Sstar(x,y)=|y|-(1-\alpha)$. This is the equal-tailed oracle CQR score under
$P^0_{Y\mid X}$. Define its two score bands by
\[
B_0=[-(1-\alpha),0],
\qquad
B_1=(0,\alpha].
\]
They have masses $1-\alpha$ and $\alpha$, respectively. Coarsen an
observation to a label
$T\in\{1,\ldots,K\}\times\{B_0,B_1,D\}$: the label is $(k,B_b)$ when
$X=x_k$ and $\Sstar(x_k,Y)\in B_b$, and it is $(k,D)$ when $X=x_{D,k}$.
Thus $D$ records a dump atom, not a score region at $x_k$.

Let
$\DC[XY]^{\circ}=\DC[X]\otimes P^0_{Y\mid X}$ and
$\QC[XY]^{\circ}=\QC[X]\otimes P^0_{Y\mid X}$ be the all-reference source
and target laws, and let $\overline{\DC}^{\,\circ}$ and
$\overline{\QC}^{\,\circ}$ denote their coarsenings under $T$. For each index
$k$,
\[
\begin{array}{c@{\qquad\qquad}c}
\begin{aligned}
\overline{\DC}^{\,\circ}(k,B_0)&=\frac{1-\alpha}{\csdiv\, K},\\
\overline{\DC}^{\,\circ}(k,B_1)&=\frac{\alpha}{\csdiv\, K},\\
\overline{\DC}^{\,\circ}(k,D)&=\frac{\csdiv-1}{\csdiv\, K}
\end{aligned}
&
\begin{aligned}
\overline{\QC}^{\,\circ}(k,B_0)&=\frac{1-\alpha}{K},\\
\overline{\QC}^{\,\circ}(k,B_1)&=\frac{\alpha}{K},\\
\overline{\QC}^{\,\circ}(k,D)&=0
\end{aligned}
\\[-0.25em]
\text{(source)} & \text{(target)}
\end{array}
\]

For $\tau\in\{0,1\}^K$, let $P^\tau_{Y\mid X}$ use the alternative density
\eqref{eq:cs_lecam_alt_density} at $x_k$ exactly when $\tau_k=1$ and agree
with $P^0_{Y\mid X}$ otherwise. Thus it moves conditional mass $\xi$ from
$B_0$ to $B_1$ at precisely the perturbed target atoms. Set
$\DC[XY]^\tau=\DC[X]\otimes P^\tau_{Y\mid X}$ and let
$\overline{\DC}^{\,\tau}$ be its coarsening under $T$. Then
\begin{equation}
\label{eq:cs_lev_vertexlaw}
\begin{aligned}
\overline{\DC}^{\,\tau}(k,B_0)
&=\frac{(1-\alpha)-\xi\,\indiacc{\tau_{k}=1}}{\csdiv\, K},\\
\overline{\DC}^{\,\tau}(k,B_1)
&=\frac{\alpha+\xi\,\indiacc{\tau_{k}=1}}{\csdiv\, K},\\
\overline{\DC}^{\,\tau}(k,D)
&=\frac{\csdiv-1}{\csdiv\, K}.
\end{aligned}
\end{equation}
The dump mass $(\csdiv-1)/(\csdiv\, K)$ is constant across vertices, and
$\overline{\DC}^{\,\circ}=\overline{\DC}^{\,\tau\equiv0}$.
Since
$\xi<\frac12\min\{\alpha,1-\alpha\}$, the density calculation following
\eqref{eq:cs_lecam_alt_density} applies to every vertex. Thus all vertex
kernels satisfy \Cref{assum:mass_regular} with common constants
$\low=1/4$, $\up=3/4$, and $r_0=\alpha/2$, while the marginals, target atoms,
reference conditional, and $\Sstar$ do not depend on $m$. Their coarsened
source masses are exactly \eqref{eq:cs_lev_vertexlaw}.

\emph{Step 2: action and leverage.}
For $a=(a_1,\ldots,a_K)\in[0,1]^K$, the threshold
$\alpha a_k$ at $x_k$ admits all of $B_0$ and a fraction $a_k$ of $B_1$.
Its conditional coverage is
\begin{equation}
\label{eq:cs_lev_cov}
G_k^\tau(a_k)
\coloneqq
\operatorname{Cov}^{P^\tau}_{\{\Sstar\leq\alpha a_k\}}(x_k)
=\underbrace{\bigl[(1-\alpha)-\xi\,\indiacc{\tau_{k}=1}\bigr]}_{B_0\text{-mass}}
+a_{k}\,\underbrace{\bigl[\alpha+\xi\,\indiacc{\tau_{k}=1}\bigr]}_{B_1\text{-mass}}\eqsp.
\end{equation}
Write
$G^\tau(a)\coloneqq
(G_1^\tau(a_1),\ldots,G_K^\tau(a_K))$.
Set $a^{\star}\coloneqq\xi/(\alpha+\xi)$. Then
\[
G_k^\tau(a_k)-(1-\alpha)
=
\begin{cases}
\alpha a_k, & \tau_k=0,\\
(\alpha+\xi)(a_k-a^{\star}), & \tau_k=1.
\end{cases}
\]
Thus the zero is $0$ at an unperturbed target atom and $a^{\star}$ at a
perturbed target atom.

\emph{Step 3: per-draw and product $\chi^{2}$ budget.}
Let $\PP_\tau=(\DC[XY]^\tau)^{\otimes m}$ and
$\PP_\circ=(\DC[XY]^\circ)^{\otimes m}$. Conditional on the label $T$, the
full observation $(X,Y)$ has the same distribution at every vertex: within
each score band the density is uniform, and the dump conditional is
vertex-independent. Consequently, the one-draw likelihood ratio is a
function of $T$ alone, and
\[
\chi^2(\DC[XY]^\tau\Vert\DC[XY]^\circ)
=
\chi^2(\overline{\DC}^{\,\tau}\Vert
\overline{\DC}^{\,\circ}).
\]
We therefore compute the divergence on the finite label space and then bound
every
$\chi^2(\PP_\tau\Vert\PP_\circ)$ against this common reference.

The per-draw divergence is
\[
\chi^{2}\bigl(\overline{\DC}^{\,\tau}\,\Vert\,
\overline{\DC}^{\,\circ}\bigr)
\ =\ \sum_{k=1}^{K}\ \sum_{\ell\in\{B_0,B_1,D\}}
\frac{\bigl(\overline{\DC}^{\,\tau}(k,\ell)
-\overline{\DC}^{\,\circ}(k,\ell)\bigr)^{2}}
{\overline{\DC}^{\,\circ}(k,\ell)}\eqsp.
\]
By~\eqref{eq:cs_lev_vertexlaw} the gaps in coordinate $k$ are
\[
\begin{aligned}
\overline{\DC}^{\,\tau}(k,B_0)-\overline{\DC}^{\,\circ}(k,B_0)
&= -\frac{\xi\,\indiacc{\tau_{k}=1}}{\csdiv\, K}\eqsp,\\
\overline{\DC}^{\,\tau}(k,B_1)-\overline{\DC}^{\,\circ}(k,B_1)
&= \phantom{-}\frac{\xi\,\indiacc{\tau_{k}=1}}{\csdiv\, K}\eqsp,\\
\overline{\DC}^{\,\tau}(k,D)-\overline{\DC}^{\,\circ}(k,D)
&=0\eqsp,
\end{aligned}
\]
so an unperturbed coordinate contributes zero. A perturbed coordinate contributes
\[
\begin{aligned}
&\frac{\bigl(\xi/(\csdiv\,K)\bigr)^2}
{(1-\alpha)/(\csdiv\,K)}
+\frac{\bigl(\xi/(\csdiv\,K)\bigr)^2}
{\alpha/(\csdiv\,K)}\\
&\qquad=\frac{\xi^2}{\csdiv\,K}
\left(\frac1{1-\alpha}+\frac1\alpha\right)\\
&\qquad=\frac{\xi^2}{\csdiv\,\alpha(1-\alpha)K}\eqsp;
\end{aligned}
\]
Write $h(\tau)\coloneqq\sum_{k=1}^K\tau_k$ for the Hamming weight. Therefore,
for every vertex $\tau\in\{0,1\}^{K}$,
\begin{equation}
\label{eq:cs_lev_radius}
\begin{aligned}
\chi^{2}\bigl(\DC[XY]^\tau\Vert\DC[XY]^\circ\bigr)
&=\chi^{2}\bigl(\overline{\DC}^{\,\tau}\Vert
\overline{\DC}^{\,\circ}\bigr)\\
&=\frac{h(\tau)}{K}\,
\frac{\xi^{2}}{\csdiv\,\alpha(1-\alpha)}\\
&\le\frac{\xi^{2}}{\csdiv\,\alpha(1-\alpha)}\eqsp.
\end{aligned}
\end{equation}
with equality at the all-alternative vertex $\tau\equiv1$.

By~\eqref{eq:cs_chi2_tensorization},
$\chi^{2}(\PP_{\tau}\Vert\PP_{\circ})
=\bigl(1+\chi^{2}(\DC[XY]^\tau\Vert\DC[XY]^\circ)\bigr)^{m}-1$.
Moreover, \eqref{eq:cs_lev_xi} gives
\[
\frac{m\,\xi^{2}}{\csdiv\,\alpha(1-\alpha)}
=\frac{K}{16}\eqsp,
\]
and $1+x\le e^{x}$ together with the radius~\eqref{eq:cs_lev_radius} give, for every $\tau$,
\begin{equation}
\label{eq:cs_lev_budget}
\chi^{2}\bigl(\PP_{\tau}\,\Vert\,\PP_{\circ}\bigr)
\ \le\ e^{m\,\chi^{2}(\DC[XY]^\tau\Vert\DC[XY]^\circ)}-1
\ \le\ e^{K/16}-1\eqsp.
\end{equation}

\emph{Step 4: packing, decoding, and action-class transfer.}
For $z=(z_1,\ldots,z_K)\in\R^K$, define the normalized discrete norm
\[
\lVert z\rVert_{1,K}\coloneqq\frac1K\sum_{k=1}^K|z_k|.
\]
Because $\QC[X]=K^{-1}\sum_{k=1}^K\delta_{x_k}$, any function $f$ satisfies
$\Lone{f}[{\QC[X]}]=\lVert(f(x_1),\ldots,f(x_K))\rVert_{1,K}$.

Fix an atom-specific calibration vector $a\in[0,1]^K$ measurable in the
sample. Decoding occurs in fraction space: collect the zero-error fractions,
$0$ at an unperturbed atom and $a^{\star}$ at a perturbed atom, into
\[
a^{\star}(\tau)\coloneqq\bigl(a^{\star}_{1}(\tau),\dots,a^{\star}_{K}(\tau)\bigr),
\qquad
a^{\star}_{k}(\tau)\coloneqq a^{\star}\,\indiacc{\tau_{k}=1}\ \in\ \{0,\ a^{\star}\}\eqsp.
\]
For any two vertices,
\begin{equation}
\label{eq:cs_lev_sep}
\lVert a^{\star}(\tau)-a^{\star}(\tau')\rVert_{1,K}
=\frac1K\sum_{k=1}^{K}a^{\star}\,\indiacc{\tau_{k}\ne\tau'_{k}}
=\frac{a^{\star}}{K}\,d_{\mathrm{H}}(\tau,\tau')\eqsp.
\end{equation}
For the packing $A$ of \Cref{lem:cs_vg}, distinct codewords are therefore
separated by at least $a^{\star}/4$.

The \emph{rounding test} rounds $a$ to the nearest of the $a^{\star}(\sigma)$:
\[
\mathrm{dev}(\sigma)\coloneqq
\lVert a-a^{\star}(\sigma)\rVert_{1,K}\ \ (\sigma\in A),
\qquad
\psi\coloneqq\argmin_{\sigma\in A}\mathrm{dev}(\sigma)\eqsp.
\]
Break ties by a fixed order. If $\mathrm{dev}(\tau)<a^{\star}/8$, then for
every $\sigma\ne\tau$, the triangle inequality and
\eqref{eq:cs_lev_sep} give
\[
\begin{aligned}
\mathrm{dev}(\sigma)
&\ \ge\ \lVert a^{\star}(\tau)-a^{\star}(\sigma)\rVert_{1,K}
-\mathrm{dev}(\tau)\\
&\ \ge\ \frac{a^{\star}}{4}-\mathrm{dev}(\tau)
\ >\ \frac{a^{\star}}{4}-\frac{a^{\star}}{8}\\
&\ =\ \frac{a^{\star}}{8}
\ >\ \mathrm{dev}(\tau)\eqsp.
\end{aligned}
\]
Thus $\tau$ is the strict minimizer. Consequently,
$\psi\ne\tau$ implies $\mathrm{dev}(\tau)\ge a^{\star}/8$.

The fraction and coverage losses satisfy, for every $a\in[0,1]^K$ and $\tau$,
\begin{equation}
\label{eq:cs_lev_transfer}
\bigl\lVert G^\tau(a)-(1-\alpha)\mathbf 1_K\bigr\rVert_{1,K}
\ \ge\ \alpha\,\lVert a-a^{\star}(\tau)\rVert_{1,K}\eqsp.
\end{equation}
Indeed, the two slopes in Step~2 are $\alpha$ and $\alpha+\xi$; averaging
their coordinatewise bounds gives~\eqref{eq:cs_lev_transfer}. Since
$m>m_\star^{(K)}$, \eqref{eq:cs_lev_admiss} gives
$\xi<\frac12\min\{\alpha,1-\alpha\}\le\alpha$, so
$\alpha+\xi\le2\alpha$ and
\[
\alpha\,a^{\star}=\frac{\alpha\,\xi}{\alpha+\xi}\ \ge\ \frac{\alpha\,\xi}{2\alpha}=\frac{\xi}{2}\eqsp.
\]
Chaining rounding and transfer on a fixed realization, $\psi\ne\tau$ forces
$\mathrm{dev}(\tau)\ge a^{\star}/8$, hence
\[
\bigl\lVert G^\tau(a)-(1-\alpha)\mathbf 1_K\bigr\rVert_{1,K}
\ \ge\ \alpha\,\mathrm{dev}(\tau)
\ \ge\ \frac{\alpha\,a^{\star}}{8}
\ \ge\ \frac{\xi/2}{8}
\ =\ \frac{\xi}{16}\eqsp.
\]
Thus, for every $\tau\in A$,
$\{\psi\ne\tau\}\subseteq
\{\lVert G^\tau(a)-(1-\alpha)\mathbf 1_K\rVert_{1,K}\ge\xi/16\}$, and under
$\PP_{\tau}$
\begin{equation}
\label{eq:cs_lev_contrapose}
\PP_{\tau}\Bigl(
\bigl\lVert G^\tau(a)-(1-\alpha)\mathbf 1_K\bigr\rVert_{1,K}
\ge\frac{\xi}{16}\Bigr)
\ \ge\ \PP_{\tau}(\psi\ne\tau)
\ =\ 1-\PP_{\tau}(\psi=\tau)\eqsp.
\end{equation}
The $\chi^{2}$ form of Fano's inequality is the $f(t)=(t-1)^{2}$ case of
the $f$-divergence bound of
\cite[Example~II.5, equation~(12), p.~2389]{guntuboyina2011lower}; see also
\cite{polyanskiy2025information}. For probability measures
$P_{1},\dots,P_{N}$ dominated by a probability measure $Q$ on a common
measurable space, and a measurable test $\psi$ with values in
$\{1,\dots,N\}$ and fibers $E_j\coloneqq\{z:\psi(z)=j\}$, it states
\begin{equation}
\label{eq:cs_fano_engine}
\frac1N\sum_{j=1}^{N}P_{j}(E_{j})
\ \le\ \frac1N+\sqrt{\frac{1}{N^{2}}\sum_{j=1}^{N}\chi^{2}(P_{j}\Vert Q)}\eqsp,
\end{equation}
whose left side is the average probability of correct identification. Indeed,
Cauchy--Schwarz gives
\[
P_j(E_j)-Q(E_j)
\le \sqrt{Q(E_j)\,\chi^2(P_j\Vert Q)},
\]
and summing over the disjoint fibers, using $\sum_jQ(E_j)=1$, proves
\eqref{eq:cs_fano_engine}. The radius
\eqref{eq:cs_lev_radius} and budget~\eqref{eq:cs_lev_budget} control all $N$
divergences simultaneously.

Apply~\eqref{eq:cs_fano_engine} directly to the full calibration sample, with
reference $Q=\PP_{\circ}$, hypotheses $\{\PP_{\tau}:\tau\in A\}$,
$N\coloneqq|A|\ge e^{K/8}$ (\Cref{lem:cs_vg}), and the rounding test $\psi$:
\[
\frac1N\sum_{\tau\in A}\PP_{\tau}(\psi=\tau)
\ \le\ \frac1N+\sqrt{\frac{1}{N^{2}}\sum_{\tau\in A}
\chi^{2}\bigl(\PP_{\tau}\,\Vert\,\PP_{\circ}\bigr)}\eqsp.
\]
By the budget~\eqref{eq:cs_lev_budget} each summand is $\le e^{K/16}-1\le e^{K/16}$, so
\[
\sqrt{\frac{1}{N^{2}}\sum_{\tau\in A}
\chi^{2}\bigl(\PP_{\tau}\,\Vert\,\PP_{\circ}\bigr)}
\ \le\ \sqrt{\frac{N\,e^{K/16}}{N^{2}}}
\ =\ \sqrt{\frac{e^{K/16}}{N}}
\ \le\ \sqrt{\frac{e^{K/16}}{e^{K/8}}}
\ =\ e^{-K/32},
\]
while $1/N\le e^{-K/8}\le e^{-K/32}$. Therefore
\begin{equation}
\label{eq:cs_lev_avgsuccess}
\frac1N\sum_{\tau\in A}\PP_{\tau}(\psi=\tau)\ \le\ 2\,e^{-K/32}.
\end{equation}
Averaging~\eqref{eq:cs_lev_contrapose} over $A$ and using
\eqref{eq:cs_lev_avgsuccess} gives
\[
\sup_{\tau\in A}\PP_{\tau}\Bigl(
\bigl\lVert G^\tau(a)-(1-\alpha)\mathbf 1_K\bigr\rVert_{1,K}
\ge\frac{\xi}{16}\Bigr)
\ \ge\ 1-\frac1N\sum_{\tau\in A}\PP_{\tau}(\psi=\tau)
\ \ge\ 1-2e^{-K/32}.
\]
Since $A\subseteq\{0,1\}^K$ and $a$ was arbitrary, this establishes the
Fano bound for atom-specific fractions under the normalized $\ell^1$ loss.

For a realized threshold vector $\widehat q_{1:K}$, set, for each $k$,
\[
a_k=
\begin{cases}
0,&\widehat q_k\le0,\\
\widehat q_k/\alpha,&0<\widehat q_k<\alpha,\\
1,&\widehat q_k\ge\alpha.
\end{cases}
\]
When $0\le\widehat q_k\le\alpha$, the threshold ray has
coverage~\eqref{eq:cs_lev_cov}; outside this window its absolute coverage
error dominates that of the clipped fraction. Indeed, for
$\widehat q_k\le0$ the coverage is at most $1-\alpha$ and is nondecreasing in
the threshold, whereas for $\widehat q_k\ge\alpha$ the coverage equals one.
Therefore, under every vertex
$\tau$, on every realization, and for every $p\in[1,\infty]$,
\[
\Lp[{\QC[X]}]{
\operatorname{Cov}^{P^\tau}_{\{\Sstar\leq\widehat q\}}(\cdot)-(1-\alpha)}
\ge
\bigl\lVert G^\tau(a)-(1-\alpha)\mathbf 1_K\bigr\rVert_{1,K}.
\]
Here the right side is an $L^1$ loss; the inequality follows from the
coordinatewise clipping comparison and monotonicity of $L^p(\QC[X])$ over
the probability measure $\QC[X]$. The cube conditionals satisfy
\Cref{assum:mass_regular}, while the marginals, target atoms, reference
conditional, and $\Sstar$ remain fixed.
The fraction-space bound therefore transfers to the theorem's action and
supremum classes:
\[
\begin{aligned}
&\inf_{\widehat q_{1:K}}
\sup_{P_{Y\mid X}}
\PP_{(\DC[X]\otimes P_{Y\mid X})^{\otimes m}}\!\Biggl(
\Lp[{\QC[X]}]{
\operatorname{Cov}^{P}_{\{\Sstar\leq\widehat q\}}(\cdot)-(1-\alpha)}\\
&\hspace{11em}\ge\frac1{64}
\sqrt{\frac{\csdiv\,
\alpha(1-\alpha)K}{m}}
\Biggr)
\ge1-2e^{-K/32}.
\end{aligned}
\]
This completes the proof of \Cref{thm:cs_fano_lower}.
\end{proof}

\subsection{Split conformal calibration at each target atom}
\label{subsec:cs_cellwise_upper}

\begin{repproposition}{prop:cs_cellwise_upper}
Under the assumptions of \Cref{thm:cs_fano_lower}, consider the source and
target marginals, target atoms $x_1,\ldots,x_K$, and fixed reference score
used in its $K$-atomic construction. In particular,
\[
\QC[X]=\frac1K\sum_{k=1}^K\delta_{x_k},
\qquad
\DC[X](\{x_k\})=\frac1{\csdiv K},
\quad k=1,\ldots,K.
\]
For every $p\in[1,\infty)$, there is a
constant $C_p<\infty$, depending only on $p$, such that the rule
\eqref{eq:cs_cellwise_rule}, with $\widehat q$ defined by the canonical
extension in \Cref{thm:cs_fano_lower}, satisfies, for every $m\ge1$,
\[
\sup_{P_{Y\mid X}}
\left\{
\E_{\Dcal}\!\left[
\left(
\Lp[{\QC[X]}]{
\operatorname{Cov}^{P}_{\{\Sstar\le\widehat q\}}(\cdot)-(1-\alpha)}
\right)^p
\right]
\right\}^{1/p}
\le C_p\sqrt{\frac{\csdiv K}{m}}.
\]
\end{repproposition}

\begin{proof}
Fix an admissible $P_{Y\mid X}$ and put $\tau=1-\alpha$. Throughout the proof,
$C_p$ denotes a finite constant depending only on $p$ and may increase from
line to line. For each target atom, let
\[
F_k(t)=P_{Y\mid X=x_k}\{\Sstar(x_k,Y)\le t\}.
\]
The upper interval-mass bound in \Cref{assum:mass_regular} and the form of
$\Sstar$ make $F_k$ continuous. Conditional on $N_k=n$, the probability
integral transform therefore gives
\[
F_k(\widehat q_k)\ \sim\ B_{n,j_n},
\qquad
j_n=\lceil(n+1)\tau\rceil,
\]
where $B_{n,j}$ has the $\operatorname{Beta}(j,n+1-j)$ distribution for
$j\le n$, and $B_{n,n+1}=1$.

We first record a uniform moment bound. If $j_n=n+1$, then
$\alpha<1/(n+1)$ and
$|B_{n,j_n}-\tau|^p\le(n+1)^{-p}$. If $j_n\le n$, set
$\mu_n=j_n/(n+1)$. Then $0\le\mu_n-\tau<1/(n+1)$. For admissible values of
$t>0$, the binomial representation of the $j_n$th uniform order statistic
and Hoeffding's inequality give
\[
\begin{aligned}
\PP(B_{n,j_n}\ge\mu_n+t)
&=\PP\bigl(\operatorname{Bin}(n,\mu_n+t)\le j_n-1\bigr)
\le e^{-2nt^2},\\
\PP(B_{n,j_n}\le\mu_n-t)
&=\PP\bigl(\operatorname{Bin}(n,\mu_n-t)\ge j_n\bigr)
\le e^{-2nt^2}.
\end{aligned}
\]
Indeed,
$n(\mu_n+t)-(j_n-1)=nt+1-\mu_n\ge nt$ and
$j_n-n(\mu_n-t)=nt+\mu_n\ge nt$; outside the admissible range the
corresponding tail probability is zero. Hence
\[
\E|B_{n,j_n}-\mu_n|^p
\le 2p\int_0^\infty t^{p-1}e^{-2nt^2}\,\rmd t
\le C_p^p n^{-p/2}.
\]
Absorbing the bias $|\mu_n-\tau|<(n+1)^{-1}$ and treating the case
$j_n=n+1$ above yields
\begin{equation}
\label{eq:cs_cellwise_beta_moment}
\E|B_{n,j_n}-\tau|^p
\le C_p^p(n+1)^{-p/2},
\qquad n\ge0.
\end{equation}

Here $N_k\sim\operatorname{Bin}(m,1/(\csdiv K))$. Write
$\lambda=m/(\csdiv K)$. If $\lambda\le1$, then
$\E[(N_k+1)^{-p/2}]\le1\le\lambda^{-p/2}$. If $\lambda>1$, the binomial
lower-tail bound gives $\PP(N_k\le\lambda/2)\le e^{-\lambda/8}$, whence
\begin{equation}
\label{eq:cs_cellwise_count_moment}
\E[(N_k+1)^{-p/2}]
\le (2/\lambda)^{p/2}+e^{-\lambda/8}
\le C_p^p\lambda^{-p/2}.
\end{equation}
Since $\QC[X]$ is uniform on the target atoms and the values of $\widehat q$
away from them do not contribute to its $L^p(\QC[X])$ loss,
\[
\begin{aligned}
&\E_{\Dcal}\!\left[
\left(
\Lp[{\QC[X]}]{
\operatorname{Cov}^{P}_{\{\Sstar\le\widehat q\}}-(1-\alpha)}
\right)^p
\right]\\
&\qquad=
\frac1K\sum_{k=1}^K
\E_{\Dcal}|F_k(\widehat q_k)-\tau|^p
\le C_p^p\left(\frac{\csdiv K}{m}\right)^{p/2}.
\end{aligned}
\]
The bound is independent of the admissible kernel. Taking the supremum and
the $p$th root proves the proposition.
\end{proof}

\section{Additional Details for the Numerical Experiments}
\label{sec:numerical_carrier_details}
Experiments E1 and E2 evaluate the fixed-score calibration benchmarks,
whereas E3 evaluates learned CQR under a continuous shift. They therefore
refer to distinct statistical experiments.

Experiments E1 and E2 use $\tau=1-\alpha$ and the fixed reference score
$\Sstar(x,y)=|y|-(1-\alpha)$. The probability integral transform reduces the
coverage error to a beta order statistic. Put
$k_n=\lceil(n+1)\tau\rceil$, let
$B_{n,k}\sim\operatorname{Beta}(k,n+1-k)$ for $k\le n$, and set
$B_{n,n+1}=1$. The resulting risks are independent of the admissible
conditional kernel.

\subsection{Scalar carrier experiment}

For the two-point construction
\[
 \QC[X]=\delta_{x_0},\qquad
 \DC[X]=\frac{1}{1+\kappa}\delta_{x_0}
 +\frac{\kappa}{1+\kappa}\delta_{x_D},
\]
we have $\csdiv=1+\kappa$ and
$N\sim\operatorname{Bin}(m,(1+\kappa)^{-1})$ carrier observations. Writing
$r_n=\E|B_{n,k_n}-\tau|$, with $r_n=\alpha$ when $k_n=n+1$, the exact
worst-kernel risk of the weighted rule is
\[
 R(m,\kappa)=\sum_{n=0}^m\binom mnq^n(1-q)^{m-n}r_n,
 \qquad q=(1+\kappa)^{-1}.
\]
For $B\sim\operatorname{Beta}(a,b)$, $\mu=a/(a+b)$, and regularized
incomplete beta function $I_x(a,b)$, we evaluate
\[
 \E|B-\tau|
 =2\tau I_\tau(a,b)-2\mu I_\tau(a+1,b)+\mu-\tau.
\]

E1 takes $\alpha=0.1$ and $\kappa\in\{1,3,9,27\}$. The blue curves in
\Cref{fig:e1_scalar} are exact Binomial--Beta mixtures. When $k_N=N+1$, the
threshold equals $+\infty$ and the conditional error equals $\alpha$, causing
the initial plateau. In effective-size coordinates, the Le Cam lower bound
starts at
\[
 \frac{m}{1+\kappa}
 \ge\frac{4(\log 2)\alpha(1-\alpha)}{\min\{\alpha,1-\alpha\}^2}
 \approx25.0,
\]
up to the integer ceiling in \eqref{eq:cs_lecam_threshold}. Moreover,
the leading-term calculation in the proof of \Cref{prop:cs_carrier_upper}
gives, as $m/(1+\kappa)\to\infty$,
\[
 \sqrt{\frac{m}{1+\kappa}}R(m,\kappa)
 \longrightarrow\sqrt{\frac{2\alpha(1-\alpha)}{\pi}}.
\]
Panel~(c) varies $\alpha\in\{0.05,0.1,0.2\}$ at effective size $10^4$.
Constants in the finite and minimax bounds are not optimized.

\begin{figure*}[t]
  \centering
  \includegraphics[width=\textwidth]{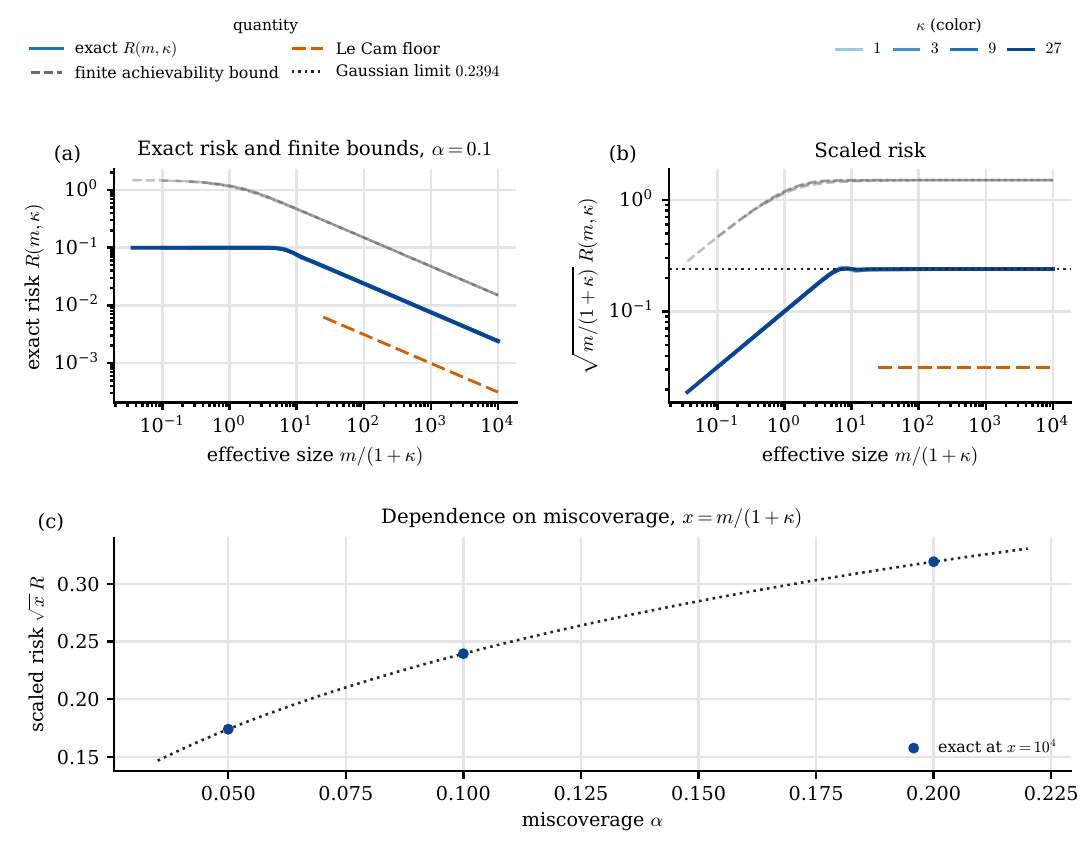}
  \caption{Scalar carrier experiment (E1) at $\alpha=0.1$. All curves are
  exact evaluations of the Binomial--Beta identity for
  $\kappa\in\{1,3,9,27\}$; no simulation is used.}
  \label{fig:e1_scalar}
\end{figure*}

\FloatBarrier

\subsection{Atomwise carrier experiment}

For the $K$-atomic construction,
\[
 \QC[X](\{x_k\})=\frac1K,\qquad
 \DC[X](\{x_k\})=\frac1{(1+\kappa)K},
 \qquad
 \DC[X](\{x_{D,k}\})=\frac{\kappa}{(1+\kappa)K}.
\]
The atomwise rule \eqref{eq:cs_cellwise_rule} has rank
$j_n=\lceil(n+1)\tau\rceil$ at count $n$. With
$q=((1+\kappa)K)^{-1}$ and
$N_D=m-\sum_{k=1}^KN_k$,
\[
 (N_1,\ldots,N_K,N_D)
 \sim\operatorname{Multinomial}
 \left(m;q,\ldots,q,\frac{\kappa}{1+\kappa}\right).
\]
Conditionally on the counts, the atom-specific errors are independent and
\[
 e_k(\widehat q_k)\mid\{N_k=n\}
 \stackrel{\mathrm d}{=}B_{n,j_n}-\tau.
\]
For $1\le p<\infty$, set
\[
 L_p=\left(\frac1K\sum_{k=1}^K|e_k(\widehat q_k)|^p\right)^{1/p},
 \qquad
 M_p=\{\E L_p^p\}^{1/p}.
\]
Writing $r_n^{(p)}=\E|B_{n,j_n}-\tau|^p$ gives the exact identity
\[
 M_p=\left\{
 \sum_{n=0}^m\binom mnq^n(1-q)^{m-n}r_n^{(p)}
 \right\}^{1/p}.
\]
The Monte Carlo curves estimate $\E L_p$ by sampling multinomial counts and
conditional beta variables; Jensen's inequality gives $\E L_p\le M_p$.

Let $\lambda=m/(\csdiv K)$, $\sigma_\alpha^2=\alpha(1-\alpha)$, and let
$Z_1,\ldots,Z_K$ be independent standard normal variables. For fixed $K$, as
$\lambda\to\infty$, conditional independence, the joint convergence
$N_k/\lambda\to1$ in probability for $k=1,\ldots,K$, and the beta quantile
central limit theorem give
\[
 \sqrt\lambda\bigl(e_1(\widehat q_1),\ldots,e_K(\widehat q_K)\bigr)
 \rightsquigarrow
 \sigma_\alpha(Z_1,\ldots,Z_K).
\]
The moment bounds \eqref{eq:cs_cellwise_beta_moment} and
\eqref{eq:cs_cellwise_count_moment}, applied at arbitrarily large exponents,
give uniform integrability of the normalized losses. Hence, for
$1\le p<\infty$,
\[
 \sqrt\lambda M_p\longrightarrow
 \sigma_\alpha\{\E|Z_1|^p\}^{1/p},
 \qquad
 \sqrt\lambda\E L_p\longrightarrow
 \sigma_\alpha\E\left(\frac1K\sum_{k=1}^K|Z_k|^p\right)^{1/p}.
\]
For $L_\infty=\max_k|e_k(\widehat q_k)|$, the same argument gives
\[
 \sqrt\lambda\E L_\infty\longrightarrow
 \sigma_\alpha\E\max_{k\le K}|Z_k|.
\]
Finally, for every fixed $\varepsilon\in(0,1)$, Mills' ratio gives the lower
bound below, while the Gaussian moment generating function gives the upper
bound:
\[
 (1-\varepsilon)\sqrt{2\log K}\{1-o(1)\}
 \le \E\max_{k\le K}|Z_k|
 \le \sqrt{2\log(2K)}.
\]
Letting $\varepsilon\downarrow0$ yields
$\E\max_{k\le K}|Z_k|\sim\sqrt{2\log K}$.

E2 takes $\alpha=0.1$, $\kappa\in\{1,3,9\}$, and
$K\in\{23,64,256\}$. Gray lines in \Cref{fig:e2_cellwise} are exact $M_p$;
hollow markers estimate $\E L_p$ from $10^4$ replications per point with seed
$20260826$. Only counts and beta variables are simulated. Panels~(b1)--(b3)
show $p\in\{1,2,8\}$ and the finite-$K$ Jensen gap. Panel~(c) uses
$K\in\{23,64,256,1024\}$, $\kappa=3$, and $\lambda=10^4$ to illustrate the
$\sqrt{\log K}$ behavior. Its conclusion is empirical.

\begin{figure*}[t]
  \centering
  \includegraphics[width=\textwidth]{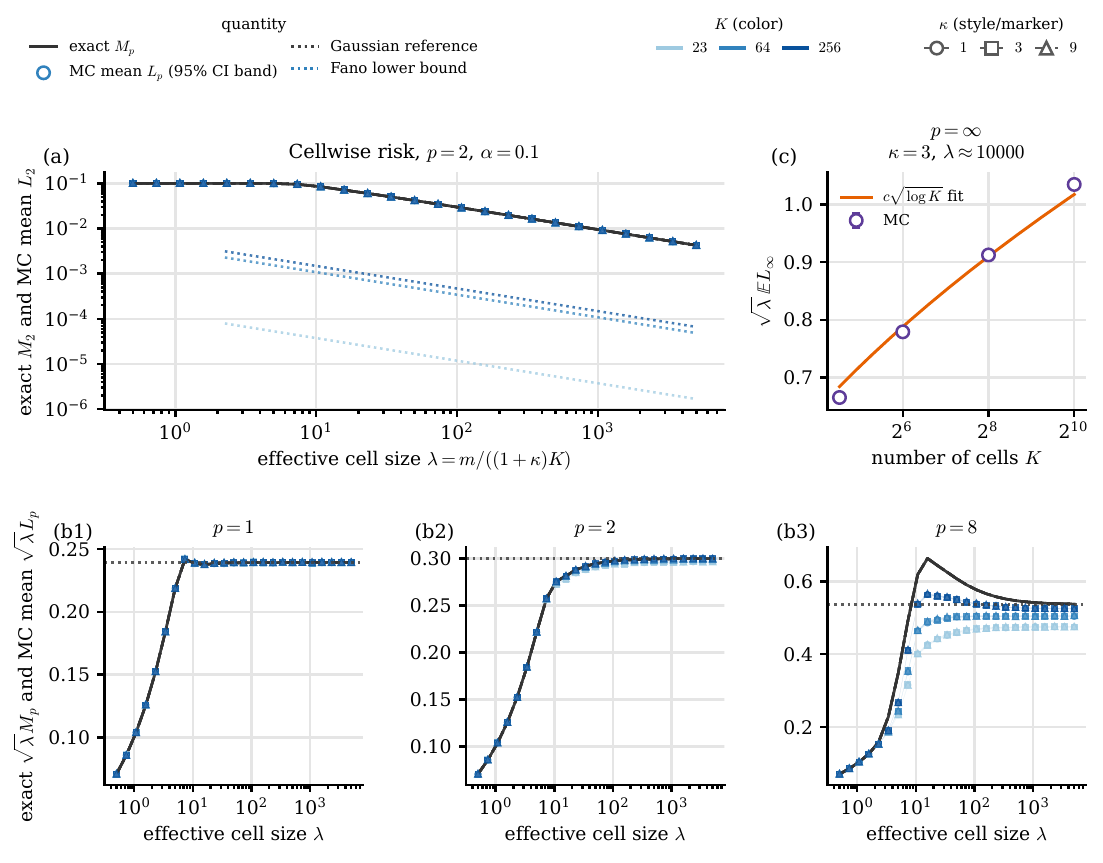}
  \caption{Atomwise carrier experiment (E2) at $\alpha=0.1$. Gray lines are
  exact moments $M_p$; hollow markers are Monte Carlo estimates of $\E L_p$
  from $10^4$ replications with pointwise $95\%$ bands.}
  \label{fig:e2_cellwise}
\end{figure*}

\FloatBarrier

\subsection{Learned CQR computational details}

For E3 in \Cref{subsec:numerical_cqr}, write
$\mu(x)=\sin(2\pi x)$ and
$\sigma(x)=1/2+\cos(2\pi x)/4$. For a realized interval
$C(x)=[\ell_C(x),u_C(x)]$, exact Gaussian conditional coverage is
\[
 \operatorname{Cov}_{C}(x)
 =
 \Phi\!\left(\frac{u_C(x)-\mu(x)}{\sigma(x)}\right)
 -\Phi\!\left(\frac{\ell_C(x)-\mu(x)}{\sigma(x)}\right),
\]
with the natural values for empty sets and infinite endpoints. Composite
trapezoidal quadrature on $x_j=j/1024$, $j=0,\ldots,1024$, evaluates all target
integrals; Monte Carlo randomness is confined to the source training and
calibration folds, and the optimizer stream is fixed across replications.

For $R=200$ replications and seed $20260826$, panel~(a) of
\Cref{fig:e3_cqr} reports means with pointwise
$\overline T\pm1.96\,\widehat{\operatorname{se}}(\overline T)$ intervals,
truncated to $[0,1]$. Panels~(b)--(c) report medians with pointwise empirical
$[0.025,0.975]$-quantile bands. All realized exact weighted thresholds entering
the length and conditional-profile panels are finite.

\section{Neural Network Approximation, Metric Entropy, and Estimation}
\label{sec:nn_definitions}
\subsection{Neural network classes and notation}\label{sec:nn-classes}
This subsection fixes the pinball-risk, H\"older, and sparse-network notation
used in \Cref{theo:requ_rates} and its proof.

\paragraph{Pinball risk and H\"older class}
For $\tau\in(0,1)$, define
\[
\pinball(u)\coloneqq u\bigl(\tau-\indiacc{u<0}\bigr),
\qquad
\Risk{f}\coloneqq
\E_{(X,Y)\sim\DC[XY]}\bigl[\pinball(Y-f(X))\bigr].
\]
The empirical risk $\EmpRisk{f}$ is the sample average of the same loss over
$\Dtrain$.
For $\beta>0$, set $\kbeta\coloneqq\ceil{\beta}-1$, the largest integer
strictly smaller than $\beta$.  The H\"older ball of radius $H>0$ on
$\Omega\subset\R^d$ is
\begin{equation}
\label{eq:holder-ball}
\HC^\beta(\Omega,H)
\coloneqq
\Biggl\{f:\Omega\to\R:
\sum_{|\bgamma|\le\kbeta}
\supnorm{\partial^{\bgamma}f}
+\sum_{|\bgamma|=\kbeta}
\sup_{\substack{\mathbf x,\mathbf y\in\Omega\\\mathbf x\ne\mathbf y}}
\frac{\abs{\partial^{\bgamma}f(\mathbf x)-\partial^{\bgamma}f(\mathbf y)}}
{\supnorm{\mathbf x-\mathbf y}^{\beta-\kbeta}}
\le H\Biggr\}.
\end{equation}

For a function $f$ on $\Xset$, $\supnorm{f}$ denotes the uniform norm
$\sup_{x\in\Xset}|f(x)|$.

\paragraph{Sparse-ReLU class}
Let $\relu(t)=\max(t,0)$ and
$\relu_{\mathbf v}(\mathbf y)
=(\relu(y_1-v_1),\ldots,\relu(y_r-v_r))^\top$.
For an architecture $(L,\bp)$ with
$\bp=(p_0,\ldots,p_{L+1})\in\N^{L+2}$, define the network realization
recursively by
\begin{equation}
\label{eq:nn-def}
\begin{aligned}
\mathbf h_0(\mathbf x)&\coloneqq\mathbf x,\\
\mathbf h_\ell(\mathbf x)
&\coloneqq\relu_{\mathbf v_\ell}
\bigl(W_{\ell-1}\mathbf h_{\ell-1}(\mathbf x)\bigr),
\qquad 1\le\ell\le L,\\
f_\theta(\mathbf x)&\coloneqq W_L\mathbf h_L(\mathbf x).
\end{aligned}
\end{equation}
Here $W_\ell\in\R^{p_{\ell+1}\times p_\ell}$,
$\mathbf v_\ell\in\R^{p_\ell}$, $\mathbf v_0\coloneqq\mathbf 0$,
$p_0=d$, and $p_{L+1}=1$.

\paragraph{Output truncation}
Define the output truncation by
\begin{equation}\label{eq:truncation-def}
\TM(x)\coloneqq\min\{\max(x,-M),M\}.
\end{equation}
As the Euclidean projection onto $[-M,M]$, $\TM$ is $1$-Lipschitz.
Consequently, output truncation preserves Lipschitz continuity and does not
increase the metric entropy of the class.
It also cannot increase pointwise error relative to any function taking
values in $[-M,M]$.

Writing $\supnorm{W}$ and $\lzeronorm{W}$ for the max-entry norm and number
of nonzero entries, define the clipped sparse class by
\begin{equation}
\label{eq:sh-class}
\NNclassSH
\coloneqq
\left\{
\TM\circ f_\theta:f_\theta\text{ is of the form \eqref{eq:nn-def}},\
\begin{aligned}
&\max_{0\le j\le L}
\bigl\{\supnorm{W_j}\vee\abs{\mathbf v_j}_\infty\bigr\}\le1,\\
&\sum_{j=0}^L
\bigl(\lzeronorm{W_j}+\abs{\mathbf v_j}_0\bigr)\le s
\end{aligned}
\right\}.
\end{equation}
Every $f\in\NNclassSH$ satisfies $\supnorm{f}\le M$.

\paragraph{Schmidt-Hieber architecture for nonparametric rates}
For input dimension $d$, smoothness $\beta > 0$, and sample size $n$,
we define the Schmidt-Hieber width parameter
\begin{equation}\label{eq:sh-scaling}
N_n \;:=\; \bigl\lceil n^{d/(2\beta + d)} \bigr\rceil,
\end{equation}
and the Schmidt-Hieber architectural parameters
\begin{subequations}\label{eq:sh-arch}
\begin{align}
L_{\mathrm{SH}}(n)
\;&:=\; 8 + \bigl(\lceil \log_2 n \rceil + 5\bigr) \bigl(1 + \lceil \log_2(d \vee \beta) \rceil \bigr),
\label{eq:sh-L}\\
W_{\mathrm{SH}}(n)
\;&:=\; 6(d + \lceil \beta \rceil) \, N_n,
\label{eq:sh-W}\\
\bp_{\mathrm{SH}}(n)
\;&:=\; \bigl(d, \underbrace{W_{\mathrm{SH}}(n), \ldots, W_{\mathrm{SH}}(n)}_
       {L_{\mathrm{SH}}(n) \text{ terms}}, 1 \bigr),
\label{eq:sh-p}\\
S_{\mathrm{SH}}(n)
\;&:=\; \left\lceil141 (d + \beta + 1)^{3+d} \, N_n \,
\bigl(\lceil \log_2 n \rceil + 6\bigr)\right\rceil.
\label{eq:sh-S}
\end{align}
\end{subequations}
These are the depth, width, and sparsity parameters supplied by
\cite[Theorem~5]{schmidthieber_2020} with
width parameter $N_n$ and
discretization parameter $\lceil\log_2 n\rceil$. They define the class used
in \Cref{theo:requ_rates}.

The resulting clipped class is
\begin{equation}
\label{eq:NNSHn}
\NNclassSHn{L_{\mathrm{SH}}}{\bp_{\mathrm{SH}}}{S_{\mathrm{SH}}}(n)
\coloneqq
\NNclassSHn{L_{\mathrm{SH}}(n)}{\bp_{\mathrm{SH}}(n)}{S_{\mathrm{SH}}(n)}.
\end{equation}
Its depth is $O(\log n)$, its maximal width is
$O(n^{d/(2\beta+d)})$, and its sparsity is
$O(n^{d/(2\beta+d)}\log n)$.
Define the training threshold by
\begin{equation}
\label{eq:n_sh}
n_{\mathrm{SH}}
\coloneqq\left\lceil
(\beta+1)^{2\beta+d}
\vee(\lipconst_\beta+1)^{(2\beta+d)/d}\rme^{2\beta+d}
\vee\rme^{3(2\beta+d)/(2\beta)}
\right\rceil.
\end{equation}

\subsection{Lipschitz stability of network parameters}

\begin{proposition}
\label{lem:inf_norm_bound}
Fix $L\in\N$ and an architecture
$\bp=(p_0,\ldots,p_{L+1})\in\N^{L+2}$.  Let
$\theta^{(m)}=(W_\ell^{(m)},v_\ell^{(m)})_\ell$, $m\in\{1,2\}$, have all
parameters in $[-1,1]$.  The corresponding realizations
$\func{\param^{(1)}}$ and $\func{\param^{(2)}}$ satisfy
\begin{equation}
\label{eq:inf_norm_bound}
\begin{aligned}
\Delta_W&\coloneqq\max_{0 \leq \ell \leq L}
\|W_\ell^{(1)} - W_\ell^{(2)}\|_\infty,\\
\Delta_v&\coloneqq\max_{1 \leq \ell \leq L}
\|v_\ell^{(1)} - v_\ell^{(2)}\|_\infty,\\
\sup_{x \in [0, 1]^{p_0}}
|\func{\param^{(1)}}(x) - \func{\param^{(2)}}(x)|
&\leq (\Delta_W\vee\Delta_v)(L+1)\prod_{j=0}^{L}(p_j+1).
\end{aligned}
\end{equation}
\end{proposition}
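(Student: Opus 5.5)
We argue by a layerwise telescoping (hybrid) argument. For $\ell\in\{0,\ldots,L+1\}$ let $f^{[\ell]}$ be the network that uses the parameters of $\param^{(2)}$ in layers $0,\ldots,\ell-1$ and those of $\param^{(1)}$ in layers $\ell,\ldots,L$. Thus $f^{[0]}=\func{\param^{(1)}}$ and $f^{[L+1]}=\func{\param^{(2)}}$, and the triangle inequality gives
\[
\sup_x|\func{\param^{(1)}}(x)-\func{\param^{(2)}}(x)|\le\sum_{\ell=0}^{L}\sup_x|f^{[\ell]}(x)-f^{[\ell+1]}(x)|.
\]
Two consecutive hybrids differ only in the weight $W_\ell$ and the shift $\mathbf v_\ell$ of one layer. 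It therefore suffices to show that each summand is at most $(\Delta_W\vee\Delta_v)\prod_{j=0}^L(p_j+1)$. This produces the factor $L+1$.

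We need two auxiliary estimates.

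\emph{Forward growth.} Let $\mathbf h_j$ be any hidden vector computed with parameters in $[-1,1]$ on inputs $x\in[0,1]^{p_0}$. By induction, $|\mathbf h_j|_\infty\le\prod_{i<j}(p_i+1)$. Indeed, $|W\mathbf h|_\infty\le p_{j-1}|\mathbf h|_\infty$, the shift adds at most $1$, and the ReLU is $1$-Lipschitz with $\relu(0)=0$. So $|\mathbf h_j|_\infty\le p_{j-1}|\mathbf h_{j-1}|_\infty+1\le(p_{j-1}+1)\max(|\mathbf h_{j-1}|_\infty,1)$, starting from $|\mathbf h_0|_\infty\le1$.

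\emph{Backward Lipschitz.} The map from layer $j$ to the output, with parameters in $[-1,1]$, is Lipschitz in $|\cdot|_\infty$ with constant at most $\prod_{i\ge j}p_i$. Each linear map $W_i$ has $\ell_\infty\to\ell_\infty$ operator norm at most $p_i$, and the shifted ReLUs are $1$-Lipschitz.

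We now bound a single summand. Perturbing layer $\ell$ changes the pre-activation at that layer by at most
\[
|(W_\ell^{(1)}-W_\ell^{(2)})\mathbf h_\ell|_\infty+|\mathbf v^{(1)}-\mathbf v^{(2)}|_\infty\le\Delta_W\,p_\ell\prod_{i<\ell}(p_i+1)+\Delta_v.
\]
Propagating this through the remaining layers with the backward Lipschitz bound $\prod_{i>\ell}p_i$ gives at most $(\Delta_W\vee\Delta_v)\prod_{j=0}^L(p_j+1)$, since $p_\ell a+1\le(p_\ell+1)a$ for $a\ge1$.

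The main obstacle is index bookkeeping in the convention \eqref{eq:nn-def}. There the shift $\mathbf v_\ell$ is applied after $W_{\ell-1}$, so the pairing of weights and shifts per layer must be chosen consistently. With the convention $\mathbf v_0=0$, we group $W_{\ell-1}$ with $\mathbf v_\ell$ so that there are exactly $L+1$ hybrid steps. Along the way we must check that the constants stay within $\prod_j(p_j+1)$; the slack from the $+1$ in each factor absorbs the shift terms. Output truncation is not involved here, since the statement concerns the raw realizations.
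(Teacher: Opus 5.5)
Your proof is correct. It uses the same two estimates as the paper, but organizes them differently:

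\begin{itemize}
\item the forward-growth bound, which is the paper's layer-output lemma;
\item $\ell_\infty$-Lipschitz propagation with a factor $p_j$ per layer.
\end{itemize}

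The paper runs a forward recurrence on the hidden-state difference, $\Delta_k\le p_{k-1}\Delta_{k-1}+\eps\Pi_k$ with $\Pi_k=\prod_{j<k}(p_j+1)$. It propagates the accumulated difference through $W^{(1)}$ and charges the fresh parameter perturbation against $h^{(2)}_{k-1}$. Unrolling this and adding the output layer gives the $L$ terms $\eps\Pi_k\prod_{j=k}^{L}p_j$, $1\le k\le L$, plus $\eps p_L\Pi_L$. These are exactly your $L+1$ per-hybrid bounds: a $\theta^{(2)}$ head, one switched layer, and a $\theta^{(1)}$ tail with Lipschitz constant $\prod_{j>\ell}p_j$. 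Your telescoping makes the factor $L+1$ and the per-step bound $\eps\prod_j(p_j+1)$ visible at once. The paper avoids defining intermediate networks but has to solve the recurrence and sum its terms.

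The one thing to fix is your sentence saying that consecutive hybrids differ in $W_\ell$ and $\mathbf v_\ell$. Your computation, and your closing remark, pair $W_\ell$ with $\mathbf v_{\ell+1}$, the shift applied immediately after it. Only that pairing gives the per-step bound. If a step switched $(\mathbf v_\ell,W_\ell)$ instead, the bound can fail. Take $L=1$, $\bp=(1,N,1)$, $x=1$, $W_0^{(1)}=W_0^{(2)}=\mathbf 1$, $\mathbf v_1^{(2)}=-\mathbf 1$, $\mathbf v_1^{(1)}=(\eps-1)\mathbf 1$, $W_1^{(2)}=\mathbf 1^\top$, and $W_1^{(1)}=(1-\eps)\mathbf 1^\top$. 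That single step moves the output by $N(3\eps-\eps^2)$. This exceeds $\eps\prod_j(p_j+1)=2(N+1)\eps$ whenever $N(1-\eps)>2$.
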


The proof needs a bound on the intermediate layers.
\begin{lemma}
\label{lem:layer_output_bound}
Fix $L\in\N$ and $(p_0,\ldots,p_L)\in\N^{L+1}$.
For $x\in[0,1]^{p_0}$, let $h_0(x)=x$ and
\[
h_k(x)=\relu_{v_k}\bigl(W_{k-1}h_{k-1}(x)\bigr),
\qquad k=1,\ldots,L,
\]
where $W_{k-1}\in[-1,1]^{p_k\times p_{k-1}}$ and
$v_k\in[-1,1]^{p_k}$.  Then, for $1\le k\le L$,
\[
\sup_{x\in[0,1]^{p_0}}\norm{h_k(x)}[\infty]
\le\prod_{j=0}^{k-1}(p_j+1).
\]
\end{lemma}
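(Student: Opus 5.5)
We prove the bound by induction on $k$, tracking the sup-norm of each hidden layer through one affine map followed by the shifted ReLU. For $k=1$ we have $h_0(x)=x\in[0,1]^{p_0}$, so $\norm{h_0(x)}[\infty]\le1$. Each coordinate of $W_0h_0(x)$ is a sum of $p_0$ terms, each of absolute value at most $1$, since every entry of $W_0$ lies in $[-1,1]$. Hence $\norm{W_0h_0(x)}[\infty]\le p_0$.

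Next we handle the activation. For scalars $y,v$ with $|v|\le1$, we use
\[
|\relu(y-v)|\le|y-v|\le|y|+1.
\]
This gives $\norm{h_1(x)}[\infty]\le p_0+1$, which is the claimed bound $\prod_{j=0}^{0}(p_j+1)$ for $k=1$.

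For the inductive step, suppose $\sup_x\norm{h_{k-1}(x)}[\infty]\le B_{k-1}:=\prod_{j=0}^{k-2}(p_j+1)$, with $B_0=1$. Each coordinate of $W_{k-1}h_{k-1}(x)$ is a sum of $p_{k-1}$ products, each bounded by $B_{k-1}$. Therefore
\[
\norm{W_{k-1}h_{k-1}(x)}[\infty]\le p_{k-1}B_{k-1}.
\]
Applying the activation estimate yields
\[
\norm{h_k(x)}[\infty]\le p_{k-1}B_{k-1}+1.
\]
Since $B_{k-1}\ge1$, this is at most $(p_{k-1}+1)B_{k-1}=B_k$, which closes the induction. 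Taking the supremum over $x\in[0,1]^{p_0}$ gives the lemma.

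The argument is elementary throughout. The only step needing care is absorbing the additive bias term $1$ into the multiplicative factor $(p_{k-1}+1)$. This absorption relies on the invariant $B_{k-1}\ge1$, so we keep that invariant explicit in the induction hypothesis.
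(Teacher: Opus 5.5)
Your proposal is correct and follows essentially the same argument as the paper. It uses induction on $k$ with base $\norm{h_0(x)}[\infty]\le1$ and the pre-activation bound $p_{k-1}\prod_{j=0}^{k-2}(p_j+1)$. It then applies the shifted-ReLU estimate $|\relu(y-v)|\le|y|+1$ and absorbs the bias using $\prod_{j=0}^{k-2}(p_j+1)\ge1$, exactly as the paper does.
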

\begin{proof}
For $k=0$, $\norm{h_0(x)}[\infty]=\norm{x}[\infty]\le1$, matching the
empty product.  Suppose for some $k\ge1$ that
\[
\sup_{x\in[0,1]^{p_0}}\norm{h_{k-1}(x)}[\infty]
\le\Pi_{k-1},
\qquad
\Pi_{k-1}\coloneqq\prod_{j=0}^{k-2}(p_j+1).
\]
For the pre-activation vector
$z_k\coloneqq W_{k-1}h_{k-1}(x)\in\R^{p_k}$ and
$1\le j\le p_k$,
\[
|(z_k)_j|
=\left|\sum_{l=1}^{p_{k-1}}(W_{k-1})_{jl}(h_{k-1}(x))_l\right|.
\]
The weight bound and induction hypothesis imply
\begin{align}
|(z_k)_j|
&\le\sum_{l=1}^{p_{k-1}}
\underbrace{|(W_{k-1})_{jl}|}_{\le1}
\underbrace{|(h_{k-1}(x))_l|}_{\le\Pi_{k-1}}
\le p_{k-1}\Pi_{k-1}.
\end{align}
Since $h_k(x)=\relu_{v_k}(z_k)$ and $|\relu(u)|\le|u|$,
\begin{equation}
|(h_k(x))_j|
\le|(z_k)_j-(v_k)_j|
\le|(z_k)_j|+|(v_k)_j|.
\end{equation}
Thus $\norm{v_k}[\infty]\le1$ and $\Pi_{k-1}\ge1$ give
\[
\norm{h_k(x)}[\infty]
\le p_{k-1}\Pi_{k-1}+1
\le(p_{k-1}+1)\Pi_{k-1}.
\]
Iterating from the base case yields
\[
\norm{h_k(x)}[\infty]\le\prod_{j=0}^{k-1}(p_j+1).
\]
\end{proof}

\begin{proof}[Proof of Proposition~\ref{lem:inf_norm_bound}]
Let $h_0^{(1)}(x)=h_0^{(2)}(x)=x$.  For $1\le k\le L$, define
$h_k^{(1)}$ and $h_k^{(2)}$ recursively as in
\Cref{lem:layer_output_bound}, using $(\Wone,\vone)$ and
$(\Wtwo,\vtwo)$, respectively.  Set
$\eps\coloneqq\Delta_W\vee\Delta_v$ and
$\Pi_k\coloneqq\prod_{j=0}^{k-1}(p_j+1)$.  Also put
\[
\Delta_k(x)\coloneqq
\norm{h_k^{(1)}(x)-h_k^{(2)}(x)}[\infty].
\]
At the input layer, $\Delta_0(x)=\norm{x-x}[\infty]=0$.  For
$1\le k\le L$,
\[
h_k^{(m)}(x)
=\relu_{v_k^{(m)}}\bigl(W_{k-1}^{(m)}h_{k-1}^{(m)}(x)\bigr),
\qquad m\in\{1,2\}.
\]
The $1$-Lipschitz property of ReLU gives
    \begin{align}
        \Delta_k(x) &= \norm{\relu_{\vone_k}\left(\Wone_{k-1} h_{k-1}^{(1)}(x)\right) - \relu_{\vtwo_k}\left(\Wtwo_{k-1} h_{k-1}^{(2)}(x)\right)}[\infty] \\
        &\leq \norm{\left(\Wone_{k-1} h_{k-1}^{(1)}(x) - \vone_k\right) - \left(\Wtwo_{k-1} h_{k-1}^{(2)}(x) - \vtwo_k\right)}[\infty] \\
        &\leq \norm{\Wone_{k-1} h_{k-1}^{(1)}(x) - \Wtwo_{k-1} h_{k-1}^{(2)}(x)}[\infty] + \norm{\vone_k - \vtwo_k}[\infty].
\end{align}
Adding and subtracting $\Wone_{k-1}h_{k-1}^{(2)}(x)$ gives
\[
\Wone_{k-1}h_{k-1}^{(1)}-\Wtwo_{k-1}h_{k-1}^{(2)}
=\Wone_{k-1}(h_{k-1}^{(1)}-h_{k-1}^{(2)})
+(\Wone_{k-1}-\Wtwo_{k-1})h_{k-1}^{(2)}.
\]
For a $p_{\mathrm{out}}\times p_{\mathrm{in}}$ matrix,
$\norm{Ay}[\infty]\le p_{\mathrm{in}}\norm{A}[\infty]
\norm{y}[\infty]$.  The parameter bounds therefore imply
    \begin{align}
        \Delta_k(x) &\leq p_{k-1} \underbrace{\norm{\Wone_{k-1}}[\infty]}_{\leq 1} \Delta_{k-1}(x) + p_{k-1} \underbrace{\norm{\Wone_{k-1} - \Wtwo_{k-1}}[\infty]}_{\leq \eps} \norm{h_{k-1}^{(2)}(x)}[\infty] + \eps \\
        &\leq p_{k-1} \Delta_{k-1}(x) + \eps \left( p_{k-1} \norm{h_{k-1}^{(2)}(x)}[\infty] + 1 \right).
\end{align}
By \Cref{lem:layer_output_bound},
$\norm{h_{k-1}^{(2)}(x)}[\infty]\le\Pi_{k-1}$ and
$p_{k-1}\Pi_{k-1}+1\le(p_{k-1}+1)\Pi_{k-1}=\Pi_k$.  Hence
\[
\Delta_k(x)\le p_{k-1}\Delta_{k-1}(x)+\eps\Pi_k.
\]
Solving this recurrence from $\Delta_0(x)=0$ yields
\[
\Delta_L(x)
\le\eps\sum_{k=1}^L\Pi_k\left(\prod_{j=k}^{L-1}p_j\right).
\]
For the final layer, $\fone(x)=\Wone_Lh_L^{(1)}(x)$ and
$\ftwo(x)=\Wtwo_Lh_L^{(2)}(x)$, so
\begin{align}
        \norm{\fone(x) - \ftwo(x)}[\infty] &= \norm{\Wone_L h_L^{(1)}(x) - \Wtwo_L h_L^{(2)}(x)}[\infty] \\
        &\leq p_L \norm{\Wone_L}[\infty] \Delta_L(x) + p_L \norm{\Wone_L - \Wtwo_L}[\infty] \norm{h_L^{(2)}(x)}[\infty] \\
        &\leq p_L \Delta_L(x) + \eps p_L \Pi_L.
\end{align}
Since $p_L\Pi_L\le\Pi_{L+1}$, summing the layer contributions gives
\[
\norm{\fone(x)-\ftwo(x)}[\infty]
\le\eps(L+1)\prod_{j=0}^{L}(p_j+1).
\]
Taking the supremum over $x$ proves~\eqref{eq:inf_norm_bound}.
\end{proof}

\subsection{Approximation by ReLU networks}

The integer $n_{\mathrm{SH}}$ is defined in~\eqref{eq:n_sh}.  Its first two
terms enforce the width condition of
\cite[Theorem~5]{schmidthieber_2020};
the third makes $n\mapsto n^{-\beta/(2\beta+d)}(\log n)^{3/2}$ non-increasing
on $[n_{\mathrm{SH}},\infty)$ and guarantees $\log n\ge1$ there.

\begin{lemma}
\label{lem:relu-approx}
Fix $\alpha\in(0,1)$, $\tau\in\{\alo,\ahi\}$, $d\in\N$, $\beta>0$, and
$\lipconst_\beta>0$, and assume
\Cref{assum:smoothness,assum:boundedness}.  For $n\ge n_{\mathrm{SH}}$, let
$\mathcal F_n^{\mathrm{SH}}$ be the sparse ReLU class in
\eqref{eq:NNSHn}.
Then some $\tilde f\in\mathcal F_n^{\mathrm{SH}}$ satisfies
\begin{equation}
\label{eq:relu-approx}
\supnorm{\tilde{f} - \fstar{\tau}} \leq \Capp\, n^{-\beta/(2\beta+d)},
\end{equation}
where
\begin{equation}
\Capp\coloneqq
2(2\lipconst_\beta+1)\lr{1+d^2+\beta^2}\,6^d
+\lipconst_\beta\,3^\beta.
\end{equation}
\end{lemma}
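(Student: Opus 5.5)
The plan is to invoke \cite[Theorem~5]{schmidthieber_2020} directly with width parameter $N=N_n$ and discretization parameter $\lceil\log_2 n\rceil$, and then account for the three differences between that theorem and our setting: the target sits in a ball of radius $\lipconst_\beta$ rather than $K$, our class carries the output clipping $\TM$, and the final error has to be written as a single power of $n$. By \Cref{assum:boundedness} the covariate domain is $[0,1]^d$, and by \Cref{assum:smoothness} we have $\fstar{\tau}\in\HC^\beta([0,1]^d,\lipconst_\beta)$. Schmidt-Hieber's theorem states that whenever $N\ge(\beta+1)^d\vee(\lipconst_\beta+1)e^d$, there is a network $f$ with the depth, width and sparsity given in \eqref{eq:sh-L}--\eqref{eq:sh-S} and all parameters in $[-1,1]$, satisfying
\[
\supnorm{f-\fstar{\tau}}\le(2\lipconst_\beta+1)(1+d^2+\beta^2)6^dN2^{-\lceil\log_2 n\rceil}+\lipconst_\beta3^\beta N^{-\beta/d}.
\]

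The first step is to check the width condition. Since $N_n\ge n^{d/(2\beta+d)}$, the condition follows from $n\ge(\beta+1)^{2\beta+d}$ together with $n\ge(\lipconst_\beta+1)^{(2\beta+d)/d}e^{2\beta+d}$, and both are built into $n_{\mathrm{SH}}$ through \eqref{eq:n_sh}. The second step is to bound each error term by a multiple of $n^{-\beta/(2\beta+d)}$. For the first term, $2^{-\lceil\log_2 n\rceil}\le1/n$ and $N_n\le 2n^{d/(2\beta+d)}$, so $N_n2^{-\lceil\log_2 n\rceil}\le2n^{-2\beta/(2\beta+d)}\le2n^{-\beta/(2\beta+d)}$. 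This produces the factor $2$ in $\Capp$. For the second term, $N_n^{-\beta/d}\le n^{-\beta/(2\beta+d)}$, which produces $\lipconst_\beta3^\beta$. Adding the two bounds gives exactly $\Capp n^{-\beta/(2\beta+d)}$ for the unclipped network.

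The third step handles the clipping. Set $\tilde f=\TM\circ f$, which belongs to $\mathcal F_n^{\mathrm{SH}}$ because the architecture and the parameter constraints in \eqref{eq:sh-class} match. Under \Cref{assum:density} the conditional law is supported on $[-M,M]$, so $\fstar{\tau}$ takes values in $[-M,M]$. Since $\TM$ is $1$-Lipschitz and fixes $\fstar{\tau}$, truncation cannot increase the uniform error, and the bound carries over to $\tilde f$.

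The main obstacle is bookkeeping. I need to confirm that Schmidt-Hieber's architecture (his depth $L$, width $6(r+\lceil\beta\rceil)N$, and sparsity bound) matches \eqref{eq:sh-arch} exactly, including his convention for shift vectors and his Hölder-ball norm against \eqref{eq:holder-ball}. If the sparsity in \eqref{eq:sh-S} turns out larger than his, the statement still holds by monotonicity of the class in $s$. I also need to handle the case where only \Cref{assum:boundedness} is available, without \Cref{assum:density}, in which case I would use $\supnorm{\fstar{\tau}}\le M$ directly.
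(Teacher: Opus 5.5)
Your proposal is correct and follows the paper's proof essentially line by line. You apply Schmidt-Hieber's Theorem~5 with width parameter $N_n$ and discretization $\lceil\log_2 n\rceil$, get the width condition from the first two terms of $n_{\mathrm{SH}}$, use $N_n2^{-\lceil\log_2 n\rceil}\le 2n^{-2\beta/(2\beta+d)}$ and $N_n^{-\beta/d}\le n^{-\beta/(2\beta+d)}$, and then clip with the nonexpansive $\TM$. The one adjustment is the one you already flag: \Cref{assum:boundedness} by itself places $\fstar{\tau}$ in $[-M,M]$, which is what the paper uses, so \Cref{assum:density} is not needed for the clipping step.
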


\begin{proof}
By \Cref{assum:smoothness}, $\fstar{\tau} \in \HC^\beta(\X, \lipconst_\beta)$. For $n\ge n_{\mathrm{SH}}$, the first two terms of~\eqref{eq:n_sh} give
\[
N_n=\lceil n^{d/(2\beta+d)}\rceil\ \ge\ n_{\mathrm{SH}}^{d/(2\beta+d)}\ \ge\ (\beta+1)^d \vee (\lipconst_\beta+1)\rme^d.
\]
Applying
\cite[Theorem~5]{schmidthieber_2020} with
width parameter $N_n$ and depth
parameter $\lceil\log_2 n\rceil$, the condition $N_n \geq (\beta+1)^d \vee (\lipconst_\beta+1)\rme^d$ guarantees
the existence of a ReLU network $\tilde{f}^{\mathrm{SH}}$ of depth
$L_{\mathrm{SH}}(n)$, hidden width $W_{\mathrm{SH}}(n)$, sparsity at most
$S_{\mathrm{SH}}(n)$, and all parameters in $[-1,1]$, satisfying
\begin{equation}\label{eq:sh-raw}
\supnorm{\tilde{f}^{\mathrm{SH}} - \fstar{\tau}}
\leq c_1\, N_n\, 2^{-\lceil\log_2 n\rceil} + c_2\, N_n^{-\beta/d},
\end{equation}
with $c_1 = (2\lipconst_\beta+1)\lr{1 + d^2 + \beta^2}\, 6^d$ and $c_2 = \lipconst_\beta \cdot 3^\beta$.

Set $\tilde f\coloneqq\TM\circ\tilde f^{\mathrm{SH}}$.  Membership in
$\NNclassSHn{L_{\mathrm{SH}}}{\bp_{\mathrm{SH}}}{S_{\mathrm{SH}}}(n)$ follows
from~\eqref{eq:sh-class} and the preceding parameter and sparsity bounds.
Projection onto $[-M,M]$ fixes $\fstar{\tau}$ by
\Cref{assum:boundedness} and is nonexpansive; hence, for every $\mathbf x\in\X$,
\begin{equation}
\abs{\tilde f(\mathbf x)-\fstar{\tau}(\mathbf x)}
=\abs{\TM(\tilde f^{\mathrm{SH}}(\mathbf x))
-\TM(\fstar{\tau}(\mathbf x))}
\le\abs{\tilde f^{\mathrm{SH}}(\mathbf x)-\fstar{\tau}(\mathbf x)}.
\end{equation}

Taking suprema and using $2^{-\lceil\log_2 n\rceil}\le n^{-1}$ and
$N_n\le n^{d/(2\beta+d)}+1$, the first term in~\eqref{eq:sh-raw} satisfies
\begin{equation}
N_n\, 2^{-\lceil\log_2 n\rceil}
\leq \lr{n^{d/(2\beta+d)} + 1}\, n^{-1}
\leq 2\, n^{-2\beta/(2\beta+d)}.
\end{equation}
Similarly, for the second term, $N_n^{-\beta/d} \leq n^{-\beta/(2\beta+d)}$. Since
$2\beta/(2\beta+d) > \beta/(2\beta+d)$, the first term in~\eqref{eq:sh-raw} is
dominated by the second for $n \geq 1$. Therefore,
\begin{equation}
\supnorm{\tilde{f} - \fstar{\tau}}
\leq (2c_1 + c_2)\, n^{-\beta/(2\beta+d)}
= \Capp\, n^{-\beta/(2\beta+d)}. \qedhere
\end{equation}

\end{proof}

\subsection{Estimation error}
\label{subsec:excess_risk}
Fix $\alpha\in(0,1)$ and $\tau\in\{\alo,\ahi\}$.
Define the pinball Lipschitz and Bernstein constants by
\begin{equation}
\label{eq:definition-V}
V =\frac{2K_\tau^2}{\low} \quad  \text{and} \quad K_\tau = \max\{\tau,1-\tau\}.
\end{equation}
To separate approximation from estimation, define the best-in-class predictor
over $\Param$ by
\begin{equation}
\label{eq:definition-best_f_inclass}
\bestfinclass= \func{\bestparam} \quad \text{where} \quad
\bestparam\in \argmin_{\param \in \Param} \Risk{\func{\param}},
\end{equation}
and define its approximation error by
\begin{equation}
\label{eq:approximation-error}
\approxerror= \Risk{\bestfinclass}- \Risk{\fstar{\tau}}.
\end{equation}
\begin{proposition}
\label{prop:risk-control}
Fix $\alpha\in(0,1)$ and $\tau\in\{\alo,\ahi\}$.
Assume \Cref{assum:boundedness}, \Cref{assum:density}, and that
$\supnorm{\func{\param}} \le M$ for all $\param \in \Param$.
Assume that the population minimizer in
\eqref{eq:definition-best_f_inclass} exists and that
$\EmpRisk{\func{\param}}$ has a measurable minimizer $\hparam$ over
$\Param$, and set $\fhat{\tau}=\func{\hparam}$.
Assume that, for every $\varepsilon>0$, there is a finite
$\varepsilon$-net $\mathcal{N}_\varepsilon \subset \Param$ such that
\begin{equation}
\label{eq:ass_lemma1}
 \sup_{\param \in \Param} \min_{\vartheta \in \mathcal{N}_\varepsilon}
 \supnorm{\func{\vartheta} - \func{\param}} \le \varepsilon.
\end{equation}
For every $\delta\in(0,1)$ and $\varepsilon>0$, with probability at least
$1-\delta$,
\begin{equation}
\label{eq:risk-control}
    \Risk{\fhat{\tau}}-\Risk{\bestfinclass} \le  \frac{(4V + (8/3)K_\tau M)u_\varepsilon(\delta)}{n} + 4\sqrt{\frac{V(2\approxerror + K_\tau \varepsilon) u_\varepsilon(\delta)}{n}} + 4 K_\tau\varepsilon,
\end{equation}
where $\bestfinclass$ is defined in \eqref{eq:definition-best_f_inclass} and where we have set
\begin{equation}
\label{eq:u-eps}
u_\varepsilon(\delta) = \log\Big(\frac{|\mathcal N_\varepsilon|}{\delta}\Big).
\end{equation}
\end{proposition}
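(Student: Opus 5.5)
The plan is the standard Bernstein-plus-union-bound oracle inequality for pinball ERM over a finite net, followed by a transfer from the net back to the full class. I will work with the excess loss $\ell_f(x,y)=\pinball(y-f(x))-\pinball(y-\fstar{\tau}(x))$.

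\emph{Step 1 (moment bounds).} Since $\pinball$ is $K_\tau$-Lipschitz and all functions take values in $[-M,M]$, we have $|\ell_f|\le K_\tau|f-\fstar{\tau}|\le 2K_\tau M$. This gives a uniform bound on $\ell_f-\E\ell_f$.

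\emph{Step 2 (Bernstein condition).} I need $\E\ell_f^2\le V(\Risk{f}-\Risk{\fstar{\tau}})$ with $V=2K_\tau^2/\low$. For the second moment, $\E\ell_f^2\le K_\tau^2\Ltwo{f-\fstar{\tau}}^2$. For the quadratic risk comparison, Knight's identity and the density lower bound of \Cref{assum:density} give, for every $x$,
\[
\E[\ell_f\mid X=x]=\int_0^{f(x)-\fstar{\tau}(x)}\bigl(F(\fstar{\tau}+s\mid x)-\tau\bigr)\,\rmd s\ge \tfrac{\low}{2}\,(f-\fstar{\tau})^2(x).
\]
This uses that both $f(x)$ and $\fstar{\tau}(x)$ lie in $[-M,M]=\Y$, where the density is at least $\low$. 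Combining the two estimates yields the Bernstein condition.

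\emph{Step 3 (Bernstein plus union bound).} For each net point $\vartheta\in\mathcal N_\varepsilon$, apply Bernstein's inequality to the i.i.d.\ variables $\ell_{f^\vartheta}(Z_i)-\ell_{\bestfinclass}(Z_i)$. Their variance is at most $2V\bigl(\mathcal E(f^\vartheta)+\approxerror\bigr)$, where $\mathcal E(f)=\Risk{f}-\Risk{\fstar{\tau}}$, and they are bounded by $2K_\tau M$. A union bound over $|\mathcal N_\varepsilon|$ points with failure probability $\delta$ then gives, simultaneously for all net points,
\[
\bigl(\Risk{}-\EmpRisk{}\bigr)(f^\vartheta-\bestfinclass)\le\sqrt{\tfrac{4V(\mathcal E(f^\vartheta)+\approxerror)u}{n}}+\tfrac{(4/3)K_\tau M u}{n},
\]
with $u=u_\varepsilon(\delta)$.

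\emph{Step 4 (transfer to the ERM).} Let $\vartheta$ be the net point closest to $\hparam$. Replacing $f^{\hparam}$ by $f^\vartheta$ costs at most $K_\tau\varepsilon$ in both the empirical and population risks. Using $\EmpRisk{\fhat{\tau}}\le\EmpRisk{\bestfinclass}$, this gives a self-bounding inequality for $x=\mathcal E(f^\vartheta)$:
\[
x-\approxerror\le 2K_\tau\varepsilon+\sqrt{\tfrac{4V(x+\approxerror)u}{n}}+\tfrac{(4/3)K_\tau Mu}{n}.
\]
I would apply $\sqrt{ab}\le a/4+b$ to absorb $x/2$ from the square-root term and then solve for $x$. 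The resulting bound is then converted back to $\Risk{\fhat{\tau}}-\Risk{\bestfinclass}$, adding one more $K_\tau\varepsilon$.

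The main obstacle is Step 4: arranging the square-root term so that the final bound takes the stated form $4\sqrt{V(2\approxerror+K_\tau\varepsilon)u/n}$. This likely requires centering the variance at $\bestfinclass$ rather than absorbing everything, together with careful constant bookkeeping. The remaining measurability and existence issues are handled by the stated assumptions.
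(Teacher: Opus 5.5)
Your Steps 1--3 are correct and coincide with the paper's argument: the Bernstein condition via Knight's identity, Bernstein for $\ell_{f^{\vartheta}}-\ell_{\bestfinclass}$ with variance $2V(\mathcal E(f^{\vartheta})+\approxerror)$ and centered envelope $4K_\tau M$, and a union bound at level $\delta/|\mathcal N_\varepsilon|$. The gap is in Step~4, which as written does not prove \eqref{eq:risk-control}.

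First, absorbing with $\sqrt{4V(x+\approxerror)u/n}\le (x+\approxerror)/2+2Vu/n$ turns your inequality into $x\le 3\approxerror+4K_\tau\varepsilon+4Vu/n+(8/3)K_\tau Mu/n$. Hence $\Risk{\fhat{\tau}}-\Risk{\bestfinclass}\le 2\approxerror+5K_\tau\varepsilon+(4V+(8/3)K_\tau M)u/n$. The approximation error now enters linearly. With $s=2\approxerror+K_\tau\varepsilon$, this is at most the claimed right side only when $s\le 4\sqrt{sVu/n}$, i.e.\ $s\le16Vu/n$; for larger $\approxerror$ it is strictly weaker.

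Second, the $\varepsilon$-accounting double counts. Since $x$ is the excess risk of the net point, only the empirical replacement $\EmpRisk{f^{\vartheta}}\le\EmpRisk{\fhat{\tau}}+K_\tau\varepsilon$ is used before solving. Keeping $2K_\tau\varepsilon$ there and then adding another $K_\tau\varepsilon$ gives $5K_\tau\varepsilon>4K_\tau\varepsilon$. The smaller square root does not compensate this when $\approxerror$ is large.

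The repair is the centering you anticipated. Put $y\coloneqq\Risk{f^{\vartheta}}-\Risk{\bestfinclass}\ge0$, which is nonnegative because $\vartheta\in\Param$. Then $x+\approxerror=y+2\approxerror$, and Step~4 reads $y\le A\sqrt{y+2\approxerror}+C$ with $A=2\sqrt{Vu/n}$ and $C=K_\tau\varepsilon+(4/3)K_\tau Mu/n$. Split $\sqrt{y+2\approxerror}\le\sqrt y+\sqrt{2\approxerror}$ and absorb only $A\sqrt y\le y/2+A^2/2$. This gives $y\le A^2+2A\sqrt{2\approxerror}+2C$, and the single final transfer yields
\[
\Risk{\fhat{\tau}}-\Risk{\bestfinclass}
\le\frac{(4V+(8/3)K_\tau M)u}{n}+4\sqrt{\frac{2V\approxerror u}{n}}+3K_\tau\varepsilon,
\]
which is at most the right side of \eqref{eq:risk-control}.

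The paper orders the steps differently. It first extends the net bound to every $\param\in\Param$, paying $K_\tau\varepsilon$ on each risk difference and using $\Risk{\func{\pi(\param)}}\le\Risk{\func{\param}}+K_\tau\varepsilon$ inside the square root. Only then does it invoke $\EmpRisk{\fhat{\tau}}\le\EmpRisk{\bestfinclass}$ and solve $x\le A\sqrt{x+B}+C$ for $x=\Risk{\fhat{\tau}}-\Risk{\bestfinclass}$, with $B=2\approxerror+K_\tau\varepsilon$ and $C$ containing $2K_\tau\varepsilon$, using the same split. That is where the $K_\tau\varepsilon$ under the root and the $4K_\tau\varepsilon$ come from. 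Your order (solve at the net point, transfer once at the end) is marginally sharper once repaired.
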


Following \cite[Theorem~1]{puchkin2024rates}, the proof has three steps: a
pointwise Bernstein deviation for a fixed $\func{\param}$, Lipschitz stability
of the empirical and population risks under the sup-norm approximation in
\eqref{eq:ass_lemma1}, and an $\eps$-net union bound.

\begin{lemma}
\label{lemma:quadratic_bounds}
Fix $\alpha\in(0,1)$ and $\tau\in\{\alo,\ahi\}$, and assume
\Cref{assum:boundedness,assum:density}.
For every measurable function $f:\X\to\Y$, the excess pinball risk satisfies
\begin{equation}
\label{eq:global_sc}
    \frac{\low}{2} \Ltwo{f - \fstar{\tau}}^2
    \;\le\;
    \Risk{f} - \Risk{\fstar{\tau}}
    \;\le\;
    \frac{\up}{2} \Ltwo{f - \fstar{\tau}}^2,
\end{equation}
where $\fstar{\tau}$ is the conditional $\tau$-quantile function.
\end{lemma}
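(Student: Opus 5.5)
We prove the two inequalities in \eqref{eq:global_sc} by a pointwise-in-$x$ computation followed by integration over $\DC[X]$. The tool is Knight's identity for the pinball loss: for $u,h\in\R$,
\[
\pinball(u-h)-\pinball(u)=-h\bigl(\tau-\indiacc{u<0}\bigr)+\int_0^{h}\bigl(\indiacc{u\le s}-\indiacc{u\le 0}\bigr)\,\rmd s,
\]
with the usual orientation convention when $h<0$.

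Fix $x$ and take $u=Y-\fstar{\tau}(x)$ and $h=f(x)-\fstar{\tau}(x)$. Under \Cref{assum:density} the conditional law is atomless, so $\PP(Y<\fstar{\tau}(x)\mid X=x)=\tau$. Hence the linear term has zero conditional expectation. Taking conditional expectation and using Fubini gives
\[
\E\bigl[\pinball(Y-f(x))-\pinball(Y-\fstar{\tau}(x))\mid X=x\bigr]
=\int_0^{h}\bigl(F(\fstar{\tau}(x)+s\mid x)-\tau\bigr)\,\rmd s,
\]
where $F(\cdot\mid x)$ is the conditional CDF. For $h<0$ the integral is read as $\int_h^0(\tau-F(\fstar{\tau}(x)+s\mid x))\,\rmd s$. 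In both cases the integrand is nonnegative.

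Next we compare the integrand with $s$. Because $f$ takes values in $\Y=[-M,M]$ and $\fstar{\tau}(x)\in[-M,M]$, every point $\fstar{\tau}(x)+s$ with $s$ between $0$ and $h$ lies in $\Y$. The density bounds $\low\le p_{Y\mid X}\le\up$ on $\Y$ then give
\[
\low|s|\le|F(\fstar{\tau}(x)+s\mid x)-\tau|\le\up|s|
\]
for all such $s$. This is the step that needs care, and it is exactly where the restriction $f:\X\to\Y$ is used. Without it, the lower bound would fail beyond the support, where the density is zero.

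Integrating the two-sided bound in $s$ gives
\[
\frac{\low}{2}h^2\;\le\;\text{conditional excess}\;\le\;\frac{\up}{2}h^2.
\]
The functions involved are bounded, so all expectations are finite. Integrating over $X\sim\DC[X]$ then yields \eqref{eq:global_sc}.

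The only other technical point is a measure-theoretic one: the conditional-expectation and Fubini steps require joint measurability of $(x,y)\mapsto\pinball(y-f(x))$. This holds because $f$ and $\fstar{\tau}$ are measurable; for $\fstar{\tau}$ this comes from the generalized inverse of a measurable kernel CDF.
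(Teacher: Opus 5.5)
Your proposal is correct and follows essentially the same route as the paper's proof. Both fix $x$, apply Knight's identity, use $F(\fstar{\tau}(x)\mid x)=\tau$ to kill the linear term, apply the density bounds on the segment between $f(x)$ and $\fstar{\tau}(x)$ (which lies in $\Y$), and integrate over $\DC[X]$.

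One minor point: the form of Knight's identity you quote, with $\indiacc{u<0}$ in the linear term but $\indiacc{u\le 0}$ in the integrand, fails at $u=0$. The paper's version uses $\indiacc{Y\le\fstar{\tau}(x)}$ in the linear term and holds for every $y$. This is harmless, because it affects only the event $\{Y=\fstar{\tau}(x)\}$, which is null since the conditional law is atomless.
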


In this paper, the lemma is used only in the proof of \Cref{theo:requ_rates},
where \eqref{eq:sh-class} clips every competitor to $\Y=[-M,M]$ and
\Cref{assum:boundedness} places $\fstar{\tau}$ in the same interval, so
\Cref{assum:density} controls the entire segment between them.

\begin{proof}
Fix $x\in\X$ outside a $\DC[X]$-null set on which the conclusions of \Cref{assum:density} hold, and define the conditional risk
\[
  r_x(t) \coloneqq \E\bigl[\pinball(Y-t)\mid X=x\bigr], \qquad t\in\R.
\]
Set
\[
  u \coloneqq f(x), \qquad v \coloneqq \fstar{\tau}(x).
\]
Let
\[
  F_x(t) \coloneqq F_{Y\mid X}(t\mid x), \qquad t\in\R.
\]
By \Cref{assum:density}, the conditional distribution of $Y\mid X=x$ is absolutely continuous on $[-M,M]$ with density $p_{Y\mid X}(\cdot\mid x)$ satisfying
\[
  \low \le p_{Y\mid X}(y\mid x) \le \up, \qquad y\in[-M,M].
\]
Hence $F_x$ is continuous and strictly increasing on $[-M,M]$. Since $v=\fstar{\tau}(x)$ is the conditional $\tau$-quantile, it follows that
\begin{equation}
\label{eq:quadratic_Fxv_tau}
  F_x(v)=\tau.
\end{equation}

By Knight's identity \cite{knight1998limiting}, for every $y,u,v\in\R$,
\begin{equation}
\label{eq:quadratic_knight}
  \pinball(y-u)-\pinball(y-v)
  = (u-v)\bigl(\indiacc{y\le v}-\tau\bigr)
    + \int_v^u \bigl(\indiacc{y\le z}-\indiacc{y\le v}\bigr)\,\rmd z.
\end{equation}
Taking the conditional expectation in \eqref{eq:quadratic_knight} given $X=x$ and using \eqref{eq:quadratic_Fxv_tau}, we obtain
\begin{align}
  r_x(u)-r_x(v)
  &= (u-v)\bigl(F_x(v)-\tau\bigr) + \int_v^u \bigl(F_x(z)-F_x(v)\bigr)\,\rmd z \nonumber\\
  &= \int_v^u \bigl(F_x(z)-F_x(v)\bigr)\,\rmd z.
  \label{eq:quadratic_conditional_excess}
\end{align}

\medskip
\noindent\textit{Case 1: $u\ge v$.}
Since both $u$ and $v$ belong to $[-M,M]$, for every $z\in[v,u]$ we have
\[
  F_x(z)-F_x(v)=\int_v^z p_{Y\mid X}(t\mid x)\,\rmd t.
\]
Using the density bounds from \Cref{assum:density},
\[
  \low (z-v) \le F_x(z)-F_x(v) \le \up (z-v), \qquad z\in[v,u].
\]
Integrating this inequality with respect to $z$ over $[v,u]$ and using \eqref{eq:quadratic_conditional_excess} gives
\[
  \frac{\low}{2}(u-v)^2
  \le r_x(u)-r_x(v)
  \le \frac{\up}{2}(u-v)^2.
\]

\medskip
\noindent\textit{Case 2: $u<v$.}
The opposite deviation is bounded identically: from
\eqref{eq:quadratic_conditional_excess}, $r_x(u)-r_x(v)=\int_u^v
\bigl(F_x(v)-F_x(z)\bigr)\,\rmd z$, and the same density bounds give
\[
  \frac{\low}{2}(v-u)^2
  \le r_x(u)-r_x(v)
  \le \frac{\up}{2}(v-u)^2.
\]

Combining the two cases, for $\DC[X]$-almost every $x$,
\[
  \frac{\low}{2}\bigl(f(x)-\fstar{\tau}(x)\bigr)^2
  \le r_x\bigl(f(x)\bigr)-r_x\bigl(\fstar{\tau}(x)\bigr)
  \le \frac{\up}{2}\bigl(f(x)-\fstar{\tau}(x)\bigr)^2.
\]
Finally, taking expectation with respect to $\DC[X]$ and using the tower property,
\[
  \E\bigl[r_X(f(X))\bigr]=\Risk{f},
  \qquad
  \E\bigl[r_X(\fstar{\tau}(X))\bigr]=\Risk{\fstar{\tau}},
\]
we conclude that
\[
  \frac{\low}{2}\,\Ltwo{f-\fstar{\tau}}^2
  \le \Risk{f}-\Risk{\fstar{\tau}}
  \le \frac{\up}{2}\,\Ltwo{f-\fstar{\tau}}^2.
\]
\end{proof}

For the Bernstein step in \Cref{prop:risk-control}, define the excess loss by
\begin{equation}
\label{eq:ell_excess_loss_def} 
\ell_f(X,Y)\;\coloneqq\;\pinball\bigl(Y-f(X)\bigr)-\pinball\bigl(Y - \fstar{\tau}(X)\bigr)\eqsp.
\end{equation}

On the clipped class, it has envelope $2K_\tau M$; the next lemma gives the
companion second-moment bound $\E[\ell_f(X,Y)^2]\leq
V\E[\ell_f(X,Y)]$ used in \Cref{prop:risk-control}.

\begin{lemma}
\label{lem:bernstein_pinball}
Fix $\alpha\in(0,1)$ and $\tau\in\{\alo,\ahi\}$, and assume
\Cref{assum:boundedness,assum:density}.
For every measurable $f:\X\to\Y$, the following bound holds
\begin{equation}
\label{eq:bernstein_pinball}
\E\bigl[\ell_f(X,Y)^2\bigr] \leq V \E\bigl[\ell_f(X,Y)\bigr] = V (\Risk{f}-\Risk{\fstar{\tau}})\eqsp,
\end{equation}
where the constant $V$ is defined in \eqref{eq:definition-V}.
\end{lemma}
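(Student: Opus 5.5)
The plan is to obtain \eqref{eq:bernstein_pinball} by pairing a pointwise Lipschitz bound on the excess loss with the quadratic lower bound of \Cref{lemma:quadratic_bounds}. Nothing beyond those two ingredients should be needed.

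\emph{Step 1: pointwise Lipschitz bound.} The check function $\pinball(u)=u(\tau-\indiacc{u<0})$ is piecewise linear. Its slope is $\tau$ on $[0,\infty)$ and $\tau-1$ on $(-\infty,0)$, and it is continuous at $0$. Hence it is Lipschitz with constant $\max\{\tau,1-\tau\}=K_\tau$; this follows by splitting according to the signs of the two arguments and, when the signs differ, passing through $0$. Applying this with arguments $Y-f(X)$ and $Y-\fstar{\tau}(X)$ in \eqref{eq:ell_excess_loss_def} gives, pointwise,
\[
|\ell_f(X,Y)|\le K_\tau\,|f(X)-\fstar{\tau}(X)|.
\]
Both $f$ and $\fstar{\tau}$ take values in $\Y=[-M,M]$ by \Cref{assum:boundedness}. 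Therefore $\ell_f$ is bounded by $2K_\tau M$, all expectations are finite, and
\[
\E[\ell_f(X,Y)^2]\le K_\tau^2\,\Ltwo{f-\fstar{\tau}}^2 .
\]

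\emph{Step 2: curvature.} Since $f$ maps into $\Y$, \Cref{lemma:quadratic_bounds} applies. Its left inequality gives
\[
\Ltwo{f-\fstar{\tau}}^2\le\frac{2}{\low}\bigl(\Risk{f}-\Risk{\fstar{\tau}}\bigr).
\]
Substituting this into Step~1 yields
\[
\E[\ell_f^2]\le\frac{2K_\tau^2}{\low}\bigl(\Risk{f}-\Risk{\fstar{\tau}}\bigr)=V\bigl(\Risk{f}-\Risk{\fstar{\tau}}\bigr),
\]
with $V$ as in \eqref{eq:definition-V}.

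\emph{Step 3: identification.} By linearity of expectation, and since both risks are finite because the losses are bounded, $\E[\ell_f(X,Y)]=\Risk{f}-\Risk{\fstar{\tau}}$. This gives the equality in \eqref{eq:bernstein_pinball}.

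The only step requiring care is the Lipschitz constant in Step~1. One must check that the constant is $\max\{\tau,1-\tau\}$ rather than $1$, which is immediate from the two slopes. It is also important that the quadratic lower bound is used only on the segment between $f(x)$ and $\fstar{\tau}(x)$ inside $[-M,M]$. That is exactly where \Cref{assum:density} supplies the lower density bound, so no localization issue arises.
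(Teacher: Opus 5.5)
Your proof is correct and follows the paper's own argument essentially verbatim. The $K_\tau$-Lipschitz property of $\pinball$ gives $\E[\ell_f^2]\le K_\tau^2\Ltwo{f-\fstar{\tau}}^2$, the lower bound in \Cref{lemma:quadratic_bounds} converts this into $V\bigl(\Risk{f}-\Risk{\fstar{\tau}}\bigr)$, and the equality is just the definition of $\ell_f$; your extra remarks on finiteness of the expectations and on using the density bound only on the segment inside $[-M,M]$ are harmless refinements of the same route.
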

\begin{proof}
The pinball loss is $K_\tau$-Lipschitz, where $K_\tau=\max\{\tau,1-\tau\}$. Therefore, almost surely,
\[
  |\ell_f(X,Y)|
  = \bigl|\pinball\bigl(Y-f(X)\bigr)-\pinball\bigl(Y-\fstar{\tau}(X)\bigr)\bigr|
  \le K_\tau\,|f(X)-\fstar{\tau}(X)|.
\]
Squaring and taking expectation gives
\begin{equation}
\label{eq:pinball_sq_bound}
  \E\bigl[\ell_f(X,Y)^2\bigr]
  \le K_\tau^2\,\Ltwo{f-\fstar{\tau}}^2.
\end{equation}
By the lower bound in \Cref{lemma:quadratic_bounds},
\[
  \Ltwo{f-\fstar{\tau}}^2
  \le \frac{2}{\low}\bigl(\Risk{f}-\Risk{\fstar{\tau}}\bigr).
\]
Substituting this into \eqref{eq:pinball_sq_bound} and using $V=2K_\tau^2/\low$ from \eqref{eq:definition-V},
\[
  \E\bigl[\ell_f(X,Y)^2\bigr]
  \le V\bigl(\Risk{f}-\Risk{\fstar{\tau}}\bigr).
\]
By~\eqref{eq:ell_excess_loss_def},
$\E[\ell_f(X,Y)]=\Risk{f}-\Risk{\fstar{\tau}}$, which gives the equality
in~\eqref{eq:bernstein_pinball}.
\end{proof}

The pointwise Bernstein bound used in \Cref{prop:risk-control} has centered
envelope $4K_\tau M$ and variance bound
$2V\bigl(\Risk{\func{\param}}-\Risk{\bestfinclass}+2\approxerror\bigr)$, with
$\bestfinclass$ as in \eqref{eq:definition-best_f_inclass}.

\begin{lemma}
\label{lem:One_Delta_Bernstein_correct}
Fix $\alpha\in(0,1)$ and $\tau\in\{\alo,\ahi\}$.
Assume \Cref{assum:boundedness}, \Cref{assum:density}, and that
$\supnorm{\func{\param}} \le M$ for all $\param \in \Param$.
Assume that the population minimizer in
\eqref{eq:definition-best_f_inclass} exists.
For every $\param\in\Param$ and $\delta\in(0,1)$, with probability at least
$1-\delta$,
\begin{multline}
\label{eq:One_Delta_Bernstein_correct}
\Risk{\func{\param}}-\Risk{\bestfinclass}
\le
\EmpRisk{\func{\param}}-\EmpRisk{\bestfinclass}
\\+
\sqrt{\frac{4V\bigl(\Risk{\func{\param}}-\Risk{\bestfinclass}+2 \approxerror \bigr)\,\log(1/\delta)}{n}}
+\frac{4K_\tau M\log(1/\delta)}{3n}\eqsp,
\end{multline}
where $\approxerror$ is defined in \eqref{eq:approximation-error}.
\end{lemma}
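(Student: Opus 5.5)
We will apply Bernstein's inequality to the i.i.d.\ centered variables
\[
Z_i\coloneqq \ell_{\func{\param}}(X_i,Y_i)-\ell_{\bestfinclass}(X_i,Y_i)
=\pinball\bigl(Y_i-\func{\param}(X_i)\bigr)-\pinball\bigl(Y_i-\bestfinclass(X_i)\bigr),
\]
for fixed $\param$. This is legitimate because $\param$ and $\bestfinclass$ are deterministic, so the $Z_i$ are i.i.d. Their mean is $\E Z=\Risk{\func{\param}}-\Risk{\bestfinclass}$, and their empirical mean is $\EmpRisk{\func{\param}}-\EmpRisk{\bestfinclass}$. The claim is therefore the one-sided Bernstein bound
\[
\E Z-\bar Z_n\le\sqrt{2\,\PVar(Z)\log(1/\delta)/n}+b\log(1/\delta)/(3n),
\]
with a suitable envelope $b$ and variance proxy.

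\emph{Envelope.} Since the pinball loss is $K_\tau$-Lipschitz and both $\func{\param}$ and $\bestfinclass$ take values in $[-M,M]$, we have $|Z|\le K_\tau|\func{\param}-\bestfinclass|\le 2K_\tau M$. Centering gives $|Z-\E Z|\le 4K_\tau M$, which produces the term $4K_\tau M\log(1/\delta)/(3n)$.

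\emph{Variance.} We bound $\PVar(Z)\le\E Z^2$. Writing $Z=\ell_{\func{\param}}-\ell_{\bestfinclass}$ and using $(a-b)^2\le 2a^2+2b^2$, \Cref{lem:bernstein_pinball} (applicable because both functions map into $\Y$) gives
\[
\E Z^2\le 2V\bigl(\Risk{\func{\param}}-\Risk{\fstar{\tau}}\bigr)+2V\bigl(\Risk{\bestfinclass}-\Risk{\fstar{\tau}}\bigr).
\]
By the definition \eqref{eq:approximation-error}, the first excess risk equals $\Risk{\func{\param}}-\Risk{\bestfinclass}+\approxerror$ and the second equals $\approxerror$. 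Hence
\[
\E Z^2\le 2V\bigl(\Risk{\func{\param}}-\Risk{\bestfinclass}+2\approxerror\bigr).
\]
Substituting into $\sqrt{2\PVar(Z)\log(1/\delta)/n}$ yields exactly $\sqrt{4V(\cdots)\log(1/\delta)/n}$.

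The only delicate point is bookkeeping the constants: the Bernstein form used must have linear term $b/3$ with $b$ the centered envelope, here $4K_\tau M$. Measurability is trivial because $\param$ is fixed. No union bound is needed; that step is deferred to \Cref{prop:risk-control}.
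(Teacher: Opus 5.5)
Your proposal is correct and follows the paper's proof essentially step for step. It uses the same increments $Z_i$, the same centered envelope $4K_\tau M$, the same variance bound via $(a-b)^2\le 2a^2+2b^2$ and \Cref{lem:bernstein_pinball} (giving $2V(\cdot+2\approxerror)$), and the same one-sided Bernstein inequality. One wording fix: the $Z_i$ are not themselves centered. The paper applies Bernstein to $W_i=\E[Z_i]-Z_i$, which is exactly what your displayed inequality for $\E Z-\bar Z_n$ does.
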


\begin{proof}
Fix $\param\in\Param$ and define
\[
  Z_i \coloneqq \pinball\bigl(Y_i-\func{\param}(X_i)\bigr)-\pinball\bigl(Y_i-\bestfinclass(X_i)\bigr),
  \qquad i=1,\dots,n.
\]
Then $Z_1,\dots,Z_n$ are i.i.d., and
\begin{equation}
\label{eq:bernstein_emp_mean}
  \frac1n\sum_{i=1}^n Z_i
  = \EmpRisk{\func{\param}}-\EmpRisk{\bestfinclass},
  \qquad
  \E[Z_1]=\Risk{\func{\param}}-\Risk{\bestfinclass}.
\end{equation}

Because $\pinball$ is $K_\tau$-Lipschitz and both $\func{\param}$ and $\bestfinclass$ take values in $[-M,M]$, we have almost surely
\[
  |Z_i|
  \le K_\tau\,\bigl|\func{\param}(X_i)-\bestfinclass(X_i)\bigr|
  \le 2K_\tau M.
\]
Hence
\begin{equation}
\label{eq:bernstein_envelope}
  |Z_i-\E[Z_i]| \le 4K_\tau M \qquad \text{a.s.}
\end{equation}

Next we bound the variance. Write
\[
  A_i \coloneqq \pinball\bigl(Y_i-\func{\param}(X_i)\bigr)-\pinball\bigl(Y_i-\fstar{\tau}(X_i)\bigr),
\]
\[
  B_i \coloneqq \pinball\bigl(Y_i-\bestfinclass(X_i)\bigr)-\pinball\bigl(Y_i-\fstar{\tau}(X_i)\bigr).
\]
Then $Z_i=A_i-B_i$, so by $(a-b)^2\le 2a^2+2b^2$,
\[
  Z_i^2 \le 2A_i^2+2B_i^2.
\]
Taking expectations and applying \Cref{lem:bernstein_pinball} first with $f=\func{\param}$ and then with $f=\bestfinclass$, we get
\begin{align}
  \E[Z_i^2]
  &\le 2\E[A_i^2]+2\E[B_i^2] \\
  &\le 2V\bigl(\Risk{\func{\param}}-\Risk{\fstar{\tau}}\bigr)
      +2V\bigl(\Risk{\bestfinclass}-\Risk{\fstar{\tau}}\bigr) \\
  &= 2V\bigl(\Risk{\func{\param}}-\Risk{\fstar{\tau}}+\approxerror\bigr).
\end{align}
Since
\[
  \Risk{\func{\param}}-\Risk{\fstar{\tau}}
  = \bigl(\Risk{\func{\param}}-\Risk{\bestfinclass}\bigr)
    + \bigl(\Risk{\bestfinclass}-\Risk{\fstar{\tau}}\bigr)
  = \bigl(\Risk{\func{\param}}-\Risk{\bestfinclass}\bigr)+\approxerror,
\]
it follows that
\begin{equation}
\label{eq:bernstein_variance}
  \PVar(Z_i)
  \le \E[Z_i^2]
  \le 2V\bigl(\Risk{\func{\param}}-\Risk{\bestfinclass}+2\approxerror\bigr).
\end{equation}

Apply Bernstein's inequality
\cite[Theorem~2.9 and (2.10), pp.~35--36]{boucheronlugosimassart2013} to the
centered variables
\[
  W_i \coloneqq \E[Z_i]-Z_i.
\]
They are i.i.d., centered, satisfy $|W_i|\le 4K_\tau M$ by \eqref{eq:bernstein_envelope}, and have variance
\[
  \PVar(W_i)=\PVar(Z_i)\le 2V\bigl(\Risk{\func{\param}}-\Risk{\bestfinclass}+2\approxerror\bigr)
\]
by \eqref{eq:bernstein_variance}. Therefore, with probability at least $1-\delta$,
\begin{align}
  \E[Z_1]-\frac1n\sum_{i=1}^n Z_i
  &\le \sqrt{\frac{2\,\PVar(Z_1)\log(1/\delta)}{n}} + \frac{4K_\tau M\log(1/\delta)}{3n} \\
  &\le \sqrt{\frac{4V\bigl(\Risk{\func{\param}}-\Risk{\bestfinclass}+2\approxerror\bigr)\log(1/\delta)}{n}}
      + \frac{4K_\tau M\log(1/\delta)}{3n}.
\end{align}
Substituting the identities in \eqref{eq:bernstein_emp_mean} yields \eqref{eq:One_Delta_Bernstein_correct}.
\end{proof}

\begin{proof}[Proof of Proposition~\ref{prop:risk-control}]
Apply \Cref{lem:One_Delta_Bernstein_correct} at confidence level
$\delta/|\mathcal N_\varepsilon|$. A union bound gives an event of probability
at least $1-\delta$ on which, for every
$\theta'\in\mathcal N_\varepsilon$,
\begin{equation}
\begin{aligned}
\Risk{\func{\theta'}}-\Risk{\bestfinclass}
&\le\EmpRisk{\func{\theta'}}-\EmpRisk{\bestfinclass}\\
&\quad+\sqrt{\frac{4V\bigl(\Risk{\func{\theta'}}-\Risk{\bestfinclass}
+2\approxerror\bigr)u_\varepsilon(\delta)}{n}}\\
&\quad+\frac{4K_\tau M u_\varepsilon(\delta)}{3n}.
\end{aligned}
\end{equation}

Fix $\param\in\Param$. Choose
$\pi(\param)\in\mathcal N_\varepsilon$ such that
$\supnorm{\func{\pi(\param)}-\func{\param}}\le\varepsilon$. The preceding
Bernstein bound holds at $\theta'=\pi(\param)$.

The pinball loss is $K_\tau$-Lipschitz. Hence
\begin{equation}
|\Risk{\func{\param}}-\Risk{\func{\pi(\param)}}|
\le K_\tau\varepsilon,
\qquad
|\EmpRisk{\func{\pi(\param)}}-\EmpRisk{\func{\param}}|
\le K_\tau\varepsilon.
\end{equation}
It follows that
\begin{equation}
\begin{split}
\Risk{\func{\param}}-\Risk{\bestfinclass}
&\le\Risk{\func{\pi(\param)}}-\Risk{\bestfinclass}
+K_\tau\varepsilon,\\
\EmpRisk{\func{\pi(\param)}}-\EmpRisk{\bestfinclass}
&\le\EmpRisk{\func{\param}}-\EmpRisk{\bestfinclass}
+K_\tau\varepsilon.
\end{split}
\end{equation}
Substitute these inequalities into the Bernstein bound. Inside the square
root, use
$\Risk{\func{\pi(\param)}}\le\Risk{\func{\param}}+K_\tau\varepsilon$.
This gives the uniform bound
\begin{multline}
\label{eq:post-net}
\Risk{\func{\param}}-\Risk{\bestfinclass}
\le\EmpRisk{\func{\param}}-\EmpRisk{\bestfinclass}\\
+\sqrt{\frac{4V\bigl(\Risk{\func{\param}}-\Risk{\bestfinclass}
+2\approxerror+K_\tau\varepsilon\bigr)u_\varepsilon(\delta)}{n}}
+\frac{4K_\tau M u_\varepsilon(\delta)}{3n}+2K_\tau\varepsilon.
\end{multline}

Since $\hparam$ minimizes the empirical risk,
$\EmpRisk{\func{\hparam}}-\EmpRisk{\bestfinclass}\le0$. Set
$x=\Risk{\func{\hparam}}-\Risk{\bestfinclass}$. Equation
\eqref{eq:post-net} gives
\begin{equation}
x\le
\sqrt{\frac{4V(x+2\approxerror+K_\tau\varepsilon)
 u_\varepsilon(\delta)}{n}}
+\frac{4K_\tau M u_\varepsilon(\delta)}{3n}+2K_\tau\varepsilon.
\end{equation}

Set
$A=2\sqrt{Vu_\varepsilon(\delta)/n}$,
$B=2\approxerror+K_\tau\varepsilon$, and
$C=4K_\tau M u_\varepsilon(\delta)/(3n)+2K_\tau\varepsilon$. Then
$x\le A\sqrt{x+B}+C$. Use
$\sqrt{x+B}\le\sqrt{x}+\sqrt B$ and
$A\sqrt x\le x/2+A^2/2$. We obtain
\begin{equation}
x\le A^2+2A\sqrt B+2C.
\end{equation}
Substitution gives
\begin{equation}
x\le
\frac{4V u_\varepsilon(\delta)}{n}
+4\sqrt{\frac{V(2\approxerror+K_\tau\varepsilon)
 u_\varepsilon(\delta)}{n}}
+\frac{8K_\tau M u_\varepsilon(\delta)}{3n}
+4K_\tau\varepsilon.
\end{equation}
This proves the proposition.
\end{proof}

\begin{corollary}
\label{cor:risk_coro}
Fix $\alpha\in(0,1)$ and $\tau\in\{\alo,\ahi\}$.
Assume \Cref{assum:boundedness}, \Cref{assum:density}, and that
$\supnorm{\func{\param}}\le M$ for all $\param\in\Param$.
Assume that the population minimizer in
\eqref{eq:definition-best_f_inclass} exists and that
$\EmpRisk{\func{\param}}$ has a measurable minimizer $\hparam$ over
$\Param$, and set $\fhat{\tau}=\func{\hparam}$.
Assume that, for every $\varepsilon>0$, there is a finite
$\varepsilon$-net satisfying \eqref{eq:ass_lemma1}.
Let
$\mathcal N_{1/n}$ be a $(1/n)$-net satisfying \eqref{eq:ass_lemma1} with
$\varepsilon=1/n$, and set
$\mathcal H_n=\log|\mathcal N_{1/n}|$. Then, for every
$\delta\in(0,1)$ and $n\ge1$, with probability at least $1-\delta$,
\begin{equation}
\label{eq:risk-net-coro}
\Risk{\fhat{\tau}}-\Risk{\bestfinclass}
\le K_1\sqrt{\frac{\approxerror(\mathcal H_n+\log(1/\delta))}{n}}
+K_2\frac{\mathcal H_n+\log(1/\delta)}{n}
+K_3\frac1n,
\end{equation}
where $K_1=4\sqrt{2V}$,
$K_2=4V+(8/3)K_\tau M+2$, and
$K_3=4K_\tau+2VK_\tau$.
\end{corollary}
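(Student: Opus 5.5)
The corollary follows by specializing \Cref{prop:risk-control} to the net scale $\varepsilon=1/n$ and then simplifying the resulting bound. All hypotheses of \Cref{prop:risk-control} are assumed verbatim in the corollary: boundedness, density bounds, $\supnorm{\func{\param}}\le M$, existence of the population and empirical minimizers, and finite nets for every $\varepsilon>0$. So I would apply it with $\varepsilon=1/n$ and the net $\mathcal N_{1/n}$. Write
\[
u\coloneqq u_{1/n}(\delta)=\log\bigl(|\mathcal N_{1/n}|/\delta\bigr)=\mathcal H_n+\log(1/\delta).
\]
On an event of probability at least $1-\delta$ this gives
\[
\Risk{\fhat{\tau}}-\Risk{\bestfinclass}
\le\frac{(4V+(8/3)K_\tau M)u}{n}
+4\sqrt{\frac{V(2\approxerror+K_\tau/n)u}{n}}
+\frac{4K_\tau}{n}.
\]

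The only work is to split the square-root term. First, subadditivity $\sqrt{a+b}\le\sqrt a+\sqrt b$ gives
\[
4\sqrt{\frac{V(2\approxerror+K_\tau/n)u}{n}}
\le4\sqrt{\frac{2V\approxerror u}{n}}+\frac{4\sqrt{VK_\tau u}}{n}.
\]
The first piece is exactly $K_1\sqrt{\approxerror u/n}$ with $K_1=4\sqrt{2V}$. For the second piece I would use the AM--GM inequality $4\sqrt{ab}\le2a+2b$ with $a=VK_\tau$ and $b=u$. This gives
\[
\frac{4\sqrt{VK_\tau u}}{n}\le\frac{2VK_\tau}{n}+\frac{2u}{n}.
\]

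Collecting terms proportional to $u/n$ gives the coefficient $4V+(8/3)K_\tau M+2=K_2$. Collecting the pure $1/n$ terms gives $4K_\tau+2VK_\tau=K_3$. This is \eqref{eq:risk-net-coro}, holding on the same event of probability at least $1-\delta$.

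There is no genuine obstacle here. The only point needing care is choosing the AM--GM split so that the $\sqrt{VK_\tau u}/n$ cross term is absorbed into the stated constants $K_2$ and $K_3$ rather than producing an extra term.
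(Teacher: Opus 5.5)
Your proposal is correct and matches the paper's proof: substitute $\varepsilon=1/n$ and $u_{1/n}(\delta)=\mathcal H_n+\log(1/\delta)$ into \Cref{prop:risk-control}, split the square root by subadditivity, and absorb the cross term $4\sqrt{(VK_\tau/n)(u/n)}\le 2VK_\tau/n+2u/n$ into $K_2$ and $K_3$. This is exactly the computation the paper carries out.
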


\begin{proof}
Substitute
$u_{1/n}(\delta)=\mathcal H_n+\log(1/\delta)$ and
$\varepsilon=1/n$ in \Cref{prop:risk-control}. With
$A\coloneqq\mathcal H_n+\log(1/\delta)$, this gives
\begin{equation}
\label{eq:risk-control-n}
\Risk{\fhat{\tau}}-\Risk{\bestfinclass}
\le4\sqrt{\frac{V\left(2\approxerror+K_\tau/n\right)A}{n}}
+\left(4V+\frac83K_\tau M\right)\frac An
+\frac{4K_\tau}{n}.
\end{equation}
The square-root term satisfies
\begin{align}
4\sqrt{\frac{V\left(2\approxerror+K_\tau/n\right)A}{n}}
&\le4\sqrt{\frac{2V\approxerror A}{n}}
+4\sqrt{\frac{VK_\tau}{n}\frac An}\\
&\le4\sqrt{2V}\sqrt{\frac{\approxerror A}{n}}
+2\left(\frac{VK_\tau}{n}+\frac An\right)\\
&=4\sqrt{2V}\sqrt{\frac{\approxerror A}{n}}
+\frac{2VK_\tau}{n}+\frac{2A}{n}\eqsp.
\end{align}
Substitution in \eqref{eq:risk-control-n} gives
\eqref{eq:risk-net-coro} with the stated constants.
\end{proof}
\begin{lemma}
\label{lem:entropy_sparse_relu_archi}
Fix $d\in\N$ and $M>0$.
Let $\NNclassSH$ be the sparse-ReLU network class defined
in~\eqref{eq:sh-class}, with depth $L \geq 1$, width vector
$\bp = (d, \WC, \ldots, \WC, 1)$ of constant hidden width
$\WC\geq d$,
integer sparsity budget $s\in\N$, and truncation level $M$. Then, for any
$n \geq 1$,
\begin{equation}\label{eq:entropy-archi}
\begin{aligned}
  \logcover{\tfrac{1}{n}}{\NNclassSH}
  &\;\leq\;
  s \log\!\lr{(2L+1)\,\WC^{2} + 1}
  \\
  &\quad+
  s \log\!\lr{1 + 2n\,(L+1)(\WC+1)^{L+1}}.
\end{aligned}
\end{equation}
\end{lemma}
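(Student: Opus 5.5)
We will prove \eqref{eq:entropy-archi} by the standard parameter-discretization argument of Schmidt-Hieber, using \Cref{lem:inf_norm_bound} as the Lipschitz stability input. Write $\eta\coloneqq 1/\bigl(n(L+1)\prod_{j=0}^{L}(p_j+1)\bigr)$. Since $p_0=d\le\WC$, $p_{L+1}=1$ and $p_j=\WC$ otherwise, we have $\prod_{j=0}^{L}(p_j+1)\le(\WC+1)^{L+1}$. It therefore suffices to work with $\eta_0\coloneqq 1/\bigl(n(L+1)(\WC+1)^{L+1}\bigr)\le\eta$. By \Cref{lem:inf_norm_bound}, two parameter vectors with all entries in $[-1,1]$ and the same sparsity pattern, differing by at most $\eta_0$ entrywise, have realizations within $1/n$ in sup norm on $[0,1]^d$. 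Since $\TM$ is $1$-Lipschitz, the same holds after truncation. (The two parameter vectors may have different zero patterns. We handle this by fixing a common pattern: the nonzeros of both vectors lie inside one chosen index set of size $s$, and we allow zero entries within that set.)

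The covering set is built in two stages. First, count the index sets. The total number of parameter slots (weights and shifts) is
\[
T=\sum_{j=0}^{L}p_{j+1}p_j+\sum_{j=1}^{L}p_j\le (L+1)\WC^2+L\WC\le(2L+1)\WC^2,
\]
using $p_0=d\le\WC$, $p_{L+1}=1$ and $\WC\ge1$. Each network in the class has at most $s$ nonzero parameters. Padding if necessary (or, when $s\ge T$, taking all slots), its support lies in some $s$-subset of the slots. The number of such subsets is at most $\binom{T}{s}\le T^s\le((2L+1)\WC^2+1)^s$.

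Second, fix one such subset. On it, discretize each coordinate of $[-1,1]$ by a grid of mesh $2\eta_0$, which contains at most $\lceil 1/\eta_0\rceil+1\le 1+2/\eta_0$ points. Every value in $[-1,1]$ then lies within $\eta_0$ of a grid point, and the grid points themselves stay in $[-1,1]$. Off the subset, set the coordinates to zero. Each network in the class whose support lies in that subset has a grid neighbour with the same admissible structure. The neighbour satisfies the max-entry bound and has at most $s$ nonzeros, so its truncated realization belongs to $\NNclassSH$ and gives an internal $1/n$-cover. The number of grid vectors for the subset is at most $(1+2/\eta_0)^s=(1+2n(L+1)(\WC+1)^{L+1})^s$.

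Taking the union over subsets and logarithms gives \eqref{eq:entropy-archi}. The main point requiring care is the interface with \Cref{lem:inf_norm_bound}: it requires all parameters in $[-1,1]$ and measures $\Delta_W\vee\Delta_v$ over all slots. Zero-padded slots contribute zero difference, so the shared-support device keeps both vectors admissible. The remaining work is the crude counting of $T$ and of the grid size, where one checks that the constant slack (the $+1$ terms) is absorbed as claimed.
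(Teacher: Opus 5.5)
Your proposal is correct and follows the paper's proof essentially step for step: you count the at most $(2L+1)\WC^2$ parameter slots and the admissible sparsity supports, quantize each support at scale $1/(n(L+1)(\WC+1)^{L+1})$, and pass from parameter closeness to sup-norm closeness via \Cref{lem:inf_norm_bound} and the $1$-Lipschitz truncation. The differences are cosmetic. You use explicit grid points in $[-1,1]$ and check that they stay in the class, whereas the paper picks one class representative per cell. You also bound the number of supports by $\binom{T}{s}\le T^s$ after padding, rather than by $(P+1)^s$.
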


\begin{proof}
\emph{Parameter support count.}
Every element of $\NNclassSH$ is $\TM \circ f_{\param}$ with $f_{\param}$
of the form~\eqref{eq:nn-def}; embed every such raw network into the
uniform-width ambient architecture
\[
  \bar{\bp} \;=\; \lr{d,\,\underbrace{\WC,\dots,\WC}_{L\text{ hidden layers}},\,1},
\]
which contains $\bp$. Any such raw network $f_{\param}$ is realized by a
parameter vector $\param \in [-1,1]^P$ with $\lzeronorm{\param} \leq s$,
where
\[
  P \;=\; \WC d + (L-1)\WC^{2} + \WC + L\WC \;\leq\; (2L+1)\,\WC^{2},
\]
using $d \leq \WC$ and $\WC \geq 1$. Denote by $\widetilde{\Param}
\subset [-1,1]^P$ the set of ambient parameter vectors realizing the raw networks
underlying $\NNclassSH$. For each $T \subset \lrcb{1,\dots,P}$ with
$\abs{T} \leq s$, let $\widetilde{\Param}_T \coloneqq \lrcb{\param \in
\widetilde{\Param} : \supp(\param) \subset T}$. Since
$\widetilde{\Param} = \bigcup_{T:\abs{T}\leq s} \widetilde{\Param}_T$,
the number of admissible supports is at most $(P+1)^s$: if $s\le P$, then
$\sum_{k=0}^{s}\binom{P}{k}\le\sum_{k=0}^{s}\binom{s}{k}P^k=(P+1)^s$;
if $s>P$, it is $2^P\le(P+1)^s$.

\emph{Quantization.}
Fix $\eps > 0$, set $C_{\mathrm{Lip}} \coloneqq (L+1)(\WC+1)^{L+1}$ and
$\eta \coloneqq \eps/C_{\mathrm{Lip}}$, and fix an admissible support $T$.
Partitioning $[-1,1]^T$ into axis-parallel cells of side $\eta$ yields at most
\[(1 + 2/\eta)^{\abs{T}} = (1 + 2C_{\mathrm{Lip}}/\eps)^{\abs{T}}
\]
cells. For each cell $Q$ with $Q \cap \widetilde{\Param}_T \neq \emptyset$,
pick one representative $\param_Q$; every $\param \in Q \cap
\widetilde{\Param}_T$ satisfies $\supnorm{\param-\param_Q}\leq\eta$.

\emph{Network-output stability.}
For $\param, \param' \in \widetilde{\Param}$, \Cref{lem:inf_norm_bound}
applied to $\bar{\bp}$ gives
\[
  \supnorm{f_{\param} - f_{\param'}}
  \;\leq\;
  \supnorm{\param - \param'}\,(L+1)\prod_{j=0}^{L}(\WC + 1)
  \;\leq\;
  C_{\mathrm{Lip}}\,\supnorm{\param - \param'},
\]
where the last inequality uses $d \leq \WC$. Thus every $\param \in Q \cap
\widetilde{\Param}_T$ satisfies $\supnorm{f_{\param} - f_{\param_Q}} \leq
C_{\mathrm{Lip}}\eta = \eps$, hence $\supnorm{\TM \circ f_{\param} -
\TM \circ f_{\param_Q}} \leq \eps$ because $\TM$ is $1$-Lipschitz. Therefore,
for each fixed support~$T$,
\[
  \cover{\eps}{\lrcb{\TM \circ f_{\param}: \param \in \widetilde{\Param}_T}}
  \;\leq\;
  \lr{1 + \tfrac{2C_{\mathrm{Lip}}}{\eps}}^{s}.
\]

\emph{Covering-number multiplication.}
Multiplying the number of supports by the preceding per-support bound gives
\[
  \cover{\eps}{\NNclassSH}
  \;\leq\;
  (P+1)^{s}\lr{1 + \tfrac{2C_{\mathrm{Lip}}}{\eps}}^{s}.
\]
Taking logs, substituting the bounds on $P$ and $C_{\mathrm{Lip}}$, and
setting $\eps = 1/n$ yields~\eqref{eq:entropy-archi}.
\end{proof}

\begin{corollary}
\label{cor:entropy_SH}
Fix $d\in\N$, $\beta>0$, and $M>0$.
Let $N_n$ and the Schmidt--Hieber architectural parameters be defined
by~\eqref{eq:sh-scaling}--\eqref{eq:sh-arch}.
Consider the class
\[
\NNclassSHn{L_{\mathrm{SH}}}{\bp_{\mathrm{SH}}}{S_{\mathrm{SH}}}(n).
\]
Define
\[
  C_L^\star := 15\lr{1 + \lceil \log_2(d \vee \beta)\rceil},
  \quad
  C_W^\star := 12(d + \lceil \beta \rceil),
  \quad
  C_S^\star := 4512\,(d + \beta + 1)^{3+d},
\]
so that, for every $n \geq 2$,
\begin{equation}\label{eq:SH-bounds}
  L_{\mathrm{SH}}(n) \leq C_L^\star \log_2 n,
  \quad
  W_{\mathrm{SH}}(n) \leq C_W^\star\, n^{d/(2\beta+d)},
  \quad
  S_{\mathrm{SH}}(n) \leq C_S^\star\, n^{d/(2\beta+d)} \log_2 n.
\end{equation}
Then there exists a constant $C_{\mathrm{ent}} > 0$, depending only on $d$
and $\beta$, such that for every $n \geq 2$,
\begin{equation}\label{eq:entropy-SH}
  \logcover{\tfrac{1}{n}}{\NNclassSHn{L_{\mathrm{SH}}}{\bp_{\mathrm{SH}}}{S_{\mathrm{SH}}}(n)}
  \;\leq\;
  C_{\mathrm{ent}}\, n^{d/(2\beta+d)}\,(\log_2 n)^{3}.
\end{equation}
\end{corollary}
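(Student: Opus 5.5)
The plan is to plug the Schmidt--Hieber architecture parameters \eqref{eq:SH-bounds} into the general entropy bound of \Cref{lem:entropy_sparse_relu_archi}. Three things need to be checked first.

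\emph{Hypotheses of \Cref{lem:entropy_sparse_relu_archi}.} The architecture $\bp_{\mathrm{SH}}(n)$ has constant hidden width $\WC=W_{\mathrm{SH}}(n)=6(d+\lceil\beta\rceil)N_n\ge d$, since $N_n\ge1$. The depth satisfies $L=L_{\mathrm{SH}}(n)\ge1$, and the sparsity $s=S_{\mathrm{SH}}(n)$ is an integer.

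\emph{The scalings \eqref{eq:SH-bounds}, for $n\ge2$.}
\begin{itemize}
\item[\textbullet] \emph{Depth.} Use $\lceil\log_2 n\rceil+5\le 6\log_2 n$ and $8\le 8\log_2 n$. Together with $1+\lceil\log_2(d\vee\beta)\rceil\ge1$, this gives $L_{\mathrm{SH}}(n)\le 14\log_2 n\,(1+\lceil\log_2(d\vee\beta)\rceil)\le C_L^\star\log_2 n$.
\item[\textbullet] \emph{Width.} Use $N_n\le n^{d/(2\beta+d)}+1\le 2n^{d/(2\beta+d)}$, so $W_{\mathrm{SH}}(n)\le C_W^\star n^{d/(2\beta+d)}$.
\item[\textbullet] \emph{Sparsity.} Use $\lceil\log_2 n\rceil+6\le 8\log_2 n$ and absorb the outer ceiling via $\lceil y\rceil\le 2y$ for $y\ge1$. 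This gives $S_{\mathrm{SH}}(n)\le 2\cdot141\cdot2\cdot8\,(d+\beta+1)^{3+d}n^{d/(2\beta+d)}\log_2 n=C_S^\star n^{d/(2\beta+d)}\log_2 n$, since $4512=141\cdot32$.
\end{itemize}

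\emph{Bounding the two logarithms in \eqref{eq:entropy-archi}.} Write $a=d/(2\beta+d)\in(0,1)$.
\begin{itemize}
\item[\textbullet] \emph{First logarithm.} We have $(2L+1)\WC^2+1\le 3C_L^\star (C_W^\star)^2\,n^{2a}\log_2 n+1\le C\,n^{3}$. Hence this logarithm is at most $C'\log_2 n$, with $C'$ depending only on $d,\beta$ (using $\log_2 n\ge1$ for $n\ge2$).
\item[\textbullet] \emph{Second logarithm.} This is the crucial term, because the depth sits in the exponent: $\log(1+2n(L+1)(\WC+1)^{L+1})$. Bound it by
\[
\log(4n(L+1))+(L+1)\log(\WC+1).
\]
Here $\log(\WC+1)\le\log(2C_W^\star)+a\log n\le C''\log_2 n$ and $L+1\le 2C_L^\star\log_2 n$. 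So the second logarithm is $O((\log_2 n)^2)$, with constants depending on $d,\beta$ only.
\end{itemize}

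\emph{Conclusion.} Multiply by $s\le C_S^\star n^{a}\log_2 n$. The first term contributes $O(n^a(\log_2 n)^2)$ and the second $O(n^a(\log_2 n)^3)$. Since $\log_2 n\ge1$, both are dominated by a single constant times $n^a(\log_2 n)^3$, which gives $C_{\mathrm{ent}}$.

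The only delicate point is the exponent $L+1$ on the width: it produces the extra $\log$ factor, and hence the cube rather than the square. Bounding it requires keeping the depth at $O(\log n)$, not merely finite. Everything else is elementary bookkeeping with $\log_2 n\ge1$ for $n\ge2$.
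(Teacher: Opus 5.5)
Your proposal is correct and follows the paper's proof essentially step for step: you verify \eqref{eq:SH-bounds} by elementary ceiling/logarithm inequalities, apply \Cref{lem:entropy_sparse_relu_archi}, bound the first logarithm by $O(\log_2 n)$ and the second (with the depth in the exponent) by $O((\log_2 n)^2)$, and then multiply by $s\le C_S^\star n^{d/(2\beta+d)}\log_2 n$. The differences are cosmetic: you use a cruder $Cn^3$ bound on the first logarithm, and your depth estimate $\lceil\log_2 n\rceil+5\le 6\log_2 n$ relies on $n$ being an integer, whereas the paper's $7\log_2 n$ holds for all real $n\ge2$; $C_L^\star=15$ absorbs either.
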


\begin{proof}
Set $\gamma := d/(2\beta+d) \in (0,1)$ and fix $n \geq 2$, so that
$\log_2 n \geq 1$ and $\log n \leq \log_2 n$. The
bounds~\eqref{eq:SH-bounds} follow from $N_n \leq 2 n^{\gamma}$,
$\lceil\log_2 n\rceil + 5 \leq 7 \log_2 n$,
$\lceil\log_2 n\rceil + 6 \leq 8 \log_2 n$, and
$\lceil z\rceil\le2z$ for $z\ge1$, substituted
into~\eqref{eq:sh-L}--\eqref{eq:sh-S}; the constant $C_L^\star$ absorbs
the additive $8$ in~\eqref{eq:sh-L} via $\log_2 n \geq 1$.

Applying \Cref{lem:entropy_sparse_relu_archi} with $\WC = W_{\mathrm{SH}}(n)$,
\begin{equation}\label{eq:cor-proof-start}
\begin{aligned}
  \logcover{\tfrac{1}{n}}{\NNclassSHn{L_{\mathrm{SH}}}{\bp_{\mathrm{SH}}}{S_{\mathrm{SH}}}(n)}
  &\;\leq\;
  s \log\!\lr{(2L+1)\WC^{2} + 1}
  \\
  &\quad+
  s \log\!\lr{1 + 2n\,(L+1)(\WC+1)^{L+1}}.
\end{aligned}
\end{equation}

 Using $(2L+1)\WC^2 + 1 \leq 4 L\WC^2$
and~\eqref{eq:SH-bounds},
\[
  \log\!\lr{(2L+1)\WC^2 + 1}
  \leq \log\!\lr{4 C_L^\star (C_W^\star)^{2}}
      + \log\log_2 n + 2\gamma \log n
  \leq c_{1} \log_2 n,
\]
with $c_{1} := \log\!\lr{4 C_L^\star (C_W^\star)^{2}} + 3$.

From $1 + 2n(L+1)(\WC+1)^{L+1} \leq 4n(L+1)(\WC+1)^{L+1}$,
$L+1 \leq (C_L^\star + 1)\log_2 n$, and
$\log(\WC+1) \leq \log(C_W^\star+1) + \log_2 n$,
\begin{align}
  \log\!\lr{1 + 2n(L+1)(\WC+1)^{L+1}}
  &\leq \log 4 + \log n + \log(L+1) + (L+1)\log(\WC+1) \\
  &\leq c_{2}\,(\log_2 n)^{2},
\end{align}
with $c_{2} := \log 4 + 2 + \log(C_L^\star + 1)
+ (C_L^\star + 1)\lr{\log(C_W^\star + 1) + 1}$.

Multiplying both terms of~\eqref{eq:cor-proof-start}
by $s \leq C_S^\star\, n^{\gamma} \log_2 n$ and using
$\log_2 n \leq (\log_2 n)^{2}$,
\[
\begin{aligned}
  \logcover{\tfrac{1}{n}}{\NNclassSHn{L_{\mathrm{SH}}}{\bp_{\mathrm{SH}}}{S_{\mathrm{SH}}}(n)}
  &\;\leq\;
  C_S^\star(c_{1} + c_{2})\, n^{\gamma}\,(\log_2 n)^{3}
  \\
  &\;=:\;
  C_{\mathrm{ent}}\, n^{d/(2\beta+d)}\,(\log_2 n)^{3},
\end{aligned}
\]
with
\begin{equation}
\begin{aligned}
C_{\mathrm{ent}}
&:= C_S^\star\Bigl(
\log\!\lr{4 C_L^\star (C_W^\star)^{2}} + 5 + \log(C_L^\star + 1)\\
&\qquad+ (C_L^\star + 1)\lr{\log(C_W^\star + 1) + 1}
+ \log 4\Bigr).
\end{aligned}
\end{equation}

Explicitly,
\begin{equation}
\label{eq:Cent}
\begin{aligned}
    C_{\mathrm{ent}} &= 4512\,(d+\beta+1)^{3+d} \bigg( \log\!\lr{8640\lr{1 + \lceil \log_2(d \vee \beta)\rceil}(d + \lceil \beta \rceil)^{2}} \\
    &\qquad + 5 + \log\!\lr{16 + 15\lceil \log_2(d \vee \beta)\rceil} \\
    &\qquad + \lr{16 + 15\lceil \log_2(d \vee \beta)\rceil}\lr{\log\!\lr{12(d + \lceil \beta \rceil) + 1} + 1} + \log 4 \bigg).
\end{aligned}
\end{equation}
\end{proof}

\subsection{Proof of~Theorem~\ref{theo:requ_rates}}
\label{sec:theo_requ_rates}

\begin{proof}
The proof combines the approximation bound \Cref{lem:relu-approx}, the entropy
bound \Cref{cor:entropy_SH}, and the oracle inequality \Cref{cor:risk_coro}.
We use $\widetilde f_{n,\tau}$ for the raw empirical minimizer throughout.

Fix $\tau\in\{\alo,\ahi\}$ and let
\[
    \NNclass \coloneqq \NNclassSHn{L_{\mathrm{SH}}(n)}{\bp_{\mathrm{SH}}(n)}{S_{\mathrm{SH}}(n)}
\]
denote the Schmidt--Hieber network class with architecture parameters chosen
as functions of $n$.  Let $\widetilde f_{n,\tau}$ be the raw empirical
minimizer of \Cref{theo:requ_rates}.

\paragraph{Step 1: approximation error}
By \Cref{assum:smoothness,lem:relu-approx}, some
$g_{\tau,n}\in\NNclass$ satisfies

\begin{equation}
    \label{eq:relu_approx}
    \supnorm{g_{\tau,n}-\fstar{\tau}} \leq \Capp\, n^{-\beta/(2\beta+d)}.
\end{equation}

The parameter set of $\NNclass$ is a finite union of closed subsets of a
finite-dimensional cube and is therefore compact.  The parameter-stability
bound of \Cref{lem:inf_norm_bound}, output clipping, and Lipschitz continuity
of the pinball loss make the population risk continuous on this set.  Hence a
population minimizer $\bestfinclass\in\arg\min_{f\in\NNclass}\Risk{f}$ exists
(see~\eqref{eq:definition-best_f_inclass}).  Both $\bestfinclass$ and
$g_{\tau,n}$ depend on $n$ through $\NNclass$, but we suppress this dependence.
For every training sample, the empirical pinball risk is continuous in the
parameter on the same compact set, so its argmin is nonempty and compact. The
empirical risk is jointly measurable in the sample and the parameter. Since
$([0,1]^d\times[-M,M])^n$ is a standard Borel space, the measurable maximum
theorem~\cite[Theorem~18.19]{aliprantisborder2006} yields a measurable exact
empirical-risk minimizer. This justifies the measurable ERM used in
\Cref{theo:requ_rates}.
By minimality, $\Risk{\bestfinclass}\le\Risk{g_{\tau,n}}$. Therefore,

\begin{equation}
    \label{eq:requ_bias_risk}
    \begin{aligned}
        \approxerror &\coloneqq \Risk{\bestfinclass} - \Risk{\fstar{\tau}}
        \le \Risk{g_{\tau,n}} - \Risk{\fstar{\tau}} \\
        &\qquad\le \frac{\up}{2}\Ltwo{g_{\tau,n}-\fstar{\tau}}^2
        \le \frac{\up}{2}\supnorm{g_{\tau,n}-\fstar{\tau}}^2
        \le \frac{\up\Capp^2}{2}\, n^{-\frac{2\beta}{2\beta+d}}.
    \end{aligned}
\end{equation}

The second inequality uses the upper bound in~\eqref{eq:global_sc}; the last
uses~\eqref{eq:relu_approx} and $\Ltwo{h}\le\supnorm{h}$.

\paragraph{Step 2: metric entropy}
Choose a finite $1/n$-net $\mathcal N_{1/n}$ of $\NNclass$ of minimum
cardinality in the uniform norm and fix one parameter representative for each
network. We use the same symbol for the resulting subset of the parameter set.
By \Cref{cor:entropy_SH} and the change-of-base formula
$\log_2 n=(\log n)/(\log 2)$,
\begin{equation}
    \label{eq:entropy_bound}
    \mathcal{H}_n \coloneqq \log|\mathcal N_{1/n}|
    =\logcover{\frac{1}{n}}{\NNclass}
    \le \bar C_{\mathrm{ent}}\, n^{\frac{d}{2\beta + d}} (\log n)^3 \eqsp,
\end{equation}
where $\bar C_{\mathrm{ent}} \coloneqq C_{\mathrm{ent}}/(\log 2)^3$ and $C_{\mathrm{ent}}$ is the constant of \eqref{eq:Cent}.

\paragraph{Step 3: rate assembly}
The boundedness hypothesis of \Cref{cor:risk_coro} holds because
\eqref{eq:sh-class} clips every output to $[-M,M]$.  Define
\[
    \mathcal{E}_n(\widetilde f_{n,\tau})
    \coloneqq \Risk{\widetilde f_{n,\tau}}-\Risk{\fstar{\tau}}.
\]
Decomposing this excess risk as
$\mathcal{E}_n(\widetilde f_{n,\tau})
=\bigl(\Risk{\widetilde f_{n,\tau}}-\Risk{\bestfinclass}\bigr)
+\approxerror$, \Cref{cor:risk_coro} yields, with probability at least
$1-\delta$,
\begin{equation}
\label{eq:excess_risk_oracle}
    \mathcal{E}_n(\widetilde f_{n,\tau}) \le \approxerror + K_1 \sqrt{\frac{\approxerror \mathcal{H}_n}{n}} + K_1 \sqrt{\frac{\approxerror \log(1/\delta)}{n}} + K_2 \frac{\mathcal{H}_n}{n} + K_2 \frac{\log(1/\delta)}{n} + K_3 \frac{1}{n} \eqsp,
\end{equation}
where $K_1, K_2, K_3$ are defined in \Cref{cor:risk_coro}.

The bound
$K_1\sqrt{\approxerror\log(1/\delta)/n}
\le\approxerror+(K_1^2/4)\log(1/\delta)/n$
turns~\eqref{eq:excess_risk_oracle} into
\begin{equation}
\label{eq:excess_risk_decoupled}
    \mathcal{E}_n(\widetilde f_{n,\tau}) \le 2\approxerror + K_1 \sqrt{\frac{\approxerror \mathcal{H}_n}{n}} + K_2 \frac{\mathcal{H}_n}{n} + \left(K_2 + \frac{K_1^2}{4}\right) \frac{\log(1/\delta)}{n} + \frac{K_3}{n}\eqsp.
\end{equation}

Set $C_{\mathrm{bias}}\coloneqq\up\Capp^2/2$.  Equations
\eqref{eq:requ_bias_risk} and~\eqref{eq:entropy_bound} give
\[
\approxerror\le C_{\mathrm{bias}}n^{-2\beta/(2\beta+d)},
\qquad
\frac{\mathcal{H}_n}{n}\le\bar C_{\mathrm{ent}}
n^{-2\beta/(2\beta+d)}(\log n)^3.
\]
Consequently, the cross-term satisfies
\[
\sqrt{\frac{\approxerror\mathcal{H}_n}{n}}
\le\sqrt{C_{\mathrm{bias}}\bar C_{\mathrm{ent}}}
n^{-2\beta/(2\beta+d)}(\log n)^{3/2}.
\]

For $n\ge3$, $(\log n)^{3/2}\le(\log n)^3$ and
$n^{-1}\le n^{-2\beta/(2\beta+d)}(\log n)^3$.  Absorbing these terms gives
\begin{equation}
\label{eq:relu_excess_risk_rate}
\begin{aligned}
\mathcal{E}_n(\widetilde f_{n,\tau})
&\le
\underbrace{\left( 2 C_{\mathrm{bias}}
+ K_1 \sqrt{C_{\mathrm{bias}} \bar C_{\mathrm{ent}}}
+ K_2 \bar C_{\mathrm{ent}} + K_3 \right)}_{\eqqcolon\, C_{\mathrm{risk}}}\\
&\qquad\times n^{-\frac{2\beta}{2\beta+d}} (\log n)^3
+ \underbrace{\left( K_2 + \frac{K_1^2}{4} \right)}_{\eqqcolon\, C_{\mathrm{conf}}'}
\frac{\log(1/\delta)}{n} \eqsp.
\end{aligned}
\end{equation}

Finally, the lower bound in~\eqref{eq:global_sc} and
\eqref{eq:relu_excess_risk_rate} give
\[
\Ltwo{\widetilde f_{n,\tau}-\fstar{\tau}}^2
\le\frac{2}{\low}\mathcal{E}_n(\widetilde f_{n,\tau}).
\]
Taking square roots and using subadditivity yields
\begin{equation}
    \Ltwo{\widetilde f_{n,\tau}-\fstar{\tau}}
    \le \underbrace{\sqrt{\frac{2\,C_{\mathrm{risk}}}{\low}}}_{=\, C_{\mathrm{rate}}}\; n^{-\frac{\beta}{2\beta+d}} (\log n)^{3/2}
    \;+\; \underbrace{\sqrt{\frac{2\,C_{\mathrm{conf}}'}{\low}}}_{=\, C_{\mathrm{conf}}}\; \sqrt{\frac{\log(1/\delta)}{n}} \eqsp.
\end{equation}
\end{proof}

\subsection{Verification of the CQR assumptions}
\label{subsec:relu_endpoint_condition}

Under \Cref{assum:boundedness,assum:density}, the upper density bound gives
atomlessness and the all-interval mass bound in \Cref{assum:mass_regular}.
Each side of either oracle quantile has probability at least $\alpha/2$,
so both quantiles lie at distance at least $\alpha/(2\up)$ from both support
boundaries. The lower density bound then gives all four local inequalities
in \Cref{assum:mass_regular} with $r_0=\alpha/(2\up)$.

For $u\in(0,1)$, define
\begin{equation}
\label{eq:relu_epsilon_main}
\epsilon_2(n,u)
\coloneqq
\begin{cases}
2M, & n<n_{\mathrm{SH}},\\
\displaystyle
\min\!\left\{2M,
C_{\mathrm{rate}}n^{-\beta/(2\beta+d)}(\log n)^{3/2}
+C_{\mathrm{conf}}\sqrt{\frac{\log(1/u)}n}\right\},
& n\ge n_{\mathrm{SH}}.
\end{cases}
\end{equation}
For either $\tau\in\{\alo,\ahi\}$ and $n\ge n_{\mathrm{SH}}$,
\Cref{theo:requ_rates} at confidence $u$ gives the stochastic bound in the
second branch. The deterministic $2M$ bound holds for every $n$ because
the raw minimizer and the oracle quantile take values in $[-M,M]$.
Thus \Cref{assum:HPD-excess-risk} holds at $p=2$ with this radius.
Pointwise sorting as in \eqref{eq:raw_endpoint_sorting} preserves the output
range; \Cref{lem:sorting_contraction_mass} gives the contraction used by
the generic results.

\subsection{Proof of~Corollary~\ref{cor:relu_rates}}
\label{app:length_mismatch_l2}

\begin{proof}
\Cref{subsec:relu_endpoint_condition} verifies the CQR assumptions.
Under the localization conditions assumed in \Cref{cor:relu_rates}, apply
\Cref{prop:target,th:global_cov_bound} at $p=2$.  On the common event
$\mathcal A_{n,m}(\delta)$, which has probability at least $1-\delta$,
substitution of \eqref{eq:relu_epsilon_main} at $u=\delta/4$ in
\eqref{eq:length_mismatch_bound} and~\eqref{eq:cond_cov}
yields~\eqref{eq:relu_rate}.  Its hidden constant depends only on
$(\alpha,\low,\up,\beta,d,M,\lipconst_\beta)$.
\end{proof}

\subsection{Proof of~Corollary~\ref{cor:cs_relu}}
\label{subsec:cs_corollary}

The specialization uses the rearranged endpoints of
\eqref{eq:raw_endpoint_sorting} and the source $L^2$ rate of
\Cref{theo:requ_rates}.

\begin{proof}[Proof of Corollary~\ref{cor:cs_relu}]
The argument in \Cref{subsec:relu_endpoint_condition} verifies
the $p=2$ instance of \Cref{assum:HPD-excess-risk} for the raw endpoints with
$\epsilon_2$
of~\eqref{eq:relu_epsilon_main}; pointwise sorting gives the rearranged fitted
pair. At
confidence $u=\delta/4$ and for $n\ge n_{\mathrm{SH}}$, this error is at most
\[
C_{\mathrm{rate}}n^{-\beta/(2\beta+d)}(\log n)^{3/2}
+C_{\mathrm{conf}}\sqrt{\frac{\log(4/\delta)}n}.
\]
Under the localization conditions assumed in \Cref{cor:cs_relu}, apply
\Cref{prop:cs_length,th:cs_cond_cov} at $p=2$ on
$\mathcal A^w_{n,m}(\delta)$, the common event defined in
\Cref{lem:cs_calibration_bound}, and substitute this rate
in~\eqref{eq:cs_length_complete} and~\eqref{eq:cs_cond_cov_complete}.
Change of measure and $0\le\wsh\le\wmax$ give
\[
\Ltwo{\wsh}[{\QC[X]}]^{2}
=\E_{\DC[X]}[\wsh^{3}]
\le\wmax\E_{\DC[X]}[\wsh^{2}]
=\wmax\,\csdiv.
\]
Substitution into the exact shifted bounds gives the endpoint term, the
random-normalization term $(2-\alpha)\etaw{\delta}$, and the test-atom term
$(1-\alpha)m^{-1}\sqrt{\wmax\,\csdiv}$ in
\eqref{eq:cs_relu_rate}.
\end{proof}

\bibliography{refs}

\end{document}